\newenvironment{enumeratei}{\begin{enumerate}[\quad\upshape (i)]} {\end{enumerate}}
\numberwithin{equation}{section}
\theoremstyle{plain}
 \newtheorem{theorem}{Theorem}[section]
 \newtheorem{lemma}[theorem]{Lemma}
 \newtheorem{proposition}[theorem]{Proposition}
 \newtheorem{observation}[theorem]{Observation}
 \newtheorem{corollary}[theorem]{Corollary}
\theoremstyle{definition}
 \newtheorem{definition}[theorem]{Definition}
 \newtheorem{remark}[theorem]{Remark}
\theoremstyle{remark}
 \newtheorem*{Caseone}{Case 1}
 \newtheorem*{Casetwo}{Case 2}
\theoremstyle{plain} 
\theoremstyle{definition} 
\theoremstyle{remark} 
\newcommand \birthno {\textup{yb}}
\newcommand \birthtr {\textup{btr}}
\newcommand \lessdesc {<_{\textup{desc}}}
\newcommand \dsty [1] {\displaystyle{#1}}
\newcommand \sssty [1] {\scriptscriptstyle{#1}}
\newcommand \tbf[1] {\textbf{#1}}  
\renewcommand\phi{\varphi}
\renewcommand\rho{\varrho}   
\renewcommand\epsilon{\varepsilon} 
\renewcommand \theta {\vartheta}
\newcommand\init [1] {#1} 
\newcommand\nothing[1] {}
\newcommand \mapping {map} 
\newcommand \poset {ordered set{}}
\newcommand \balpha {{\boldsymbol{\alpha}}}
\newcommand \bdelta{\boldsymbol{\delta}}
\newcommand \bepsilon{\boldsymbol{\epsilon}}
\newcommand \pair [2] {\tuple{#1,#2}}
\renewcommand\phi{\varphi}
\newcommand \restrict [2] {{#1}\kern-1pt \rceil_{\kern-1pt #2}}
\newcommand\set [1]{\{#1\}}
\newcommand \tuple [1] {\langle #1 \rangle}
\newcommand \then {\Rightarrow}
\newcommand \Jir [1] {\textup{Ji}(#1)} 
\newcommand \Mir [1] {\textup{Mi}(#1)}
\newcommand \length {\textup{length}}
\newcommand\ideal[1]{\mathord\downarrow #1}
\newcommand\filter[1]{\mathord\uparrow #1}
\newcommand \chain [1] {\sf C_{#1}}
\newcommand \colcomp [2] {{#1}\mathrel{ \mathord{\triangleleft}\kern -3.6pt \mathord{\cdot}  }{#2} }
\newcommand \leftb [1]  {\textup{C}_{\textup l}(#1)} 
\newcommand \rightb [1] {\textup{C}_{\textup r}(#1)} 
\newcommand \bound [1] {\textup{Bnd}(#1)} 
\newcommand \cornl [1] { \textup{lc}(#1) } 
\newcommand \cornr [1] { \textup{rc}(#1) } 
\newcommand \lsquare [1] {^{\diamond}\kern-3pt #1}
\newcommand \Traj [1] {\textup{Traj}(#1)}
\newcommand \Straj [1] {\widehat{\textup{Traj}}(#1)}
\newcommand \trajeq {\mathrel{ \sim^{\kern-2pt\textup{traj}} } }
\newcommand \wtau {{\widehat\tau}}
\newcommand \preogen {\textup{quor}}
\newcommand\tblokk [1] {{\blokk{#1}\Theta}}
\newcommand \blokk [2] {#1/#2}
\newcommand \con {\textup{con}}
\DeclareMathOperator{\Con}{Con}
\newcommand \intv [1]  {{\mathfrak #1}}  
\newcommand \Prin[1] {\textup{PrInt}(#1)} 
\newcommand \inp {{\mathfrak p}}
\newcommand \inq {{\mathfrak q}} 
\newcommand \inh {{\mathfrak h}} 
\newcommand \inr {{\mathfrak r}} 
\newcommand \ing {{\mathfrak g}} 
\newcommand \cproj {\mathrel{ \mathord{\Rightarrow} \kern-7.5pt \mathord{\Rightarrow} }}
\newcommand \cpreq {\mathrel{ \mathord{\Leftarrow}  \kern-7.5pt \mathord{\Leftrightarrow} \kern-7.5pt \mathord{\Rightarrow} }} 
\newcommand \uppers {\,{\buildrel{\sssty{\textup{up}}}\over \rightarrow}\kern-8pt\mathord{\rightarrow}\; } 
\newcommand \upperpa [1] {\,{\buildrel{\sssty{\textup{up}}}\over \rightarrow}\kern-8pt\mathord{\rightarrow}_{#1}\; } 
\newcommand \dnpers {\,{\buildrel{\sssty{\textup{dn}}}\over \rightarrow}\kern-8pt\mathord{\rightarrow}\; } 
\newcommand \dnperpa [1] {\,{\buildrel{\sssty{\textup{dn}}}\over \rightarrow}\kern-8pt\mathord{\rightarrow}_{#1}\; }
\numberwithin{equation}{section}
\newcommand \url [1] {\tt{#1}}
\newcommand \colgammacon{\textup{(C1)}}
\newcommand \colcongamma{\textup{(C2)}}
\newcommand\rectext{normal rectangular extension}
\newcommand\Rectexts{Normal rectangular extensions}
\newcommand \lsp{\textup{lsp}}
\newcommand \rsp{\textup{rsp}}
\newcommand \lside{\textup{LS}}
\newcommand \rside{\textup{RS}}
\newcommand \llb [1]{C_{\textup{ll}}(#1)} 
\newcommand \lrb [1]{C_{\textup{lr}}(#1)} 
\newcommand \lefth {\textup{ljc}^{\sssty{\textup{n}}}} 
\newcommand \righth {\textup{rjc}^{\sssty{\textup{n}}}} 
\newcommand \lhr [1] {\lefth_R(#1)} 
\newcommand \rhr [1] {\righth_R(#1)} 
\newcommand \height {\textup{height}}
\newcommand \emel{m_\textup{l}}
\newcommand \emer{m_\textup{r}}
\newcommand \tel{{\textup{l}}}
\newcommand \ter{{\textup{r}}}
\newcommand \rellambda {\mathrel{\lambda}}
\newcommand\incoord[1]{\textup{ICP}_{#1}(L)}
\newcommand\leftcoord[1]{\textup{LCP}_{#1}(L)}
\newcommand\rightcoord[1]{\textup{RCP}_{#1}(L)}
\newcommand\allcoord[1]{\textup{ACP}_{#1}(L)}
\newcommand\vincoord[1]{\textup{ICP}_{#1'}(L')}
\newcommand\vallcoord[1]{\textup{ACP}_{#1'}(L')}
\newcommand \refl[1]{#1^{\textup{(mi)}}}
\newcommand\adia {\mathcal C_0}
\newcommand\bdia {\mathcal C_1}
\newcommand\cdia {\mathcal C_2}
\newcommand\ddia {\mathcal C_3}
\newcommand\jdia[1]{\mathcal C_{#1}}
\newcommand\glsum {\mathrel{+^{\sssty{\textup{gl}}}}}
\newcommand \swings {\mathrel{\mathbin{\raisebox{2.0pt}{\rotatebox{160}{$\curvearrowleft$}}}}}
\newcommand\perspup {\mathrel{ \overset{\sssty{\textup{up}}}{\sim} }}
\newcommand\perspdn {\mathrel{ \overset{\sssty{\textup{dn}}}{\sim} }}
\newcommand\qparallel {\mathrel{     {\mathord\parallel} _ {\kern-1pt \sssty{\textup{quasi}}}         }}
\newcommand\qnparallel {\mathrel{     {\mathord\nparallel} _ {\kern-1pt \sssty{\textup{quasi}}}         }}
\newcommand\lexleq{\mathrel{\leq_{\textup{lex}}}}
\newcommand\antilexleq{\mathrel{\sqsubseteq_{\textup{lex}}}}
\newcommand \terr[1] {\textup{Terr}(#1)}
\newcommand \oterr[1] {\textup{Terr}_{\sssty{\textup{orig}}}(#1)}
\newcommand\anc[2]{\textup{anc}(#1,#2)}
\newcommand\vrbal{\vec r\,^{\sssty{\textup{ft}}}}
\newcommand\vrjobb{\vec r\,^{\sssty{\textup{gh}}}}
\newcommand\rbal[1]{r^{\sssty{\textup{ft}}}_{#1}}
\newcommand\rjobb[1]{r^{\sssty{\textup{gh}}}_{#1}}
\newcommand\vrvbal{\vec r\,^{\ast\sssty{\textup{ft}}}}
\newcommand\vrvjobb{\vec r\,^{\ast\sssty{\textup{gh}}}}
\newcommand\rvbal[1]{r^{\ast\sssty{\textup{ft}}}_{#1}}
\newcommand\rvjobb[1]{r^{\ast \sssty{\textup{gh}}}_{#1}}
\newcommand\clft {c_{\sssty{\textup{ft}}}}
\newcommand\crght {c_{\sssty{\textup{gh}}}}
\newcommand\interr[1]{\textup{Interior}(#1)}
\newcommand\nggy{G} 
\newcommand\unu {f^{\sssty{\textup{nc}}}} 
\newcommand\inu[2] {\unu_{ #2\mathord{\subseteq} #1 }}
\newcommand\nszgpi {\pi^{\sssty{\bullet}}}
\newcommand\id{\textup{id}}
\newcommand\mixint [2]{[#1,#2]^\ast}
\newcommand\accessible {$\swings\perspdn$-accessible}
\newcommand \vajonkell [1] {}
\begin{document}
\title[Diagrams and rectangular extensions]
{Diagrams and rectangular extensions of planar semimodular lattices}

\author[G.\ Cz\'edli]{G\'abor Cz\'edli}
\email{czedli@math.u-szeged.hu}
\urladdr{http://www.math.u-szeged.hu/~czedli/}
\address{University of Szeged, Bolyai Institute. 
Szeged, Aradi v\'ertan\'uk tere 1, HUNGARY 6720}

\begin{abstract}  In 2009, \init{G.\ }Gr\"atzer and \init{E.\ }Knapp proved that every planar semimodular lattice has a rectangular extension. We prove that, under reasonable additional conditions, this extension is unique. This theorem naturally leads to a hierarchy of special  diagrams of  planar semimodular lattices. Besides that these diagrams are unique in a strong sense,  we explore many of their further properties. Finally, we demonstrate the power of our new diagrams in two ways. First, we prove a simplified version of  our earlier Trajectory Coloring Theorem, which describes the inclusion $\con({\inp})\supseteq\con{(\inq)}$ for prime intervals $\inp$ and $\inq$ in slim rectangular lattices. Second, we prove  \init{G.\,}Gr\"atzer's Swing Lemma for the same lattices, which describes the same inclusion more simply.
\end{abstract}

\thanks{This research was supported by the NFSR of Hungary   (OTKA), grant number K83219} 

\subjclass {06C10}


\keywords{Rectangular lattice, \rectext{}, slim  semimodular lattice, multi-fork extension, lattice diagram, lattice congruence, trajectory coloring, Jordan--H\"older permutation, Swing Lemma\quad Under proof-reading}

\maketitle

\begin{figure}[htb]
\includegraphics[scale=1.0]{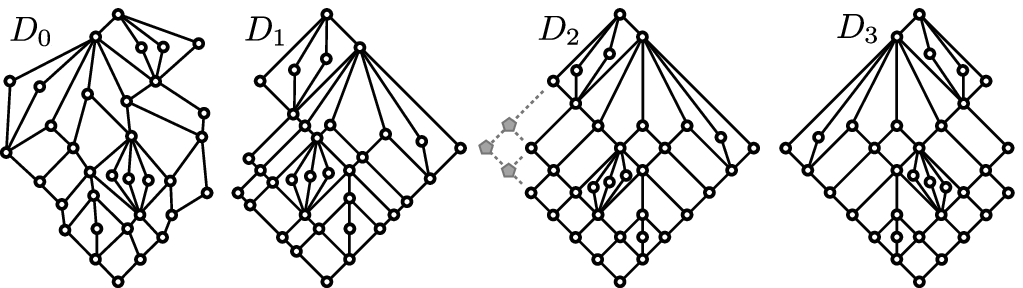}
\caption{$D_0\in \adia\setminus \bdia$, $D_1\in \bdia\setminus \cdia$, $D_2\in \cdia\setminus \ddia$, and  $D_3\in \ddia$  }\label{fig-hierarchy}
\end{figure}

\begin{figure}[htb]
\includegraphics[scale=1.0]{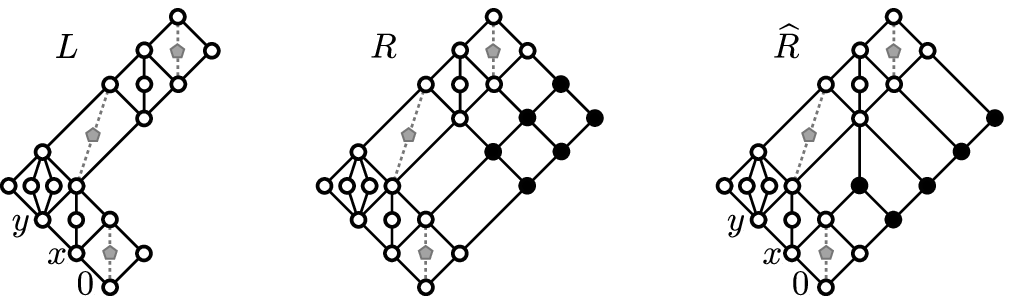}
\caption{$R$ is the \rectext{} but $|\widehat R|<|R|$}\label{fig-normext-notmin}
\end{figure}

\section{Introduction}  
A \emph{planar lattice} is finite lattice that has a planar (Hasse) diagram. All lattices in this paper are assumed to be \emph{finite}, even is this is not emphasized all the time. 
With the appearance  of \init{G.\ }Gr\"atzer and \init{E.\ }Knapp~\cite{gratzerknapp1} in 2007, the theory of planar semimodular lattices became a very  intensively studied branch of lattice theory. This activity is witnessed by more than two dozen papers; 
some of them are included among the References section while some others are overviewed in the  book chapter \init{G.\ }Cz\'edli and \init{G.\ }Gr\"atzer~\cite{czgggltsta}. The study of planar semimodular lattices and, in particular, slim planar semimodular lattices can be motivated by three factors.

First, these lattices are \emph{general enough}; for example 
\init{G.\ }Gr\"atzer, \init{H.\ }Lakser, and \init{E.\,T.\ }Schmidt~\cite{r:GratzerLakserSchmidt} proved that  every finite distributive lattice can be represented as the congruence lattice of a planar semimodular lattice $L$. In addition, one can also stipulate that every congruence of $L$ is principal, see \init{G.\ }Gr\"atzer and \init{E.\,T.\ }Schmidt~\cite{ggscht-periodica2014}. 
Even certain maps between two finite distributive lattices can be represented;
see \init{G.\ }Cz\'edli~\cite{czgrepres} for the latest news in this direction, and see its bibliography for many earlier results.

Second, these lattices offer \emph{useful links} between lattice theory and the rest of mathematics. For example, \init{ G.\ }Gr\"atzer and \init{J.\,B.\ }Nation~\cite{gr-nation} and, by adding a uniqueness part to it, \init{G.\,}Cz\'edli and \init{E.\,T.\ }Schmidt~\cite{czgschtJH}, improve the classical Jordan--H\"older theorem for groups from the nineteenth century. Also, these lattices are connected with combinatorial structures, see \init{G.\ }Cz\'edli~\cite{czgcircles} and \cite{czgquasiplanar}, and they raise interesting combinatorial problems, see 
\init{G.\ }Cz\'edli, \init{T.\ }D\'ek\'any, \init{L.\ }Ozsv\'art, \init{N.\ }Szak\'acs, and \init{B.\ }Udvari~\cite{czgdekanyatall} and its bibliography.

Third, there are  \emph{lots of tools} to deal with these lattices; see, for example, 
\init{G.\,}Cz\'edli~\cite{czg-mtx}, \cite{czgtrajcolor}, \cite{czgcircles}, 
 \init{G.\ }Cz\'edli and \init{G.\ }Gr\"atzer~\cite{czgggresect},   \init{G.\,}Cz\'edli and \init{E.\,T.\ }Schmidt~\cite{czgschvisual},  \cite{czgschslim2}, and \cite{czgschperm}, and \init{G.\ }Gr\"atzer and \init{E.\ }Knapp~\cite{gratzerknapp1} and \cite{gratzerknapp3}
; see also  \init{G.\ }Cz\'edli and \init{G.\ }Gr\"atzer~\cite{czgggltsta}, where most of these tools are overviewed; many of them are needed here.

\begin{figure}[htb]
\includegraphics[scale=1.0]{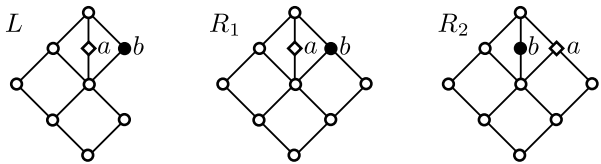}
\caption{Isomorphic but not relatively over $L$}\label{fig-nonreliso}
\end{figure}

\subsection*{Target} The first goal is to extend a planar semimodular lattice into a unique rectangular lattice. Definitions will be given soon.  For a first impression, if we add the three grey pentagon-shaped elements together with the grey dotted edges to $D_2$ in Figure~\ref{fig-hierarchy}, then we obtain its rectangular extension. While the existence of such an extension is known from \init{G.\ }Gr\"atzer and \init{E.\ }Knapp~\cite{gratzerknapp3}, its uniqueness needs some natural additional assumptions and a nontrivial proof. 

The second goal, which originates from the first one,  is to  associate a special diagram with each  planar semimodular lattice $L$. Besides the class $\adia$ of planar diagrams of slim semimodular lattices, we define a hierarchy $\adia\supset \bdia\supset \cdia \supset \ddia$ of (classes of) diagrams. 
For a   first impression, we present Figure~\ref{fig-hierarchy}, where the grey pentagon-shaped elements do not belong to $D_2$ and each of the four diagrams determine the same planar semimodular lattice. Also, we list some of the diagrams or lattices whose diagrams are depicted in the paper:
\begin{enumeratei}
\item In $\bdia\setminus \cdia$, we have $L$ and $R$ of Figure~\ref{fig-normext-notmin} and  Figures~\ref{fig-forkext-bdia}, \ref{fig-idealinDj1}, and \ref{fig-samidealinDj}.
\item In $\cdia\setminus\ddia$, we have  $\widehat R$ in  Figure~\ref{fig-normext-notmin} and $L_1$ and $R_1$ in  Figure~\ref{fig-nonindecomposable}.
\item In $\ddia$, we have  $L_2$ and $R_2$ in Figure~\ref{fig-nonindecomposable}, $D$ and $E$ in Figure~\ref{fig-centgravleft},
and Figures~\ref{fig-nonreliso}  and  Figure~\ref{fig-oterr}.
\end{enumeratei}
Although the systematic study and several statements on $\cdia$, $\ddia$, even on $\adia$ and, mainly, on $\bdia$ are new, note that we have often used diagrams from $\bdia$ and $\cdia$ previously.  Choosing  a  smaller hierarchy class, the diagrams of $L$ become  unique in a stronger sense.   For example, in the plane of complex numbers (with $0,1\in\mathbb C$ fixed),  $L$ has exactly one diagram that belongs to $\ddia$. 
Besides introducing new diagrams, we prove several useful properties for them.
While $\cdia$ and $\ddia$ seem to have only some aesthetic advantage over $\bdia$, the passage from $\adia$ to $\bdia$ gives some extra  insight into the theory of planar semimodular lattices. 

Finally,  to demonstrate that our diagrams and the toolkit we elaborate are useful,  we 
improve the Trajectory Coloring Theorem from \init{G.\,}Cz\'edli~\cite[Theorem 7.3.(i)]{czgtrajcolor}, which describes the ordered set of join-irreducible congruences of a slim rectangular lattice. 
The improved version is based on $\bdia$;  it is more pictorial and easier to understand and apply than the original one. As a nontrivial joint application of the improved Trajectory Coloring Theorem and our toolkit for $\bdia$, we  prove \init{G.\ }Gr\"atzer's Swing Lemma for slim rectangular lattices.  
The Swing Lemma gives a particularly elegant condition for $\con(\inp) \geq \con(\inq)$, where $\inp$ and $\inq$ are prime intervals of a slim rectangular lattice. Although we know from \init{G.\ }Gr\"atzer~\cite{swinglemma} that this lemma holds also for a larger class of lattices, the slim semimodular ones, the lion's share of the difficulty  is to conquer the slim rectangular case.

\subsection*{Outline} The present section is introductory. In Section~\ref{mainresultsection}, we introduce the concept of a \rectext{} of a slim semimodular lattice, and state its uniqueness in Theorem~\ref{thmmain}.  
Also, this section contains some analysis of this theorem and that of the way we prove it in subsequent sections.
To make the paper easier to read, some concepts and earlier results are surveyed in Section~\ref{preparesection}. 
Section~\ref{proofssection} is devoted to the proof of Theorem~\ref{thmmain}, but many of the auxiliary statements are of further interest. Namely, Lemma~\ref{dkjTghNVc} on cover-preserving sublattices of slim semimodular lattices, Lemmas~\ref{sldiHjNxY}  and \ref{samecoordin} on join-coordinates, Lemma~\ref{outlemmA} on the explicit description of \rectext{}s, and Lemma~\ref{slimcompmml} on the categorical properties of the antislimming procedure  deserve separate mentioning here.
In Section~\ref{uniquediagramssection}, a hierarchy $\adia\supseteq\bdia\supseteq\cdia\supseteq \ddia$ of classes of diagrams of planar semimodular lattices is introduced and appropriate  uniqueness statements are proved. Here we only mention Proposition~\ref{kTrTzmyY} on $\adia$, which extends the scope of a known result from ``slim semimodular'' to ``planar semimodular", and  Theorem~\ref{pcvWsWdG} on $\bdia$. 
Section~\ref{toolkitsection} proves several easy statements on diagrams in $\bdia$ and their trajectories. The rest of the paper  demonstrates the usefulness of $\bdia$ and the toolkit presented in  Section~\ref{toolkitsection}. 
Section~\ref{congruenceSection} improves the Trajectory Coloring Theorem,
while  Section~\ref{swingsection} proves G.\ Gr\"atzer's Swing Lemma for slim rectangular lattices.

\subsection*{Method}  
Our lattices are planar and they are easy to imagine. Thus,  intuition gives many ideas 
on their properties. However, experience shows that many of these ``first ideas'' are wrong or need serious improvements. Therefore, instead of relying too much on pictorial intuition, we give rigorous proofs for many auxiliary statements. Fortunately, we can use  a rich toolkit available in the referenced papers, including   \init{D.\ }Kelly and \init{I.\ }Rival  \cite{kellyrival} and  \init{G.\ }Gr\"atzer and \init{E.\ }Knapp~ \init{G.\ }Gr\"atzer and \init{E.\ }Knapp~\cite{gratzerknapp1} and \cite{gratzerknapp3}  as the pioneering sources. 

To prove Theorem~\ref{thmmain} on \rectext{}s, we coordinatize our lattices. Although our terminology is different, the coordinates we use are essentially the largest homomorphic images with respect to the 2-dimensional case of  \init{M.\ }Stern's join-homomorphisms in \cite{stern}, which were rediscovered in  \init{G.\,}Cz\'edli and \init{E.\,T.\ }Schmidt~\cite[Corollary 2]{czgschthowtoderive}. Note that the coordinatization used in this paper has nothing to do with the one used in \init{G.\,}Cz\'edli~\cite{czgcoord}. 

By a \emph{grid} we mean the direct product of two finite nontrivial (that is, non-singleton) chains.   Once we have coordinatization, it is natural to position the elements in a grid according to their coordinates. Of course, we have to prove that this plan is compatible with planarity.  This leads to a hierarchy of planar diagrams with useful properties. The emphasis is put on the properties of trajectories, because they are  powerful tools to understand slim rectangular lattices and their congruences.

Although we mostly deal with slim rectangular lattices in this paper, many of our statements can be extended to slim semimodular lattices in a straightforward but sometimes a bit technical way. Namely,  one can follow  \init{G.\,}Cz\'edli~\cite[Remark 8.5]{czgtrajcolor} or he can use Theorem~\ref{thmmain}. Because of space considerations, we do not undertake this task now.

\subsection*{Prerequisites} 
The reader is assumed to have some familiarity with lattices but not much. 
Although  widely known concepts  like semimodularity  are not defined here and a lot of specific statements and concepts are used from the recent literature, these less known constituents are explained here. Unless he wants to check the imported tools for correctness,  the reader hardly  has to look into the referenced literature while reading the present paper.

\section{\Rectexts}\label{mainresultsection}
Following \init{G.\,}Cz\'edli and \init{E.\,T.\ }Schmidt~\cite{czgschslim2}, a \emph{glued sum indecomposable lattice} is a finite non-chain lattice $L$ such that each $x\in L\setminus\set{0,1}$ is incomparable with some element of $L$. Such a lattice consists of at least 4 elements. Following \init{G.\ }Gr\"atzer and \init{E.\ }Knapp~\cite{gratzerknapp3}, a \emph{rectangular lattice} is a planar semimodular lattice $R$ such that $R$ has a planar diagram $D$ with the following properties:
\begin{enumeratei}
\item\label{rttlgpqa} $D\setminus\set{0,1}$ has exactly one double irreducible element on the left boundary chain of $D$; this element, called \emph{left corner}, is denoted by  $\cornl D$.
\item\label{rttlgpqb} $D\setminus\set{0,1}$ has exactly one double irreducible element,  $\cornr D$, on the right boundary chain of $D$. It is called the \emph{right corner} of $D$.
\item\label{rttlgpqc} These two elements are complementary, that is, $\cornl D\wedge \cornr D=0$ and  $\cornl D\vee \cornr D=0$.
\end{enumeratei}
Note that a rectangular lattice has at least four elements. 
Following \init{G.\,}Cz\'edli and \init{E.\,T.\ }Schmidt~\cite{czgschtJH}, a lattice $L$ is \emph{slim}, if it is finite and $\Jir L$, the (ordered) set of (non-zero) join-irreducible elements of $L$, is the union of two chains. 
It  follows from  \init{G.\,}Cz\'edli and \init{E.\,T.\ }Schmidt~\cite[page 693]{czgschslim2} that, for a \emph{slim}  semimodular lattice $L$,
\begin{equation}
\parbox{9cm}{$L$ is rectangular iff  $\Jir L$ is the union of two chains, $W_1$ and $W_2$, such that $w_1\wedge w_2=0$ for all $\pair{w_1}{w_2}\in W_1\times W_2$.}
\label{tpwcgTslm}
\end{equation}
We know from  \init{G.\ }Cz\'edli and \init{G.\ }Gr\"atzer~\cite[Ecercise 3.55]{czgggltsta} (which follows from  \eqref{tpwcgTslm}, \cite[Lemma 6.1(ii)]{czgschslim2}, and 
\init{G.\ }Cz\'edli and \init{G.\ }Gr\"atzer~\cite[Theorem 3-4.5]{czgggltsta}) that 
\begin{equation}
\parbox{8.6 cm}{
if one planar diagram of a semimodular lattice $L$ satisfies \eqref{rttlgpqa}--\eqref{rttlgpqc} above, then so do all planar diagrams of $L$.}
\label{1rctallrect}
\end{equation}
Let us emphasize that slim lattices, planar lattices, and rectangular lattices are finite by definition. Since a slim lattice is necessarily planar by \init{G.\,}Cz\'edli and \init{E.\,T.\ }Schmidt~\cite[Lemma 2.2]{czgschtJH}, we usually say ``slim'' rather than ``slim planar''.

\begin{definition} \label{dfrctext}
Let $L$  be a planar semimodular lattice. We say that a lattice $R$ is a \emph{\rectext{}} of $L$ if the following hold.
\begin{enumeratei}
\item\label{dfrctexta} $R$ is a rectangular lattice.
\item\label{dfrctextb} $L$ is a cover-preserving $\set{0,1}$-sublattice of $R$.
\item\label{dfrctextc} For every $x\in R$, if $x$ has a lower cover outside $L$, then $x$ has at most two lower covers in $R$.
\end{enumeratei}
\end{definition}

In Figure~\ref{fig-normext-notmin}, $R$ is a \rectext{} of $L$ but 
$\widehat R$ is not; no matter if we consider the pentagon-shaped grey-filled elements with the dotted edges or we omit them. This example witnesses that a \rectext{} of $L$ need not be a minimum-sized rectangular, cover-preserving extension of~$L$.

If $R_1$ and $R_2$ are extensions of a lattice $L$ and $\phi\colon R_1\to R_2$ is a lattice isomorphism whose restriction $\restrict\phi L$  to $L$ is the identity map, then $\phi$ is a \emph{relative isomorphism over} $L$.

\begin{theorem}\label{thmmain}
If $L$ is a planar semimodular lattice with more than two elements, then the following two statements hold.
\begin{enumeratei}
\item\label{thmmaina} $L$ has a \rectext{}. 
\item\label{thmmainb}  $L$ is slim iff it has a slim  \rectext{} iff all \rectext{}s of $L$ are slim.
\end{enumeratei}
Moreover, if $L$ is a glued sum indecomposable planar semimodular lattice, then even the
the following three statements also hold. 
\begin{enumeratei}
\setcounter{enumi}{2}
\item\label{thmmainc}  The \rectext{} of $L$ is unique up to  isomorphisms. 
\item\label{thmmaind} If in addition,  $L$ is slim, then its  \rectext{} is unique up to relative isomorphisms over $L$. 
\item\label{thmmaine} Furthermore,  if  $L$ is slim and  $\psi\colon L\to L'$ is a lattice isomorphism, $R$ is a \rectext{} of $L$, and $R'$ is that of $L'$, then $\psi$ extends to a lattice isomorphism $R\to R'$.
\end{enumeratei}
\end{theorem}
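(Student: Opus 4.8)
The plan is to coordinatize $L$ and to build the \rectext{} as a canonical ``grid closure'' of the coordinate image of $L$. I would first treat the slim case, which is the heart of the matter. Since $L$ is slim, $\Jir L$ is the union of two chains by definition, and these two chains serve as the two coordinate axes. Following the join-homomorphism idea attributed to Stern, I would assign to each $x\in L$ a pair of join-coordinates recording how far left and how far right $x$ reaches in a fixed planar diagram, and then prove, in the style of Lemmas~\ref{sldiHjNxY} and~\ref{samecoordin}, that this coordinate map is an order embedding into a grid $\chain m\times\chain n$ that preserves the covering relation and carries $0$ and $1$ to the two ``corners'' of the grid. Lemma~\ref{dkjTghNVc} on cover-preserving sublattices would be the tool that recognizes the image as a cover-preserving $\set{0,1}$-sublattice.

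For existence (part~\ref{thmmaina}) in the slim case, I would let $R$ be the sublattice of the grid generated by the coordinate image of $L$ together with exactly those elements that the normality requirement in condition~\ref{dfrctextc} of Definition~\ref{dfrctext} forces in, and then verify through \eqref{tpwcgTslm} that $R$ is rectangular: its join-irreducibles split into two chains whose pairwise meets equal $0$. The same computation shows $R$ is slim, which is the forward implication of part~\ref{thmmainb}. For a non-slim $L$ I would pass to the slim reduct $L^s$ obtained by slimming, take the \rectext{} $R^s$ just constructed, and antislim back; Lemma~\ref{slimcompmml} on the categorical behaviour of the antislimming procedure would guarantee that the outcome is a \rectext{} of $L$ and, in the converse direction, that a non-slim $L$ admits no slim \rectext{}, completing part~\ref{thmmainb}.

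For uniqueness under glued sum indecomposability (parts \ref{thmmainc}--\ref{thmmaine}), the linchpin is Lemma~\ref{outlemmA}, which I would establish so as to give an \emph{explicit} description of every \rectext{} purely in terms of the coordinates of $L$. Glued sum indecomposability is precisely what eliminates the freedom at the top and bottom of the diagram, so that the coordinatization of $L$ is forced and any two \rectext{}s $R_1$ and $R_2$ carry identical coordinate grids. The resulting coordinate-preserving bijection $R_1\to R_2$ is then a lattice isomorphism; because it fixes the coordinates of each element of $L$, it restricts to the identity on $L$ and yields the relative isomorphism of part~\ref{thmmaind}, while forgetting the restriction gives part~\ref{thmmainc}. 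Part~\ref{thmmaine} would follow by functoriality: an isomorphism $\psi\colon L\to L'$ transports the coordinate data, and the explicit description of Lemma~\ref{outlemmA} lets it extend to the \rectext{}s, with Lemma~\ref{slimcompmml} again supplying the compatibility needed to push $\psi$ through the slimming--antislimming passage.

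The main obstacle will be Lemma~\ref{outlemmA} together with the assertion that condition~\ref{dfrctextc} of Definition~\ref{dfrctext} pins the extension down. Existence of \emph{some} rectangular cover-preserving extension is already known from Gr\"atzer and Knapp~\cite{gratzerknapp3}, but Figure~\ref{fig-normext-notmin} shows that such extensions are far from unique; the condition on lower covers is meant to single out a canonical one. Proving rigorously that normality neither admits extra elements nor omits necessary ones---so that the coordinate description is at once correct and complete---is the delicate point, and I expect it to demand careful control of how fork-type insertions interact with planarity, rather than the pictorial intuition against which the introduction explicitly warns.
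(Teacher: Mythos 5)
Your strategy for the uniqueness assertions \eqref{thmmainc}--\eqref{thmmaine} is essentially the paper's: join-coordinates, the invariance of the coordinates under passage to the extension (Lemma~\ref{samecoordin}), the explicit description of $\allcoord R$ in terms of $\incoord L$ (Lemma~\ref{outlemmA}) so that any two \rectext{}s carry the same coordinate set and the coordinate-preserving bijection is the desired (relative) isomorphism, and the reduction of the non-slim case of \eqref{thmmainc} to the slim case via the $\unu$-preserving isomorphisms of Lemma~\ref{slimcompmml}. That part of the plan is sound and matches the paper.

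The genuine gap is in existence, part \eqref{thmmaina}. You propose to \emph{build} $R$ as a coordinate-theoretic closure of the image of $L$ in a grid, but this reverses the direction in which the coordinate machinery actually works. Lemma~\ref{outlemmA} is an analysis statement: it presupposes that a \rectext{} $R$ already exists (hypothesis \eqref{ssmfTsCtwsta}) and then shows that its coordinate pairs are forced by $\incoord L$. Turning its formulas into a \emph{definition} of $R$ would require proving from scratch that the resulting set of pairs is a planar semimodular rectangular lattice, that $L$ sits inside it as a cover-preserving $\set{0,1}$-sublattice, and that condition \eqref{dfrctextc} of Definition~\ref{dfrctext} holds---none of which is automatic, and none of which the paper attempts. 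Moreover, Lemmas~\ref{samecoordin} and \ref{outlemmA} genuinely fail without glued sum indecomposability (the paper's counterexample is a maximal chain inside the four-element Boolean lattice), whereas part \eqref{thmmaina} must hold for \emph{every} planar semimodular $L$ with more than two elements, chains and glued sums included. The paper sidesteps all of this by importing existence from Gr\"atzer and Knapp~\cite{gratzerknapp3} (see also \cite{czgrepres}) and using the coordinates only for uniqueness. A smaller gap of the same kind occurs in part \eqref{thmmainb}: exhibiting one slim \rectext{} via slimming and antislimming does not yield the clause that \emph{all} \rectext{}s of a slim $L$ are slim; for that the paper argues directly that an arbitrary \rectext{} $R$ contains a cover-preserving diamond if{f} $L$ does, because condition \eqref{dfrctextc} forces all three atoms of such a diamond into $L$, and then invokes \eqref{slimiffnodiamond}.
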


For a variant of this theorem in terms of diagrams,  
see Proposition~\ref{dgmpRopmain} later.
The two-element lattice cannot have a \rectext. 
Although a finite chain $C$ has a \rectext{} if $|C|\geq 3$, it is not unique up to relative automorphisms over $L$ in case $|C|\geq 5$.
Figure~\ref{fig-nonreliso}, where both $R_1$ and $R_2$ are \rectext{}s of $L$, shows that slimness cannot be removed from part  \eqref{thmmainc}. Figure~\ref{fig-nonindecomposable} shows that  glued sum indecomposability is also inevitable. In this figure, $L_1\cong L_2$ are isomorphic slim semimodular lattices but they are not glued sum indecomposable. Their diagrams are similar in the sense of \init{D.\ }Kelly and \init{I.\ }Rival  \cite{kellyrival}, so they are the same in $\adia$-sense, to be defined in Section~\ref{uniquediagramssection}. 
For $i\in\set{1,2}$, $R_i$ is a \rectext{} of $L_i$. However,  $R_1\ncong R_2$ since $|R_1|\neq |R_2|$.

\begin{figure}[htb]
\includegraphics[scale=1.0]{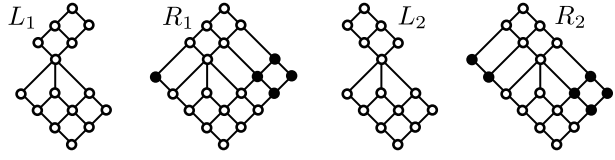}
\caption{The $L_i$ (black-filled) are isomorphic but the $R_i$ are not}\label{fig-nonindecomposable}
\end{figure}

Let $L_1$ be a sublattice of another lattice, $L_2$. We say that $L_2$ is a \emph{congruence-preserving extension} of $L_1$ if the restriction map $\Con {L_2}\to \Con{L_1}$ from the congruence lattice of $L_2$ to that of $L_1$ defined by $\balpha\mapsto \balpha\cap (L_1\times L_1)$ is a lattice isomorphism. We know from \init{G.\ }Gr\"atzer and \init{E.\ }Knapp~\cite[Theorem 7]{gratzerknapp3} that every planar  semimodular lattice has a rectangular congruence-preserving extension. Analyzing their proof, it appears that they construct a \rectext. Hence, using the uniqueness granted by Theorem~\ref{thmmain}, we obtain the following statement; note that it also follows from \init{G.\ }Cz\'edli~\cite[Lemmas 5.4 and 6.4]{czgrepres}.  

\begin{corollary}[{compare with \init{G.\ }Gr\"atzer and \init{E.\ }Knapp~\cite[Theorem 7]{gratzerknapp3}}]
\label{dkHjGwC}
If $L$ is a planar semimodular lattice, then its \rectext{} is a congruence-preserving extension of $L$.
\end{corollary}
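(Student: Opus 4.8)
The plan is to leverage two external inputs and the uniqueness part of Theorem~\ref{thmmain}. The key observation, recorded in the paragraph preceding the corollary, is that the construction of \init{G.\ }Gr\"atzer and \init{E.\ }Knapp in \cite[Theorem 7]{gratzerknapp3} already produces a \rectext{} in the sense of Definition~\ref{dfrctext}: their rectangular lattice contains $L$ as a cover-preserving $\set{0,1}$-sublattice and satisfies the at-most-two-lower-covers condition \eqref{dfrctextc}. So I would first verify (by inspecting their construction) that what they build genuinely meets conditions \eqref{dfrctexta}--\eqref{dfrctextc}, and that the extension they obtain is congruence-preserving by their theorem.

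The next step is the uniqueness transfer. Let $R$ be the \rectext{} of $L$ furnished by Theorem~\ref{thmmain}\eqref{thmmaina}, and let $R_{\textup{GK}}$ be the Gr\"atzer--Knapp extension, which we have just argued is also a \rectext{} of $L$. If $L$ is glued sum indecomposable, then Theorem~\ref{thmmain}\eqref{thmmainc} (respectively \eqref{thmmaind} in the slim case) gives an isomorphism between $R$ and $R_{\textup{GK}}$, and in the slim case this isomorphism is the identity on $L$; hence $R$ is congruence-preserving over $L$ because $R_{\textup{GK}}$ is, the relevant restriction maps being intertwined by the relative isomorphism. Thus for glued sum indecomposable $L$ the corollary follows immediately.

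The remaining work is to remove the glued sum indecomposability hypothesis, since the corollary is stated for an arbitrary planar semimodular $L$. A planar semimodular lattice decomposes as a glued sum (Hall--Dilworth gluing over its doubly irreducible elements, equivalently along its $\set{0,1}$-separating cut points) of glued sum indecomposable intervals and chains. I would form the \rectext{} of $L$ by taking \rectext{}s of the indecomposable ``blocks'' and gluing them back together in the same pattern, and then argue that congruences of a glued sum are determined blockwise, so that a congruence-preserving extension of each block yields a congruence-preserving extension of the whole. This reduces the general case to the indecomposable case already handled, with the degenerate chain blocks contributing nothing to $\Con$.

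The main obstacle, as I see it, is the gluing step: one must check that assembling the block-wise \rectext{}s indeed produces \emph{the} \rectext{} of $L$ (so that Theorem~\ref{thmmain} applies and the uniqueness is not lost), and that the isomorphism $\Con R\to\Con L$ decomposes compatibly with the gluing so that it is an isomorphism precisely because each block-restriction is. Verifying that congruences of a glued sum are the ``independent products'' of the congruences of the summands --- and that this respects the restriction map $\balpha\mapsto\balpha\cap(L\times L)$ --- is the technical heart. Alternatively, one can sidestep the gluing bookkeeping entirely by simply citing \init{G.\ }Cz\'edli~\cite[Lemmas 5.4 and 6.4]{czgrepres}, as the statement itself notes, which already establishes the congruence-preserving property in the required generality; the value of the present argument is that it derives the corollary directly from Theorem~\ref{thmmain} together with the Gr\"atzer--Knapp result.
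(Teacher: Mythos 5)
Your main line of argument is exactly the paper's: one observes that the Gr\"atzer--Knapp construction in \cite[Theorem 7]{gratzerknapp3} already produces a \rectext{} in the sense of Definition~\ref{dfrctext}, their theorem gives that this particular extension is congruence-preserving, and the uniqueness in Theorem~\ref{thmmain} transfers the property to ``the'' \rectext{}. The paper's entire justification is this one observation together with the remark that the statement also follows from \cite[Lemmas 5.4 and 6.4]{czgrepres}.

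Where your proposal goes beyond the paper --- the attempt to remove the glued sum indecomposability hypothesis by gluing blockwise \rectext{}s --- is where it breaks down. First, a glued sum of rectangular lattices is not rectangular (conditions \eqref{rttlgpqa}--\eqref{rttlgpqc} fail at the identified narrows: for instance the two corners of the lower summand no longer join to $1$), so ``gluing the block extensions back together in the same pattern'' does not yield a \rectext{} of $L$ at all; the paper's own outline for Proposition~\ref{dgmpRopmain}\eqref{dgmpRopmaina}--\eqref{dgmpRopmainb} instead successively replaces glued sums of consecutive rectangular diagrams by their \rectext{}s, and even there the collinear chain components require special handling. Second, and more fundamentally, the uniqueness you want to invoke simply fails once $L$ is decomposable: Figure~\ref{fig-nonindecomposable} exhibits non-isomorphic \rectext{}s $R_1\ncong R_2$ of isomorphic non-indecomposable lattices, so there is no uniqueness statement available with which to transfer the congruence-preserving property from the Gr\"atzer--Knapp extension to an arbitrary \rectext{} in the general case. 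Your fallback of citing \cite[Lemmas 5.4 and 6.4]{czgrepres} is the correct (and the paper's own) resolution; without it, the reduction to the indecomposable case as you describe it does not close.
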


\begin{remark} Omit the pentagon-shaped grey-filled elements and the dotted edges from Figure~\ref{fig-normext-notmin}. Then,  
 as opposed to $R$, the \rectext{} of $L$, $\widehat R$  is not a congruence-preserving extension of $L$,
because $\pair xy\in\con(0,x)$ holds in $\widehat R$ but fails in $L$.  Also, if we omit 1 and the rightmost coatom from this $L$, then the remaining planar semimodular lattice has two non-isomorphic minimum-sized cover-preserving extensions.    
\end{remark}

\begin{remark} Consider the lattices in Figure~\ref{fig-normext-notmin} together with  the pentagon-shaped grey-filled elements and the dotted edges. Then
$R$ is a \rectext{} of $L$, $|\widehat R|<|R|$, and  $\widehat R$  is  a congruence-preserving extension of $L$ since both $L$ and  $\widehat R$ are simple lattices. 
\end{remark}

These two remarks explain why we  deal with \rectext{}s rather than with minimum-sized ones or with congruence-preserving ones.

The fact that the construct in  \init{G.\ }Gr\"atzer and \init{E.\ }Knapp~\cite[Theorem 7]{gratzerknapp3} 
turns out to be a \rectext{} of $L$ does not imply Theorem~\ref{thmmain} in itself. First, because their proof does not say anything about uniqueness. Second, because our  definition of a \rectext{} does not make it evident that theirs is the only way to obtain such an extension. For example, they never insert a new element into the interior of their diagram but we have to prove that Definition~\ref{dfrctext} also excludes this  possibility.

For a given $n$, up to isomorphism, there are finitely many slim semimodular lattices of length $n$; their number is determined in 
\init{G.\ }Cz\'edli, \init{L.\ }Ozsv\'art, and  \init{B.\ }Udvari \cite{czgolub}. With the temporary notation  $f(n)=\max\{|L|: L$  is a slim semimodular lattice of length $n\}$, one may have the idea of proving Theorem~\ref{thmmain}\eqref{thmmainc}  by induction on $f(\length(L))-|L|$. Although such a proof seems to be possible and, probably, it would be somewhat shorter than the proof we are going to present here, our approach has two advantages. First, it gives an explicit formula for the \rectext{} rather than a recursive one; see Lemmas~\ref{samecoordin} and \ref{outlemmA}. 
 Second,  it is the present approach that leads us directly to a better understanding of slim semimodular lattices, as it is  witnessed by Sections~\ref{uniquediagramssection} and \ref{congruenceSection}. In particular, the
explicit description of a \rectext{} is heavily  used in the proof of Theorem~\ref{pcvWsWdG}.

\section{Preparations before the proof of Theorem~\ref{thmmain}}\label{preparesection}
For the reader's convenience, this section collects briefly the most important conventions, concepts, and tools needed in our proofs. Note that, with much more details, the majority of this section is covered by the  book chapter \init{G.\ }Cz\'edli and \init{G.\ }Gr\"atzer~\cite{czgggltsta}.
This paper is on \emph{planar} semimodular lattices. Unless otherwise stated, we always assume implicitly that a fixed planar diagram of the lattice under consideration is given. Some concepts, like ``left'' of ``right'', may depend on the diagram. However, the  choice of the diagram is irrelevant in the statements and proofs.  Later in Sections~\ref{uniquediagramssection}, \ref{congruenceSection}, and \ref{swingsection},  we focus explicitly on diagrams rather than lattices, and we apply lattice adjectives, like ``slim'' or ``semimodular'', to the corresponding diagrams as well. Also, if $D_i$ is a planar diagram of $L_i$  for $i\in \set{1,2}$, then we do not make a distinction between a map from $L_1$ to $L_2$ and the corresponding map from $D_1$ to $D_2$. This allows us to speak lattice isomorphisms between diagrams. Similarly, we can use the statements and concepts that are introduced in Section~\ref{proofssection} for lattices also for diagrams later. 

For a maximal chain $C$ of a planar lattice $L$, the set of elements $x\in L$ that are on the left of $C$ is the \emph{left side} of $C$, and it is denoted by $\lside( C)$. The \emph{right side} of $C$, $\rside(C)$, is defined similarly. Note that $C=\lside( C)\cap \rside(C)$. If $x\in \lside(C)\setminus C$, then $x$ is \emph{strictly on the left} of $C$; ``\emph{strictly on the right}'' is defined analogously.
Let us emphasize that, for an element $x$ and a maximal chain $C$, ``left'' and ``right'' is always understood in the wider sense that allows $u\in C$. We need some results from \init{D.\ }Kelly and \init{I.\ }Rival  \cite{kellyrival}; the most frequently used one is the following. 

\begin{lemma}[{\init{D.\ }Kelly and \init{I.\ }Rival  \cite[Lemma 1.2]{kellyrival}}]\label{kellyrivallemma}
Let $L$ be a finite planar lattice, and let $x\leq y\in L$. If $x$ and $y$ are on different sides of a maximal chain $C$ in $L$, then there exists an element $z\in C$ such that $x\leq z\leq y$.
\end{lemma}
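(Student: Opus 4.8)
The plan is to argue geometrically, exploiting the fixed planar diagram of $L$ rather than any purely order-theoretic feature, since the statement genuinely fails in non-planar lattices and so the embedding must be used somewhere. First I would dispose of the degenerate cases: if $x\in C$ take $z=x$, and if $y\in C$ take $z=y$. Hence I may assume that $x$ and $y$ lie \emph{strictly} on opposite sides of $C$; say $x\in\lside(C)\setminus C$ and $y\in\rside(C)\setminus C$, the reverse case being symmetric. In particular, since $0,1\in C$, we have $x,y\notin\set{0,1}$.

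Next I would turn $C$ into a separating curve. Drawing $C\colon 0=c_0\prec\dots\prec c_n=1$ as the height-monotone polygonal arc formed by its covering edges, I one-point-compactify the plane to the sphere $S^2=\mathbb R^2\cup\set\infty$ and close $C$ up through $\infty$; because $0$ is the unique minimum and $1$ the unique maximum of the diagram, the resulting simple closed curve $\tilde C$ meets the diagram exactly in $C$. By the Jordan curve theorem, $\tilde C$ splits $S^2$ into two open regions whose traces on the diagram are, by the very definition of sides, $\lside(C)\setminus C$ and $\rside(C)\setminus C$. Thus $x$ and $y$ lie in different components of $S^2\setminus\tilde C$.

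Now I would force a crossing. Choose a maximal chain $M\colon x=m_0\prec\dots\prec m_k=y$ of the interval $[x,y]$ and draw it as the height-monotone arc $\mu$ built from its covering edges. Since $x$ and $y$ lie in distinct components of $S^2\setminus\tilde C$, the connected arc $\mu$ must meet $\tilde C$; and since every point of $\mu$ has height in $[\height(x),\height(y)]\subseteq(\height(0),\height(1))$, it avoids the part of $\tilde C$ running through $\infty$ (which lies strictly below $0$ and strictly above $1$). Therefore $\mu$ actually meets $C$ itself.

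Finally I would locate the required element. Both $\mu$ and $C$ are unions of covering edges of the given \emph{planar} diagram, and in a planar embedding two edges can intersect only at a shared endpoint; hence $\mu$ and $C$ have a common vertex $z$. Then $z\in M$, so $x\le z\le y$, while $z\in C$, which is exactly the element we seek. The step I expect to be the main obstacle is this crossing argument: making rigorous both that $\mu$ meets $\tilde C$ \emph{within} $C$ (rather than at $\infty$) and that the intersection is forced to occur at a common \emph{vertex} (rather than via a transversal edge crossing). Both rest on the precise conventions for planar diagrams---height-monotone covering edges and the absence of edge crossings---supplied by the Kelly--Rival framework.
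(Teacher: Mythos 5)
Your argument is correct, but there is nothing in the paper to compare it with: the paper states this as an imported result, citing Kelly and Rival \cite[Lemma 1.2]{kellyrival}, and gives no proof of its own. What you have written is essentially the original Kelly--Rival argument: reduce to the case where $x$ and $y$ lie strictly on opposite sides, close the polygonal arc of $C$ through $\infty$ into a Jordan curve separating the two strict sides, run a covering chain of $[x,y]$ from $x$ to $y$ (covers in an interval are covers in $L$, so these really are edges of the diagram), and use height-monotonicity plus the no-crossing convention for planar diagrams to force the intersection to occur at a common vertex $z\in C\cap[x,y]$. The two points you flag as delicate --- that the crossing happens on $C$ rather than on the arc through $\infty$, and that it happens at a vertex rather than at a transversal edge crossing --- are indeed exactly where the Kelly--Rival conventions (strictly height-increasing covering edges, edges meeting only at shared endpoints, $0$ and $1$ at the extreme heights) must be invoked, and your treatment of both is sound.
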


Next, let $x$ and $y$ be elements of a finite planar lattice $L$, and assume that they are incomparable, written as $x\parallel y$. If $x\vee y$ has lower covers $x_1$ and $y_1$ such that $x\leq x_1\prec x\vee y$, $y\leq y_1\prec x\vee y$, and $x_1$ is on the left of $y_1$, then the element $x$ is \emph{on the left} of the element $y$. In notation, $x\rellambda y$. 
 If $x\rellambda y$, then we also say that $y$ is \emph{on the right} of $x$. Let us emphasize that whenever $\lambda$, that is ``left'', or ``right'' are used for two elements,  then the two elements in question are incomparable. That is, the notation $x\rellambda y$ implies $x\parallel y$.  Note the difference; while $\rellambda$ is an irreflexive relation for elements,  ``left'' and ``right'' are used in the wider sense if an element and a maximal chain are considered.

\begin{lemma}[{\init{D.\ }Kelly and \init{I.\ }Rival  \cite[Propositions 1.6 and 1.7]{kellyrival}}]\label{leftrightlemma} Let $L$ be finite planar lattice. If  $\,x,y\in L$ and $x\parallel y$, then the following hold.
\begin{enumeratei}
\item\label{leftrightlemmaa} $x\rellambda y$ if and only if  $x$ is on the left of some maximal chain through $y$ if and only if $x$ is on the left of all maximal chains through $y$.
\item\label{leftrightlemmab} Either $x\rellambda y$, or $y\rellambda x$.
\item\label{leftrightlemmac} If $z\in L$,  $x\rellambda y$, and   $y\rellambda  z$, then $x\rellambda z$.
\end{enumeratei}  
\end{lemma}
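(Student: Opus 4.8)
The plan is to reduce everything to the fixed planar embedding, from which I would extract two standing facts and use them repeatedly. First, a maximal chain $C$, drawn as an arc from $0$ to $1$, separates the diagram so that $\lside(C)\cup\rside(C)=L$, $\lside(C)\cap\rside(C)=C$, and every element off $C$ lies strictly on one side; this is exactly the setting of Lemma~\ref{kellyrivallemma}. Second, the lower covers of any element, and likewise its upper covers, inherit a linear left-to-right order from the cyclic order of edges at that vertex. With these, part \eqref{leftrightlemmab} is immediate. Given $x\parallel y$, set $m=x\vee y$. No single lower cover of $m$ can lie above both $x$ and $y$, for otherwise $m=x\vee y$ would lie below it; hence any lower covers $x_1\ge x$ and $y_1\ge y$ of $m$ are automatically distinct, and the left-to-right order on the covers of $m$ places $x_1$ left of $y_1$ or vice versa, giving $x\rellambda y$ or $y\rellambda x$.

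For part \eqref{leftrightlemmaa} I would prove the cycle ``join-left $\Rightarrow$ left of \emph{every} maximal chain through $y$ $\Rightarrow$ left of \emph{some} maximal chain through $y$ $\Rightarrow$ join-left,'' the middle implication being trivial. From the witnessing lower covers $x_1,y_1$ of $x\vee y$ I build a maximal chain $C\ni y$ whose upper part passes through $y_1\prec x\vee y$, and read off from the separation property that $x$, sitting below $x_1$ (strictly left of $y_1\in C$), is strictly left of $C$; the same reading works for any chain through $y$ once the side is known to be chain-independent. That independence, i.e.\ ``some $=$ all,'' is the real content: were $x$ strictly left of one maximal chain $C\ni y$ and strictly right of another $C'\ni y$, splicing the lower part of one with the upper part of the other would produce further maximal chains through $y$, and comparing the forced side of $x$ across these splices would force $x$ to be comparable with $y$ via Lemma~\ref{kellyrivallemma}, contradicting $x\parallel y$.

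Part \eqref{leftrightlemmac} splits into showing first $x\parallel z$ and then $x\rellambda z$ rather than $z\rellambda x$. The incomparability is clean. Fix a maximal chain $C\ni y$; by \eqref{leftrightlemmaa}, $x\rellambda y$ makes $x$ strictly left of $C$ while $y\rellambda z$ makes $z$ strictly right of $C$. If $x$ and $z$ were comparable, then, lying on opposite strict sides of $C$, they would admit an element $c\in C$ between them by Lemma~\ref{kellyrivallemma}. Since $c\in C$ is comparable with $y$, the case $c\le y$ yields $x\le c\le y$ and the case $c\ge y$ yields $y\le c\le z$, contradicting $x\parallel y$ or $y\parallel z$ respectively. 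Hence $x\parallel z$.

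Given $x\parallel z$, part \eqref{leftrightlemmab} forces $x\rellambda z$ or $z\rellambda x$, so it remains to exclude $z\rellambda x$, and this is the crux. Assuming $z\rellambda x$ alongside $x\rellambda y$ and $y\rellambda z$ produces a cyclic ``left-of'' triangle among the pairwise incomparable $x,y,z$, which planarity must forbid. I would rule it out by translating each relation, via \eqref{leftrightlemmaa}, into statements about the strict sides of the three arcs through $x$, $y$, and $z$, and then exhibiting a crossing of two of these arcs that violates the Jordan-type separation property. Showing that three cyclically ``left-of'' elements cannot coexist in one planar diagram is the main obstacle, and the one point where genuine planarity rather than pure order theory is unavoidable; everything else in the lemma reduces to the two foundational facts together with Lemma~\ref{kellyrivallemma}.
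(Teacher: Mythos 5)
This lemma is not proved in the paper at all: it is imported verbatim from Kelly and Rival (Propositions 1.6 and 1.7 of their 1975 paper), so there is no in-paper argument to compare your route against. Judged on its own, your write-up correctly handles the easy parts --- the dichotomy in \eqref{leftrightlemmab} from the fact that no single lower cover of $x\vee y$ can dominate both $x$ and $y$, and the deduction of $x\parallel z$ in \eqref{leftrightlemmac} from Lemma~\ref{kellyrivallemma} --- but it leaves genuine gaps exactly at the two places where planarity does real work, and you acknowledge as much.

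First, in \eqref{leftrightlemmaa} the implication ``$x\rellambda y$ $\Rightarrow$ $x$ is strictly left of the constructed chain $C$ through $y_1\prec x\vee y$'' silently uses that $x_1$ being to the left of $y_1$ in the linear order on the lower covers of $x\vee y$ forces $x_1$ to lie strictly on the left of \emph{every} maximal chain through $y_1$ and $x\vee y$; that is essentially an instance of the equivalence you are trying to establish, so the argument is circular unless you prove this local-to-global step separately. Worse, the ``some $=$ all'' direction is dismissed with a splicing heuristic: being on the left of a maximal chain is a global property of the whole arc from $0$ to $1$, and knowing that $x$ is strictly left of $C$ and strictly right of $C'$ does not determine on which side of the spliced chain $(C\cap\ideal y)\cup(C'\cap\filter y)$ the element $x$ lies, so no application of Lemma~\ref{kellyrivallemma} is ``forced.'' Second, in \eqref{leftrightlemmac} the exclusion of the cyclic configuration $x\rellambda y\rellambda z\rellambda x$ is only announced (``I would rule it out by \dots exhibiting a crossing''), not carried out; this is the heart of transitivity and cannot be waved at. Since the paper treats this as a black-box citation, the honest options are either to cite Kelly--Rival as the paper does, or to supply the missing topological arguments in full --- the present text does neither.
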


Let $L$ be a slim semimodular lattice.
According to  the general convention in the paper, a planar diagram of  $L$ is fixed. 
Let  $\inp_i=[x_i,y_i]$ be prime intervals, that is, edges in the diagram, for $i\in\set{1,2}$. These two edges are \emph{consecutive} if they are opposite sides of a covering square, that is, of a 4-cell in the diagram. Following \init{G.\ }Cz\'edli and \init{E.\,T.\ }Schmidt~\cite{czgschtJH}, an equivalence class of the transitive reflexive closure of the ``consecutive" relation is called a \emph{trajectory}. Recall from \cite{czgschtJH} that 
\begin{equation}
\parbox{9cm}{a trajectory begins with an edge on the left boundary chain $\leftb L$, it goes from left to
right, it cannot  branch out, and it terminates at an edge on the right boundary chain, $\rightb L$.}
\label{trjtrffrls}
\end{equation}
These boundary chains also important because  of 
\begin{equation}
\Jir L\subseteq \leftb L\cup\rightb L;
\label{ddHtBgRw}
\end{equation}
see \init{G.\,}Cz\'edli and \init{E.\,T.\ }Schmidt~\cite[Lemma 6]{czgschvisual}.

According to  \init{G.\ }Cz\'edli and \init{G.\ }Gr\"atzer~\cite{czgggresect}, there are three types of trajectories: an \emph{up-trajectory}, which goes up (possibly, in zero steps), a \emph{down-trajectory}, which goes down (possibly, in zero steps), and a \emph{hat-trajectory},
which goes up (at least one step), then turns to the lower right, and finally it
goes down (at least one step). Let 
$\inp_1=[x_1,y_1]$, $\inp_2=[x_2,y_2]$, and $\inp_3=[x_3,y_3]$ be three consecutive edges of a trajectory $T$, listed from left to right. If $y_1<y_2<y_3$, then $T$ \emph{goes upwards} at $\inp_2$.  Similarly,  $T$ \emph{goes downwards} at $\inp_2$  if $y_1>y_2>y_3$. The only third possibility is that 
$y_1<y_2>y_3$; then $T$ is a hat-trajectory and $\inp_2$ is called its \emph{top edge}. 
If $x_1$ and $y_1$ are on the left boundary chain, then we say that the trajectory containing    $\inp_1=[x_1,y_1]$ and  $\inp_2=[x_2,y_2]$ goes  upwards or downwards at $\inp_1$ if $y_1<y_2$ or $y_1>y_2$, respectively. 
Since there are only three types of trajectories,  if $\inp_1$ is on the left of $\inp_2$ in a trajectory $T$ of $L$, then 
\begin{equation}\parbox{7.5cm} {if $T$ goes upwards at $\inp_2$ then so it does at $\inp_1$, and if $T$ goes downwards at $\inp_1$ then so it does at $\inp_2$.} 
\label{trtrfR} 
\end{equation}

\section{Proving some lemmas and Theorem~\ref{thmmain}}\label{proofssection}

If  $C_1$ and $C_2$ are maximal chains of planar lattice $L$ such that $C_1\subseteq \lside(C_2)$, then $\rside(C_1)\cap\lside(C_2)$ is called  a \emph{region} of $L$. 
For a subset $X$ of $L$, we know from \init{G.\ }Cz\'edli and \init{G.\ }Gr\"atzer~\cite[Exercise 3.12]{czgggltsta} that the predicate ``$X$ is a region of $L$'' does not depend on the choice of the planar diagram. The following  lemma is  of separate interest.

\begin{lemma} \label{dkjTghNVc}
If $K$ is a cover-preserving $\set{0,1}$-sublattice of a slim semimodular lattice $L$, then $K$ is also a slim semimodular lattice, it is a region of $L$, and $K= \rside_L(\leftb K) \cap \lside_L(\rightb K)$.
\end{lemma}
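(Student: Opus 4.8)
The plan is to realise $K$ as a region of $L$ and read off every assertion from that identification; throughout I work in the fixed planar diagram of $L$ and use the left/right relation of Lemma~\ref{leftrightlemma} and Lemma~\ref{kellyrivallemma}. First observe that every maximal chain of $K$ runs from $0$ to $1$ and, since $K$ is cover-preserving, is built from prime intervals of $L$; hence it is a maximal chain of $L$. Let $\mathcal M$ be the (nonempty) family of those maximal chains of $L$ that lie in $K$. Two crossing maximal chains of a planar lattice must meet at each crossing, so the left and right envelopes of members of $\mathcal M$ are patchworks of subchains of those members and therefore stay inside $\bigcup\mathcal M\subseteq K$; thus $\mathcal M$ is closed under envelopes and has a leftmost member $C_1$ and a rightmost member $C_2$, with $C_1,C_2\subseteq K$ and $C_1\subseteq\lside_L(C_2)$. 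Semimodularity of $K$ can be isolated directly (it will also follow from the region description below): if $a,b\in K$ and $a\wedge b\prec_K a$, then $a\wedge b\prec_L a$ by cover-preservation, so $b\prec_L a\vee b$ by the semimodularity of $L$; as $b,a\vee b\in K$ and no element of $L$ lies strictly between them, $b\prec_K a\vee b$.

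Next I claim $K=\rside_L(C_1)\cap\lside_L(C_2)=:R^\ast$, the region determined by $C_1$ and $C_2$. The inclusion $K\subseteq R^\ast$ is immediate: each $x\in K$ lies on some $D\in\mathcal M$, and $C_1\subseteq\lside_L(D)\subseteq\lside_L(C_2)$ forces $x\in\rside_L(C_1)\cap\lside_L(C_2)$. For the reverse inclusion I use that $R^\ast$ is a region of the slim semimodular lattice $L$, hence itself a slim semimodular cover-preserving sublattice whose induced diagram has boundary chains $\leftb{R^\ast}=C_1$ and $\rightb{R^\ast}=C_2$. Applying \eqref{ddHtBgRw} to $R^\ast$ gives $\Jir{R^\ast}\subseteq C_1\cup C_2\subseteq K$. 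Since every element of $R^\ast$ is the join of the join-irreducibles of $R^\ast$ below it, and this join is computed identically in $R^\ast$ and in $L$ (as $R^\ast$ is a sublattice), while $K$ is join-closed and contains $C_1\cup C_2$, we conclude $R^\ast\subseteq K$. Therefore $K=R^\ast$.

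This identification yields all three conclusions at once. As $K=R^\ast$ is a region of $L$, it is a slim semimodular sublattice, its induced planar diagram has left and right boundary chains $C_1=\leftb K$ and $C_2=\rightb K$, and substituting these into $K=R^\ast$ gives exactly the displayed formula $K=\rside_L(\leftb K)\cap\lside_L(\rightb K)$. (One may also read slimness off directly: $\Jir K=\Jir{R^\ast}\subseteq C_1\cup C_2$ is contained in a union of two chains, hence is itself the union of the two chains $\Jir K\cap C_1$ and $\Jir K\cap C_2$.)

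I expect the reverse inclusion $R^\ast\subseteq K$ to be the main obstacle: it asserts that no element of $L$ can sit in the region between the extreme chains of $K$ while staying outside $K$, and the only real leverage against such a stray element is the slim structure, namely that the join-irreducibles of the region $R^\ast$ all lie on its two boundary chains $C_1,C_2$, which are contained in $K$. If one is unwilling to invoke the (known) fact that a region of a slim semimodular lattice is again a slim semimodular sublattice, then this boundary-generation statement for $R^\ast$ must be proved from scratch, essentially re-running the argument behind \eqref{ddHtBgRw} inside the region. The secondary technical point to pin down carefully is the very first step, that the extremal chains $C_1,C_2$ of $\mathcal M$ exist and lie in $K$; this rests on the standard description of the chain-envelopes as patchworks of the given chains, under which membership in the sublattice $K$ is preserved.
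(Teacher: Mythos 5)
Your proof is correct, and it reaches the decisive inclusion by a genuinely different mechanism than the paper. Both arguments reduce the lemma to showing that the region $R^\ast=\rside_L(\leftb K)\cap\lside_L(\rightb K)$ is contained in $K$, and both ultimately rest on the principle that join-irreducibles of a slim semimodular lattice lie on its two boundary chains. The paper applies that principle to $L$ itself: for $x\in R^\ast$ it writes $x=\lsp_L(x)\vee\rsp_L(x)$ using \eqref{dkGHjT}, and then uses Kelly--Rival's crossing lemma (Lemma~\ref{kellyrivallemma}) once on each side to replace the supports by elements $a\in\leftb K$ and $b\in\rightb K$ with $x=a\vee b\in K$. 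You instead apply the principle to the region $R^\ast$ itself: you first import the package ``a region is a cover-preserving sublattice, hence semimodular, hence slim by \eqref{slimiffnodiamond}, with boundary chains $C_1,C_2$,'' invoke \eqref{ddHtBgRw} for $R^\ast$ to get $\Jir{R^\ast}\subseteq C_1\cup C_2\subseteq K$, and finish by join-generation inside the sublattice $R^\ast$. Your route is a heavier import (it needs the full structure theory of the region before the inclusion is proved, whereas the paper needs only \eqref{ddHtBgRw} for $L$ plus one crossing argument per side), but it buys two things: the slimness of $K$ drops out instantly from $\Jir K\subseteq C_1\cup C_2$, and you make explicit the point the paper glosses over, namely that the extremal chains $\leftb K$ and $\rightb K$ exist as maximal chains of $L$ lying inside $K$ (your envelope argument). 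None of the facts you import creates a circularity, since the region-to-sublattice and slimness criteria are external citations rather than consequences of this lemma.
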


\begin{proof} For $x\in L$, the \emph{left support} and the \emph{right support} of $x$, denoted by $\lsp(x)=\lsp_L(x)$ and $\rsp(x)=\rsp_L(x)$, are the largest element of $\leftb L\cap\ideal x$ and that of $\rightb L\cap\ideal x$, respectively. Since $\Jir L\subseteq \leftb L\cup\rightb L$ and $\leftb L$ and $\rightb L$ are chains, it is straightforward to see that,  for every $x\in L$, $y\in  [\lsp_L(x),x]$, and  $z\in  [\lsp_L(x),x]$, 
\begin{equation}
\parbox{9 cm}{
$x=\lsp_L(x)\vee \rsp_L(x)$,  $[\lsp_L(x),x]$ and $[\rsp_L(x),x]$ are\\ chains, $\lsp_L(y)=\lsp_L(x)$, and    $\rsp_L(z)=\rsp_L(x)$.}
\label{dkGHjT}
\end{equation}
Let $H=\rside_L(\leftb K) \cap \lside_L(\rightb K)$; it is the smallest region of $L$ that includes $K$. Consider an arbitrary element $x\in H$. Applying Lemma~\ref{kellyrivallemma} to $\lsp_L(x)\leq x$ and $\leftb K$, we obtain an element $a\in \leftb K$ such that $\lsp_L(x)\leq a\leq x$. Similarly, there is an element 
$b\in \rightb K$ such that $\rsp_L(x)\leq b\leq x$. Hence, $x=a\vee b\in K$ by \eqref{dkGHjT}. This shows that $K=H$ is a region, and it is a slim lattice since $\Jir K\subseteq \leftb K\cup\rightb K$. As a cover-preserving sublattice, $K$ inherits semimodularity.
\end{proof}

In the rest of this section, unless otherwise stated, we always assume that 
\begin{equation}
\parbox{7.5 cm}{
$L$ is a planar semimodular lattice of length $n\geq 2$ and  $R$ is a   \rectext{} of $L$.}
\label{ssmfTsCtwsta}
\end{equation} 
A planar diagram of $R$ is  fixed; it determines the diagram of $L$ as a subdiagram. Sometimes, we stipulate additional assumptions, including 
\begin{equation}
L\text{ is a glued sum indecomposable.}
\label{ssmfTsCtwstb}
\end{equation} 
Sometimes, for emphasis, we repeat \eqref{ssmfTsCtwsta} and \eqref{ssmfTsCtwstb}. By Lemma~\ref{dkjTghNVc}, 
\begin{equation}
L= \rside_R(\leftb L) \cap \lside_R(\rightb L)\text.
\label{LldkkLgdm}
\end{equation}

We know from 
\init{G.\ }Gr\"atzer and \init{E.\ }Knapp~\cite[Lemmas 3 and 4]{gratzerknapp3} (see also 
\init{G.\ }Cz\'edli and \init{G.\ }Gr\"atzer~\cite[Lemma 3-7.1]{czgggltsta}) that 
\begin{equation}
\text{the intervals }[0,\cornl R]\text{ and  }[0,\cornr R]\text{ are chains.}
\label{llsRChains}
\end{equation}
If $R$ is slim, then we also know from \eqref{tpwcgTslm},
\eqref{ddHtBgRw}, and \init{G.\ }Gr\"atzer and \init{E.\ }Knapp~\cite[Lemma 3]{gratzerknapp3},   see also 
\init{G.\ }Cz\'edli and \init{G.\ }Gr\"atzer~\cite[Exercises 3.51 and 3.52]{czgggltsta}, that  
\begin{equation}
\Jir R=\bigl(\llb R\cup\lrb R\bigr)\setminus\set0=\set{c_1,\dots,c_{\emel},d_1,\dots,d_{\emer}}\text.
\label{dkRtghWr}
\end{equation}

\begin{definition}\label{DFgjRd}
If \eqref{ssmfTsCtwsta} and $L$ is slim, then we agree in the following.
\begin{enumeratei}
\item\label{DFgjRda} Let $\llb R=[0,\cornl R]_R=\set{0=c_0\prec c_0\prec\dots\prec c_{\emel}}$ (lower left boundary) and $\lrb R=[0,\cornr R]_R=\set{0=d_0\prec d_0\prec\dots\prec d_{\emer}}$ (lower right boundary). Note that $c_{\emel}=\cornl D$ and $c_{\emer}=\cornr D$.
\item\label{DFgjRdb} For $x\in L$, the left and right \emph{join-coordinates} of $x$ are defined by 
$\lefth_L(x)=|\leftb L\cap\Jir L\cap\ideal x|$ and $\righth_L(x)=|\rightb L\cap\Jir L\cap\ideal x|$. The superscript ``n'' in the notation reminds us that they are numbers. 
It follows from \eqref{ddHtBgRw} that $x$ is determined by the pair $\pair{\lefth_L(x)}{\righth_L(x)}$ of its join coordinates; namely, we have that
\begin{equation}
x = c_{\lefth_L(x)}\vee d_{\righth_L(x)}\text.
\label{djdzBgRWv}
\end{equation}
\item\label{DFgjRdc} For $x\in R$, we obtain $\lefth_R(x)$ and $\righth_R(x)$ by substituting $R$ to $L$ above.
By  \eqref{dkRtghWr}, understanding  $\wedge$  in $\tuple{\mathbb N_0;\leq}$  , equivalently, we have  that 
\begin{equation*}
\phantom{mmmm}
\lefth_R(x)=\emel\wedge \height(\lsp_R(x)),\quad\righth_R(x)=\emer\wedge\height(\rsp_R(x))\text.
\end{equation*}
Note that, for $x,x',y,y'\in R$ with   $x\not> c_{\emel}$ and $y\not> d_{\emer}$, 
\begin{align}
&\lsp_R(x)=c_{ \lefth_R(x)},\quad \rsp_R(y)=d_{ \righth_R(y)},\label{dkjGHhPq} \\
&\begin{aligned}
\lefth_R(x')<\lefth_R(y') &\then \lsp_R(x')<\lsp_R(y'),\cr
\righth_R(x')<\righth_R(y') &\then \rsp_R(x')<\rsp_R(y')\text.
\end{aligned}  \label{moddkjGHhPq}
\end{align}
\end{enumeratei} 
\end{definition}

The conditions $x\not> c_{\emel}$ and $y\not> d_{\emer}$ right before  \eqref{dkjGHhPq} could be inconvenient at later applications. Hence, we are going to formulate a related condition, \eqref{dkjGHhPqvar} below. As a preparation to do so, the set of \emph{meet-irreducible elements} of $R$ distinct from 1 is denoted by $\Mir R$. For $x\in R$, $x\in \Mir R$ iff $x$ has exactly one cover. 
 \init{G.\ }Gr\"atzer and \init{E.\ }Knapp~\cite[Lemma 3]{gratzerknapp3} or \init{G.\ }Cz\'edli and \init{G.\ }Gr\"atzer~\cite[Exercise 3.52]{czgggltsta} yields that
\begin{equation}
\parbox{9.5cm}{if $1\neq x\in(\leftb R\setminus\llb R)\cup (\rightb R\setminus\lrb R)$, then $x\in\Mir R$.}
\label{jvcnTgjYq}
\end{equation}
This implies, see also \init{G.\ }Gr\"atzer and \init{E.\ }Knapp~\cite[Lemma 4]{gratzerknapp3}, that
\begin{equation}
[\cornl R,1]=\filter{c_{\emel}} \text{ and }
[\cornr R,1]=\filter{d_{\emer}}\text{ are chains.} 
\label{upPerChains}
\end{equation}
Clearly, for every $x\in R$,   $c_{\lefth_R(x)}\leq \lsp_R(x)$ and $d_{\righth_R(x)}\leq \rsp_R(x)$. Thus, 
\begin{equation*}
[\lsp_R(x),x]\subseteq [c_{\lefth_R(x)} ,x]\quad\text{and}\quad [\rsp_R(x),x]\subseteq [d_{\righth_R(x)} ,x]\text.
\label{dkzhThfzRb}
\end{equation*}
We conclude from \eqref{dkGHjT},
\eqref{dkjGHhPq}, and \eqref{upPerChains}  that  for all $x\in R$,  if $y\in [c_{\lefth_R(x)} ,x]$ and $z\in[d_{\righth_R(x)},x]$, then
\begin{equation}
\lefth_R(y)=\lefth_R(x)\quad\text{and}\quad
\righth_R(z)=\righth_R(x)\text.
\label{dkjGHhPqvar}
\end{equation}
\vajonkell{
\begin{remark}\label{dlPhGrGbRt}
For later use in subsequent sections, note that Definition~\ref{DFgjRd}\eqref{DFgjRdb}  is meaningful for every planar lattice  diagram $D$ of a finite lattice $L$, no matter if it is slim or not. So $\lefth_D(x)$ and $\righth_D(x)$ or, if $D$ is fixed,  $\lefth_L(x)$ and $\righth_L(x)$  are defined.
\end{remark}
}
The elements of $R$ on the left of  $\leftb L$ form a region 
\[\rside_R\bigl(\leftb R\bigr)\cap \lside_R\bigl(\leftb L\bigr)=  \lside_R\bigl(\leftb L\bigr);
\] 
it is called \emph{the region left to $L$}, and we denote it by $S$.

\begin{lemma}\label{SdstrL} Assume \eqref{ssmfTsCtwsta}. 
\begin{enumeratei}
\item\label{SdstrLa} The  region left to $L$, denoted by $S=\lside_R\bigl(\leftb L\bigr)$,  is a cover-preserving 
$\set{0,1}$-sublattice of $R$ and it is distributive. 
\item\label{SdstrLb} $R$ is slim iff $L$ is slim.
\end{enumeratei}
\end{lemma}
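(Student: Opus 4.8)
The plan is to handle both parts through one structural observation: $S$ meets $L$ in the chain $\leftb L$, and Definition~\ref{dfrctext}\eqref{dfrctextc} forbids any element of $S$ from acquiring a third lower cover. First I would record that $S$ is a region of $R$. Since $L$ is a cover-preserving $\{0,1\}$-sublattice of $R$, its left boundary chain $\leftb L$ is a maximal chain of $R$, and trivially $\leftb R\subseteq\lside_R(\leftb L)=S$; hence $S=\rside_R(\leftb R)\cap\lside_R(\leftb L)$ is a region. As a region of a planar semimodular lattice it is a cover-preserving sublattice (see \cite{czgggltsta}), and $\{0,1\}\subseteq\leftb R\cap\leftb L\subseteq S$ shows it is a $\{0,1\}$-sublattice; being a cover-preserving sublattice of a semimodular lattice, it is semimodular. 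The key auxiliary fact is that $S\cap L=\leftb L$: using \eqref{LldkkLgdm} and $C=\lside_R(C)\cap\rside_R(C)$ for $C=\leftb L$, we get $S\cap L=\lside_R(\leftb L)\cap\rside_R(\leftb L)\cap\lside_R(\rightb L)=\leftb L\cap\lside_R(\rightb L)=\leftb L$, the last equality because $L\subseteq\lside_R(\rightb L)$.

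The substance of part~\eqref{SdstrLa} is distributivity, and this is where Definition~\ref{dfrctext}\eqref{dfrctextc} enters. I claim that no element of $S$ has three lower covers. Indeed, if $w\in S$ had three (necessarily pairwise incomparable) lower covers in $S$, then, $S$ being cover-preserving in $R$, these would be three lower covers of $w$ in $R$; by the contrapositive of Definition~\ref{dfrctext}\eqref{dfrctextc}, $w$ would then have no lower cover outside $L$, so all three would lie in $S\cap L=\leftb L$ and be pairwise comparable, a contradiction. Now the top of a cover-preserving $M_3$, and the upper endpoint of the top edge of any hat-trajectory, each carry three lower covers; hence $S$ contains no cover-preserving $M_3$ and no hat-trajectory. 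By the former, $S$ is slim, and, being a slim semimodular lattice without hat-trajectories, it is distributive (absence of forks is exactly the distributive case). This finishes part~\eqref{SdstrLa}.

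Part~\eqref{SdstrLb} is now short. If $R$ is slim, then $L$, a cover-preserving $\{0,1\}$-sublattice of $R$, is slim by Lemma~\ref{dkjTghNVc}. Conversely I would argue by contraposition: if $R$ is not slim it contains a cover-preserving $M_3$ whose top $i$ has three lower covers, so Definition~\ref{dfrctext}\eqref{dfrctextc} forces all lower covers of $i$ into $L$; since $L$ is a sublattice, the bottom and top of this $M_3$ lie in $L$ as well, so the whole cover-preserving $M_3$ sits in $L$ and witnesses that $L$ is not slim. Hence $L$ slim implies $R$ slim.

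The only genuinely delicate point, I expect, is the passage from ``no element of $S$ has three lower covers'' to full distributivity: the three-lower-cover argument is elementary and self-contained, but converting it into distributivity invokes the structural dichotomy for slim semimodular lattices (no fork, equivalently no hat-trajectory, equivalently distributive), which must be quoted carefully from the trajectory toolkit rather than reproved here.
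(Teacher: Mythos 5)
Your proof is correct and follows essentially the same route as the paper: both establish that $S$ is a region, hence a cover-preserving $\set{0,1}$-sublattice, then use Definition~\ref{dfrctext}\eqref{dfrctextc} to show no element of $S$ has three lower covers, and invoke the known facts \eqref{slimiffnodiamond} and \eqref{tHnDstRbtV} to get slimness and distributivity, with part \eqref{SdstrLb} argued by the same $M_3$-contradiction. Your explicit verification that $S\cap L=\leftb L$ is a chain (so that the forced-into-$L$ lower covers cannot be pairwise incomparable) is a step the paper leaves implicit, and is a welcome addition rather than a deviation.
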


\begin{proof}  As a region of $R$, $S$ is a cover-preserving sublattice of $R$ by  \init{D.\ }Kelly and \init{I.\ }Rival  \cite[Proposition 1.4]{kellyrival}. 
The inclusion $\set{0_R,1_R}\subseteq S$ is obvious. 
As a cover-preserving sublattice, $S$ is semimodular. As a region of a planar diagram, $S$ is a planar lattice. We know from 
\init{G.\ }Cz\'edli and \init{G.\ }Gr\"atzer~\cite[Theorem 3-4.3]{czgggltsta}, see also \init{G.\ }Cz\'edli and \init{E.\,T.\ }Schmidt~\cite[Lemma 2.3]{czgschtJH}, that 
\begin{equation}
\parbox{7.5cm}{a finite semimodular lattice is slim if it contains no cover-preserving diamond sublattice $M_3$.}
\label{slimiffnodiamond}
\end{equation}
This property holds for $S$ by Definition~\ref{dfrctext}\eqref{dfrctextc}, so $S$ is slim. Recall from  \init{G.\ }Cz\'edli and \init{E.\,T.\ }Schmidt~\cite[Lemma 15]{czgschvisual} or
\init{G.\ }Cz\'edli and \init{G.\ }Gr\"atzer~\cite[Exercise 3.30]{czgggltsta} that
\begin{equation}
\parbox{8.1cm}{if no element of a slim semimodular lattice covers more than 2 elements, then the lattice is distributive.}
\label{tHnDstRbtV}
\end{equation}
By Definition~\ref{dfrctext}\eqref{dfrctextc} again, no element of $S$ covers three elements.  Hence,  $S$ is distributive by  \eqref{tHnDstRbtV} . This proves part \eqref{SdstrLb} .

By \eqref{slimiffnodiamond},  if $R$ is slim, then so is $L$. Suppose, for a contradiction, that $L$ is slim but $R$ is not. By \eqref{slimiffnodiamond}, some element $x\in R$ is the top of a cover-preserving diamond. By Definition~\ref{dfrctext}\eqref{dfrctextc}, none of the coatoms (that is, the atoms) of this diamond are outside $L$. Hence, they are in $L$, the whole diamond is $L$, which contradicts the slimness of $L$ by \eqref{slimiffnodiamond}.  This proves part \eqref{SdstrLb}
\end{proof}


In the following statement,   $R$ is slim by Lemma~\ref{SdstrL}\eqref{SdstrLb}.

\begin{lemma}\label{sldiHjNxY}
If \eqref{ssmfTsCtwsta}, $L$ is slim, and $x,y\in L$, then 
\begin{align}
x\rellambda y &\iff \bigl( \lefth_L(x)>\lefth_L(y)\text{ and }\righth_L(x)<\righth_L(y)\bigr), 
\label{sldiHjNxYa}\\
x\leq y  &\iff \bigl(\lefth_L(x)\leq \lefth_L(y)\text{ and }\righth_L(x)\leq\righth_L(y)\bigr)\text.
\label{sldiHjNxYb}
\end{align}
If we substitute $R$ to $L$, then \eqref{sldiHjNxYa} and \eqref{sldiHjNxYb} still hold.
\end{lemma}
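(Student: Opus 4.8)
The plan is to read both equivalences off the coordinate formula \eqref{djdzBgRWv} and the Kelly--Rival facts in Lemma~\ref{leftrightlemma}, and to treat $R$ by the very same argument since $R$ is slim by Lemma~\ref{SdstrL}\eqref{SdstrLb}; thus I only write $L$. I would first settle \eqref{sldiHjNxYb}. The implication ``$\Rightarrow$'' is immediate: $x\le y$ gives $\ideal x\subseteq\ideal y$, hence $\leftb L\cap\Jir L\cap\ideal x\subseteq \leftb L\cap\Jir L\cap\ideal y$ and likewise on the right, so neither join-coordinate decreases. For ``$\Leftarrow$'', recall that the left-boundary join-irreducibles form a chain, so $c_i\le c_j\iff i\le j$, and similarly for the $d_j$. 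Thus $\lefth_L(x)\le\lefth_L(y)$ gives $c_{\lefth_L(x)}\le c_{\lefth_L(y)}$, $\righth_L(x)\le\righth_L(y)$ gives $d_{\righth_L(x)}\le d_{\righth_L(y)}$, and joining these via \eqref{djdzBgRWv} yields $x=c_{\lefth_L(x)}\vee d_{\righth_L(x)}\le c_{\lefth_L(y)}\vee d_{\righth_L(y)}=y$.

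Now I turn to \eqref{sldiHjNxYa}. By \eqref{sldiHjNxYb} the map $x\mapsto\pair{\lefth_L(x)}{\righth_L(x)}$ both preserves and reflects order, hence it is injective and carries incomparable elements to incomparable pairs of $\mathbb N_0\times\mathbb N_0$. Since two distinct points of a product of two chains are incomparable exactly when one coordinate is strictly larger and the other strictly smaller, every incomparable pair $x\parallel y$ falls into exactly one of the cases (A) $\lefth_L(x)>\lefth_L(y)$ and $\righth_L(x)<\righth_L(y)$, or (B) $\lefth_L(x)<\lefth_L(y)$ and $\righth_L(x)>\righth_L(y)$. Recalling that $x\rellambda y$ implies $x\parallel y$ and invoking Lemma~\ref{leftrightlemma}\eqref{leftrightlemmab}, it suffices to prove the single implication $(\ast)$: $x\rellambda y\Rightarrow$ case (A); indeed the forward direction of \eqref{sldiHjNxYa} is then $(\ast)$ itself, and for the converse, case (A) forces $x\parallel y$, so $x\rellambda y$ or $y\rellambda x$, the latter being excluded by $(\ast)$ applied to $\pair yx$. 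Moreover, to get $(\ast)$ it is enough to rule out case (B), and for that it suffices to show $\lefth_L(x)\ge\lefth_L(y)$.

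The heart of the matter is therefore the claim: if $x\rellambda y$ and $c$ is a left-boundary join-irreducible with $c\le y$, then $c\le x$ (so that every left join-irreducible counted in $\lefth_L(y)$ is also counted in $\lefth_L(x)$). I would argue by contradiction. If $x\le c$ then $x\le c\le y$ contradicts $x\parallel y$, so $x\not\le c$; together with the assumption $c\not\le x$ this gives $c\parallel x$. Because $c$ lies on $\leftb L$ and no element is strictly to the left of the left boundary chain, Lemma~\ref{leftrightlemma}\eqref{leftrightlemmaa} forbids $x\rellambda c$ (which would place $x$ strictly left of the maximal chain $\leftb L$ through $c$), so Lemma~\ref{leftrightlemma}\eqref{leftrightlemmab} forces $c\rellambda x$. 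Then $c\rellambda x$ and $x\rellambda y$ give $c\rellambda y$ by the transitivity in Lemma~\ref{leftrightlemma}\eqref{leftrightlemmac}; but $c\rellambda y$ entails $c\parallel y$, contradicting $c\le y$. This proves the claim, hence $\lefth_L(x)\ge\lefth_L(y)$, hence $(\ast)$, and with it all of \eqref{sldiHjNxYa}. I expect this last claim --- that an element of the leftmost chain behaves ``join-prime from the left'' --- to be the only nonroutine point; everything else is bookkeeping with \eqref{djdzBgRWv} and the grid order.
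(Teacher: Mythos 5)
Your proof is correct, but for \eqref{sldiHjNxYa} it takes a genuinely different route from the paper. For \eqref{sldiHjNxYb} the two arguments coincide: both directions come down to writing $x$ and $y$ as the joins of their largest left- and right-boundary join-irreducible lower bounds (the paper's $x'\vee x''$ is your $c_{\lefth_L(x)}\vee d_{\righth_L(x)}$ from \eqref{djdzBgRWv}, both resting on \eqref{ddHtBgRw}). For \eqref{sldiHjNxYa}, however, the paper simply imports the characterization \eqref{MsldiHkkQY} of $\rellambda$ in terms of the supports $\lsp_L$ and $\rsp_L$ from Cz\'edli's earlier quasiplanar paper, translates supports into join-coordinates, and sharpens the resulting weak inequalities by the already-proved \eqref{sldiHjNxYb}; the whole proof is then a few lines. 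You instead avoid that citation entirely and prove the needed half --- $x\rellambda y$ implies $\lefth_L(x)\ge\lefth_L(y)$ --- from first principles: a left-boundary join-irreducible $c\le y$ with $c\not\le x$ would be incomparable to $x$, could not satisfy $x\rellambda c$ (nothing lies strictly to the left of the maximal chain $\leftb L$ through $c$, by Lemma~\ref{leftrightlemma}\eqref{leftrightlemmaa}), hence $c\rellambda x$ by \eqref{leftrightlemmab}, and transitivity \eqref{leftrightlemmac} would give $c\rellambda y$, contradicting $c\le y$. Your reduction of the biconditional to this single implication via the grid-incomparability dichotomy and Lemma~\ref{leftrightlemma}\eqref{leftrightlemmab} is also sound. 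What each approach buys: the paper's version is shorter but leans on an external lemma the reader must trust; yours is self-contained, using only the Kelly--Rival facts already quoted in Section~\ref{preparesection}, at the price of a somewhat longer argument. Both are valid, and your treatment of the $R$-version (same proof, $R$ being slim by Lemma~\ref{SdstrL}\eqref{SdstrLb}) matches the paper's intent.
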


\begin{proof} The $\Rightarrow$ part of \eqref{sldiHjNxYb} is evident. To show the converse implication, assume that $\lefth_L(x)\leq \lefth_L(y)$ and  $\righth_L(x)\leq\righth_L(y)$. For $z\in L$, let  $z'$ and $z''$ be the largest element of $\Jir L\cap\leftb L\cap  \ideal z$ and that of $\Jir L\cap\rightb L\cap \ideal z$, respectively. By the inequalities we have assumed, $x'\leq y'$ and $x''\leq y''$. Since  $x=x'\vee x''$ and $y=y'\vee y''$ by \eqref{ddHtBgRw}, we obtain that $x\leq y$. Thus, \eqref{sldiHjNxYb} holds.

To prove \eqref{sldiHjNxYa}, recall from  \init{G.\,}Cz\'edli~\cite[Lemma 3.15]{czgquasiplanar} that
\begin{equation} 
x\rellambda y  \iff \bigl( \lsp_L(x)>\lsp_L(y)\text{ and }\rsp_L(x)<\rsp_L(y)\bigr)\text. 
\label{MsldiHkkQY}
\end{equation}
Assume that $x\rellambda y$. Then  $\lsp_L(x)>\lsp_L(y)$ by \eqref{MsldiHkkQY}, and we obtain that $\lefth_L(x)\geq\lefth_L(y)$. Similarly, $\righth_L(x)\leq\righth_L(y)$. Both inequalities must be sharp, because otherwise \eqref{sldiHjNxYb} would imply that $x\nparallel y$. Therefore, the $\then$ implication of \eqref{sldiHjNxYa} follows. Conversely, assume that $\lefth_L(x)>\lefth_L(y)$ and $\righth_L(x)<\righth_L(y)$. Clearly, $\lsp_L(x)>\lsp_L(x)$ and $\rsp_L(x)<\rsp_L(x)$. Hence, $x\rellambda y$ by \eqref{MsldiHkkQY}, which gives the desired converse implication of \eqref{sldiHjNxYa}.
\end{proof}

In the  the following lemma,  the subscripts  come from ``left'' and ``right'' and so they are not numbers. Hence,  we  write $x_\tel$ and $x_\ter$ rather than $x_l$ and $x_r$.

\begin{lemma} \label{tChnQlM} 
Assume that  \eqref{ssmfTsCtwsta} holds,  $L$ is slim,  $T$ is a trajectory of $R$,  and that  $[x,y]\in T$.  Let $[x_\tel,y_\tel]$ and $[x_\ter,y_\ter]$ be the leftmost $($that is, the first) and the rigthmost edge of $T$, respectively. If $T$ goes upwards at $[x,y]$, then $\lsp_R(x)=x_\tel < y_\tel\leq\lsp_R(y)$. Similarly, if $T$ goes downwards  at $[x,y]$, then $\rsp_R(x)=x_\ter< y_\ter\leq\rsp_R(y)$.
\end{lemma}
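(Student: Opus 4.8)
The plan is to reduce the whole statement to how $\lsp_R$ and $\rsp_R$ behave across a single $4$-cell and then to propagate that behaviour along the monotone part of the trajectory. Write the edges of $T$ from left to right as $\inp_1,\dots,\inp_k$; by \eqref{trjtrffrls} the first edge $\inp_1=[x_\tel,y_\tel]$ lies on $\leftb R$ and the last edge $\inp_k=[x_\ter,y_\ter]$ lies on $\rightb R$. I would treat the upward case in detail and deduce the downward case by the left--right dual (interchanging $\lsp_R\leftrightarrow\rsp_R$, $\leftb R\leftrightarrow\rightb R$, and reversing the orientation). So suppose $[x,y]=\inp_m$ and $T$ goes upwards at $\inp_m$. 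By \eqref{trtrfR}, $T$ then goes upwards at every $\inp_j$ with $j\le m$; consequently each consecutive pair $\inp_j,\inp_{j+1}$ with $j<m$ is an up-step, i.e.\ $\inp_j=[o,a]$ is the lower-left edge and $\inp_{j+1}=[b,t]$ the upper-right edge of a $4$-cell with $o\prec a,b\prec t$, $o=a\wedge b$, $t=a\vee b$, and $a\rellambda b$.

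The crux is the single-cell identity $\lsp_R(o)=\lsp_R(b)$. Since $R$ is slim semimodular, \eqref{MsldiHkkQY} applies to it (with $R$ in place of $L$) and gives $\lsp_R(a)>\lsp_R(b)$. Now $o\le b$ yields $\lsp_R(o)\le\lsp_R(b)$; if this were strict, then $w:=\lsp_R(b)\in\leftb R$ would satisfy $w\le b$ and $w\not\le o$, while $w<\lsp_R(a)\le a$ forces $w\le a$ and hence $w\le a\wedge b=o$, a contradiction. Thus along every up-step $\lsp_R(b)=\lsp_R(o)$, that is, the left support of the bottom endpoint is unchanged in passing from $\inp_j$ to $\inp_{j+1}$. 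As this support is therefore constant over $\inp_1,\dots,\inp_m$ and the bottom $x_\tel$ of $\inp_1$ lies on $\leftb R$ (so $\lsp_R(x_\tel)=x_\tel$), I conclude $\lsp_R(x)=x_\tel$. The middle inequality $x_\tel<y_\tel$ is immediate because $[x_\tel,y_\tel]$ is a prime interval, and $y_\tel\le\lsp_R(y)$ needs only order-preservation of $\lsp_R$: along up-steps the tops strictly increase ($a\prec t$), so $y_\tel\le y$, whence $y_\tel=\lsp_R(y_\tel)\le\lsp_R(y)$ since $y_\tel\in\leftb R$.

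The downward case is entirely dual: from $a\rellambda b$ and \eqref{MsldiHkkQY} one gets $\rsp_R(a)<\rsp_R(b)$, and the same elimination argument gives $\rsp_R(o)=\rsp_R(a)$; by \eqref{trtrfR} the trajectory goes downwards at every edge to the right of $[x,y]$, so $\rsp_R$ of the bottom endpoints is constant along the tail $\inp_m,\dots,\inp_k$, giving $\rsp_R(x)=x_\ter$, while monotonicity and $y_\ter\in\rightb R$ give $y_\ter\le\rsp_R(y)$. The main obstacle I anticipate is exactly the single-cell support identity together with the bookkeeping needed to be sure that \eqref{trtrfR} really forces every consecutive pair in the relevant tail to be a genuine up-step (resp.\ down-step), so that the edges sit inside each $4$-cell in the claimed positions; once that is pinned down, only the order-preservation of $\lsp_R$ and $\rsp_R$ is required.
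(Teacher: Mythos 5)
Your proof is correct. It shares the paper's skeleton --- left--right symmetry, the reduction via \eqref{trtrfR} to a monotone initial (resp.\ terminal) segment of $T$, and anchoring at the first (resp.\ last) edge on the boundary chain --- but the decisive step is carried out by a different mechanism. The paper telescopes the joins along the up-segment to obtain $y=y_\tel\vee x$, uses \eqref{jvcnTgjYq} to place $[x_\tel,y_\tel]$ in $\llb R$, and deduces $y_\tel\nleq x$, so that $x_\tel$ is the largest element of the chain $\leftb R$ below $x$. You instead establish a local invariant: in each up-cell with bottom $o=a\wedge b$ and $a\rellambda b$, the support characterization \eqref{MsldiHkkQY} of $\rellambda$ (applicable in $R$, which is slim by Lemma~\ref{SdstrL}) gives $\lsp_R(a)>\lsp_R(b)$, and your elimination argument then forces $\lsp_R(o)=\lsp_R(b)$; propagating this from the boundary edge yields $\lsp_R(x)=x_\tel$. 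The single-cell argument is sound (the only point needing care, which you flag, is that the up-step really places $\inp_j$ as the lower-left and $\inp_{j+1}$ as the upper-right side of the $4$-cell, so that the top of $\inp_j$ is $\rellambda$-left of the bottom of $\inp_{j+1}$; this is the standard trajectory picture). Your route has the minor advantage of not invoking \eqref{jvcnTgjYq} at all, at the cost of an induction that the paper's join identity dispatches in one stroke; the remaining inequalities $x_\tel<y_\tel\leq\lsp_R(y)$ and the dual downward case come out the same in both treatments.
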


\begin{proof}  By left-right symmetry, we can assume that $T$ goes upwards at $[x,y]$.  
The segment of $T$ from  $[x_\tel,y_\tel]$  to $[x,y]$ goes up by \eqref{trtrfR}. Combining this fact with \eqref{jvcnTgjYq}, 
 it follows  that the edge  $[x_\tel,y_\tel]$ belongs to $\llb R$ and that   $y=y_\tel \vee x$. Hence, $y_\tel\nleq x$. Thus,  we obtain that $x_\tel=\lsp_R(x) $ and  $y_\tel\leq \lsp_R(y)$.
\end{proof}

The following lemma is of separate interest.

\begin{lemma}\label{samecoordin} If \eqref{ssmfTsCtwsta}, \eqref{ssmfTsCtwstb}, $L$ is slim, and  $x\in L$,  then the pair of join-coordinates of $x$ is the same in $L$ as in $R$.
\end{lemma}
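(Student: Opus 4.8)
The plan is to prove the two equalities $\lefth_L(x)=\lefth_R(x)$ and $\righth_L(x)=\righth_R(x)$ separately; by left--right symmetry it suffices to treat the left join-coordinate. Write $S=\lside_R(\leftb L)$ for the region left to $L$, which is distributive by Lemma~\ref{SdstrL}, and put $q=|\leftb L\cap\Jir L|$, so that $\lefth_L(x)$ counts the join-irreducibles of $L$ on $\leftb L$ below $x$.

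First I would reduce the problem to the chain $\leftb L$. Trivially $\lefth_L(x)=\lefth_L(\lsp_L(x))$, and I claim that $\lsp_R(x)=\lsp_R(\lsp_L(x))$ for $x\in L$, whence $\lefth_R(x)=\lefth_R(\lsp_L(x))$ as well. The inequality ``$\ge$'' is the monotonicity of $\lsp_R$; for ``$\le$'' note that $\lsp_R(x)\in\leftb R\subseteq S$ lies weakly on the left of the maximal chain $\leftb L$ while $x$ lies weakly on its right, so (the degenerate cases being immediate) Lemma~\ref{kellyrivallemma} produces $z\in\leftb L$ with $\lsp_R(x)\le z\le x$; then $z\le\lsp_L(x)$ and, since $\lsp_R(x)\in\leftb R$, also $\lsp_R(x)\le\lsp_R(\lsp_L(x))$. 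Consequently it is enough to prove $\lefth_R(w)=\lefth_L(w)$ for every $w\in\leftb L$.

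The core is the identity $\emel=q$, which I would prove by exhibiting a bijection, via trajectories, between the $\emel$ edges of $\llb R$ and the $q$ edges $[w',w]$ of $\leftb L$ with $w\in\Jir L$. Each edge of $\llb R$ is the first edge of a unique trajectory $T$ of $R$, which starts upward; since its initial segment lies in the distributive region $S$, where no hat-trajectory occurs, $T$ meets $\leftb L$ exactly once and while going upward. A local analysis of the $4$-cell just to the right of this crossing edge $[a,a']$ shows that $a$ is the only lower cover of $a'$ in $L$, i.e.\ $a'\in\Jir L$, so the crossing edge is one of the $q$ edges above. Conversely, tracing such an edge $[w',w]$ leftward along its trajectory, the downward portion inside $S$ leads back to a first edge on $\llb R$; hence the crossing-edge map is onto, and it is injective because distinct trajectories are edge-disjoint. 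Thus $\emel=q$. Applying Lemma~\ref{tChnQlM} along these trajectories, $\lsp_R$ is strictly increasing across each join-irreducible edge of $\leftb L$; as the trajectories do not cross, the bijection is order-preserving, and together with $\emel=q$ this forces $\lsp_R(w)$ to advance by exactly one step of $\llb R$ across each such edge and not at all across the others, giving $\lefth_R(w)=\lefth_L(w)$ on $\leftb L$, hence on all of $L$.

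The main obstacle is the bijection of the previous paragraph---above all, the claim that a trajectory issuing from $\llb R$ re-enters $L$ through a join-irreducible (lower-left) edge and reaches $\leftb L$ while still going upward. This is where the distributivity of $S$ is indispensable (it forbids a hat inside $S$ that would let $T$ cross $\leftb L$ downward), and it is also where the glued sum indecomposability \eqref{ssmfTsCtwstb} enters: it forces $1\notin\Jir L$, without which a ``neck'' in $L$ produces a boundary join-irreducible above the corner $\cornl R$ and makes $\righth_L$ exceed $\righth_R$. The remaining bookkeeping---the order-preservation of the bijection, the one-step count, and the passage from $\leftb L$ back to $L$---is routine once this crossing behaviour is under control.
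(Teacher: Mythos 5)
Your overall strategy is the same as the paper's: reduce to the chain $\leftb L$ via $\lsp_R(x)=\lsp_R(\lsp_L(x))$ (this is exactly the paper's \eqref{dkhgNTr}), relate the join-irreducible edges of $\leftb L$ to the trajectories issuing from $\llb R$ through the distributive region $S$, apply Lemma~\ref{tChnQlM}, and count. Your repackaging as an explicit bijection between the $\emel$ edges of $\llb R$ and the $q$ join-irreducible edges of $\leftb L$ is a clean way to organize the paper's \eqref{fiTbNWsp} together with \eqref{dlGhBTmnd}, and the injectivity, the ``crossing edge is join-irreducible'' claim (your local $4$-cell analysis, which is the paper's use of Definition~\ref{dfrctext}\eqref{dfrctextc} at the crossing), and the final counting are all sound.

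The gap is in the surjectivity half of your bijection. When you ``trace $[w',w]$ leftward along its trajectory'' and speak of ``the downward portion inside $S$'', you are tacitly assuming that the trajectory goes \emph{upward} at $[w',w]$ (read left to right), so that the leftward trace descends into $S$ and terminates on $\llb R$. That is precisely what has to be proved, and neither the distributivity of $S$ nor your remark about $1\notin\Jir L$ delivers it: a priori the trajectory through a join-irreducible edge of $\leftb L$ could go \emph{downward} there, arriving from above and to the left and continuing down to $\rightb R$, in which case the leftward trace ascends and never meets $\llb R$. (The hat-top case is easy to exclude via Definition~\ref{dfrctext}\eqref{dfrctextc}, but the downward case is not.) The paper rules it out in the proof of \eqref{fiTbNWsp} by first establishing $\righth_R(e_i)=\righth_R(e_{i-1})$ --- its \eqref{dkjJHpZzT}, which is where glued sum indecomposability genuinely enters --- and then deriving $\rsp_R(e_i)=\rsp_R(e_{i-1})$, contradicting the right-support half of Lemma~\ref{tChnQlM}. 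Without surjectivity you only obtain $\emel\leq q$ and hence $\lefth_R\leq\lefth_L$ on $\leftb L$; the reverse inequality is exactly the content of this missing step. A secondary, more minor omission: your step count via ``$\lsp_R$ advances by one step of $\llb R$'' needs to be phrased in terms of $\lefth_R$ above the level of $c_{\emel}$, where $\leftb R$ keeps growing but the coordinate is capped at $\emel$; the paper treats the case $c_{\emel}<e_k$ separately for this reason.
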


A maximal chain as $L$ in the 4-element Boolean lattice as $R$ indicates that this lemma would fail without assuming that $L$ is glued sum indecomposable.

\begin{proof} Since $L$ is glued sum indecomposable, 
\begin{equation}
\leftb L\cap\rightb L=\set{0,1}\text{ and }
|L|\geq 4\text.
\label{dkdgzNmD}
\end{equation}
By semimodularity, $|\leftb L|=\length(L)+1=n+1=|\rightb L|$. 
Let 
\begin{equation*}\parbox{6.5cm} 
{$\leftb L=\set{0=e_0\prec e_1\prec\dots\prec e_n=1}$ and $\rightb L=\set{0=f_0\prec f_1\prec\dots\prec f_n=1}$.}
\label{fjHzTnS} 
\end{equation*}
We claim that, for $i\in \set{1,\dots, n}$, 
\begin{equation}
e_i\in \Jir L \iff \pair{\lhr{e_i}} {\rhr{e_i}}  =
\pair{1+ \lhr{e_{i-1}}}  { \rhr{e_{i-1}}} \text.
\label{fiTbNWsp}
\end{equation}
First, to prove the ``$\Leftarrow$'' direction of \eqref{fiTbNWsp}, assume that $\lhr{e_i}=1+ \lhr{e_{i-1}}$ and  $\rhr{e_i}=\rhr{e_{i-1}}$. 
Suppose, for  a contradiction, that $e_i\notin \Jir L$, and let $y\in L\setminus\set{e_{i-1}}$ be a lower cover of $e_i$. Since $e_{i-1}$ is on the left boundary of $L$, $e_{i-1}\rellambda y$.  Hence, we obtain from \eqref{sldiHjNxYa}  that  $\rhr{e_{i-1}} < \rhr y$. On the other hand, $e_i>y$ and \eqref{sldiHjNxYb}  yield that $\rhr{e_{i-1}}= \rhr{e_{i}}\geq  \rhr y$, which contradicts the previous inequality. Thus, the ``$\Leftarrow$'' part of \eqref{fiTbNWsp} follows.

To prove the converse direction of \eqref{fiTbNWsp}, assume that $e_i\in \Jir L$.
By \eqref{sldiHjNxYb},  
\begin{equation}
\pair {\lhr{e_i}}  {\rhr{e_i}}  >  \pair{\lhr{e_{i-1}}} {\rhr{e_{i-1}}}
\label{dkjhTr}
\end{equation}
in the usual componentwise ordering ``$\leq$'' of $\set{0,1,\dots,n}^2$.  We claim that 
\begin{equation}
{\rhr{e_i}} = {\rhr{e_{i-1}}}\text.
\label{dkjJHpZzT}
\end{equation}
To prove this by contradiction, suppose ${\rhr{e_i}} > {\rhr{e_{i-1}}}$. Applying Lemma~\ref{kellyrivallemma} in $R$ to $\rsp_R(e_i) \leq e_i$ and the maximal chain $\rightb L$,  
we obtain an element $z\in\rightb L\subseteq L$ such that $\rsp_R(e_i)\leq z\leq e_i$. Combining \eqref{moddkjGHhPq} and  ${\rhr{e_i}} > {\rhr{e_{i-1}}}$, we have that $z\nleq e_{i-1}$. Hence, $e_{i-1}\prec e_i$ gives that $e_{i-1}\vee z=e_i\in \Jir L$. So we conclude that $z=e_i$, that is, $0\neq e_i\in\leftb L\cap\rightb L$. From \eqref{dkdgzNmD}, we obtain that $1=e_i=f_i$ and $i=n$. Since $1=e_i=e_n\in\Jir L$ has only one lower cover, we obtain that $e_{n-1}=f_{n-1}\in \leftb L\cap\rightb L$, which contradicts \eqref{dkdgzNmD}. This proves \eqref{dkjJHpZzT}.

Combining  \eqref{dkjhTr} and \eqref{dkjJHpZzT}, we obtain that $\lhr{e_i}>\lhr{e_{i-1}}$. Let $T$ denote the trajectory of $R$ that contains $[e_{i-1},e_i]$. In the moment, there are three possible ways how $T$ can be related to $[e_{i-1},e_i]$ but we want to exclude two of them. First, suppose that  $[e_{i-1},e_i]$  is the top edge of a hat-trajectory. Then $e_i$ has a lower cover to the left of $e_{i-1}\in \leftb L$, so outside $L$, and $e_i$ has at least three lower covers. This possibility is excluded by Definition~\ref{dfrctext}\eqref{dfrctextc}. Hence, $[e_{i-1},e_i]$ cannot be the top edge of a hat-trajectory. (Note, however, that $T$ can be a hat-trajectory whose top is above  $[e_{i-1},e_i]$ in a straightforward sense.)   Second, suppose that $T$ goes  downwards at $[e_{i-1},e_i]$. Then   $\rsp_R(e_{i-1})$ is meet-reducible, because it is  the bottom of the last edge of $T$ by  Lemma~\ref{tChnQlM} and $T$ arrives downwards at this last edge by \eqref{trtrfR}. So \eqref{jvcnTgjYq} yields that $\rsp_R(e_{i-1})<d_{\emer}$, and we have that $e_{i-1}\ngeq d_{\emer}$.
Hence, there is a unique $j<\emer$ such that $\rsp_R(e_{i-1})=d_j$, and \eqref{dkjGHhPq} gives that  $j=\lefth_R(e_{i-1})$. 
Since  $\lefth_R(e_i)$ is also $j$ by   \eqref{dkjJHpZzT}, $d_{j+1}\nleq e_i$, and we obtain that $\rsp_R(e_i)=d_j=\rsp_R(e_{i-1})$. This contradicts Lemma~\ref{tChnQlM}  and excludes the possibility that $T$ goes downwards at $[e_{i-1},e_i]$.
Therefore,  $T$ goes upwards at $[e_{i-1},e_i]$. Let $[u_\tel,v_\tel]$ be the first edge of $T$. We know from Lemma~\ref{tChnQlM}  that $u_\tel=\lsp_R(e_{i-1})$. The left-right dual of the argument used in the excluded previous case yields that 
$u_\tel =  \lsp_R(e_{i-1}) = \lefth_R(e_{i-1}) < \emel$. If $ \lefth_R(e_{i-1}) = \emel-1$, then the required equality $\lefth_R(e_i)=1 +  \lefth_R(e_{i-1})$ follows from $\lefth_R(e_i) >  \lefth_R(e_{i-1})$ and from the fact that $\lefth_R(x)\leq \emel$ for all $x\in R$. Thus, we can assume that $ \lefth_R(e_{i-1}) \leq \emel-2$.  If  $e_i > c_{\emel}$, then $e_i$ is meet-irreducible and $e_{i-1}\geq c_{\emel}$  by \eqref{jvcnTgjYq} and \eqref{upPerChains}, and so $\lefth_R{e_{i-1}}=\emel$, contradicting  $\lefth_R{e_{i-1}}\leq \emel-2$. Hence, $e_i \not> c_{\emel}$, 
and the desired equation   $ \lhr{e_i}=1+ \lhr{e_{i-1}}$ and \eqref{fiTbNWsp} will follow from \eqref{dkjGHhPq}  if we show that $v_\tel=\lsp_R(e_i)$.

\vajonkell{
For later reference, we summarize that we want to show that
\begin{equation}\parbox{7cm} 
{if $e_i\in\leftb L\cap\Jir L$, the trajectory $T$ through $[e_{i-1},e_i]$ goes upwards at $[e_{i-1},e_i]$, and  $[u_\tel,v_\tel]$ is the first edge of
$T$, then $v_\tel=\lsp_R(e_i)$. 
} \label{dkHjGsX} 
\end{equation}
}
Suppose, for a contradiction, that $v_\tel\neq\lsp_R(e_i)$. We have that $v_\tel\ < \lsp_R(e_i)$, since $v_\tel\leq \lsp_R(e_i)$ is clear by $v_\tel\leq e_i$.  Also, $\lsp_R(e_i)\nleq e_{i-1}$, since $\lsp_R(e_i)\geq v_l > u_\tel=\lsp_R(e_{i-1})$ and $\lsp_R(e_{i-1})$ is the largest element of $\leftb R\cap\ideal e_{i-1}$.
Since $u_\tel$, $v_\tel$, and $ \lsp_R(e_i)$ are on the leftmost chain $\leftb R$ of $R$, these elements belong to $S$, the region left to $L$,  defined before Lemma~\ref{SdstrL}.  By Lemma~\ref{SdstrL}\eqref{SdstrLb}, $R$ is a slim rectangular lattice.
Observe that $\lsp_R(e_i)>u_\tel\in S$ 
by Lemma~\ref{tChnQlM}, which excludes $\lsp_R(e_i)=0_S$. We obtain from $e_i\not>c_{\emel}$ that $\lsp_R(e_i)\in\llb R$. 
Hence, \eqref{dkRtghWr}  
yields that $\lsp_R(e_i)\in   \Jir R$, and we conclude that   $\lsp_R(e_i)\in   \Jir S$.  
Using $e_{i-1}\prec e_i$ and $e_i\geq v_\tel \nleq e_{i-1}$, we conclude that  $\lsp_R(e_i)\leq e_i=v_\tel\vee e_{i-1}$. Since $S$ is distributive by Lemma~\ref{SdstrL} and the elements in the previous inequality belong to $S$, we have that
\begin{equation}
\lsp_R(e_i) =\lsp_R(e_i)\wedge(v_\tel \vee e_{i-1} ) = (\lsp_R(e_i)\wedge v_\tel )\vee (\lsp_R(e_i)\wedge e_{i-1} )\text.
\label{dstrSng}
\end{equation}
Since $\lsp_R(e_i)\in   \Jir S$ equals one of the two joinands above and $\lsp_R(e_i)\nleq e_{i-1}$,  we obtain that 
$\lsp_R(e_i)\leq v_\tel$. This contradicts  $v_\tel\ < \lsp_R(e_i)$. In this way, we have shown that 
$v_\tel=\lsp_R(e_i)$. This proves 
\vajonkell{\eqref{dkHjGsX}  and}%
\eqref{fiTbNWsp}.

Next,  with reference to the notation in Definition~\ref{DFgjRd}\eqref{DFgjRda},  we claim that
{\begin{equation}\parbox{8cm} 
{$(\forall j \in\set{0,\dots,{\emel}})\  \bigl(\exists e_i\in\leftb L\bigr)\,\bigl(\lefth_R(e_i)=j\bigr) $.
}\label{dlGhBTmnd}
\end{equation}
}%
To prove \eqref{dlGhBTmnd}, let $j\in\set{0,\dots,\emel}$. We can assume that $j<\emel$, since otherwise we can let   $e_i:=e_n=1\in \leftb L$. Due to \eqref{dkjGHhPq}, it suffices to find an $e_i\in\leftb L$ such that $\lsp_R(e_i)=c_j$. If $c_j\in L$, then $c_j\in\leftb R$ implies $c_j\in\leftb L$, and we have that $c_j=\lsp_R(e_i)$ with  $e_i:=c_j$. Hence, we can assume that $c_j\notin L$. 
Consider the trajectory $T$ that contains $\inp_0=[x_0,y_0]:=[c_j,c_{j+1}]$. Let $\inp_0$,  $\inp_1=[x_1,y_1]$, $\inp_2=[x_2,y_2]$, \dots, $\inp_s=[x_s,y_s]$ be the edges that constitute $T$ in $R$, listed from left to right. Since $y_s\in \rightb R$, we conclude that $y_s$ is on the right of $\leftb L$; in notation,  $y_s\in \rside_R(\leftb L)$.
Since $y_0\in \leftb R$, $y_0\in\lside_R(\rightb L)$.  Thus, as opposed to $y_s$,   $y_0=c_j\notin\rside_R(\leftb L)$, because otherwise it would belong to $\rside_R(\leftb L)\cap\lside_R(\rightb L) $, which is $L$ by   Lemma~\ref{dkjTghNVc}. Therefore, there exists a unique integer $t\in\set{1,\dots, s}$ such that 
 $y_0, \dots, y_{t-1}$ are strictly on the left of $\leftb L$ but $y_t\in\rside(\leftb L)$. Since $y_0=c_j\in\Jir R$ by \eqref{dkRtghWr}, $T$ departs in upwards direction and $y_0\prec y_1$. None of $y_0, \dots,  y_{t-1}$ belongs to $L $, so none of $y_0, \dots,  y_t$ can have more than two lower covers by Definition~\ref{dfrctext}\eqref{dfrctextc}. Hence, none of $\inp_1, \dots,  \inp_t$ is the top edge of a hat trajectory, and the section of $T$ from $\inp_0$ to $\inp_t$ goes upwards. That is, $T$ goes upwards at $\inp_0,\dots, \inp_t$. Thus,  $y_0\prec y_1\prec\dots\prec y_t$. Applying Lemma~\ref{kellyrivallemma} to the maximal chain $\leftb L$ of $R$ and to the elements 
$y_{t-1}\prec y_t$, we obtain that $y_t\in\leftb L$. Therefore, since $c_j<c_{j+1}=y_0<y_t$ excludes that $y_t=0$, $y_t$ is of the form $y_t=e_{i+1}$ for some $i\in\set{0,1,\dots,n-1}$. Observe that $y_{t-1}\notin L$, $x_t$, and $e_i\in L$ are lower covers of $y_t$. However,  by Definition~\ref{dfrctext}\eqref{dfrctextc}, 
 $y_t$ has at most two lower covers. This implies that $x_t=e_i$, that is, $\inp_t=[x_t,y_t]=[e_i,e_{i+1}]$. 
Since $T$ is the trajectory through $\inp_t$, Lemma~\ref{tChnQlM}  implies that $c_j=x_0=\lsp_R(e_i)$. This proves   \eqref{dlGhBTmnd}.

Next, we claim that, for $x,y\in R$,
\begin{equation}
\parbox{9.0cm}
{if $x\prec y$, $\lefth_R(x)<\lefth_R(y)$, and $\righth_R(x)<\righth_R(y)$, then there are $u,v\in R$ such that $u\prec y$, $v\prec y$, $u\rellambda x$, and $x\rellambda v$.}
\label{hrMlCvr}
\end{equation}
To prove this, assume the first line of \eqref{hrMlCvr}. We conclude from  \eqref{moddkjGHhPq} that $\lsp_R(x)<\lsp_R(y)$ and $\rsp_R(x)<\lsp_R(y)$. 
We have that $c_{\emel}\nless y$, because otherwise  $c_{\emel}\leq x$ by 
\eqref{jvcnTgjYq} and \eqref{upPerChains}, and so $\lefth_R(x)=\emel=\lefth_R(y)$ would contradict our assumption. Similarly, $d_{\emer}\nless y$. 
%
%
First we show that $y\notin \leftb R$ and $y\notin \rightb R$. Suppose, for a contradiction, that $y\in\leftb R$. Then $y\in\llb R$ since $c_{\emel}\nless y$.  We know from \eqref{tpwcgTslm},   \eqref{dkRtghWr}, and Definition~\ref{DFgjRd}\eqref{DFgjRda} that 
\begin{equation}
\text{for all } \pair ij\in\set{0,\dots,\emel}\times\set{0,\dots,\emer},\quad  c_i\wedge d_j=0\text.
\label{dkhNmxWn}
\end{equation}
In particular, $y\wedge d_j$ for all $j\in\set{0,\dots, \emer}$. But $y\neq 0$, so $y\ngeq d_i$ for $i\in\set{1,\dots,\emer}$, and we obtain that $\rsp_R(y)=0$. This contradicts $\rsp_R(x)<\rsp_R(y)$, and we conclude that $y\notin \leftb R$.  Similarly, $y\notin \rightb R$. 
We know from \eqref{dkGHjT}  that $[\lsp_R(y),y]$ is a chain. This chain is nontrivial, because $y\notin \leftb R$. 
Thus, we can pick a unique element $u$ such that $\lsp_R(y)\leq u\prec y$. 
By \eqref{dkGHjT},  $\lsp_R(u)=\lsp_R(y)>\lsp_R(x)$.  We claim that $\rsp_R(u)<\rsp_R(x)$. 
Suppose, for a contradiction, that $\rsp_R(u)\geq \rsp_R(x)$. Combining this inequality with 
$\lsp_R(u)>\lsp_R(x)$ and \eqref{sldiHjNxYb}, we obtain that $x<u$. This is a contradiction since both $x$ and $u$ are lower covers of $y$. Hence,  
\eqref{MsldiHkkQY} applies and  $u\rellambda x$. By left-right symmetry, we also have a $v\in R$ with  $x\rellambda v$. This proves \eqref{hrMlCvr}.

The next step is to show that, for $x\in L$, 
\begin{equation}
\lsp_L(x)= \lsp_L\bigl(\lsp_L(x)\bigr)  \quad\text{and}\quad 
\lsp_R(x)=\lsp_R\bigl(\lsp_L(x)\bigr)\text.
\label{dkhgNTr}
\end{equation}
The first equation is a consequence of \eqref{dkGHjT}.  To prove the second, we can assume that $c_{\emel}\nless x$, because otherwise  $\lsp_R(x)=x=\lsp_L(x)=\lsp_R(\lsp_L(x))$ by \eqref{jvcnTgjYq} and \eqref{upPerChains}. Let $u=\lsp_L(x)$, $v=\lsp_R(u)$, and $w=\lsp_R(x)$. Since $x\geq u\geq v\in\leftb R$, we have $v\leq w$. Applying Lemma~\ref{kellyrivallemma} to $w\leq x$ and the maximal chain $\leftb L$, we obtain an element $t\in \leftb L$ such that $w\leq t\leq x$. By the definition of $v$, we have that $t\leq v$. By transitivity, $w\leq v$, so $v=w$. This proves \eqref{dkhgNTr}.

Now, we are in the position to complete the proof of Lemma~\ref{samecoordin}. 
Let $x\in L$. By left-right symmetry, it suffices to show that $\lefth_R(x)=\lefth_L(x)$. However, by  
\eqref{dkhgNTr}, it is sufficient to show that
\begin{equation}
\text{for}\quad x=e_k\in\leftb L,\quad\lefth_R(e_k)=\lefth_L(e_k)\text.
\label{dljHPTn}
\end{equation}
First, we assume that $c_{\emel}\nless e_k$.
Let $t=\lefth_R(e_k)$; by  \eqref{dkjGHhPq}, this means that $\lsp_R(e_k)=c_t$. Consider the chain
$ H:=\leftb L\cap  \ideal{e_k} =\set{e_k\succ e_{k-1}\succ \dots \succ e_0=0}$.   When we walk down along this chain, at each step from $e_i$ to $e_{i-1}$, \eqref{sldiHjNxYb} yields that at least one of the join-coordinates $\lefth_R(e_i)$ and $\righth_R(e_i)$ decreases. By the definition of $\lefth_L(e_k)$,  it suffices to show that $\lefth_R(e_i)$ decreases iff  $e_i\in\Jir L$, and it can decrease by at most 1.
Therefore, by \eqref{fiTbNWsp}, 
it suffices to show that, for $i\in\set{1,\dots, k}$, 
\begin{align}
&\text{if }\lefth_R(e_i)>\lefth_R(e_{i-1})\text{, then } \lefth_R(e_i)-\lefth_R(e_{i-1})=1\text{, and}\label{dijhGrWa}
\\
&\text{if }\lefth_R(e_i)>\lefth_R(e_{i-1}) \text{, then } \righth_R(e_i)=\righth_R(e_{i-1})\text.\label{dijhGrWb}
\end{align}
Suppose, for a contradiction, that \eqref{dijhGrWb} fails. Since $ \righth_R(e_i)>\righth_R(e_{i-1})$ by \eqref{sldiHjNxYb},
\eqref{hrMlCvr} yields $u,v\in R$ such that $u\prec e_i$, $v\prec e_i$, $u\rellambda e_{i-1}$, and $e_{i-1}\rellambda v$. Since  $e_{i-1}\in\leftb L$, $u$ is strictly on the left of $\leftb L$, and so $u\notin L$. This contradicts Definition~\ref{dfrctext}\eqref{dfrctextc}, proving \eqref{dijhGrWb}.
The proof of \eqref{dijhGrWa} is even shorter. By  \eqref{sldiHjNxYb}, for each $e_j\in\leftb L$,  either $e_j \geq e_i$ and  $\lefth_R(e_j)\geq \lefth_R(e_i)$, or $e_j\leq e_{i-1}$ and
$\lefth_R(e_j)\leq \lefth_R(e_{i-1})$. So if the gap $ \lefth_R(e_i)-\lefth_R(e_{i-1})>1$, then   \eqref{dlGhBTmnd} fails. Hence,   \eqref{dijhGrWa} holds, and so does \eqref{dljHPTn} if $c_{\emel}\nless e_k$.

Second, we assume that $c_{\emel}<e_k$. Since $\filter{c_{\emel}}$ is a chain by \eqref{upPerChains}, there is a unique element $y$ in this chain such that 
$c_{\emel} \leq y\prec e_k$.
Clearly, $\lefth_R(e_k)=\lefth_R(y)=\emel$. 
Let $t$ be the smallest subscript such that $c_{\emel}<e_t$; note that $0<t\leq k$. Since $c_{\emel}$, $e_t$ and $e_{t-1}$ belongs to $S$, which is distributive by Lemma~\ref{SdstrL}\eqref{SdstrLa}, so does $c_{\emel}\wedge e_{t-1}$. By distributivity, $c_{\emel}\wedge e_{t-1} \prec c_{\emel}$. Hence 
\eqref{llsRChains} and \eqref{dkRtghWr} give that $c_{\emel}\wedge e_{t-1}=c_{\emel-1}$. So $c_{\emel-1} \leq  e_{t-1}$. By the definition of $t$, $c_{\emel}\nleq  e_{t-1}$. Hence, $\lefth_R(e_{t-1})=\emel-1$. Since $c_{\emel}\nless e_{t-1}$, \eqref{dljHPTn} is applicable and we have that 
$\lefth_L(e_{t-1})= \lefth_R(e_{t-1})=\emel-1$.
Hence, for the validity of  \eqref{dljHPTn} for $e_k$, we only have to show that $|\set{e_t,\dots,e_k}\cap\Jir L|=1$. This will follow from the following observation:
\begin{equation}
e_t\in\Jir L\text{ but for all }s,\text{ if }t<s\leq k\text{, then }e_s\notin\Jir L\text.
\label{fhgztbvnRwW}
\end{equation}
Note that $t=k$ is possible; if so, then the second part above vacuously holds. Since 
$\lefth_R(y)=\emel>\emel-1=   \lefth_R(e_{t-1})$,  Lemma~\ref{sldiHjNxY} gives that $y\rellambda e_{t-1}$ or $y  > e_{t-1}$. However,  $y  > e_{t-1}$ is impossible since both elements are lower covers of $e_t$. Hence,  $y\rellambda e_{t-1}\in\leftb L$, which implies that $y\notin L$. By Definition~\ref{dfrctext}\eqref{dfrctextc}, $e_t$ has only two lower covers in $R$. Since one of them, $y$, is outside $L$, $e_t\in \Jir L$, as required.  To prove the second half of \eqref{fhgztbvnRwW}, assume that $t<k$ and that $t<s\leq k$. We want to show that $e_s$ is join-reducible in $L$. Since the join-reducibility of $1=e_n$ in $L$ follows prompt from \eqref{dkdgzNmD}, we can assume that $e_s\neq 1$. Since $[e_t,e_k]\subseteq \filter{c_{\emel}}$ and, by \eqref{upPerChains}, $\filter{c_{\emel}}$ is a chain, we obtain that the interval $[e_t,e_k]$ is a chain in $R$. On the other hand,  $\leftb L$ contains $e_t$ and $e_k$, and it is a maximal chain in $R$. Hence, $[e_t,e_k]\subseteq \leftb L$. Since $e_s\notin \Jir R$ by \eqref{dkRtghWr}, $e_s$ has a lower cover $z\in R$ such that $z\neq  e_{s-1}\in \filter{c_{\emel}}\subseteq    \leftb R$. Hence, $e_{s-1}\rellambda z$, and 
the left-right dual of Lemma~\ref{leftrightlemma}\eqref{leftrightlemmaa} gives that $z\in\rside_R(\leftb L)$. 
 We claim that $z\in\lside_R( \rightb L )$ and then, by \eqref{LldkkLgdm}, $z\in L$.  Suppose, for a contradiction, that this is not the case and $z$ is strictly on the right of $\rightb L$. Since $e_s\in \leftb L$ belongs to $\lside_R(\rightb L)$ and $z\prec e_s$, 
Lemma~\ref{kellyrivallemma} yields that $e_s\in\rightb L$. Hence, $e_s=1$ by \eqref{dkdgzNmD}, but this possibility has previously been excluded. This shows that $z\in L$ is another lower cover of $e_s$. Therefore,  $e_s$ is join-reducible in $L$, as required. 
This proves \eqref{dljHPTn} and Lemma~\ref{samecoordin}.
\end{proof}

Next, we still assume \eqref{ssmfTsCtwsta}, \eqref{ssmfTsCtwstb}, and that $L$ is slim. We define the following sets of coordinate pairs; the acronyms come from ``Internal'', ``Left'', ``Right'', and ``All'' Coordinate Pairs,  respectively. 
\begin{align*}
\incoord L&:= \set{\pair{\lefth_L (x)}{\righth_L(x)}: x\in L },\cr
\incoord R&:= \set{\pair{\lefth_R (x)}{\righth_R(x)}: x\in L },\cr
\leftcoord R&:= \set{\pair{\lefth_R (x)}{\righth_R(x)}: x\in R\text{ is strictly on the left of  }\leftb L},\cr
\rightcoord R&:= \set{\pair{\lefth_R (x)}{\righth_R(x)}: x\in R\text{ is strictly on the right of  }\rightb L},\cr
\allcoord R&:= \incoord R \cup  \leftcoord R \cup \rightcoord R
\text.
\end{align*}
We know from \eqref{sldiHjNxYa} and \eqref{sldiHjNxYb} that  
\begin{equation}
\text
{these sets describe $R$ and, in an appropriate sense, its diagram.}
\label{dhdttjrjkL}
\end{equation}
As an important step towards the uniqueness of $R$, the following lemma states that these sets do not depend on $R$. As the case $L$ is a chain witnesses, the following lemma would fail without assuming \eqref{ssmfTsCtwstb}.

\begin{lemma}\label{outlemmA}
Assume \eqref{ssmfTsCtwsta}, \eqref{ssmfTsCtwstb},   and that $L$ is slim.
With the notation given in Definition~\ref{DFgjRd} and $\nggy:=\set{0,\dots,\emel}\times\set{0,\dots,\emer}$, the following 
hold.
{\allowdisplaybreaks{
\begin{align}
&\emel=\max\set{\lefth_L(x): x\in L}, \quad \emer=\max\set{\righth_L(x): x\in L},
\label{jGjdTbZa}\\
&\incoord R=\incoord L,
\label{jGjdTbZb}\\
&\begin{aligned}
\leftcoord R=\set{&\pair ij: \pair ij\in\nggy\setminus \incoord L\text{ and } \exists x\in\leftb L \cr   
& \text{such that }i > \lefth_L(x)\text{ and }  j=\righth_L(x)},
\end{aligned} 
\label{jGjdTbZc}\\
&\begin{aligned}
\rightcoord R=\set{&\pair ij: \pair ij\in \nggy\setminus \incoord L\text{ and } \exists x\in\rightb L \cr   
& \text{such that }j > \righth_L(x)\text{ and }  i=\lefth_L(x)}\text.
\end{aligned}
\label{jGjdTbZd}
\end{align}
}%
}%
Also, $\leftcoord R$ and $\rightcoord R$ are given in terms of $\incoord L$ as follows:
{\allowdisplaybreaks{
\begin{align}
&\begin{aligned}
\leftcoord R =\set{\pair ij\in\nggy\setminus \incoord L:\exists i'\text{ such that }
\pair{i'}{j}\in\incoord L,\phantom{\}}  \cr
 i'<i \text{, and for every }  \pair{i''}{j''}\in\incoord L,\text{ }  i''> i' \then j'' \geq{} j},
\end{aligned} 
\label{jGjdTbZe}\\
&\begin{aligned}
\leftcoord R =\set{\pair ij\in\nggy\setminus \incoord L:\exists j'\text{ such that }
\pair{i}{j'}\in\incoord L,\phantom{\}}   \cr
 j'<j \text{, and for every }  \pair{i''}{j''}\in\incoord L,\text{ }  j''> j' \then i'' \geq{} i}\text.
\end{aligned} 
\label{jGjdTbZf}
\end{align}
}%
}%
The componentwise ordering, $\pair{i_1}{j_1} \leq \pair{i_2}{j_2}$ iff $i_1\leq i_2$ and $j_1\leq j_2$, turns 
$\allcoord R = \incoord R \cup  \leftcoord R \cup \rightcoord R$ into a lattice, which depends only on the fixed diagram of $L$. 
Actually, 
\begin{equation}
\allcoord R\text{ only depends on }\incoord L\text.
\label{dksdghNcX}
\end{equation}
Furthermore, the \emph{``coordinatization maps''}  $\gamma\colon L\to \incoord L$ defined by $x\mapsto \pair{\lefth_L(x)}{\righth_L(x)}$ and $\delta\colon R\to \allcoord R$ defined by $x\mapsto \pair{\lefth_R(x)}{\righth_R(x)}$ are lattice isomorphisms.
\end{lemma}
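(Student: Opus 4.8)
The plan is to read everything off the coordinatization map together with one partition of $R$. The key observation is that Lemma~\ref{sldiHjNxY} (part \eqref{sldiHjNxYb}, applied to $R$) says precisely that $\delta\colon x\mapsto\pair{\lefth_R(x)}{\righth_R(x)}$ is an order-embedding of $R$ into the grid $\nggy$ under the componentwise order; in particular $\delta$ is injective. I then partition $R$ into the internal part $L$, the strictly-left part $S\setminus\leftb L$, and the strictly-right part, using \eqref{LldkkLgdm} and the monotonicity of sides in a planar diagram ($\leftb L\subseteq\lside_R(\rightb L)$ forces $\lside_R(\leftb L)\subseteq\lside_R(\rightb L)$, so no element is simultaneously strictly left and strictly right). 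As $\delta$ is injective and these three blocks are pairwise disjoint, $\allcoord R$ is the disjoint union of $\incoord R$, $\leftcoord R$, and $\rightcoord R$. With this in place, \eqref{jGjdTbZa} is immediate: $1\in L$ satisfies $\lefth_L(1)=\emel$ since $1\geq\cornl R=c_{\emel}$, while $\lefth_L(x)=\lefth_R(x)\leq\emel$ for all $x$ by Lemma~\ref{samecoordin}; and \eqref{jGjdTbZb} is just Lemma~\ref{samecoordin} read over all $x\in L$.

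The core is \eqref{jGjdTbZc}, after which \eqref{jGjdTbZd} follows by left--right symmetry. For ``$\subseteq$'', take $x$ strictly on the left of $\leftb L$ and set $\pair ij=\delta(x)$. Applying Lemma~\ref{kellyrivallemma} to $\rsp_R(x)\leq x$ (here $\rsp_R(x)\in\rightb R$ lies on the right of $\leftb L$) yields $z\in\leftb L$ with $\rsp_R(x)\leq z\leq x$, whence $\rsp_R(z)=\rsp_R(x)$ and thus $\righth_L(z)=\righth_R(z)=j$; moreover $\lefth_R(z)\leq i$ with equality excluded, since $\delta(z)=\delta(x)$ would force $z=x\in L$. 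So $z$ is the required witness, and $\pair ij\notin\incoord L$ because $\delta$ is injective and $x\notin L$. For ``$\supseteq$'' I construct the element directly: choosing $x'\in\leftb L$ with $\lefth_L(x')$ maximal subject to $\righth_L(x')=j$ and $\lefth_L(x')<i$, I put $w:=c_i\vee x'$. Since $c_i\in\leftb R$ and $x'\in\leftb L$ both lie in the distributive region $S$ of Lemma~\ref{SdstrL}, we have $w\in S$; and once $\delta(w)=\pair ij$ is verified, the position of $w$ is automatic, because $S\cap L=\leftb L$ (by \eqref{LldkkLgdm}) forces $w\in\leftb L$ or $w$ strictly left, and $w\in\leftb L$ is impossible as $\delta(w)=\pair ij\notin\incoord L$.

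Granting \eqref{jGjdTbZc}, the reformulations \eqref{jGjdTbZe} and \eqref{jGjdTbZf} are a purely combinatorial translation: using $\incoord L=\gamma(L)$ and the fact that the $\leftb L$-elements realize, via Lemma~\ref{sldiHjNxY}, a maximal chain of coordinate pairs, the geometric clause ``$\exists x'\in\leftb L$ with $\lefth_L(x')<i$ and $\righth_L(x')=j$'' is rewritten as the existence of $i'<i$ with $\pair{i'}{j}\in\incoord L$ together with the closure condition that no pair of $\incoord L$ lies strictly inside the gap; \eqref{jGjdTbZf} is the same set described from a ``downward'' witness and is obtained by the symmetric bookkeeping. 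The remaining claims are formal: $\delta$ is an order-isomorphism of the finite lattice $R$ onto $(\allcoord R,\leq)$, so $\allcoord R$ is a lattice and $\delta$ a lattice isomorphism, and likewise $\gamma$ for $L$. Since \eqref{jGjdTbZa}, \eqref{jGjdTbZb}, \eqref{jGjdTbZe}--\eqref{jGjdTbZf} and their left--right duals express $\emel$, $\emer$ and all three coordinate blocks in terms of $\incoord L$ alone, \eqref{dksdghNcX} follows, and hence $\allcoord R$ depends only on the fixed diagram of $L$.

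I expect the genuine obstacle to be the existence half of \eqref{jGjdTbZc}, namely verifying $\delta(c_i\vee x')=\pair ij$. Monotonicity only gives $\lefth_R(w)\geq i$ and $\righth_R(w)\geq j$, and because $\nggy$ is distributive while $R$ need not be, $\delta$ is \emph{not} a join-homomorphism into $\nggy$, so the reverse inequalities cannot be obtained coordinatewise. Pinning $\lefth_R(w)=i$ will use $c_i\leq w$ together with \eqref{dlGhBTmnd} and the maximal choice of $x'$, while $\righth_R(w)=j$ must be forced by working inside the distributive region $S$ and exploiting the extremal/closure property of $x'$ (equivalently the quantified clause of \eqref{jGjdTbZe}). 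This is the one place where Lemma~\ref{SdstrL} and \eqref{dlGhBTmnd} do substantive work rather than bookkeeping.
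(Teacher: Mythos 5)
Your architecture is the paper's: \eqref{jGjdTbZa} and \eqref{jGjdTbZb} from Lemma~\ref{samecoordin}, the ``$\subseteq$'' half of \eqref{jGjdTbZc} from Lemma~\ref{kellyrivallemma} applied to $\rsp_R(y)\leq y$ and the maximal chain $\leftb L$, the ``$\supseteq$'' half by forming $y=c_i\vee x$ inside the distributive region $S$ of Lemma~\ref{SdstrL}, then \eqref{jGjdTbZe}--\eqref{jGjdTbZf} by translating ``$x\in\leftb L$'' into the closure clause via \eqref{sldiHjNxYa}, and the isomorphism statements from \eqref{sldiHjNxYb}. All of that is fine.

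The genuine gap is exactly where you flag it: you never prove $\righth_R(c_i\vee x)=j$, you only announce that it ``must be forced by working inside the distributive region $S$ and exploiting the extremal/closure property of $x'$.'' That equality is the heart of the whole lemma (the paper's \eqref{mkrJdBr}, roughly a page of argument), and the mechanism you name is not the one that works. The paper assumes $t:=\righth_R(y)>j$, descends along the chain $[d_t,y]$, isolates the last member $y_q$ of that chain still lying in $S$, shows $y_q\in\leftb L$, and then exhibits three pairwise incomparable lower covers of $y_q$, one of which lies strictly to the left of $\leftb L$ and hence outside $L$ --- contradicting Definition~\ref{dfrctext}\eqref{dfrctextc}. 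So the decisive ingredient is the ``at most two lower covers'' clause of a \rectext{}, which your verification never touches; distributivity of $S$ alone cannot close the argument, and the extremal choice of $x'$ does no work (the paper takes an arbitrary $x\in\leftb L$ with the prescribed coordinates and needs no maximality). Without invoking Definition~\ref{dfrctext}\eqref{dfrctextc} at this point one cannot distinguish $R$ from a non-normal cover-preserving rectangular extension such as $\widehat R$ in Figure~\ref{fig-normext-notmin}, so no proof that omits it can be correct. A smaller inaccuracy: for $\lefth_R(y)=i$ the paper uses join-irreducibility of $c_{i+1}$ together with distributivity of $S$ (its \eqref{mkrvNd}), not \eqref{dlGhBTmnd}; that part of your sketch is repairable, but the missing proof of $\righth_R(y)=j$ is not a detail.
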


\begin{proof}
Using Lemma~\ref{samecoordin} and $1=1_R\in L$, we obtain that $\max\set{\lefth_L(x): x\in L}=\lefth_L(1)=\lefth_R(1)=\emel$. The other half of  \eqref{jGjdTbZa} follows similarly. 
\eqref{jGjdTbZb}  also follows from Lemma~\ref{samecoordin}. 
In the rest of the proof, \eqref{jGjdTbZb}  and Lemma~\ref{samecoordin} allow  us to write $\incoord R$,    $\lefth_R$ and $\righth_R$ instead of  $\incoord L$, $\lefth_L$ and $\righth_L$, and vice versa,   respectively,   without further  warning.

Assume that $\pair ij\in \leftcoord R$, and that $\pair ij=\pair{\lefth_R (y)}{\righth_R(y)}$ for some $y\in R$ strictly on the left of $\leftb L$. Applying Lemma~\ref{kellyrivallemma} to $\rsp_R(y)\leq y$ and $\leftb L$, we obtain an element $x\in\leftb L\cap [\rsp_R(y),  y]$.  By \eqref{dkjGHhPqvar}, $\righth_R(x)=\righth_R(y)=j$. We know that $x\neq y$, because $x\in L$ but $y\notin L$. Hence, $x<y$, and \eqref{sldiHjNxYb} gives that $i=\lefth_R(y)>\lefth_R(x)$. This proves the ``$\subseteq$'' part of \eqref{jGjdTbZc}.

In order to prove the converse inclusion, assume that $x\in \leftb L$, $\pair ij \in \nggy$, $\pair ij\notin\incoord R=\incoord L$, $i> \lefth_R(x)$, and $j=\righth_R(x)$. Let $k=\lefth_R(x)$. 
In the distributive lattice $S$ from Lemma~\ref{SdstrL}, let $y=c_i\vee x\in S$.
Next, we show that
\begin{equation}
\lefth_R(y)=i  \text{ and, if } i<\emel, \text{ } \lsp_R(y)=c_i\text.
\label{mkrvNd}
\end{equation}
Since \eqref{mkrvNd} is obvious if $i=\emel$, we can assume that $i<\emel$. Since $c_i\leq y$, we  obtain that $\lsp_R(y)\geq c_i$. Suppose, for a contradiction, that $\lsp_R(y) >  c_i$. Then  $c_{i+1}\leq y$. Since 
$c_{i+1}$ is join-irreducible in $R$ by \eqref{dkRtghWr} and $c_{i+1}\neq 0_S=0_R$, we have that $c_{i+1}\in\Jir S$.  Using distributivity in the standard way as in \eqref{dstrSng} and taking $c_{i+1}\nleq c_i$ and $c_{i+1}\leq y=c_i\vee x$ into account, we obtain that $c_{i+1}\leq x$. Hence, $k=\lefth_R(x) \geq i+1$, either because  $i+1\leq\emel$ and $x>c_{\emel}$, or because  $x \not>c_{\emel}$ and \eqref{dkjGHhPq} applies. This contradicts $i>k$,  proving the second equation in \eqref{mkrvNd}. The first equation in \eqref{mkrvNd} follows from \eqref{dkjGHhPq}.

Observe that, for every $z\in R$, 
\begin{equation}
\text{the intervals }
[c_{\lefth_R(z)},z]\text{ and } [d_{\righth_R(z)},z]\text{ are chains.}
\label{stLRchains}
\end{equation}
If $\lefth_R(z)=\emel$, then $z\geq c_{\emel}=\cornl R$, and the first interval  is a chain by  \eqref{upPerChains}. If $\lefth_R(z)\neq\emel$, then  $z\not>c_{\emel}$, and the first interval  is a chain by  \eqref{dkGHjT} and \eqref{dkjGHhPq}. Similarly, the second interval is also a chain, proving  \eqref{stLRchains}.

Next, we prove that 
\begin{equation}
\righth_R(y) = j\text.
\label{mkrJdBr}
\end{equation}
If $\righth_R(x)=j=\emer$, then $y\geq x\geq d_{\emer}$ yields that $j=\emer=\righth_R(y)$ as required. Hence, we can assume that $j<\emer$.
From $y\geq x$ and \eqref{sldiHjNxYb}, we obtain that $\righth_R(y)\geq j$. Suppose, for a contradiction, that $t:=\righth_R(y) >  j$. 
By \eqref{stLRchains}, we can let
$[d_t,y]=\set{y_0:=y\succ y_1\succ \dots \succ y_s=d_t}$. Since $y=y_0\in S$, there is a largest number $q\in\set{0,\dots,s}$ such that 
$\set{y_0,\dots,y_q}\subseteq S$. 
The situation is roughly visualized in Figure~\ref{fig-prfDtls1}, where only a part of $R$ is depicted and the black-filled elements belong to $\leftb L$. (Note, however, that a targeted contradiction cannot be satisfactorily depicted.)

We claim that  $q<s$. Suppose, for a new  contradiction, that $q=s$. Then $y_s=d_t\in S\cap\rightb R$. So $d_t$ is on the left of $\leftb L$ and, belonging to $\rightb R$, it is also on the right of $\rightb L$.   If $d_t\notin \leftb L$, then we can pick an $e\in\leftb L$ with $d_t\parallel e$, because $\leftb L$ is a maximal chain. By 
Lemma~\ref{leftrightlemma}\eqref{leftrightlemmaa}, $d_t\rellambda e$, that is, $e$ is on the right of $d_t$. By the left-right dual of Lemma~\ref{leftrightlemma}\eqref{leftrightlemmaa}, $e$ is on the right of $\rightb R$. But no element of $R$ can be strictly on the right of $\rightb R$, so $e\in \rightb R$. This is a contradiction, because $d_t\parallel e$,  $d_t$ also belongs to $\rightb R$, and $\rightb R$ is a chain. This shows that $d_t\in\leftb L$. A similar argument gives that  
$d_t\in \rightb L$. That is, $d_t\in \leftb L\cap \rightb L$, and the glued sum indecomposability of $L$ implies that $d_t\in\set{0,1}$. The inequality $t>j$ excludes that $d_t=0$, so $d_t=1_L=1_R$. Hence,  \eqref{dkRtghWr} and
\eqref{dkhNmxWn} yield that $R$ is a chain, which contradicts its rectangularity.
Thus, $q<s$.

\begin{figure}[htb]
\includegraphics[scale=1.0]{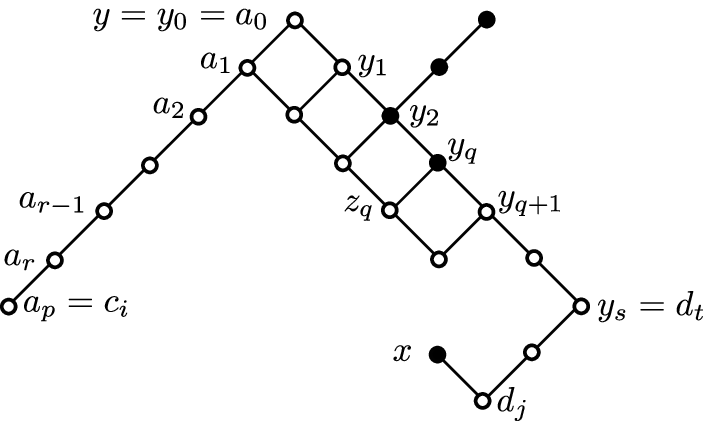}
\caption{If \eqref{mkrJdBr} fails}\label{fig-prfDtls1}
\end{figure}

Note that $q<s$ implies that   $s\geq 1$ and that    $y_1$ and $y_{q+1}$ will make sense  later. Since $\lefth_R(y)=i$ by \eqref{mkrvNd}, \eqref{stLRchains} yields that $[c_i,y]$ is a chain. Since  $y\geq d_t$ but, by  \eqref{dkhNmxWn}, $c_i\ngeq d_t$, we obtain that  
 $[c_i,y]$ is a nontrivial chain. Denote its element as follows:
\[[c_i,y] = \set{y=a_0\succ a_1\succ\dots\succ a_p=c_i},
\]
where $p\geq 1$.  By \eqref{dkjGHhPqvar}, $\lefth_R(a_1)=\lefth_R(y)=i$. By \eqref{djdzBgRWv},  $y=c_i\vee d_t$. Since $y=c_i\vee d_t\leq a_1\vee y_1\leq y$, we obtain that $a_1\neq y_1$. Thus, as two distinct lower covers of $y$, $a_1$ and $y_1$ are incomparable.  Observe that $c_i\leq y_1$ is impossible because otherwise $y=c_i\vee d_t\leq y_1 < y$. Hence, $\lefth_R(y_1)<i=\lefth_R(a_1)$.  Combining this inequality with $a_1\parallel y_1$ and \eqref{sldiHjNxYa} and using Lemma~\ref{leftrightlemma}\eqref{leftrightlemmab}, we obtain that $a_1\rellambda y_1$.

Next, we assert that $a_1\in S$. Suppose, for a new contradiction, that $a_1\notin S$. 
This means that $a_1$ is strictly on the right of $\leftb L$. Since $a_1\prec y$, Lemma~\ref{kellyrivallemma} excludes that $y$ is strictly on the left of $\leftb L$. However, $y\in S$ is on the left of $\leftb L$, so $y\in\leftb L$.   We know that $a_p=c_i\in\leftb R$ belongs to $S$, whence there exists a smallest $r\in\set{2,\dots,r}$ such that $\set{a_p, a_{p-1},\dots, a_r}\subseteq S$.  Since $a_{r-1}$ is not in $S$, it strictly is on the right of $\leftb L$.  But $a_r$ is on the left of $\leftb L$ and $a_r\prec a_{r-1}$. Lemma~\ref{kellyrivallemma} implies easily that $a_r\in\leftb L$. The interval $[a_r, a_0]$ is a chain since so is $[a_p,a_0]=[c_i,y]$ by 
\eqref{stLRchains}. Since $a_r,a_0\in \leftb L$ and $\leftb L\cap  [a_r, a_0]$ is a maximal chain in the interval $[a_r, a_0]=\set{a_r\prec \dots\prec a_1\prec a_0}$, it follows that $\set{a_r, \dots, a_1,a_0}\subseteq \leftb L\subseteq S$. This contradicts   $a_1\notin S$. Thus, $a_1\in S$. 

Since $S$ is meet-closed, $z_q:=a_1\wedge y_q\in S$. 
The distributivity of $S$, see Lemma~\ref{SdstrL}, yields that $z_q\preceq y_q$.
If $z_q=y_q$, then we have $y>a_1 =  a_1\vee z_q = a_1\vee y_q\geq c_i\vee d_t$, which is a contradiction since  $y=c_i\vee d_t$ by  \eqref{djdzBgRWv}. Hence,  
$z_q\prec y_q$.  We also know that $y_{q+1}\prec y_q$. By the choice of $q$, $y_{q+1}\notin S$, so $y_{q+1}$ is strictly on the right of $\leftb L$. But the element $y_q\in S$
is on the left of $\leftb L$, and we conclude from Lemma~\ref{kellyrivallemma}  easily again that $y_q\in \leftb L$. Since $q<s$ and $y_q>y_s$, we know that $y_q\neq 0$. Therefore, $\leftb L$ contains a unique element $e$ such that $e\prec y_q$. Since $y_{q+1}\notin S$, we obtain that $e\neq y_{q+1}\neq z_q$. 
Suppose, for a new contradiction, that $e=z_q$. Since $x$ and $z_q=e$ both belong to $\leftb L$, they are comparable. If $x> z_q$, then $x\geq y_q\in \leftb L$, so \eqref{dkjGHhPqvar} and \eqref{sldiHjNxYb} imply that $j=\righth_R(x)\geq \righth_R(y_q)=\righth_R(y)=t$, which is a contradiction excluding that $e=z_q$. 
Consequently, the set $\set{z_q,e,y_{q+1}}$, which consists of  distinct lower covers of $y_q$,   is a three-element antichain.  
Hence, as opposed to $e$, $z_q$ does not belong to the chain $\leftb L$. If $z_q\rellambda e$, then $z_q$ is strictly on the left of $\leftb L$, so $z_q\notin L$, which contradicts Definition~\ref{dfrctext}\eqref{dfrctextc}. Hence, by Lemma~\ref{leftrightlemma}\eqref{leftrightlemmab}, $e\rellambda z_q$. However, then  $z_q$ is strictly on the right of $\leftb L$ by the left-right dual of Lemma~\ref{leftrightlemma}\eqref{leftrightlemmaa}, which contradicts $z_q\in S$. That is, $e\neq z_q$ also leads to a contradiction. This proves \eqref{mkrJdBr}.

It follows from  \eqref{mkrvNd} and \eqref{mkrJdBr} that $\pair{\lefth_R(y)}{\righth_R(y)}=\pair ij\notin \incoord L$. Hence, $y\notin L$, which gives that $y\notin\leftb L$. On the other hand, $y\in S$ yields that $y$ is strictly on the left of $\leftb L$. Therefore, $\pair ij = \pair{\lefth_R(y)}{\righth_R(y)} \in   \leftcoord R$.
This implies the ``$\supseteq$'' part of \eqref{jGjdTbZc}.  Thus, \eqref{jGjdTbZc} holds, and so does \eqref{jGjdTbZd} by left-right symmetry.

Next, we deal with \eqref{jGjdTbZe}. 
The pair $\pair{i'}{j}$ in \eqref{jGjdTbZe} corresponds to the coordinate pair $\pair{\lefth_L(x)}{\righth_L(x)}$ for some element $x\in L$.   By \eqref{sldiHjNxYa}, the condition that for every $ \pair{i''}{j''}\in\incoord L$, $  i''> i' \then j'' \geq{} j$ says that no element of $L$ is to the left of $x$, that is, 
 this  $x$ belongs to $\leftb L$.  Therefore, the right-hand side of the equation in \eqref{jGjdTbZe} is the same as that in \eqref{jGjdTbZc}. Hence,  \eqref{jGjdTbZe} follows from \eqref{jGjdTbZc}. Similarly, \eqref{jGjdTbZd} implies \eqref{jGjdTbZf}. In this way, we have proved the equations  \eqref{jGjdTbZa}--\eqref{jGjdTbZf}.

It follows from  \eqref{jGjdTbZb},  \eqref{jGjdTbZe},  and \eqref{jGjdTbZf} that $\allcoord R$ depends only on the fixed diagram of $L$, and it only depends on $\incoord L$.  Finally, Lemma~\ref{sldiHjNxY} and \eqref{sldiHjNxYb} imply that $\gamma$ and $\delta$ are isomorphisms. This completes the proof of  Lemma~\ref{outlemmA}.
\end{proof}

Let $L$ be a planar semimodular lattice. According to  \init{G.\ }Gr\"atzer and \init{E.\ }Knapp~\cite{gratzerknapp1}, a \emph{full slimming} (sublattice) $L'$ of $L$ is obtained from a planar diagram of $L$ by omitting all elements from the interiors of intervals of length 2 as long as there are elements to omit in this way. Note that $L'$, as a subset of $L$, is not unique; this is witnessed by $L=M_3$. However, the full slimming sublattice becomes unique if the planar diagram of $L$ is fixed.
 In \cite{gratzerknapp1}, the elements we omit are called ``eyes''. Note that $L'$ is a slim semimodular lattice.
Note also that when we omit an eye from the lattice, then we also omit this eye (which is a doubly irreducible element)  from the diagram with the two edges from the eye. The converse procedure, when we put the omitted elements back, is called an \emph{anti-slimming}. 
An element $x\in L$ is \emph{reducible} if it is join-reducible or meet-reducible, that is, if $x\in (L\setminus \Jir L)\cup (L\setminus \Mir L)$. In other words, if  it is not  doubly irreducible.
It follows obviously from the slimming procedure that if $L'$ is a full slimming sublattice of $L$, then 
\begin{equation}
L' \text{ contains every reducible element of }L\text.
\label{nrxrDmrD}
\end{equation}
Although we know from \init{G.\,}Cz\'edli and \init{E.\,T.\ }Schmidt~\cite[Lemma 4.1]{czgschslim2} that $L$ determines $L'$ up to isomorphisms, we need a stronger statement here. 
By \init{G.\ }Gr\"atzer and \init{E.\ }Knapp~\cite[Lemma 8]{gratzerknapp1}, an element in a slim semimodular lattice  can have at most two covers. Therefore, every 4-cell can be described by its bottom element. To capture the situation that $L'$ is a full slimming (sublattice) of a planar semimodular lattice $L$, we define \emph{the numerical companion map} $\unu=\inu L{L'}$ associated with the full slimming sublattice $L'$ as follows. It is the map $\inu L{L'}\colon L'\to  \mathbb N_0:=\set{0,1,2,\dots}$ defined by
\begin{equation}
\inu L{L'}(x)= \begin{cases}
n,&\text{if } x\text{ is the bottom of a 4-cell that has }n\text{ eyes in }L,\cr
0,&\text{otherwise.}
\label{slimmingMAP}
\end{cases}
\end{equation}
Let $L_i'$ be a full slimming sublattice of a planar semimodular  lattice $L_i$, for $i\in\set{1,2}$, and let $\phi\colon L_1'\to L_2'$ be an isomorphism. We say that $\phi$ is an \emph{$\unu$-preserving isomorphism} if $\inu{L_1}{L_1'}= \inu{L_2}{L_2'}\circ\phi$. (We compose maps from right to left.) 
The map $\inu L{L'}$ exactly describes how to get $L$ back from $L'$ by anti-slimming.  Hence,  obviously, 
\begin{equation}
\parbox{6.5cm}{every $\unu$-preserving $L_1'\to L_2'$ isomorphism extends to an $L_1\to L_2$  isomorphism.}
\label{sdTZnDsghpq}
\end{equation}
The restriction of a map $\kappa$ to a set $A$ is denoted by $\restrict\kappa A$.

\begin{lemma} \label{slimcompmml} For $i\in\set{1,2}$, let $L_i'$ be a full slimming sublattice of a planar semimodular lattice $L_i$.
\begin{enumeratei}
\item\label{slimcompmmla} $L_1$ is glued sum indecomposable iff so is $L_1'$. 
\item\label{slimcompmmlb} $L_1$ is rectangular iff so is $L_1'$. $($This is  \init{G.\ }Cz\'edli~\cite[(6.4)]{czgrepres}.$)$
\item\label{slimcompmmlc} If $\phi\colon L_1\to L_2$ is an isomorphism, then there exists an automorphism $\pi$ of $L_1$ such that the restriction $\restrict{(\phi\circ\pi)}{L_1'}$ is a $\unu$-preserving $L_1'\to L_2'$ isomorphism and, in addition, $\pi(x)=x$  for every reducible  $x\in L_1$.
\item\label{slimcompmmld} Any two full slimming sublattices of a planar semimodular lattice are isomorphic. 
\end{enumeratei}
\end{lemma}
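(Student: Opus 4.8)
The plan is to handle the four parts separately, but with most of the work going into part~\eqref{slimcompmmlc}. Part~\eqref{slimcompmmlb} is the cited result \cite[(6.4)]{czgrepres}, so nothing is to be proved there. Part~\eqref{slimcompmmld} will follow at once from part~\eqref{slimcompmmlc} by specializing to $L_1=L_2$ and $\phi=\id$, which yields an automorphism $\pi$ whose restriction is a $\unu$-preserving isomorphism between the two given slimmings, so in particular they are isomorphic. Throughout I will use \eqref{nrxrDmrD}, that every full slimming contains all reducible elements, so that the only freedom in a full slimming is \emph{which} doubly irreducible atoms of each length-$2$ interval are retained, and I will use \eqref{slimiffnodiamond}, that a full slimming is slim and hence meets every length-$2$ interval in a sublattice with at most two atoms.

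For part~\eqref{slimcompmmla} I will prove that $L_1$ has an element of $L_1\setminus\set{0,1}$ comparable to all others if and only if $L_1'$ does; since chains have no eyes and thus satisfy $L_1=L_1'$, this settles the equivalence of glued sum indecomposability. An eye is incomparable with the remaining atoms of its $(\geq 3)$-atom interval, so a comparable-to-all element of $L_1$ is never an eye, lies therefore in $L_1'$, and stays comparable to every element of $L_1'\subseteq L_1$. Conversely, suppose $c\in L_1'\setminus\set{0,1}$ is comparable to all of $L_1'$ and let $e$ be an eye of a length-$2$ interval $[b,t]$; then $b,t\in L_1'$ since $b$ is meet-reducible and $t$ is join-reducible, so $c$ is comparable to both $b$ and $t$. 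As the two side atoms of the $L_1'$-$4$-cell $[b,t]$ are incomparable, $c$ cannot lie strictly between $b$ and $t$; hence $c\leq b<e$ or $c\geq t>e$, so $c$ is comparable to $e$ and thus to all of $L_1$.

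For the central part~\eqref{slimcompmmlc}, the idea is to correct $\phi$ by an automorphism of $L_1$. Put $M:=\phi^{-1}(L_2')$; this is a slim $\set{0,1}$-sublattice of $L_1$ that contains every reducible element of $L_1$ and retains exactly two atoms from each length-$2$ interval having more than two atoms, which are the only features used below. I will build an automorphism $\pi$ of $L_1$ with $\pi(L_1')=M$ that fixes every reducible element, and then take $\phi\circ\pi$ as the desired map, since $\phi\circ\pi(L_1')=\phi(M)=L_2'$. To construct $\pi$, I work one interval at a time: in a length-$2$ interval $[b,t]$ with at least three atoms, at most two atoms are reducible (three reducible atoms would produce a cover-preserving $M_3$ inside the slim lattice $L_1'$, contradicting \eqref{slimiffnodiamond}), and these reducible atoms lie in both $L_1'$ and $M$; consequently $L_1'$ and $M$ retain the \emph{same number} of doubly irreducible atoms of $[b,t]$. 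I define $\pi$ on the doubly irreducible atoms of $[b,t]$ as any bijection sending the $L_1'$-retained ones onto the $M$-retained ones and the $L_1'$-eyes onto the $M$-eyes, and let $\pi$ fix everything else. Each doubly irreducible atom has $b$ as its only lower cover and $t$ as its only upper cover, so such a permutation is a lattice automorphism; the permutations on distinct intervals are independent and assemble into $\pi$. A short check gives $\pi(L_1')=M$. Finally $\phi\circ\pi$ restricts to a lattice isomorphism $L_1'\to L_2'$, and it is $\unu$-preserving because the number of eyes sitting over an element $x$ is determined by the isomorphism type of the interval of $L_1$ above $x$, which the lattice isomorphism $\phi\circ\pi$ preserves.

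The main obstacle is the bookkeeping inside part~\eqref{slimcompmmlc}: one must confirm that the interval-local permutations of doubly irreducible atoms patch into a single global automorphism fixing all reducibles, and that the numbers of retained doubly irreducible atoms agree for $L_1'$ and $M$. Both hinge on the single structural fact that a $(\geq 3)$-atom interval contains at most two reducible atoms, which is forced by slimness of the slimmings through \eqref{slimiffnodiamond}; the extension principle \eqref{sdTZnDsghpq} is not strictly needed for \eqref{slimcompmmlc} but explains why $\unu$-preservation is exactly the invariant worth tracking.
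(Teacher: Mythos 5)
Your proof is correct, and for the central part (iii) it takes a genuinely different route from the paper. The paper proves (iii) by induction on $|L_1|$: it locates a single eye $x_1\notin L_1'$ together with $y_1=\phi^{-1}(x_2)$ in the same length-two interval, corrects $\phi$ by the transposition $\pi_0$ of $x_1$ and $y_1$, deletes $x_1$ and $x_2$, applies the induction hypothesis to $L_i^\ast=L_i\setminus\set{x_i}$, and then verifies $\unu$-preservation by an explicit two-case computation (at the bottom of the affected $4$-cell and elsewhere). You instead pull all of $L_2'$ back at once via $M=\phi^{-1}(L_2')$, observe that $L_1'$ and $M$ contain the same reducible elements and, in every length-two interval with at least three atoms, the same number $2-r$ of doubly irreducible atoms (where $r\le 2$ is the number of reducible atoms, bounded by \eqref{slimiffnodiamond}), and assemble $\pi$ from independent interval-local permutations of doubly irreducible atoms; $\unu$-preservation then falls out in one stroke because the eye count over $x$ equals the number of atoms of the corresponding length-two interval of the \emph{ambient} lattice minus two, an invariant of the global isomorphism $\phi\circ\pi$. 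This buys a shorter, non-recursive argument and makes the combinatorial content explicit (a full slimming keeps all reducibles and exactly two atoms per multi-atom cell), at the price of leaning on that structural description of full slimmings, which the paper's one-eye-at-a-time induction never needs to state in full. Your derivation of (iv) from (iii) with $L_1=L_2$ and $\phi=\id$ is also fine (the paper relies on a citation there), and your treatments of (i) and (ii) match the paper's.
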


\begin{proof}  To prove part \eqref{slimcompmmla}, assume that $L_1$ is glued sum indecomposable and that $x\in L_1'\setminus\set{0,1}$. There is an element $y\in L_1$ such that $x\parallel y$. We can assume that $y\notin L_1'$, since otherwise there is nothing to do. Then $y$ is an ``eye'', so there are $a,b\in L_1'$ such that $\set{a\wedge b,a,b,a\vee b}$ is a covering square in $L_1$ and $y\in[a\wedge b,a\vee b]$ is to the left of $a$ and to the right of $b$. If $x\nparallel a$ and $x\nparallel b$, then either $x\leq a\wedge b\leq y$, or $x\geq a\vee b\geq y$, because the rest of cases would contradict $a\parallel b$. But this contradicts $x\parallel y$, proving that $L_1'$ is slim. The converse direction is trivial, because if $L_1'$ is glued sum indecomposable, then its elements outside $\set{0,1}$ are incomparable with appropriate elements of $L_1'$ while the eyes are incomparable with the corners of the covering squares they were removed from to obtain $L_1'$. This proves \eqref{slimcompmmla}.

Part \eqref{slimcompmmlb} has already been proved in \init{G.\ }Cz\'edli~\cite{czgrepres}.

We assume that, for $i\in\set{1,2}$, a planar diagram of $L_i$ is fixed and that we form the  full slimming sublattice $L_i'$ according to this diagram. 
We prove \eqref{slimcompmmlc} by induction on $|L_1|$. If $L_1$ is slim, then the statement is trivial, because $L_1'=L_1$, $L_2'=L_2$, $\pi$ is the identity map on $L_1$, and both numerical companion maps are the constant zero maps. 
 Assume that $L_1$ is not slim. Then there are $a_1 < b_1 \in L_1$ with images $a_2=\phi(a_1)$ and $b_2=\phi(b_1)$ such that,
for $i\in\set{1,2}$,  $[a_i, b_i]$ is
an interval of length two and it contains a doubly irreducible element $x_i$ in its interior such that $x_i\notin L_i'$.  Let $y_1=\phi^{-1}(x_2)$; it is a doubly irreducible element of $L_1$ in  $[a_1,b_1]$. 
Clearly, there is an automorphism $\pi_0$ of $L_1$ such that $\pi_0(x_1)=y_1$, $\pi_0(y_1)=x_1$, and $\pi_0(z)=z$ for $z\notin\set{x_1,y_1}$. As we require in case of our automorphisms, every reducible element is a fixed point of   $\pi_0$.

Observe that $(\phi\circ\pi_0)(x_1)= \phi(\pi_0(x_1))= \phi(y_1)=x_2$. Since $x_i$ is doubly irreducible, $L_i^\ast:=L_i\setminus\set{x_i}$ is a sublattice of $L_i$ and  $\phi\ast:=\restrict{ (\phi\circ\pi_0) }{L_1^\ast}$ is an $L_1^\ast\to L_2^\ast$ isomorphism. Note that $L_i'$ is also a full slimming sublattice of $L_i^\ast$. By the induction hypothesis, $L_1^\ast$ has an automorphism $\pi^\ast$ such that  $\restrict{(\phi^\ast\circ \pi^\ast)}{L_1'}$ is an $\unu$-preserving $L_1'\to L_2'$ isomorphism and, in addition, $\pi^\ast(z)=z$ for every reducible element  $z$ of $L_1^\ast $. 
In particular,
\begin{equation}
\inu{L_2^\ast}{L_2'} \circ  (\restrict{(\phi^\ast\circ \pi^\ast)}{L_1'}) =  \inu{L_1^\ast}{L_1'} \text.
\label{zldhl}
\end{equation}
Let $\nszgpi\colon L_1\to L_1$ the only automorphism that extends $\pi^\ast$. That is, $\nszgpi(x_1)=x_1$ and, for $z\neq x_1$, $\nszgpi(z)=\pi^\ast(z)$. We define $\pi:=\pi_0\circ\nszgpi$, and we claim that it has the properties required in Lemma~\ref{slimcompmml}\eqref{slimcompmmlc}. 
If $z$ is a reducible element of $L_1$, then $z\notin\set{ x_1,y_1}$, since $x_1$ and $y_1$ are doubly irreducible. Hence, $z\in  L_1^\ast$. Furthermore, $z$ is also reducible in  $L_1^\ast$, because it is only $a_1$ and $b_1\in L_1$ that loose one of their  upper and lower covers, respectively, when passing from $L_1$ to $L_1^\ast$, but they  still have at least two upper and lower covers, respectively, in $L_1^\ast$. Hence, $z$ is a fixed point of $\pi^\ast$ by the induction hypothesis, and we obtain that $
\pi(z)= (\pi_0\circ\nszgpi)(z) = \pi_0(\nszgpi(z)) = 
\pi_0(\pi^\ast(z))=\pi_0(z)=z$, as Lemma~\ref{slimcompmml}\eqref{slimcompmmlc} requires. Next, we show  that
\begin{equation}
\restrict{(\phi\circ\pi)}{L_1'} =
\restrict{(\phi^\ast \circ \pi^\ast  )}{L_1'},   
\label{zhlglpnkSt}
\end{equation}
Let $z\in L_1'$. Since $x_1\notin L_1'$, $z\neq x_1$. We compute as follows.
\begin{align*}
(\phi\circ\pi)(z)&= (\phi\circ\pi_0\circ\nszgpi)(z)=  (\phi\circ\pi_0)(\nszgpi(z))=  (\phi\circ\pi_0)(\pi^\ast(z))\cr
&= \restrict{(\phi\circ\pi_0)}{L_1^\ast}(\pi^\ast(z)) = \phi^\ast(\pi^\ast(z)) = (\phi^\ast\circ \pi^\ast)(z)\text.
\end{align*}
This proves \eqref{zhlglpnkSt}. In particular, this also gives that $\restrict{(\phi\circ\pi)}{L_1'}$ is an isomorphism from $L_1'$ to $L_2'$. We have to prove that it is $\unu$-preserving, that is,
\begin{equation}
\inu{L_2}{L_2'} \circ  (\restrict{(\phi \circ \pi )}{L_1'}) \overset ?=  \inu{L_1}{L_1'} \text.
\label{prscrssq}
\end{equation}
Before proving \eqref{prscrssq}, observe that, for $z\in L_i'$ and $i\in\set{1,2}$, 
\begin{equation}
\inu{L_i}{L_i'}(z)=
\begin{cases}
\inu{L_i^\ast}{L_i'}(z),&\text{ if }z\neq a_i.\cr
1+\inu{L_i^\ast}{L_i'}(z),&\text{ if }z = a_i\text.
\end{cases}
\label{ktmnTNfqC}
\end{equation}
Hence $z=a_1$, which is in $L_1'$ by \eqref{nrxrDmrD}, and $z\in L_1'\setminus\set{a_1}$ need separate treatments. 
First, since $a_1$ is reducible and   $\pi^\ast$, $\pi_0$, and $\pi$ keep it fixed,
\begin{align}
&
(\restrict{(\phi \circ \pi )}{L_1'})(a_1)=(\phi \circ \pi ) (a_1) 
 = \phi ( \pi( a_1) )= \phi ( a_1)=a_2, 
\label{mRtwDsrpa} \\
&\begin{aligned}
&(\restrict{(\phi^\ast  \circ \pi^\ast  )}{L_1'})(a_1)=(\phi^\ast  \circ \pi^\ast  ) (a_1) 
 = \phi^\ast  ( \pi^\ast ( a_1) )
= \phi^\ast  ( a_1) \cr 
&\kern 16.5 pt =\restrict{(\phi\circ\pi_0)}{L_1^\ast}(a_1)
=(\phi\circ\pi_0)(a_1)=\phi(\pi_0(a_1))=\phi(a_1) =a_2\text.
\end{aligned}
\label{mRtwDsrpb}
\end{align}
Hence, we can compute as follows.
\begin{align*}
\bigl(\inu{L_2}{L_2'} &\circ  (\restrict{(\phi \circ \pi )}{L_1'})\bigr) (a_1) = 
\inu{L_2}{L_2'} \bigl(  (\restrict{(\phi \circ \pi)}{L_1'})(a_1)  \bigr) 
\overset{\eqref{mRtwDsrpa} }=
\inu{L_2}{L_2'}( a_2) \cr
&\overset{\eqref {ktmnTNfqC} }=
1+\inu{L_2^\ast}{L_2'}( a_2) 
\overset{\eqref {mRtwDsrpb} }=
1+\inu{L_2^\ast}{L_2'}\bigl( (\restrict{(\phi^\ast  \circ \pi^\ast  )}{L_1'}) (a_1)  \bigr) \cr
&\overset{\eqref {zldhl} }=
1+\inu{L_1^\ast}{L_1'}(a_1) 
\overset{\eqref {ktmnTNfqC} }= \inu{L_1}{L_1'}(a_1) \text.
\end{align*}
This shows that \eqref{prscrssq} 
 holds for the element $a_1$. Second, 
assume that $z\in L_1'\setminus\set{a_1}$. Since the map in \eqref{mRtwDsrpb} is a bijection, $(\restrict{(\phi^\ast  \circ \pi^\ast )}{L_1'})(z) \neq a_2$, and we can compute as follows.
\begin{align*}
\bigl(\inu{L_2}{L_2'} &\circ  (\restrict{(\phi \circ \pi )}{L_1'})\bigr) (z) 
\overset{\eqref  {zhlglpnkSt}  }= 
\inu{L_2}{L_2'} \bigl(  (\restrict{(\phi^\ast  \circ \pi^\ast )}{L_1'})(z)  \bigr) 
\cr
&
\overset{\eqref {ktmnTNfqC} }=
\inu{L_2^\ast}{L_2'} \bigl(  (\restrict{(\phi^\ast  \circ \pi^\ast )}{L_1'})(z)  \bigr) 
\overset{\eqref {zldhl} }=
\inu{L_1^\ast}{L_1'} (z)
\overset{\eqref {ktmnTNfqC} }=
\inu{L_1}{L_1'} (z)
\text.
\end{align*}
Therefore, \eqref{prscrssq} holds and $\restrict{(\phi\circ\pi)}{L_1'}$ is $\unu$-preserving. This  completes  the proof of Lemma~\ref{slimcompmml}. 
\end{proof}

\begin{definition}[{\init{D.\ }Kelly and \init{I.\ }Rival ~\cite[p.~ 640]{kellyrival}}]
\label{dkrmMpdF}
For planar lattice diagrams $D_1$ and $D_2$, a 
bijection $\phi\colon D_1\to D_2$ is a \emph{similarity map} if it is a lattice isomorphism and, for all $x,y,z\in D_1$ with $y\prec x$ and $z\prec x$, $y$ is to the left of $z$ iff $\phi(y)$ is to the left of $\phi(z)$. If there is such a map, then $D_1$ is \emph{similar} to~$D_2$.
\end{definition}

Note that similarity turns out to be a self-dual condition; see  \init{G.\ }Cz\'edli and \init{G.\ }Gr\"atzer~\cite[Exercise 3.9]{czgggltsta}. Furthermore, if $D_1$ and $D_2$ are planar diagrams of slim (but not necessarily semimodular) lattices and a bijective map 
 $\phi\colon D_1\to D_2$ is a lattice isomorphism, then  
\begin{equation}
\text{$\phi$ is a similarity map iff it preserves the left boundary chain,}
\label{kkfpPjRw}
\end{equation}
that is, $\phi(\leftb{D_1})=\leftb{D_2}$; see \cite[Theorem 3-4.6]{czgggltsta}.  
A map between two lattices can be considered as a map between (the vertex sets) of their diagrams.  For a diagram $D$, its mirror image over a vertical axis is denoted by $\refl D$. We say that the planar diagrams of a planar lattice $L$ are unique up to \emph{left-right similarity} if for any two diagrams $D_1$ and $D_2$ of $L$, $D_1$ is similar to $D_1$ or it is similar to $\refl{D_1}$. For a statement stronger than the following one, see  \init{G.\ }Cz\'edli and \init{G.\ }Gr\"atzer~\cite[Theorem 3-4.5]{czgggltsta}.

\begin{proposition}[{\init{G.\,}Cz\'edli and \init{E.\,T.\ }Schmidt~\cite[Lemma 4.7]{czgschslim2}}]\label{slmUq} 
Assume that $L_1$ and $L_2$ are glued sum indecomposable slim semimodular lattices with planar diagrams $D_1$ and $D_2$, respectively. If $\phi\colon L_1\to L_2$ is a lattice isomorphism, then $\phi\colon D_1\to D_2$ or $\phi\colon D_1\to \refl{D_2}$ is a similarity map. Consequently, the planar diagrams of a glued sum indecomposable slim lattice are unique up to  left-right similarity.
\end{proposition}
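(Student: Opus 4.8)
The plan is to reduce everything to the left boundary chain via \eqref{kkfpPjRw} and to show that $\phi$ carries $\leftb{D_1}$ onto $\leftb{D_2}$ or onto $\rightb{D_2}$; since mirroring swaps the sides, $\leftb{\refl{D_2}}=\rightb{D_2}$, so the former case makes $\phi\colon D_1\to D_2$ a similarity map and the latter makes $\phi\colon D_1\to\refl{D_2}$ one. Each $L_i$ is slim semimodular and glued sum indecomposable, hence has at least $4$ elements and, by Theorem~\ref{thmmain}, a \rectext{}, so the coordinate machinery of Definition~\ref{DFgjRd} and Lemma~\ref{sldiHjNxY} applies to it. For $i\in\set{1,2}$ I would set $W^{(i)}_{\tel}:=\Jir{L_i}\cap\leftb{L_i}$ and $W^{(i)}_{\ter}:=\Jir{L_i}\cap\rightb{L_i}$. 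By \eqref{ddHtBgRw} and \eqref{dkdgzNmD}, $\set{W^{(i)}_{\tel},W^{(i)}_{\ter}}$ is a partition of $\Jir{L_i}$ into two disjoint chains, and Definition~\ref{DFgjRd}\eqref{DFgjRdb} reads $\lefth_{L_i}(x)=|W^{(i)}_{\tel}\cap\ideal x|$ and $\righth_{L_i}(x)=|W^{(i)}_{\ter}\cap\ideal x|$. Finally, $x\in\leftb{L_i}$ iff no element is strictly on the left of $x$, which by Lemma~\ref{leftrightlemma}\eqref{leftrightlemmaa} and \eqref{sldiHjNxYa} means that no $y$ satisfies $\lefth_{L_i}(y)>\lefth_{L_i}(x)$ and $\righth_{L_i}(y)<\righth_{L_i}(x)$; dually for $\rightb{L_i}$.

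The crux is that the partition $\set{W^{(i)}_{\tel},W^{(i)}_{\ter}}$ is intrinsic to the lattice $L_i$ up to interchanging the two blocks. Consider the graph $G_i$ on vertex set $\Jir{L_i}$ whose edges are the incomparable pairs. A partition of $\Jir{L_i}$ into two chains is exactly a proper two-colouring of $G_i$, and $\set{W^{(i)}_{\tel},W^{(i)}_{\ter}}$ is one such colouring; hence it is unique up to swapping the colours, provided $G_i$ is connected. Proving this connectivity is the main obstacle, and I would argue it by contradiction: if $G_i$ were disconnected, write $\Jir{L_i}=A\sqcup B$ with $A$ a connected component and $B\neq\emptyset$ the rest, so that every element of $A$ is comparable with every element of $B$. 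Using that $A$ is incomparability-connected, each $b\in B$ lies entirely below $A$ or entirely above $A$; splitting $B=B_-\cup B_+$ accordingly, one checks that a suitable join of join-irreducibles—namely $\bigvee(A\cup B_-)$ when $B_+\neq\emptyset$, and $\bigvee B$ when $B_+=\emptyset$—is comparable with every $x\in L_i$ (because $x=\bigvee(\Jir{L_i}\cap\ideal x)$) and lies in $L_i\setminus\set{0,1}$ (it is nonzero, and it is below some join-irreducible, which cannot be $1$). This contradicts glued sum indecomposability. Hence $G_i$ is connected and the two-colouring is unique up to swap.

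To finish, I would note that $\phi$ restricts to an order isomorphism $\Jir{L_1}\to\Jir{L_2}$, so it maps $G_1$ isomorphically onto $G_2$ and therefore carries the unique two-colouring of $G_1$ to that of $G_2$. Thus either $\phi(W^{(1)}_{\tel})=W^{(2)}_{\tel}$ and $\phi(W^{(1)}_{\ter})=W^{(2)}_{\ter}$, or $\phi(W^{(1)}_{\tel})=W^{(2)}_{\ter}$ and $\phi(W^{(1)}_{\ter})=W^{(2)}_{\tel}$. Because $\ideal{\phi(x)}=\phi(\ideal x)$, the counting formulas above give, in the first case, $\lefth_{L_2}(\phi(x))=\lefth_{L_1}(x)$ and $\righth_{L_2}(\phi(x))=\righth_{L_1}(x)$ for all $x\in L_1$, while in the second case the two coordinates are interchanged. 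Feeding this into the coordinate description of the boundary chains from the first paragraph, the first case yields $\phi(\leftb{D_1})=\leftb{D_2}$ and the second yields $\phi(\leftb{D_1})=\rightb{D_2}=\leftb{\refl{D_2}}$. By \eqref{kkfpPjRw}, applied to $D_2$ respectively to $\refl{D_2}$, $\phi\colon D_1\to D_2$ or $\phi\colon D_1\to\refl{D_2}$ is a similarity map, as claimed. The final assertion then follows by taking $L_1=L_2=L$, letting $D_1,D_2$ be any two diagrams of $L$, and applying the above to $\phi=\id$.
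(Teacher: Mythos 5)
Your argument is essentially correct, but there is nothing in the paper to compare it with: the paper does not prove this proposition, it imports it verbatim from Cz\'edli and Schmidt~\cite[Lemma 4.7]{czgschslim2}. What you have written is therefore an independent, self-contained derivation from the toolkit the paper does develop, and the route is sound: reduce similarity to preservation of the left boundary chain via \eqref{kkfpPjRw}, describe $\leftb{L_i}$ and $\rightb{L_i}$ through the join-coordinates of Definition~\ref{DFgjRd} and Lemma~\ref{sldiHjNxY}, and show that the partition of $\Jir{L_i}$ into the two boundary chains is intrinsic by proving the incomparability graph connected — the connectivity step being exactly where glued sum indecomposability enters, via the narrows element $\bigvee(A\cup B_-)$ (resp.\ $\bigvee B$). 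Two details deserve a line each. First, disjointness of $W^{(i)}_{\tel}$ and $W^{(i)}_{\ter}$ needs $1\notin\Jir{L_i}$: if $1$ were join-irreducible, its unique lower cover would be a narrows outside $\set{0,1}$ (it is not $0$ since $|L_i|\geq 4$), contradicting indecomposability; this same fact underlies your remark that the join-irreducible above your narrows candidate ``cannot be $1$''. Second, in checking that $\bigvee(A\cup B_-)$ is comparable with every $x$, you need each $b\in B_+$ to dominate every $b'\in B_-$, which follows at once from $b>a>b'$ for any $a\in A$. One caution on dependencies: you invoke Theorem~\ref{thmmain} only for the existence of a \rectext{}, i.e., part (i), which the paper obtains from Gr\"atzer--Knapp independently of Proposition~\ref{slmUq}; parts (iii)--(v) of that theorem do rely on Proposition~\ref{slmUq}, so citing those here would be circular, but your use is safe — and in fact the parts of Lemma~\ref{sldiHjNxY} you need are proved without reference to the extension $R$, so the invocation only serves to satisfy the formal hypothesis \eqref{ssmfTsCtwsta}.
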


Now, we are in the position to complete the present section briefly.

\begin{proof}[Proof of Theorem~\ref{thmmain}]
 Let $L$ be a planar semimodular lattice. 
The existence of a \rectext{} $R$ of $L$ follows from  \init{G.\ }Gr\"atzer and \init{E.\ }Knapp~\cite[Proof of Theorem 7]{gratzerknapp3}, and it also follows from 
 \init{G.\ }Cz\'edli~\cite[Lemma 6.4]{czgrepres}. Thus, part \eqref{thmmaina} of the theorem holds.
 
To prove part \eqref{thmmainb}, let $R$ be an arbitrary \rectext{} of $L$. 
Based on \eqref{slimiffnodiamond}, it suffices to show that $R$ has a cover-preserving diamond sublattice $M_3$ iff so has $L$.  The ``if'' part is evident since $L$ is a cover-preserving sublattice of $L$. Conversely, assume that $M_3$ is a cover-preserving sublattice of $R$. It follows from Definition~\ref{dfrctext}\eqref{dfrctextc} that none of its three atoms is in $R\setminus L$. Hence, all atoms of $M_3$ belong to $L$. Since $M_3$ is generated by its atoms, $M_3$ is a cover-preserving sublattice of $L$. This proves part \eqref{thmmainb} of the theorem.

To prove \eqref{thmmaine}, assume that $L$ and $L'$  are glued sum indecomposable slim planar semimodular lattices with fixed planar diagrams and that $\psi\colon L\to L'$ is an isomorphism. By reflecting one of the diagrams over a vertical axis if necessary, 
Proposition~\ref{slmUq} allows us to assume that $\psi$ is a
similarity map between the respective diagrams. Hence, $\psi(\leftb{L})= \leftb{L'}$ and $\psi(\rightb{L})= \rightb{L'}$. Also, $\emel=\emel'$ and $\emer=\emer'$.
We know from (the last sentence of) Lemma~\ref{outlemmA}
that  $\incoord L$ and $\vincoord L$ are lattices with respect to the componentwise ordering.  The same lemma says that 
$\gamma\colon L\to \incoord L$   is a lattice isomorphism, and so is $\gamma'\colon L'\to \vincoord L$, defined analogously by 
$x\mapsto \pair{\lefth_{L'}(x)}{\righth_{L'}(x)}$. Since $\psi$ is a similarity map, it commutes with the maps $\lsp$ and $\rsp$, and so it preserves the left and right join-coordinates. 
Thus, for $x\in L$, $\gamma(x)=\gamma'(\psi(x))$, that is, $\gamma=\gamma'\circ \psi$, and we also conclude that $\incoord L=\vincoord L$. Hence, \eqref{dksdghNcX}  yields that $\allcoord L=\vallcoord L$.
Since $\gamma$, $\gamma'$, and $\psi$ are isomorphisms, the equality $\gamma=\gamma'\circ \psi$ implies that $\psi=\gamma'^{-1}\circ\gamma$.
Consider the isomorphism $\delta\colon R\to \allcoord R$ from  Lemma~\ref{outlemmA} and the isomorphism $\delta'\colon R'\to \vallcoord R$ defined  by $x\mapsto \pair{\lefth_{R'}(x)}{\righth_{R'}(x)}$ analogously. It follows from Lemma~\ref{samecoordin} that $\delta$ and $\delta'$ extend $\gamma$ and $\gamma'$, respectively. Since $\delta'$ extends $\gamma'$ and they are bijections, $\delta'^{-1}$ extends $\gamma'^{-1}$. 
The equality $\allcoord R=\vallcoord R$ allows us to define a lattice isomorphism $\psi^\ast\colon R\to R'$ by 
$\psi^\ast:=\delta'^{-1}\circ\delta$. Since $\delta$ and $\delta'^{-1}$ extend $\gamma$ and $\gamma'^{-1}$, we conclude that $\psi^\ast$ extends $\gamma'^{-1}\circ\gamma$, which is $\psi$. This proves part \eqref{thmmaine} of the theorem. 

Part \eqref{thmmaind} follows from part \eqref{thmmaine} trivially.

Finally, to prove  \eqref{thmmainc}, let $L$ be a glued sum indecomposable planar semimodular lattice, and let $R_1$ and $R_2$ be \rectext{}s of $L$. Let $R_1'$, and $R_2'$ denote their full slimmings, respectively 
(with respect to their fixed planar diagrams, of course).
These full slimmings are rectangular lattices by Lemma~\ref{slimcompmml}\eqref{slimcompmmlb}.
When we delete  all eyes, one by one, from $R_i$ to obtain $R_i'$, we also delete all eyes from its cover-preserving sublattice, $L$. So, this sublattice changes to a full slimming sublattice $L_i'$ of $L$, for $i\in\set{1,2}$.  Since the deletion of eyes does not spoil the validity of Definition~\ref{dfrctext}\eqref{dfrctextc}, we conclude that $R_i'$ is a \rectext{} of $L_i'$, for $i\in\set{1,2}$.

Applying Lemma~\ref{slimcompmml}\eqref{slimcompmmlc} to the identity map $L\to L$, we obtain an automorphism $\pi$ of $L$ such that
$\restrict{\pi}{L_1'}$ is   a $\unu$-preserving isomorphism $\restrict{\pi}{L_1'} \colon L_1'\to L_2'$.
Thus, 
$\inu{L}{L_1'}=\inu{L}{L_2'}\circ\restrict{\pi}{L_1'} $.
If $B\cong M_3$ is a cover-preserving diamond sublattice of $R_i$, then all the three coatoms of $B$ belong to $L$ by  Definition~\ref{dfrctext}\eqref{dfrctextc}, and so do all elements of $B$, including $0_B$. Hence, by the definition of antislimming, if $B\cong M_3$ is a cover-preserving diamond sublattice of $R_i$, then $0_B\in L_i'$. These facts imply that, for $x\in R_i'\setminus L_i'$ and $y\in L_i'$, we have that  $\inu{R_i}{R_i'}(x)=0$ and    $\inu{R_i}{R_i'}(y)=\inu{L}{L_i'}(y) $. By the already proved part \eqref{thmmaine} of Theorem~\ref{thmmain}, $\restrict{\pi}{L_1'}$ extends to an isomorphism $\phi\colon R_1'\to R_2'$.  For $x\in R_1'\setminus L_1'$,  we have that $\phi(x)\in R_2'\setminus L_2'$, and 
the already established facts imply that
\[(\inu{R_2}{R_2'} \circ\phi)(x) =  \inu{R_2}{R_2'} (\phi(x))=0= \inu{R_1}{R_1'} (x)\text.
\]
On the other hand, for $y\in L_1'$, we have that 
\begin{align*}
(\inu{R_2}{R_2'} \circ\phi)(y) &=  \inu{R_2}{R_2'} (\phi(y)) = \inu{R_2}{R_2'} (\restrict{\pi}{L_1'}(y)) = \inu{L}{L_2'} (\restrict{\pi}{L_1'}(y))\cr
 &= (\inu{L}{L_2'} \circ \restrict{\pi}{L_1'})(y)
 = \inu{L}{L_1'} (y) = \inu{R_1}{R_1'} (y) \text.
\end{align*}
The two displayed equations show that $\inu{R_2}{R_2'} \circ\phi= \inu{R_1}{R_1'}$, which means that  $\phi\colon R_1'\to R_2'$ is a $\unu$-preserving isomorphism. By \eqref{sdTZnDsghpq}, it extends to an $R_1\to R_2$ isomorphism. Consequently, the \rectext{} of $L$ is unique up to isomorphims, which completes the proof of Theorem~\ref{thmmain}.
\end{proof}

\section{A hierarchy of planar semimodular lattice diagrams}\label{uniquediagramssection}
Our experience with planar semimodular lattices makes it reasonable  to develop a hierarchy of diagram classes for planar semimodular lattices. In this section,  
we do so. Several properties of our diagrams and their trajectories will be studied at various levels of this hierarchy. 
In particular, we are interested in what sense our diagrams are unique.
The power of this approach is demonstrated in Section~\ref{swingsection}, where we give a proof of \init{G.\ }Gr\"atzer's Swing Lemma.
Let us repeat that, unless otherwise explicitly stated, our lattices are still assumed to be finite planar semimodular lattices and the diagrams are planar diagrams of these lattices. We are going to define diagram classes $\adia$, $\bdia$, $\cdia$, and $\ddia$; the term ``hierarchy'' is explained by the inclusions $\adia\supset\bdia \supset  \cdia \supset  \ddia$. A small part of this section is just an overview of earlier results in the present setting.

\subsection{Diagrams and uniqueness in Kelly and Rival's sense}
Let $\adia$ be the class of planar diagrams of planar semimodular lattices. 
We recall some well-known concepts from, say, \init{G.\ }Cz\'edli and \init{G.\ }Gr\"atzer~\cite[Definition 3-3.5 and Lemma 3-4.2]{czgggltsta}. 
An element $x$ of a lattice $L$ is a \emph{narrows} if $x\nparallel y$ for all $y\in L$. 
The \emph{glued sum}  $L_1\glsum L_2$  of finite lattices $L_1$ and $L_2$ is a particular case of their (Hall--Dilworth) gluing: we put   $L_2$ atop $L_1$ and identify the singleton filter $\set{1_{L_1}}$ with the singleton ideal $\set{0_{L_2}}$.  Chains and lattices with at least two elements are called \emph{nontrivial}. Remember that a glued sum indecomposable lattice consists of at least four elements by definition. A folklore result says that a finite lattice $L$ and, consequently, any of its diagrams $D$ can uniquely be decomposed as
\begin{equation}
L=L_1\glsum \dots \glsum L_t\quad\text{and}\quad D=D_1\glsum \dots \glsum D_t
\label{cnNcDpsT}
\end{equation}
where $t\in\mathbb N_0:=\set{0,1,2,\dots}$ and, for every $i\in\set{1,\dots,t}$, either $L_i$ is a glued sum indecomposable lattice, or it is a maximal nontrivial (chain) interval that consists of narrows. By definition, the empty sum yields the one element lattice. 
This decomposition makes it meaningful to speak of the \emph{glued sum indecomposable components} of  $L$ or $D$. 
We say that the planar diagrams of a planar lattice $L$ are \emph{unique up to sectional left-right similarity} if for  every  $L_i$ from the canonical decomposition \eqref{cnNcDpsT}, the  planar diagrams of  $L_i$ are unique up to left-right similarity. 
The uniqueness properties of $\adia$, that is, the ``natural isomorphism'' concept in $\adia$,  are explored by the following statement.

\begin{proposition}\label{kTrTzmyY}
If $L$ is a planar semimodular lattice, then its  planar diagrams are unique up to sectional left-right similarity. They are unique even up to left-right similarity if, in addition, $L$ is glued sum indecomposable.
\end{proposition}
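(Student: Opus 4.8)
The plan is to deduce the first (sectional) assertion from the second (glued sum indecomposable) one together with the trivial behaviour of chains, and to prove the second assertion by passing to full slimmings, where the known slim result Proposition~\ref{slmUq} is available, and then anti-slimming back.

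First I would dispatch the sectional statement. Given the canonical decomposition $L=L_1\glsum\dots\glsum L_t$ of \eqref{cnNcDpsT}, each $L_i$ is (isomorphic to) an interval of $L$, hence a cover-preserving sublattice and thus again planar semimodular; and by definition each $L_i$ is either glued sum indecomposable or a nontrivial chain of narrows. For a glued sum indecomposable $L_i$ the second sentence of the proposition (proved next) applies verbatim. For a chain $L_i$ the defining condition of a similarity map in Definition~\ref{dkrmMpdF} is vacuous, since no element of a chain has two distinct lower covers; hence every lattice isomorphism between two diagrams of $L_i$ is automatically a similarity map, and the diagrams of a chain are trivially unique up to left-right similarity. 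Therefore the planar diagrams of every component $L_i$ are unique up to left-right similarity, which is exactly sectional left-right similarity for $L$.

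Now let $L$ be glued sum indecomposable and let $D_1$ and $D_2$ be two planar diagrams of $L$. Forming the full slimmings with respect to these two diagrams, I obtain slim semimodular sublattices $L_1'\subseteq L$ with induced diagram $D_1'$ and $L_2'\subseteq L$ with induced diagram $D_2'$. By Lemma~\ref{slimcompmml}\eqref{slimcompmmla} both $L_1'$ and $L_2'$ are glued sum indecomposable. Applying Lemma~\ref{slimcompmml}\eqref{slimcompmmlc} to the identity isomorphism $\id\colon L\to L$ yields an automorphism $\pi$ of $L$ fixing every reducible element, whose restriction $\sigma:=\restrict{\pi}{L_1'}\colon L_1'\to L_2'$ is a $\unu$-preserving isomorphism of the two slimmings. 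Since $L_1'$ and $L_2'$ are glued sum indecomposable slim semimodular lattices, Proposition~\ref{slmUq} shows that $\sigma\colon D_1'\to D_2'$ or $\sigma\colon D_1'\to \refl{D_2'}$ is a similarity map; after reflecting $D_2$ (and hence $D_2'$) over a vertical axis if necessary, I may assume the former. By \eqref{kkfpPjRw} this also records that $\sigma$ carries $\leftb{D_1'}$ onto $\leftb{D_2'}$.

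The step needing the most care is to upgrade the similarity $\sigma\colon D_1'\to D_2'$ of the slimmed diagrams to a similarity of the full diagrams $D_1\to D_2$. Here I would exploit that $\sigma$ is $\unu$-preserving: in each covering square the numerical companion map records how many eyes are reinserted, and anti-slimming places these eyes side by side in their left-to-right order inside the square. Thus, exactly as in the remark preceding \eqref{sdTZnDsghpq}, $\sigma$ extends to a lattice automorphism $\hat\sigma$ of $L$; and since corresponding covering squares receive equally many eyes, with the eyes inheriting the left-right order of the two outer lower covers that $\sigma$ already respects, $\hat\sigma$ preserves the left-right order of lower covers at every element, i.e.\ $\hat\sigma\colon D_1\to D_2$ is a similarity map. (For an element whose lower covers all survive the slimming this is immediate from the similarity of $\sigma$; for the top of an anti-slimmed square the two outer lower covers are ordered by $\sigma$ and the inner eyes are inserted between them in matching order.) Consequently $D_1$ is similar to $D_2$, or to $\refl{D_2}$ in the reflected case, which is precisely uniqueness up to left-right similarity. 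The only genuine obstacle is this eye-bookkeeping in the last step; everything else is an assembly of Lemma~\ref{slimcompmml}, Proposition~\ref{slmUq}, and \eqref{kkfpPjRw}.
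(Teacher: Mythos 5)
Your proposal is correct and follows essentially the same route as the paper: reduce to the glued sum indecomposable case via \eqref{cnNcDpsT}, pass to full slimmings, combine Lemma~\ref{slimcompmml}\eqref{slimcompmmla},\eqref{slimcompmmlc} with Proposition~\ref{slmUq} to get a similarity of the slimmed diagrams, and extend it to the full diagrams by sending the $i$-th cover of $y^-$ to the $i$-th cover of its image, which is exactly the paper's map \eqref{dlghrnMycq}. The only cosmetic difference is that you spell out the (vacuous) chain case explicitly, where the paper simply says the first half follows from the second.
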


\begin{proof}  First, assume that $L$ is glued sum indecomposable. Let $D_1\in \adia$ and $D_2\in\adia$ by diagrams of $L$.  For $i\in\set{1,2}$, by deleting eyes as long as possible, we obtain a subdiagram $D_i'$ of $D_i$ such that $D_i'$ determines a full slimming sublattice $L_i'$ of $L$. By Lemma~\ref{slimcompmml}\eqref{slimcompmmla}, the $L_i$ are glued sum indecomposable. 
Applying Lemma~\ref{slimcompmml}\eqref{slimcompmmlc}
to the identity map $\id_L\colon L\to L$, we obtain an automorphism $\pi$ of $L$ such that $\restrict \pi{L_1'}\colon L'_1\to L'_2$ is an $\unu$-preserving lattice isomorphism. We let $\kappa:=\restrict \pi{L_1'}$ , and we consider it as  a $D_1'\to D_2'$ map. Also, let  $\refl{\kappa}:=\restrict \pi{L_1'}$, which is treated as a $D_1'\to \refl{D_2'}$ map.    
By Proposition~\ref{slmUq}, $\kappa$ or $\refl{\kappa}$ is a similarity map. We can assume that $\kappa\colon D_1'\to D_2'$  is a similarity map, because in the other case we could work with $\refl{D_2}$, whose full slimming subdiagram is $\refl{D_2'}$. Next, we define a map $\psi\colon D_1\to D_2$ as follows.
First, if  $x\in D_1'$, then  $\psi(x):=\kappa(x)$.  
Second, let $y\in D_1\setminus D_1'$. By \eqref{nrxrDmrD}, $y$ is a doubly irreducible element; its unique lower cover is denoted by $y^-$.  It follows obviously from the slimming procedure that $y^-\in D_1'$ and that $\inu{D_1}{D_1'}(y^-)\geq 1$. 
Listing them from left to right, let $y$ be the $i$-th cover of $y^-$ in $D_1$; note that $1< i < 2+\inu{D_1}{D_1'}(y^-)$, because $y^-$ has exactly $2+\inu{D_1}{D_1'}(y^-)$ covers in $D_1$.
Since $\kappa$ is $\unu$-preserving, $\inu{D_2}{D_2'}(\kappa(y^-)) = \inu{D_1}{D_1'}(y^-)$. So, $\kappa(y^-)$ has the same number of covers as $y^-$. Hence, we can define $\psi(y)$ as the $i$-th cover of $\kappa(y^-)$, counting from left to right. To sum up,
$\psi\colon D_1\to D_2$ is defined by 
\begin{equation}
\psi(z)=\begin{cases}
\kappa(z),&\text{if }z\in D_1',\cr
\text{the }i\text{-th cover of }\kappa(z^-),&
\text{if }z\notin D_1'\text{ is the }i\text{-th cover of }z^-\text.
\end{cases}\label{dlghrnMycq}
\end{equation}
We claim that $\psi\colon D_1\to D_2$ is a similarity map.
Clearly, $\psi$ is an order isomorphism, because so is $\kappa$. Hence, it is a lattice isomorphism.
To prove that $\psi$ is a similarity map, assume that $a,b,c\in D_1$, $a\prec b$, $a\prec c$,  $b\neq c$, and $b$ is to the left of $c$. Since  Definition~\ref{dkrmMpdF}   gives a selfdual condition (see the paragraph after the definition), it suffices to show that $\psi(b)$ is to the left of $\psi(c)$.  Having at least two covers, $a$ belongs to $D_1'$ by \eqref{nrxrDmrD}. If $b,c\in D_1'$, then $\psi(b)= \kappa(b)$ is to the left of $\psi(c)=\kappa(c)$, because $\kappa$ is a similarity map. Hence, the second line of \eqref{dlghrnMycq} implies that $\psi(b)$ is to the left of $\psi(c)$ even if $\set{b,c}\nsubseteq D_1'$.  Therefore, $\psi$ is a similarity map. This proves the second half of the proposition. 

Based on \eqref{cnNcDpsT}, the first half follows from the second.
\end{proof}

As a preparation for later use, we formulate the following lemma. 

\begin{lemma}\label{autslrctglexchbrs}
Let $L$ and $L'$ be  
slim rectangular lattices with  fixed diagrams $D, D'\in\adia$, respectively, and let $\phi\colon L\to L'$ be a lattice isomorphism. Then either $\phi(\leftb D)=\leftb {D'}$ and $\phi(\rightb {D})=\rightb{D'}$, or $\phi(\leftb D)=\rightb{D'}$ and $\phi(\rightb D)=\leftb {D'}$.
\end{lemma}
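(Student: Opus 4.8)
The plan is to derive the lemma from Proposition~\ref{slmUq} together with the boundary criterion \eqref{kkfpPjRw}, after first checking that the hypotheses of Proposition~\ref{slmUq} are met. Since $L$ and $L'$ are slim by assumption, the only missing ingredient is glued sum indecomposability. First I would argue that a rectangular lattice is glued sum indecomposable: if some $x\in L\setminus\set{0,1}$ were comparable with every element of $L$, then it would be comparable with both corners $\cornl D$ and $\cornr D$, and the relations $\cornl D\wedge\cornr D=0$ and $\cornl D\vee\cornr D=1$ would force $x\in\set{0,1}$ in each of the four cases. For instance, $x\le\cornl D$ together with $x\ge\cornr D$ gives $\cornr D\le\cornl D$, whence $\cornr D=\cornl D\wedge\cornr D=0$, a contradiction; the other three cases are similar. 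Thus $L$, and likewise $L'$, is glued sum indecomposable, so Proposition~\ref{slmUq} applies and gives that $\phi\colon D\to D'$ or $\phi\colon D\to\refl{D'}$ is a similarity map.

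The key auxiliary fact I would isolate is that every similarity map $\sigma\colon D_1\to D_2$ between planar diagrams of slim semimodular lattices preserves \emph{both} boundary chains, that is, $\sigma(\leftb{D_1})=\leftb{D_2}$ and $\sigma(\rightb{D_1})=\rightb{D_2}$. The first equality is exactly \eqref{kkfpPjRw}. For the second I would pass to the mirror images: for two distinct lower covers of a common element, ``to the left of'' and ``to the right of'' are mutually exclusive and exhaustive (Lemma~\ref{leftrightlemma}\eqref{leftrightlemmab}), so simultaneously reversing the left-right orientation in $D_1$ and $D_2$ carries the defining equivalence of Definition~\ref{dkrmMpdF} into itself. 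Hence $\sigma$ is also a similarity map $\refl{D_1}\to\refl{D_2}$, and \eqref{kkfpPjRw} applied to these mirror diagrams, together with $\leftb{\refl{D_i}}=\rightb{D_i}$, yields $\sigma(\rightb{D_1})=\rightb{D_2}$.

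With this in hand the proof finishes by splitting into the two cases furnished by Proposition~\ref{slmUq}. If $\phi\colon D\to D'$ is a similarity map, the auxiliary fact gives $\phi(\leftb D)=\leftb{D'}$ and $\phi(\rightb D)=\rightb{D'}$, the first alternative. If instead $\phi\colon D\to\refl{D'}$ is a similarity map, the auxiliary fact gives $\phi(\leftb D)=\leftb{\refl{D'}}=\rightb{D'}$ and $\phi(\rightb D)=\rightb{\refl{D'}}=\leftb{D'}$, the second alternative. These two possibilities are exactly the two conclusions of the lemma.

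I expect the only genuinely delicate point to be the auxiliary fact that a similarity map preserves the right boundary and not merely the left one: the criterion \eqref{kkfpPjRw} is phrased only for the left boundary, so the reduction to mirror diagrams --- and the verification that ``similarity'' is stable under simultaneous mirroring --- is where a little care is needed. The glued sum indecomposability of rectangular lattices and the subsequent case analysis are then routine.
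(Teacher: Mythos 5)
Your proof is correct, but it takes a genuinely different route from the paper. The paper never invokes the similarity machinery here: it observes via \eqref{dkRtghWr}, \eqref{upPerChains}, and the cited fact that all elements of $\llb D\cup\lrb D$ other than the corners are meet-reducible, that $c_{\emel}$ and $d_{\emer}$ are the \emph{only} doubly irreducible elements of $D$ (and likewise for $D'$), so $\phi$ must permute the two corners; since $\leftb D=\ideal{c_{\emel}}\cup\filter{c_{\emel}}$ and $\rightb D=\ideal{d_{\emer}}\cup\filter{d_{\emer}}$ are lattice-theoretic functions of the corners, the conclusion drops out. That argument is purely order-theoretic and, as a by-product, locates exactly where the corners go. You instead reduce to Proposition~\ref{slmUq} and \eqref{kkfpPjRw}, which requires two auxiliary checks that you correctly supply: that a rectangular lattice is glued sum indecomposable (your four-case complementarity argument is fine, and note you silently correct the paper's typo $\cornl D\vee\cornr D=0$ to $=1$), and that a similarity map preserves the right boundary as well as the left, via stability of Definition~\ref{dkrmMpdF} under simultaneous mirroring together with Lemma~\ref{leftrightlemma}\eqref{leftrightlemmab}. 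Both checks are sound, so your proof stands; the paper's own caveat that the lemma ``seems not to follow from Proposition~\ref{kTrTzmyY} immediately'' does not apply to you, because Proposition~\ref{slmUq} is stated directly for an isomorphism between two \emph{different} lattices and already asserts that $\phi$ is a similarity map onto $D'$ or onto $\refl{D'}$. What your approach buys is brevity and reuse of general machinery; what the paper's buys is independence from the similarity results and the sharper structural information about the corners, which is what the author actually needs in the surrounding arguments.
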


Although $\phi$ is also a $D\to D'$ map, it is not so obvious that it preserves the ``to the left of'' relation or its inverse. Hence, this lemma  seems not to follow from Proposition~\ref{kTrTzmyY} immediately.

\begin{proof}[Proof of Lemma~\ref{autslrctglexchbrs}]
With self-explanatory notation, \eqref{dkRtghWr}  yields that
\begin{equation}
\begin{aligned}
\Jir D&=\bigl(\llb D\cup\lrb D\bigr)\setminus\set0=\set{c_1,\dots,c_{\emel},d_1,\dots,d_{\emer}}, \cr
\Jir {D'}&=\bigl(\llb {D'}\cup\lrb {D'}\bigr)\setminus\set0=\set{c'_1,\dots,c'_{\emel'},d_1',\dots,d'_{\emer'}},
\end{aligned}
\label{skdTrzGbnWrGkR}
\end{equation}
where, as in Definition~\ref{DFgjRd}\eqref{DFgjRda}, $c_0\prec\dots\prec c_{\emel}$, $d_0\prec\dots\prec d_{\emer}$, 
 $c'_0\prec\dots\prec c'_{\emel'}$, and $d'_0\prec\dots\prec d'_{\emer'}$.  
Since $\filter{c_{\emel}}$ and $\filter{d_{\emer}}$  are chains by \eqref{upPerChains},
\begin{align}
\begin{aligned}
\leftb D&=\ideal {c_{\emel}}\cup\filter{c_{\emel}}, \quad \kern4.5pt\rightb D=\ideal {d_{\emer}}\cup\filter{d_{\emer}},\cr
\leftb {D'}&=\ideal {c'_{\emel'}}\cup\filter{c'_{\emel'}}, \quad \rightb{D'}=\ideal {d'_{\emer'}}\cup\filter{d'_{\emer'}}\text.
\end{aligned}
\label{dkQdszGtWkX}
\end{align}
We know from \init{G.\,}Cz\'edli and \init{E.\,T.\ }Schmidt~\cite[(2.14)]{czgschslim2} that, with the exceptions of $c_{\emel}$, $d_{\emer}$, $c'_{\emel'}$ and $d'_{\emer'}$, the elements given in \eqref{skdTrzGbnWrGkR} are meet-reducible. 
Thus, each of $D$ and $D'$ has exactly two doubly irreducible elements, and they are  $c_{\emel}, d_{\emer}\in D$ and  $c'_{\emel'},d'_{\emer'}\in D'$, respectively. Hence, $\set{\phi(c_{\emel}),\phi(d_{\emer})} =\set{c'_{\emel'},d'_{\emer'}}$. Thus, Lemma~\ref{autslrctglexchbrs}
 follows from \eqref{dkQdszGtWkX}.
\end{proof}

\subsection{Diagrams with normal slopes on their boundaries}
Although the title of this  subsection  does not define the class $\bdia$ of diagrams, it reveals a property, to be defined soon, of diagrams in $\bdia$. 
In the rest of the whole section, we often consider the plane as $\mathbb C$, the field of complex numbers. However, a comment is useful at this point. When dealing with diagrams, they are on the blackboard or in a page of an article or a book. All in these cases, the direction ``up'' is fixed, but $0\in \mathbb C$ and (to the right of 0)  $1\in\mathbb C$ are not necessarily. In other words, the position of the origin and the unit distance is our choice.  Let 
\begin{align*}
\bepsilon&=\cos(\pi/4)+i\,\sin(\pi/4) = \sqrt 2/2 + i\,\sqrt 2/2,\cr
\bepsilon^3&=\cos(3\pi/4)+i\,\sin(3\pi/4) = -\sqrt 2/2 + i\,\sqrt 2/2\text.
\end{align*}
We use these 8th roots of 1 to coordinatize the location of vertices of diagrams in $\bdia$, which we want to define.

For finite sequences $\vec x=\tuple{x_1,\dots,x_j}$ and $\vec y=\tuple{y_1,\dots, y_k}$, we can \emph{glue} these two sequences to obtain a new sequence
$\vec x\glsum \vec y :=\tuple{x_1,\dots, x_j,y_1,\dots, y_k}$; we can also glue more then two sequences. For $D \in \adia$, let 
\[\emel(D)=\max\set{\lefth_{D'}(x): x\in D'}\text{ and }
\emer(D)=\max\set{\righth_{D'}(x): x\in D'}, 
\]
where $D'$ is the full slimming subdiagram of $D$. (That is, $D$ determines a unique full slimming sublattice $L'$ of the lattice $L$ defined by $D$, and $D'$ consists of the vertices that represent the elements of $L'$.) Let $\chain n$ denote the \emph{chain of length} $n$; it consists of $n+1$ elements.
The superscripts ft and gh below come from ``left'' and ``right'', respectively.

%

\begin{definition}\label{dhGhTzM} \   
\begin{enumerate}[\quad\upshape(A)]
\item\label{dhGhTzMA} A planar diagram $D$ of a \emph{glued sum indecomposable}
finite planar semimodular lattice $L$ belongs to $\bdia$ if  there exist a complex number $\bdelta\in\mathbb C$ and  sequences 
\begin{equation}
\vrbal= \tuple{\rbal1, \dots, \rbal{\emel(D)}} \text{\quad and\quad}
\vrjobb= \tuple{\rjobb1,\dots,\rjobb{\emer(D)}} 
\label{rGblrjbBb}
\end{equation}
of positive real numbers such that the following conditions hold.
\begin{enumeratei}
\item\label{dhGhTzMAa} $L$ is a planar semimodular lattice. The full slimming subdiagram of $D$ and the corresponding sublattice of $L$ are denoted by $D'$ and $L'$, respectively. 
\item\label{dhGhTzMAb} For every $x\in L'$, the corresponding vertex of $D'$ is 
\begin{equation}
\bdelta +   \bepsilon^3\cdot \sum_{j=1}^{\dsty{ \lefth_{D'}(x)}}\rbal j  +   \bepsilon\cdot \sum_{j=1}^{\dsty{ \righth_{D'}(x)}}\rjobb j  \in\mathbb C\text.
\label{bdltrrvdGr}
\end{equation}
\item\label{dhGhTzMAc} We know that for each ``eye'' $x\in L\setminus L'$, there exists a unique $4$-cell $U$ of $D'$ whose interior contains $x$; the condition is that the eyes in the interior of $U$ should belong to the (not drawn) line segment connecting the left corner and the right corner of $U$ and, furthermore, these eyes should divide this line segment into equal-sized parts.
\end{enumeratei}
In this case, we say that $D$ is \emph{determined by} $\tuple{\bdelta, \vrbal,\vrjobb}$. We also say that $\tuple{\bdelta, \vrbal,\vrjobb}$ is the \emph{complex coordinate triplet} of $D\in\bdia$.
\item\label{dhGhTzMB} For a \emph{chain} $C=\set{0=c_0\prec c_1\prec\dots\prec c_n=1}$ 
of length $n\in\mathbb N_0$, a planar diagram $D$ of $C$ belongs to $\bdia$ if there exists a $\bdelta\in\mathbb C$ such that  one of the following three possibilities holds.
\begin{enumeratei}
\item There is a sequence $\vrbal=\tuple{\rbal1,\dots,\rbal n}$ of positive real numbers such that, for $j\in\set{0,\dots,n}$, the vertex representing $c_j$ is
$\bdelta +   \bepsilon^3\cdot(\rbal1+\dots+\rbal j)$.
In this case we let $\emel(D):=n$, $\emer(D):=0$, and let $\vrjobb$ be the empty sequence.
\item There is a sequence $\vrjobb=\tuple{\rjobb1,\dots,\rjobb n}$ of positive real numbers such that, for $j\in\set{0,\dots,n}$, the vertex representing $c_j$ is
$\bdelta +   \bepsilon \cdot(\rjobb1+\dots+\rjobb j)$.
In this case we let $\emel(D):=0$, $\emer(D):=n$, and let $\vrbal$ be the empty sequence.
\item There are positive integers $j$ and $k$ with $n=j+k$, sequences $\vrbal=\tuple{\rbal1,\dots,\rbal j}$ and $\vrjobb=\tuple{\rjobb1,\dots,\rjobb k}$ of positive real numbers such that $D$ is a cover-preserving $\set{0,1}$-subdiagram of the diagram $E\in\bdia$ of $\chain j\times \chain k$ determined by $\tuple{\bdelta,\vrbal,\vrjobb}$. Then $\tuple{\bdelta,\vrbal,\vrjobb}$ is said to be the complex coordinate triplet of $D$. However, this vector does not determine $D$, which can be any of the ``zigzags'' from $0_E$ up to $1_E$.
\end{enumeratei}
\item\label{dhGhTzMC} If $D\in\adia$ is a diagram of an \emph{arbitrary planar semimodular lattice} $L$ and they are decomposed as in \eqref{cnNcDpsT}, then we say that $D$ belongs to $\bdia$ if so do  its components, $D_1,\dots, D_t$.
With the self-explanatory notation, the complex coordinate triplet of $D$ is
$\tuple{\bdelta,\vrbal,\vrjobb}$ defined as
\begin{equation}\tuple{\bdelta^{(1)},{\vrbal}^{(1)}\glsum\dots\glsum {\vrbal}^{(t)}, {\vrjobb}^{(1)}\glsum\dots\glsum {\vrjobb}^{(t)}    }\text.
\label{dsdglcmpvTr}
\end{equation}
We define $\emel(D)$ and $\emer(D)$ as the number of components of $\vrbal$ and that of $\vrjobb$, respectively.
\item\label{dhGhTzMD}
We say that   $\tuple{\bdelta, \vrbal,\vrjobb}$ is a \emph{triplet  compatible with} $L$, if 
$\vrbal$ and $\vrjobb$ are finite sequences of positive real numbers,  $\bdelta\in\mathbb R$, and there exists a planar diagram $D$ of $L$ $($that is, $D$ is in $\adia$ but not necessarily in $\bdia)$ such we can obtain $\vrbal$ and $\vrjobb$ by determining the values  $\emel(D_j)$ and $\emer(D_j)$ in \eqref{rGblrjbBb}  for the canonical components of $L$ and using \eqref{dsdglcmpvTr}.
\item\label{dhGhTzME} 
For $D\in\bdia$, we say that $D$ is \emph{collinear} if $0\in\set{\emel(D),\emer(D)}$. Otherwise, $D$ is  \emph{non-collinear}. 
\end{enumerate}
\end{definition}

For example, $D_{t-1}$ and $D_t$ in Figure~\ref{fig-forkext-bdia}, which happen to be slim rectangular diagrams,  belong to $\bdia$ but not to $\cdia$, to be defined soon. In these diagrams,  $\vrbal$ and $\vrjobb$ are indicated. 
No matter if the pentagon-shaped grey-filled elements are considered, the diagram of $L$ in Figure~\ref{fig-normext-notmin} 
is also in $\bdia$; this lattice is neither slim, nor rectangular, $\vrbal=\tuple{1,1,1}$ and  $\vrjobb=\tuple{1,2,1,1}$. 
There are also many earlier examples, including 
\init{G.\,}Cz\'edli~\cite[Figure 7]{czg-mtx}, \cite[$M$ in Figure 3]{czgrepres} , \cite[$D$ in Figures 2, 3]{czgtrajcolor}, which belong to $\bdia\setminus\cdia$.
The examples in $\cdia$, to be mentioned later, are also in $\bdia$. 
\nothing{
For example,  the diagrams 
\init{G.\,}Cz\'edli~\cite[Figures 1, 2, 3, 4,  5]{czg-mtx},  \cite[Figures 2, 4, 5]{czgrepres}, \cite[Figures 1, 8, 9]{czgtrajcolor}  belong to   $\cdia$. } Our examples are non-collinear, since only nontrivial  chains have collinear diagrams in $\bdia$. However, the chain $\chain n$ with $n\geq 2$ also has non-collinear diagrams in $\bdia$.

\begin{remark}\label{whynotonlyA}
One may ask why we need parts \eqref{dhGhTzMB} and \eqref{dhGhTzMC} of definition  Definition~\ref{dhGhTzM} and why we do not apply part \eqref{dhGhTzMA} and \eqref{bdltrrvdGr} 
without assuming  glued sum indecomposability. For the answer, see Remark~\ref{thatswhynotA} later.
\end{remark}

Assume that $D\in\bdia$ is a diagram  of a glued sum indecomposable planar semimodular lattice $L$ with complex coordinate triplet $\tuple{\bdelta,\vrbal,\vrjobb}$ and that the full slimming subdiagram of $D$ is $D'$. If we change $\tuple{\bdelta,\vrbal,\vrjobb}$  to some 
 $\tuple{\bdelta^\ast,\vrvbal,\vrvjobb}$ where
\begin{equation} \vrvbal  = \tuple{\rvbal1, \dots,\rvbal{\emel(D)} }\quad\text{and}\quad  
\vrvjobb = \tuple{\rvjobb1, \dots  ,\rvjobb{\emer(D)}},
\label{rbTljbBb}
\end{equation}
then \eqref{bdltrrvdGr}, in which $\lefth_{D'}(x)$ and $\righth_{D'}(x)$ are still understood in the full slimming of the \emph{original} diagram, defines another diagram $D^\ast$ of $L$. 
We say that $D^\ast$ is obtained from $D$ by \emph{rescaling}. We can rescale a diagram $D\in\bdia$ of a chain similarly, keeping  $\emel(D)$ and $\emer(D)$ unchanged. Finally, if  $D\in\bdia$ and we rescale its  components in the canonical decomposition 
\eqref{cnNcDpsT}, then we obtain another diagram of the same lattice, and we say that it is obtained from $D$ by  \emph{piecewise rescaling}. Also, we can reflect some of the $D_j$ in  \eqref{cnNcDpsT} over a vertical axis. (Of course, we may have to move several $D_j$'s to the left or to the right in order not to ``tear'' the glued sum.) We say that the new diagram is obtained by \emph{component-flipping}. Finally, \emph{parallel shifting} means that we change $\bdelta$ in \eqref{bdltrrvdGr}. Obviously, $\bdia$ is closed with respect to component-flipping. 
Since the compatibility of a triplet does not depend on the magnitudes of its real number components, if  $\tuple{\bdelta,\vrbal,\vrjobb}$ is the complex coordinate triplet of $D\in\bdia$, then \eqref{rbTljbBb} gives a triplet compatible with $L$. Hence, Theorem~\ref{pcvWsWdG}\eqref{pcvWsWdGa} below implies that $\bdia$ is also closed with respect to piecewise rescaling;  this is not obvious, because we have to shows that  rescaling does not ruin planarity.

\begin{theorem}\label{pcvWsWdG}
For a planar semimodular lattice $L$,  the following hold.
\begin{enumeratei} 
\item\label{pcvWsWdGa} If $\tuple{\bdelta,\vrbal,\vrjobb}$ is a triplet compatible with $L$, then this triplet determines a diagram $D\in\bdia$ of $L$. That is, every triplet compatible with $L$ is the complex coordinate triplet of a unique diagram of $L$ in  $\bdia$.
\item\label{pcvWsWdGb} In particular, $L$ has a diagram in $\bdia$.
\item\label{pcvWsWdGc} The diagram of $L$ in $\bdia$ is unique  up to component-flipping, parallel shifting,  and  piecewise rescaling.
\end{enumeratei}
\end{theorem}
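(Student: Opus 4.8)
The plan is to prove all three parts by reducing to the glued-sum-indecomposable components of $L$ and then analyzing the geometric placement \eqref{bdltrrvdGr} directly. By Definition~\ref{dhGhTzM} the class $\bdia$ and the complex coordinate triplet are defined componentwise along the canonical decomposition \eqref{cnNcDpsT}, and a triplet compatible with $L$ restricts to a compatible triplet for each component; so it suffices to treat one component at a time, the chain components being handled by the elementary part (B) of the definition. Thus I would fix a glued-sum-indecomposable planar semimodular component $L$ with full slimming $L'$ and a compatible triplet $\tuple{\bdelta,\vrbal,\vrjobb}$, and let $D$ be the configuration obtained by placing each vertex of $L'$ at the point \eqref{bdltrrvdGr} and inserting the eyes on the equally-spaced diagonals prescribed in Definition~\ref{dhGhTzM}\eqref{dhGhTzMAc}. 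The uniqueness clause of (i) is then immediate, since the triplet together with these two rules determines the location of every vertex, so no two distinct diagrams of $L$ can carry the same triplet; the real content of (i) is that this $D$ is a genuine planar diagram of $L$.

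For the core of (i), I would set $A(x)=\sum_{j\le\lefth_{L'}(x)}\rbal j$ and $B(x)=\sum_{j\le\righth_{L'}(x)}\rjobb j$. Since the $\rbal j$ and $\rjobb j$ are positive, $A$ and $B$ are strictly increasing in the respective join-coordinates; the vertex \eqref{bdltrrvdGr} has imaginary part an increasing affine function of $A(x)+B(x)$ and real part an increasing affine function of $B(x)-A(x)$. Lemma~\ref{sldiHjNxY} does the combinatorial bookkeeping: $x\le y$ iff the join-coordinates increase weakly, so comparable elements are stacked vertically in the correct order and $x\mapsto{}$position is injective; and $x\rellambda y$ iff $\lefth_{L'}(x)>\lefth_{L'}(y)$ and $\righth_{L'}(x)<\righth_{L'}(y)$, which forces $B(x)-A(x)<B(y)-A(y)$, so that the geometric ``to the left of'' relation coincides with $\rellambda$. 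The one step I would isolate as a lemma is that each covering $x\prec y$ of $L'$ raises exactly one join-coordinate by exactly one: semimodularity makes $L'$ graded, and combined with the coordinate description this turns every covering edge into a unit step in the direction $\bepsilon^3$ or $\bepsilon$. Consequently the drawing of $L'$ is a sub-drawing of the evidently planar ``diamond grid'' drawing of $\chain{\emel(D)}\times\chain{\emer(D)}$ determined by the same triplet, so it inherits planarity while realizing exactly the left-right relation of the (essentially unique, by Proposition~\ref{slmUq}) diagram of $L'$. Finally, placing the eyes on the equally-spaced diagonal of each $4$-cell keeps them in the interior of that cell and introduces no crossings, so by the definition of antislimming $D$ is a planar diagram of $L$; this proves (i). Part (ii) is then a corollary: a compatible triplet always exists, since any diagram of $L$ in $\adia$ (one exists because $L$ is planar semimodular) fixes the sequence-lengths, and we may take all $\rbal j=\rjobb j=1$ and $\bdelta=0$.

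For (iii) I would again argue componentwise. If $D_1$ and $D_2$ are two diagrams in $\bdia$ of one glued-sum-indecomposable component, their full slimmings $D_1'$ and $D_2'$ are planar diagrams of the same slim lattice $L'$, so by Proposition~\ref{slmUq} one is similar to the other or to its mirror image. After possibly applying component-flipping to $D_2$, I may assume $D_1'$ and $D_2'$ are similar; then they share the same join-coordinate functions and the same $\emel,\emer$, both are given by formula \eqref{bdltrrvdGr} with their respective triplets, and the eyes are placed identically once the slimmings agree. Hence the only remaining freedom is the additive constant (parallel shifting) and the positive magnitudes $\rbal j,\rjobb j$ (rescaling), so $D_2$ is obtained from $D_1$ by component-flipping, parallel shifting and rescaling. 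Re-gluing the components and allowing each to be adjusted independently gives precisely component-flipping together with piecewise rescaling, which is (iii).

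The hard part will be the planarity assertion buried in (i): one must show that the forced placement \eqref{bdltrrvdGr} yields a crossing-free straight-line Hasse diagram realizing the correct combinatorial diagram, not merely a consistent labelling of points. The cleanest route is the ``covers are unit grid steps'' lemma together with the embedding of the whole configuration into the planar diamond grid $\chain{\emel(D)}\times\chain{\emer(D)}$; the genuinely technical point is that this embedding is cover-preserving, i.e.\ that no covering of $L'$ jumps two grid units or moves diagonally, and it is here---using semimodularity together with glued-sum-indecomposability to exclude such ``long diagonal'' covers---that I expect to spend most of the effort.
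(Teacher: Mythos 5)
Your reduction to glued sum indecomposable components, your treatment of (ii), and your derivation of (iii) from Proposition~\ref{slmUq} are all in line with the paper. The problem is the lemma on which your entire planarity argument for (i) rests: the claim that every covering $x\prec y$ of the full slimming $L'$ raises exactly one join-coordinate by exactly one is \emph{false} whenever $L'$ is not distributive. If $y$ has three or more lower covers and $x$ is a ``middle'' lower cover (neither leftmost nor rightmost), then $x$ lies strictly to the right of the leftmost and strictly to the left of the rightmost lower cover, so by \eqref{sldiHjNxYa} \emph{both} $\lefth(y)-\lefth(x)$ and $\righth(y)-\righth(x)$ are positive. The smallest example is $\nseven$: with $m=a_1\vee b_1$ covered by $1=a_2\vee b_2$ one gets $\pair{\lefth(m)}{\righth(m)}=\pair11$ and $\pair{\lefth(1)}{\righth(1)}=\pair22$. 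These are exactly the precipitous edges of Observation~\ref{dksRptG}; they cannot be ``excluded using semimodularity and glued sum indecomposability'' because they genuinely occur, so your configuration is not a sub-drawing of the diamond grid $\chain{\emel(D)}\times\chain{\emer(D)}$, and planarity does not follow from the grid. One would still have to show that these diagonal segments, which cut across the interiors of grid cells, avoid all other vertices and edges --- and that is precisely the nontrivial content of part (i).

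The paper closes this gap differently. For slim \emph{rectangular} $L$ it proves planarity by induction along a multifork construction sequence (Lemma~\ref{mfrkLma}, built on Lemma~\ref{oldmfrkLma}): each step inserts a multifork into a distributive $4$-cell whose ideal is a grid, and one checks directly that the prescribed coordinates place the new elements inside the correct cells, so planarity is preserved step by step; the precipitous edges are exactly the top edges created by these insertions and their geometry is controlled by the construction. For a general glued sum indecomposable slim $L$, the paper passes to the \rectext{} $R$ of $L$ (Theorem~\ref{thmmain}), uses Lemma~\ref{samecoordin} to see that the elements of $L$ already sit where \eqref{bdltrrvdGr} demands, and invokes Lemma~\ref{dkjTghNVc} to conclude that $L$ is a region of the planar diagram of $R$, hence planar. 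If you want to salvage your approach, you would need to replace the false unit-step lemma with an argument of this kind (or an explicit non-crossing verification for precipitous edges); as written, the proof of (i) does not go through.
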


\begin{figure}[htb]
\includegraphics[scale=1.0]{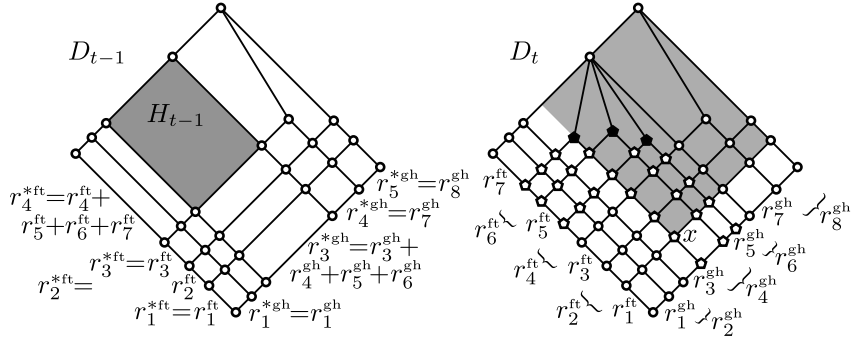}
\caption{$D_t$ is a 3-fold multifork extension of $D_{t-1}$ at $H_{t-1}$}\label{fig-forkext-bdia}
\end{figure}

Before proving this theorem, we have to recall a construction from \init{G.\ }Cz\'edli~\cite{czgtrajcolor}. Let $D$ be a planar diagram of a slim semimodular lattice $L$. A 4-cell $H$ of $D$ is \emph{distributive} if the ideal $\ideal{1_H}$ is a distributive lattice. 
To obtain a  \emph{multifork extension}  $D'$ of $D$ at the 4-cell $H$, we have to perform two steps.  As the first step, we insert $k$ new lower covers of $1_H$ into the interior of $H$. For
$\tuple{D,D',H,k}=\tuple{D_t,D_{t-1}, H_{t-1},3}$, the situation is exemplified in Figure~\ref{fig-forkext-bdia}, where $H=H_{t-1}$ is the grey 4-cell on the left and the new lower covers of  $1_H$ are the black-filled pentagon-shaped elements on the right.  (Except for $D=D_{t-1}$, $D'=D_t$, and $H=H_{t-1}$, the reader is advised to disregard the labels in the figure at present.)  In the second step,  we proceed downwards by inserting new elements (the empty-filled pentagon-shaped ones in the figure)  into the 4-cells of the two trajectories through 
$H$, and we obtain $D'$ in this way. We say that $D'$ and $L'$ are obtained by a ($k$-fold)  \emph{multifork extension} at the 4-cell $H$ from $D$ and from $L$, respectively.  
The maximal elements in $L'\setminus L$ or, equivalently, the new meet-irreducible elements, are called the \emph{source elements} of the fork  extension. (They are the black-filled pentagon-shaped elements in the figure.)  For more details, the reader might want but need not resort to   \cite[Definition 3.1]{czgtrajcolor}. Note that this construction also makes sense for slim semimodular lattices without rectangularity.

The importance of this construction is given by the following lemma. Remember that a \emph{grid} is the direct product of two finite chains. 

\begin{lemma}[{\init{G.\ }Cz\'edli~\cite[Theorem 3.7]{czgtrajcolor}}]\label{oldmfrkLma} 
If $D\in\adia$ is a slim rectangular diagram, then there exist a $t\in\mathbb N_0=\set{0,1,2,\dots}$,
\begin{equation}
\parbox{8.0 cm}
{a sequence of diagrams $D_0\subseteq D_1\subseteq \dots \subseteq D_t=D$, and  distributive $4$-cells $H_j$ of $D_j$ for
$j=0,1,\dots,t-1$
}\label{olddkmfFtHv}
\end{equation}
such that  $D_0, \dots,D_{t-1}\in \adia$, $D_0$ is a grid, and  that  $D_{j+1}$ is obtained from $D_j$ by a multifork extension at $H_j$, for   $j=0,1,\dots,t-1$.
\end{lemma}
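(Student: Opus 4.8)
The plan is to prove the statement by induction, peeling off one multifork extension at a time. Since a multifork extension inserts at least one new source element, it strictly increases the number of vertices, so I would induct on $|D|$. The base case is that $D$ is already a grid, in which case $t=0$ and there is nothing to construct. For the inductive step I must exhibit a strictly smaller slim rectangular diagram $D_{t-1}\subseteq D$, together with a distributive $4$-cell $H_{t-1}$ of $D_{t-1}$, such that $D$ is a multifork extension of $D_{t-1}$ at $H_{t-1}$; the induction hypothesis applied to $D_{t-1}$ then supplies $D_0\subseteq\dots\subseteq D_{t-1}$ with $D_0$ a grid, and appending the recovered last step finishes the proof.

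First I would establish the dichotomy \emph{grid versus forkable}. For a slim rectangular diagram, $\eqref{tpwcgTslm}$ and $\eqref{dkRtghWr}$ give that $\Jir D=(\llb D\cup\lrb D)\setminus\set0$ is the disjoint union of a left chain and a right chain whose pairwise meets are $0$; consequently, if $D$ is moreover distributive, then $D$ is the lattice of down-sets of an antichain of two chains, i.e. the direct product of these two chains, hence a grid. By $\eqref{tHnDstRbtV}$, a slim semimodular diagram is distributive exactly when no element covers three elements. So if $D$ is \emph{not} a grid, some element $w$ covers at least three elements, which is precisely the footprint of a multifork: $w$ is the top $1_H$ of a $4$-cell into whose interior extra lower covers of $w$ have been inserted. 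This reduces the inductive step to locating and removing a single, well-chosen fork.

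The heart of the argument is to single out the \emph{last} multifork extension. I would look at the interior meet-irreducible elements of $D$ — these are exactly the candidate source elements, since a grid has none and each fork creates new ones — and choose a fork that is not overshadowed by any later fork; concretely, pick a top vertex $1_H$ that is maximal (in the lattice order) among the upper covers of interior meet-irreducibles. I would then delete from $D$ the source elements immediately below $1_H$ together with the elements that the construction cascades downward along the two trajectories through $H$, obtaining the candidate $D_{t-1}$. The deleted set is controlled by these two trajectories: by $\eqref{trjtrffrls}$ and $\eqref{trtrfR}$ each trajectory is confined and cannot branch, so the cascade matches one-to-one the prongs of a $k$-fold multifork.

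The main obstacle is verifying that this deletion is legitimate and reversible: I must show that (i) $D_{t-1}$ is again a slim rectangular diagram in $\adia$; (ii) the cell $H$ reappears as a genuine $4$-cell of $D_{t-1}$ and is distributive, so that the forward multifork extension is defined there; and (iii) performing the multifork extension at $H_{t-1}$ on $D_{t-1}$ reproduces $D$ exactly. Step (ii) is where the maximal (topmost) choice is essential: maximality forces $\ideal{1_H}$ to contain no further unresolved fork, so $H$ collapses to a distributive $4$-cell rather than to another diamond pattern. The trajectory bookkeeping needed for (iii) — matching the cascaded vertices to the two trajectories through $H$ and checking that no planarity or covering relation elsewhere is disturbed — is technical but routine once the confinement of these two trajectories is in hand. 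With $D_{t-1}$ slim rectangular and $|D_{t-1}|<|D|$, the induction closes.
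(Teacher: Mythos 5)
The paper does not actually prove this lemma: it is imported verbatim from \cite[Theorem 3.7]{czgtrajcolor}, so there is no internal proof to compare with, and your attempt must be judged on its own. Your overall scheme (induct by peeling off the last multifork, using \eqref{tHnDstRbtV} and Lemma~\ref{dkhgTbFmk} for the grid/forkable dichotomy) is the right one, but the concrete selection rule is inverted, and this breaks the inductive step. You propose to remove the fork whose top $1_H$ is \emph{maximal}, in the lattice order, among the elements with at least three lower covers, asserting that ``maximality forces $\ideal{1_H}$ to contain no further unresolved fork.'' That is backwards: maximality of $1_H$ constrains the filter $\filter{1_H}$, not the ideal $\ideal{1_H}$. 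There may be another element $w'<1_H$ with three or more lower covers; deleting the fork at $1_H$ does not touch the lower covers of $w'$, so in the pruned diagram $D_{t-1}$ the ideal $\ideal{1_H}$ still contains $w'$, hence is not distributive by \eqref{ifdistno3covered}, hence $H_{t-1}$ is not a distributive $4$-cell and $D$ is not a multifork extension of $D_{t-1}$ in the sense the lemma requires. A concrete instance: take a grid, perform a multifork at its top $4$-cell (top element $1$), and then a second multifork at a low, untouched $4$-cell whose top ideal is still distributive; the maximal fork top is $1$, and removing that fork leaves a non-distributive ideal below $1$.

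The correct choice is a \emph{minimal} element among those with at least three lower covers. Indeed, if $j<j'$ then $1_{H_j}$ already has at least three lower covers in $D_{j'}$, so distributivity of $\ideal{1_{H_{j'}}}$ in $D_{j'}$ forces $1_{H_j}\nleq 1_{H_{j'}}$ by \eqref{ifdistno3covered}; thus the top of the last fork of any construction sequence is minimal among fork tops, and conversely, for a minimal such $1_H$ the ideal $\ideal{1_H}$ of the pruned diagram has no element covering three elements and is therefore distributive by \eqref{tHnDstRbtV}, which is exactly what item (ii) of your plan needs. With ``maximal'' replaced by ``minimal'' (and the trajectory bookkeeping of your items (i) and (iii) actually carried out), the argument can be made to work; as written, the inductive step fails.
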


The sequence in \eqref{olddkmfFtHv} is not unique, since  the order of multifork extensions is unique in general. However, $t$ is uniquely determined, because it is clearly the number of elements with more than two lower covers. 
Now, we  tailor Lemma~\ref{oldmfrkLma}  to our needs as follows.

\begin{lemma}\label{mfrkLma} 
Let $L$ be a slim rectangular lattice, and let $t$ be the number of its elements with more than two lower covers. If  
$\tuple{\bdelta,\vrbal,\vrjobb}$ is a triplet compatible with $L$,  then it is the complex coordinate triplet of a unique diagram $D$
of $L$ in $\bdia$ and, furthermore, there 
exist 
\begin{equation}
\parbox{8.0 cm}
{a sequence of diagrams $D_0\subseteq D_1\subseteq \dots \subseteq D_t=D$, and  distributive $4$-cells $H_j$ of $D_j$ for
$j=0,1,\dots,t-1$
}\label{dkmfFtHv}
\end{equation}
such that  $D_0, \dots,D_{t-1}\in \bdia$, $D_0$ is a grid, and  that  $D_{j+1}$ is obtained from $D_j$ by a multifork extension at $H_j$, for   $j=0,1,\dots,t-1$.
\end{lemma}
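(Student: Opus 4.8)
The plan is to combine the purely combinatorial multifork decomposition supplied by Lemma~\ref{oldmfrkLma} with the explicit grid-style placement \eqref{bdltrrvdGr}, and to establish planarity by induction along the multifork sequence. First I would record that a slim rectangular lattice is glued sum indecomposable, so the clause Definition~\ref{dhGhTzM}\eqref{dhGhTzMA} is the relevant one, and that $L=L'$ since $L$ is slim. Given a triplet $\tuple{\bdelta,\vrbal,\vrjobb}$ compatible with $L$, formula \eqref{bdltrrvdGr} assigns to each $x\in L$ a point of $\mathbb C$ depending only on the pair $\pair{\lefth_L(x)}{\righth_L(x)}$; by \eqref{djdzBgRWv} this pair determines $x$, and by Lemma~\ref{sldiHjNxY} and \eqref{sldiHjNxYb} the assignment $x\mapsto\pair{\lefth_L(x)}{\righth_L(x)}$ is an order-embedding of $L$ into the sheared grid spanned by the directions $\bepsilon^3$ (up-left) and $\bepsilon$ (up-right). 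Consequently the triplet forces the position of every vertex, so a diagram of $L$ in $\bdia$ realizing this triplet is unique provided one exists; the whole task is therefore existence together with the existence of the sequence \eqref{dkmfFtHv}.

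To prove existence I would fix any diagram of $L$ in $\adia$ and apply Lemma~\ref{oldmfrkLma}, obtaining $t$, a combinatorial chain $D_0\subseteq\dots\subseteq D_t$, and distributive $4$-cells $H_j$, with $D_0$ a grid. I then re-place each $D_j$ in grid-style through \eqref{bdltrrvdGr}, using the sub-triplet obtained from $\tuple{\bdelta,\vrbal,\vrjobb}$ by discarding the components (boundary join-irreducibles) created at the multifork steps after $j$, and prove by induction on $j$ that the resulting placement is a valid planar diagram, hence lies in $\bdia$. In the base case $t=0$, $L$ is a grid $\chain{\emel}\times\chain{\emer}$ and the element with join-coordinates $\pair ik$ is placed at $\bdelta+\bepsilon^3\cdot(\rbal1+\dots+\rbal i)+\bepsilon\cdot(\rjobb1+\dots+\rjobb k)$; since $\bepsilon^3$ points up-left, $\bepsilon$ points up-right, and all $\rbal j,\rjobb j>0$, each cover edge goes strictly upward and the parallelogram cells tile the region without crossings, which is a direct coordinate check. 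Thus $D_0\in\bdia$.

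The heart of the argument is the inductive step, passing from $D_j$ to $D_{j+1}$ by a $k$-fold multifork at $H_j$. This inserts $k$ new lower covers of $1_{H_j}$ into the interior of $H_j$ and propagates new cells downwards along the two trajectories through $H_j$ until they reach $\llb L$ and $\lrb L$, where new join-irreducible elements appear; accordingly $\emel$ and $\emer$ each grow and the join-coordinates of the affected old vertices shift. Using Lemma~\ref{tChnQlM} and \eqref{trtrfR} I would pin down exactly which old edges the two propagating trajectories traverse, thereby confining the whole modification to the two trajectory-strips emanating from $H_j$; the distributivity of $H_j$ guarantees that this region is drawn as a clean grid block, so that the new vertices land at the grid points dictated by their join-coordinates and the slanted \emph{source}-edges (the covers along which both coordinates jump, as in \eqref{hrMlCvr}) stay inside the affected cells. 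Since outside these strips the old vertices only undergo the rescaling induced by the newly inserted components and keep their relative grid positions, no new crossing is produced, whence $D_{j+1}\in\bdia$ with the next sub-triplet. Taking $j=t$ yields $D=D_t\in\bdia$ realizing the full triplet, together with the desired sequence \eqref{dkmfFtHv}.

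The step I expect to be the main obstacle is precisely this crossing-freeness: because a multifork renumbers the boundary join-irreducibles, the positions of the old vertices genuinely move, and the new covers are slanted segments rather than grid edges, so one cannot simply restrict the grid drawing of the ambient grid $\incoord L\cong L$ (Lemma~\ref{outlemmA}). Making rigorous that the slanted source-edges and the propagated cells nest inside the two trajectory-strips without meeting any edge outside them is where the coordinate machinery of Lemmas~\ref{sldiHjNxY}, \ref{outlemmA}, and \ref{tChnQlM}, the cover analysis \eqref{hrMlCvr}, and the distributivity of $H_j$ must all be deployed with care.
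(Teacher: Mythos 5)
Your overall strategy is the same as the paper's: induct along the multifork construction sequence of Lemma~\ref{oldmfrkLma}, place vertices by \eqref{bdltrrvdGr}, get uniqueness for free from the coordinate formula, start from a grid, and check at each step that inserting a multifork into a distributive $4$-cell preserves planarity. The paper phrases this as a downward induction on $t$ (peel off the last multifork, apply the hypothesis to the smaller lattice, reinsert), but the content of the inductive step is identical to yours, and the paper is no more detailed than you are at the crossing-freeness step you flag as the main obstacle.

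There is, however, one bookkeeping detail where your version would run into trouble. You propose to give $D_j$ the sub-triplet obtained by \emph{discarding} the components of $\vrbal$ and $\vrjobb$ created at later multifork steps. With that choice the partial sums in \eqref{bdltrrvdGr} change, so every old vertex moves between $D_j$ and $D_{j+1}$; the asserted chain $D_0\subseteq D_1\subseteq\dots\subseteq D_t=D$ of subdiagrams then fails literally, and your planarity argument for the region outside the two trajectory-strips silently relies on the fact that rescaling a $\bdia$ diagram preserves planarity --- which is exactly what Theorem~\ref{pcvWsWdG}\eqref{pcvWsWdGa}, i.e.\ this lemma, is needed to establish, so the argument risks circularity. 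The paper avoids this by \emph{merging} rather than discarding: if the multifork at $H_{t-1}$ creates $k$ new join-irreducibles starting at positions $i$ and $j$, the sub-triplet for $D_{t-1}$ replaces $\rbal i,\dots,\rbal{i+k}$ by the single component $\rbal i+\dots+\rbal{i+k}$ (and likewise on the right). Then every old vertex keeps its exact position in $\mathbb C$, planarity outside the affected strips is inherited verbatim from the induction hypothesis, and only the freshly inserted cells inside the grid $\ideal{1_{H_{t-1}}}$ need to be checked. With that correction your proof goes through and coincides with the paper's.
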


Sometimes, we refer to \eqref{dkmfFtHv} as a \emph{multifork construction sequence} of $D$.
Before proving Lemma~\ref{mfrkLma}, we formulate an auxiliary statement.

\begin{lemma}\label{dkhgTbFmk}\
\begin{enumeratei}
\item\label{dkhgTbFmka}
Let $x$ be an element of a slim rectangular lattice $L$.  If $\ideal x$ is distributive, then it is a grid $($= direct product of two chains$)$ or a chain. 
\item\label{dkhgTbFmkb}
A distributive rectangular lattice is a grid.
\end{enumeratei}
\end{lemma}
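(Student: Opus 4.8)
The plan is to reduce both parts to Birkhoff's representation theorem after pinning down the ordered set of join-irreducibles. The key preliminary observation, valid for the slim rectangular lattice $L$, is this: by \eqref{dkRtghWr} we have $\Jir L=\set{c_1,\dots,c_{\emel},d_1,\dots,d_{\emer}}$, where $c_1\prec\dots\prec c_{\emel}$ and $d_1\prec\dots\prec d_{\emer}$ are chains, and by \eqref{tpwcgTslm} (equivalently \eqref{dkhNmxWn}) we have $c_i\wedge d_j=0$ for all $i,j\geq 1$. Since $c_i\neq 0\neq d_j$, this forces $c_i\parallel d_j$: for instance $c_i\leq d_j$ would give $c_i=c_i\wedge d_j=0$, a contradiction. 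Hence, as an ordered set, $\Jir L$ is the disjoint union, with no comparabilities across, of the two chains $\set{c_1,\dots,c_{\emel}}$ and $\set{d_1,\dots,d_{\emer}}$.

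For part \eqref{dkhgTbFmka}, I would first invoke the standard fact that join-irreducibility is inherited by principal ideals: $\Jir{\ideal x}=\Jir L\cap\ideal x$, because every lower cover in $L$ of an element $j\leq x$ already lies in $\ideal x$, so $j$ has the same system of lower covers in $\ideal x$ as in $L$. Writing $a:=\lefth_L(x)=|\set{i:c_i\leq x}|$ and $b:=\righth_L(x)=|\set{j:d_j\leq x}|$, the downward closed sets $\set{i:c_i\leq x}$ and $\set{j:d_j\leq x}$ are the initial segments $\set{c_1,\dots,c_a}$ and $\set{d_1,\dots,d_b}$ of the two chains. Thus $\Jir{\ideal x}$ is, as an ordered set, the disjoint union of an $a$-element chain and a $b$-element chain. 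Since $\ideal x$ is distributive by hypothesis, Birkhoff's representation theorem gives $\ideal x\cong \chain a\times\chain b$: indeed the lattice of down-sets of a disjoint union of ordered sets is the direct product of their down-set lattices, and the down-set lattice of an $a$-element chain is $\chain a$. If $a\geq 1$ and $b\geq 1$, this is a grid; if $a=0$ or $b=0$, the corresponding factor collapses to a single point and $\ideal x$ is a chain. This proves \eqref{dkhgTbFmka}.

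For part \eqref{dkhgTbFmkb}, let $L$ be a distributive rectangular lattice. Being distributive, $L$ has no cover-preserving $M_3$ sublattice, so $L$ is slim by \eqref{slimiffnodiamond}; thus $L$ is a slim rectangular lattice and part \eqref{dkhgTbFmka} applies to $x:=1_L$, yielding $\ideal{1_L}=L\cong\chain a\times\chain b$ with $a=\lefth_L(1)=\emel$ and $b=\righth_L(1)=\emer$. Since the left and right corners $c_{\emel}$ and $d_{\emer}$ belong to $L\setminus\set{0,1}$ by the definition of a rectangular lattice, we have $\emel\geq 1$ and $\emer\geq 1$; both factors are therefore nontrivial and $L$ is a grid.

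The only genuinely delicate step is the bookkeeping in part \eqref{dkhgTbFmka}: one must verify that the join-irreducibles of the ideal inherit exactly the ``two incomparable chains'' shape of $\Jir L$ — as initial segments of the two boundary chains — and that the degenerate values $a=0$ or $b=0$ yield chains rather than genuine grids. Once that is in place, both statements are immediate consequences of Birkhoff duality, and no appeal to the finer planar machinery of the paper is needed.
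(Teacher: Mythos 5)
Your proof is correct and follows essentially the same route as the paper: both arguments identify $\Jir{\ideal x}$ as a disjoint union of (initial segments of) the two chains from \eqref{tpwcgTslm} and then invoke Birkhoff's representation theorem for finite distributive lattices to conclude $\ideal x\cong \chain a\times\chain b$, with part \eqref{dkhgTbFmkb} reduced to part \eqref{dkhgTbFmka} via \eqref{slimiffnodiamond}. Your version merely spells out a few details the paper leaves implicit (that $\Jir{\ideal x}=\Jir L\cap\ideal x$, and that the corners force both factors to be nontrivial in part \eqref{dkhgTbFmkb}).
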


\begin{proof}  To prove part \eqref{dkhgTbFmka}, assume that $\ideal x$ is not a chain. Since $\Jir{\ideal x}\subseteq \Jir L$,   $\Jir{\ideal x}$ satisfies the condition given in \eqref{tpwcgTslm}. Hence, there is a grid $G$ such that the ordered sets $\Jir G$ and $\Jir{\ideal x}$ are isomorphic. By the classical structure theory of finite distributive lattices, see \init{G.\ }Gr\"atzer~\cite[Corollary 108]{r:Gr-LTFound}, $\ideal x\cong G$, as required. This proves part \eqref{dkhgTbFmka}. Part \eqref{dkhgTbFmkb} follows from part \eqref{dkhgTbFmka}, applied to $x=1$, and \eqref{slimiffnodiamond}.
\end{proof}

\nothing
{$\{$ 
$D_{t-1}$ $D_t$   $H_{t-1}$\\
$\rbal1$ $\rjobb1$ 
$\rbal2$ $\rjobb2$ $\rbal3$ $\rjobb3$ $\rbal4$ $\rjobb4$ 
$\rbal5$ $\rjobb5$ 
$\rbal6$ $\rjobb6$ $\rbal7$ $\rjobb7$ $\rbal8$ $\rjobb8$ 
\\
$\rvbal1=\rbal1$   $\rvbal2=\rbal2$  $\rvbal3=\rbal3$   $\rvbal4=\rbal4+\rbal5+\rbal6+\rbal7$ 
\\$\rvjobb1=\rjobb1$ $\rvjobb2=\rjobb2$      $\rvjobb3=\rjobb3+\rjobb4+\rjobb5+\rjobb6$
$\rvjobb4=\rjobb7$ $\rvjobb5=\rjobb8$ 
}

\begin{proof}[Proof of Lemma~\ref{mfrkLma}] We prove the lemma by induction on $t$. If $t=0$, then $L$ is a grid by Lemma~\ref{dkhgTbFmk}\eqref{dkhgTbFmkb} and the statement is trivial. Assume that $t>0$ and the lemma holds for $t-1$.  By  Lemma~\ref{oldmfrkLma}, there exist a slim rectangular lattice $L'$ with exactly $t-1$ of its elements having more than two lower covers,  a fixed diagram $D_0'\in \adia$ of $L'$, a distributive covering square  (equivalently, a distributive 4-cell in $D_0'$) $H_{t-1}$ of $L'$, and $k\in\mathbb N=\set{1,2,\dots}$ such that $L$ is obtained from $L'$ by a $k$-fold multifork  extension at $H_{t-1}$. With respect to $D_0'$, let $i=\lefth_{L'}(1_{H_{t-1}})$ and $j=\righth_{L'}(1_{H_{t-1}})$. Define
\begin{align*}
\vrvbal&=\tuple{ \rbal1,\dots,\rbal{i-1}, \rbal i+\dots+\rbal{i+k},\rbal{i+k+1},\dots,\rbal{k+\emel(D'_0)} }\text{ and}\cr
\vrvjobb&=\tuple{ \rjobb1,\dots,\rjobb{j-1}, \rjobb j+\dots+\rjobb{j+k},\rjobb{j+k+1}, \dots,\rjobb{k+\emer(D'_0)}}\text{.}
\end{align*}
Since $D'_0$ witnesses that  $\tuple{\bdelta,\vrvbal , \vrvjobb}$ is a triplet compatible with $L'$, the induction hypothesis applies to this triplet and $L'$. Therefore, there exists a diagram $D'\in \bdia$ of $L'$ whose compatible coordinate triplet  is $\tuple{\bdelta,\vrvbal , \vrvjobb}$  such that \eqref{dkmfFtHv} holds with $t-1$, $D'$, and $L'$ instead of $t$, $D$, and $L$; see Figure~\ref{fig-forkext-bdia} for an illustration with $\tuple{i,j,k}=\tuple{4,4,3}$. (In the figure, $H_{t-1}$ is the grey covering square on the left.) The ideal $\ideal{1_{H_{t-1}}}$ in $D'$ is a distributive lattice, so  it is a grid. Clearly, if we insert a $k$-multifork at $H_{t-1}$ according to $\tuple{\rbal i,\dots,\rbal{i+k} }$ and $\tuple{\rjobb j,\dots,\rjobb{j+k}}$ as in the figure, then we obtain a \emph{planar} diagram $D$, which belongs to $\bdia$. The definition of $\vrvbal$ and $\vrvjobb$ imply that   $\tuple{\bdelta,\vrbal , \vrjobb}$ is the complex coordinate triplet of $D$. 
This completes the induction step and proves the lemma.
\end{proof}

\begin{proof}[Proof of Theorem~\ref{pcvWsWdG}]  Part \eqref{pcvWsWdGc}  follows from  Proposition~\ref{kTrTzmyY}.  
Part \eqref{pcvWsWdGb}  is an obvious consequence of part \eqref{pcvWsWdGa}, so we only focus on part  part \eqref{pcvWsWdGa}.

It is straightforward to see that if part \eqref{pcvWsWdGa} holds for all the $L_i$ in the canonical glued sum decomposition   \eqref{cnNcDpsT} of $L$, then it also holds for $L$.   Part \eqref{pcvWsWdGa} is evident if  $L_i$ is a chain.  Part \eqref{pcvWsWdGa}
follows from Lemma~\ref{mfrkLma} if $L_i$ is a slim rectangular lattice.  So, it suffices to show the validity of part \eqref{pcvWsWdGa} if $L_i$ is a glued sum indecomposable planar semimodular lattice.  To ease the notation, we write $L$ rather than $L_i$.  Actually, since 
the application of 
Definition~\ref{dhGhTzM}\eqref{dhGhTzMAc} cannot destroy planarity, we can assume that $L$ is a \emph{slim} semimodular lattice. Let $\tuple{\bdelta,\vrbal,\vrjobb}$ be a triplet compatible with $L$. 
Theorem~\ref{thmmain} allows us to consider the \rectext{} $L'$ of $L$. Since this is only the question of the diagram-dependent values $\emel$ and $\emer$, it follows from Lemma~\ref{samecoordin} and Proposition~\ref{kTrTzmyY} that $\tuple{\bdelta,\vrbal,\vrjobb}$ is compatible with $L'$. Thus, Lemma~\ref{mfrkLma} gives us a diagram $D'\in \bdia$ of $L'$ such that  $\tuple{\bdelta,\vrbal,\vrjobb}$ is the complex coordinate triplet of $D'$. We conclude from Lemma~\ref{samecoordin} that the elements of $L$ in $D'$ are exactly in the appropriate places that  \eqref{bdltrrvdGr} demands for $L$. These elements form a subdiagram $D$. By Lemma~\ref{dkjTghNVc}, $D$ is a region of $D'$. As a region of a planar diagram, $D$ is also planar. It is clear, again by  Lemma~\ref{samecoordin}, that  $\tuple{\bdelta,\vrbal,\vrjobb}$ is the complex coordinate triplet of $D$. In particular, $D\in\bdia$.
\end{proof}

Although $\cdia$ is not yet defined, the diagrams in $\jdia j$, $j\in\set{1,2}$, of a \emph{rectangular} lattice are particularly easy to draw. Hence,  we formulate the following remark, which follows from Lemma~\ref{samecoordin}.
Note, however, that \eqref{bdltrrvdGr} allows us to draw a diagram directly, without drawing its \rectext{}.

\begin{remark}\label{kdTZcWZ} For $j\in\set{1,2}$, a diagram $D\in \jdia j$ of a planar semimodular lattice $L$ with more than two elements can be constructed as follows.
\begin{enumeratei}
\item Take a \rectext{} $R$ of $L$.
\item Find a diagram $E\in\jdia j$ of $R$.
\item Remove the vertices corresponding to $R\setminus L$ and the edges not in $L$.
\end{enumeratei}
\end{remark}

As a counterpart of this remark, we formulate the following statement here, even if $\cdia$ is not yet defined. (We need this statement before introducing $\cdia$, and its validity for $\bdia$ will trivially imply that it holds for $\cdia$.)
We say that $E$ is a \emph{\rectext{} diagram} of a planar semimodular diagram $D$ if $E$ is a planar diagram of a \rectext{} of the lattice determined by $D$ and  we can obtain $D$ from $E$ by omitting some vertices and edges. The  equation $E_1=E_2$ below is understood in the sense that the two diagrams consist of the same complex numbers as vertices and the same edges.
Note that a glued sum indecomposable lattice cannot have a collinear diagram; see 
Definition~\ref{dhGhTzM}\eqref{dhGhTzME}.

\begin{proposition}\label{dgmpRopmain}
If $j\in\set{0,1,2}$,     $D\in\jdia j$, and $|D|\geq 3$, then  the following  assertions hold.
\begin{enumeratei}
\item\label{dgmpRopmaina} If $j=0$, then $D$ has a \rectext{} diagram in $\adia$.
\item\label{dgmpRopmainb} If $j\in\set{1,2}$ and $D$ is non-collinear,  then  $D$ has a \rectext{} diagram in $\jdia j$.
\item\label{dgmpRopmainc} Assume, in addition, that $D$ is glued sum indecomposable. Let  $E_1\in\jdia j$ and $E_2\in\jdia j$ be  \rectext{} diagrams of $D$. If $j$ is in $\set{1,2}$, then $E_1=E_2$. If $j=0$, then $E_1$ is similar to $E_2$.
\end{enumeratei}
\end{proposition}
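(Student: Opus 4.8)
The plan is to construct the required \rectext{} diagram $E$ by drawing $D$ inside a fixed \rectext{} $R$ of its lattice $L$, exploiting that the join-coordinates of the elements of $L$ survive the passage to $R$. I would treat the glued sum indecomposable slim case as the core and reduce everything else to it. So first assume $L$ is glued sum indecomposable and slim. By Theorem~\ref{thmmain}\eqref{thmmaina} and \eqref{thmmainb}, $L$ has a slim \rectext{} $R$, again glued sum indecomposable. Let $\tuple{\bdelta,\vrbal,\vrjobb}$ be the complex coordinate triplet of $D\in\bdia$. Lemma~\ref{outlemmA} (its \eqref{jGjdTbZa}) together with Lemma~\ref{samecoordin} gives $\emel(R)=\emel(L)=\emel(D)$ and $\emer(R)=\emer(L)=\emer(D)$, so the very same triplet is compatible with $R$. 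Hence Theorem~\ref{pcvWsWdG}\eqref{pcvWsWdGa} yields a unique diagram $E\in\bdia$ of $R$ with coordinate triplet $\tuple{\bdelta,\vrbal,\vrjobb}$.

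The point is then that $E$ restricts to $D$. Indeed, for $x\in L$ Lemma~\ref{samecoordin} gives $\lefth_R(x)=\lefth_L(x)$ and $\righth_R(x)=\righth_L(x)$, so the defining formula \eqref{bdltrrvdGr} places $x$ at the same complex number in $E$ as in $D$; since $L$ is a cover-preserving sublattice of $R$ the edges match, and the eyes match because Definition~\ref{dhGhTzM}\eqref{dhGhTzMAc} locates them by the same rule inside $4$-cells common to $D$ and $E$. Thus $D$ is obtained from $E$ by deleting the vertices and edges of $R\setminus L$, settling the glued sum indecomposable slim instance of part \eqref{dgmpRopmainb}. For a non-slim (still glued sum indecomposable) $L$ I would pass to the full slimming $D'$, which is glued sum indecomposable by Lemma~\ref{slimcompmml}\eqref{slimcompmmla}, apply the above, and then anti-slim: the full slimming of $R$ is the \rectext{} of the full slimming of $L$, and Definition~\ref{dhGhTzM}\eqref{dhGhTzMAc} reinserts the eyes of $L$ in the same shared $4$-cells, so $E\in\bdia$ still restricts to $D$. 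A general non-collinear $D$ is then handled through its canonical decomposition \eqref{cnNcDpsT}, treating each glued sum indecomposable component as above and each chain component (whose \rectext{}s are grids) separately. Finally, part \eqref{dgmpRopmaina} for arbitrary $D\in\adia$ is the easiest: $R$ exists by Theorem~\ref{thmmain}\eqref{thmmaina}, $L$ is a region of $R$ by Lemma~\ref{dkjTghNVc}, and the flexibility of $\adia$ (Proposition~\ref{kTrTzmyY}) lets one choose a planar diagram of $R$ whose $L$-region is exactly $D$.

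For the uniqueness statement \eqref{dgmpRopmainc}, assume $D$ glued sum indecomposable and let $E_1,E_2\in\jdia j$ be \rectext{} diagrams of $D$, diagrams of \rectext{}s $R_1,R_2$ of $L$. When $j\in\set{1,2}$ I would show $E_1=E_2$ literally (reducing to the slim case by full slimming as above). Since each $E_i$ restricts to $D$, the position of $0\in L$ fixes the same $\bdelta$; and because $\emel(R_i)=\emel(L)$ and $\emer(R_i)=\emer(L)$, the left and right boundary chains of $L$ run through all coordinate levels, so their positions in $E_i$ force every entry of the scaling vectors of $E_i$ to be exactly $\vrbal$ and $\vrjobb$ of $D$. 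Moreover $R_1$ and $R_2$ share the same coordinate structure, since by \eqref{dksdghNcX} in Lemma~\ref{outlemmA} the set $\allcoord{R_i}$ depends only on $\incoord L$; hence \eqref{bdltrrvdGr} places every vertex of $E_1$ and of $E_2$, including those outside $L$, at the same complex number, giving $E_1=E_2$ (the case $j=2$ is inherited verbatim as $\cdia\subset\bdia$). When $j=0$, Theorem~\ref{thmmain}\eqref{thmmainc} gives $R_1\cong R_2$, both glued sum indecomposable, and Proposition~\ref{kTrTzmyY} makes their diagrams unique up to left-right similarity; the common restriction $D$ fixes which boundary is the left one, ruling out the reflected alternative, so $E_1$ is similar to $E_2$.

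The hard part will be twofold. First, in part \eqref{dgmpRopmainb} the genuinely non--glued-sum-indecomposable, non-collinear case: the coordinate invariance of Lemma~\ref{samecoordin} is available only for glued sum indecomposable $L$ (and indeed fails for a chain sitting inside the square), so I must verify that after decomposing along \eqref{cnNcDpsT} the component diagrams, including the collinear chain components, reassemble into a single rectangular $\bdia$ diagram with no coordinate mismatch at the glued seams — this is precisely where non-collinearity of the whole $D$ is used. Second, in part \eqref{dgmpRopmainc} with $j=0$, ruling out the mirror image: I must argue that requiring both $E_1$ and $E_2$ to restrict to the \emph{same} oriented diagram $D$ forces the left-right similarity rather than its reflection, which rests on $D$ being glued sum indecomposable so that its left boundary chain $\leftb D$ is intrinsically distinguished.
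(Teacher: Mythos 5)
Your core construction is the paper's own: for a glued sum indecomposable slim $D$ you transfer the complex coordinate triplet to the \rectext{} via Lemma~\ref{samecoordin} and Lemma~\ref{outlemmA}, invoke Theorem~\ref{pcvWsWdG}\eqref{pcvWsWdGa}, and observe that the resulting diagram restricts to $D$; this is exactly the mechanism the paper uses (in the reverse direction) in the proof of Theorem~\ref{pcvWsWdG}, and your treatment of part \eqref{dgmpRopmainc} for $j\in\set{1,2}$ correctly spells out the paper's one-line appeal to Lemma~\ref{samecoordin}, \eqref{dksdghNcX}, and \eqref{bdltrrvdGr}. The gaps you flag in parts \eqref{dgmpRopmaina}--\eqref{dgmpRopmainb} for decomposable $D$ are gaps the paper leaves open as well: it only outlines the successive merging of component extensions (Figures~\ref{fig-nonindecomposable} and \ref{fig-getridchain}) and explicitly omits the ``straightforward but tedious'' verification, so you are not behind the paper there.

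The one genuine gap is your $j=0$ case of part \eqref{dgmpRopmainc}. Theorem~\ref{thmmain}\eqref{thmmainc} only supplies an \emph{abstract} isomorphism $R_1\cong R_2$, which need not restrict to the identity on $L$ (for non-slim $L$ it sometimes cannot, cf.\ Figure~\ref{fig-nonreliso}), so ``the common restriction $D$ fixes which boundary is the left one'' does not follow from Proposition~\ref{kTrTzmyY} alone: the similarity map it produces may move $L$ inside $R_2$, and $\leftb D$ is not intrinsically distinguished in the abstract lattice --- what distinguishes left from right is the geometric position of $R_i\setminus L$ relative to $\leftb L$, and your argument never connects the two. The paper closes this by first reducing to slim $D$ (putting the eyes back at the end) and then working with the concrete isomorphism $\eta=\delta_2^{-1}\circ\delta_1$ built from the coordinatization maps of Lemma~\ref{outlemmA}: since $\leftcoord{E_k}$ is determined by $\incoord L$ via \eqref{jGjdTbZe}, $\eta$ sends elements strictly to the left of $\leftb L$ to elements strictly to the left of $\leftb L$; glued sum indecomposability then excludes $\eta(\leftb{E_1})=\rightb{E_2}$, so Lemma~\ref{autslrctglexchbrs} forces $\eta(\leftb{E_1})=\leftb{E_2}$, and \eqref{kkfpPjRw} yields the similarity. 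Some version of this coordinate argument is indispensable; your sketch as written does not supply it.
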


Besides that $\ddia$ has not been defined yet, 
Remark~\ref{remnotInScope} will explain  why 
$j=3$ is not allowed above.

\begin{proof}[Proof of Proposition~\ref{dgmpRopmain}\eqref{dgmpRopmainc}] 
We can assume that $D$ is slim; then its \rectext{} are also slim Theorem~\ref{thmmain}\eqref{thmmainb}. The reason is that if $D$ is not slim, then we can work with its full slimming subdiagram $D'$, and we can put the eyes back later, in the \rectext{}. 
For \emph{lattices}, the ambiguity of the full slimming can cause some difficulties, see 
Lemma~\ref{slimcompmml}. However, for \emph{diagrams}, the full slimming is uniquely determined and cannot cause any problem; see also 
Definition~\ref{dhGhTzM}\eqref{dhGhTzMAc}

Part \eqref{dgmpRopmainc} for   
 $j\in\set{1,2}$ follows from  Lemma~\ref{samecoordin}, \eqref{dksdghNcX}, and \eqref{bdltrrvdGr}. Next, we deal with part \eqref{dgmpRopmainc} for $j=0$. So let $D\in \adia$ and let $E_1,E_2\in\adia$ be \rectext{} diagrams of $D$. Let $L$ be the lattice determined by $D$. By \eqref{dksdghNcX}, objects like $\leftcoord {E_k}$ and $\allcoord {E_k}$ will be understood with respect to $D$.  For $k\in\set{1,2}$, take the  coordinatization map $\delta_k\colon E_k\to  \allcoord{E_k}$, given in the last sentence of Lemma~\ref{outlemmA}. Since it is a lattice isomorphism, so is the map $\delta_2^{-1}\circ\delta_1\colon E_1\to E_2$, which we denote by  $\eta$. It follows from \eqref{jGjdTbZe} that 
$\eta (\leftcoord{E_1})  \subseteq  \leftcoord{E_2}$.  Hence, $\eta(\leftb{E_1})\subseteq \leftcoord{E_2}$.  
The glued sum indecomposability of $L$ yields that $\rightb {E_2}\cap \leftcoord{E_2}=\set{0,1}$.  This excludes that $\eta(\leftb{E_1})= \rightb{E_2}$. Thus, Lemma~\ref{autslrctglexchbrs} yields that $\eta(\leftb{E_1})= \leftb{E_2}$.  Consequently,  \eqref{kkfpPjRw} implies that $E_1$ is similar to $E_2$, as required, completing the proof of part \eqref{dgmpRopmainc}.
\end{proof}

\begin{figure}[htb]
\includegraphics[scale=1.0]{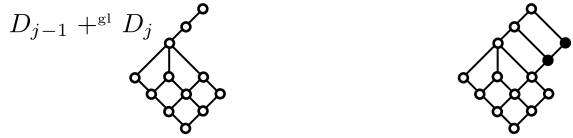}
\caption{Getting rid of a collinear chain $D_j$}\label{fig-getridchain}
\end{figure}

\begin{proof}[Outline for Proposition~\ref{dgmpRopmain}\eqref{dgmpRopmaina}-\eqref{dgmpRopmainb}] 
As opposed to part \eqref{dgmpRopmainc}, we will not use parts \eqref{dgmpRopmainb} and \eqref{dgmpRopmainb} in the paper. Hence, and also because of space considerations, we only give the main ideas.
Consider the canonical decomposition 
$D=D_1\glsum\dots\glsum D_t$; see \eqref{cnNcDpsT}.
In the simplest case, we can take
a \rectext{} $E_j$ of $D_j$ for every $j$; either by following the argument in the proof of  Proposition~\ref{dgmpRopmain}\eqref{dgmpRopmainc} for $j=0$, see also \eqref{dhdttjrjkL}, or trivially for chain components. Then Figure~\ref{fig-nonindecomposable} indicates how to continue by successively replacing the glued sum of two consecutive rectangular diagrams by their \rectext{}. However, there are less simple cases, where some $D_j$ are collinear or $|D_j|=2$. Then we can exploit the fact that  $D_j\neq D$, and  so at least one of  $D_{j-1}$ and $D_{j+1}$ exists and it is glued sum indecomposable. If, say, $D_{j-1}$ is glued sum indecomposable, then $D_{j-1}\glsum D_j$, see on the left of Figure~\ref{fig-getridchain}, can be replaced by the diagram on the right of the same figure.

The straightforward but tedious details proving that our method yields a \rectext{} diagram of $D$ are omitted. 
\end{proof}

\subsection{Equidistant diagrams with normal slopes on their boundaries}
We define a subclass $\cdia$ of $\bdia$ as follows
\begin{definition}\label{dkGkGn}
A diagram $D\in \bdia$ belongs to 
$\cdia$ if its complex coordinate triplet is of the form
\begin{equation}
\tuple{\bdelta,\vrbal,\vrjobb}=\tuple{\bdelta,\tuple{r,\dots,r},\tuple{r,\dots,r}}
\label{dTfchnMr}
\end{equation}
for a positive constant  $r\in\mathbb R$. 
``\emph{Rescaling}" in $\cdia$ means to change $r$.
\end{definition}

From Theorem~\ref{pcvWsWdG}, we clearly obtain the following statement.

\begin{corollary}\label{pcvhjRdG}
Every planar semimodular lattice has a  diagram in $\cdia$, which is unique  up to  rescaling in $\cdia$, parallel shifting, and component-flipping.
\end{corollary}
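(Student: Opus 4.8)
The plan is to extract everything from Theorem~\ref{pcvWsWdG}, observing that a diagram in $\cdia$ is, by Definition~\ref{dkGkGn}, nothing but a diagram in $\bdia$ whose complex coordinate triplet has all of its real entries equal to one common positive constant $r$. So the whole statement should be a bookkeeping consequence of the existence/uniqueness already established for $\bdia$.

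For existence, I would first apply Theorem~\ref{pcvWsWdG}\eqref{pcvWsWdGb} to obtain some diagram $D_0\in\bdia$ of $L$; its complex coordinate triplet $\tuple{\bdelta,\vrbal,\vrjobb}$ fixes the lengths $\emel(D_0)$ and $\emer(D_0)$ of the two real sequences. Now fix any positive real $r$ and replace every entry of $\vrbal$ and of $\vrjobb$ by $r$, keeping $\bdelta$ and both lengths unchanged. By the observation recorded just before Theorem~\ref{pcvWsWdG} — that compatibility of a triplet depends only on the lengths of its real sequences and not on the magnitudes of their entries — the resulting uniform triplet $\tuple{\bdelta,\tuple{r,\dots,r},\tuple{r,\dots,r}}$ is still compatible with $L$. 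Then Theorem~\ref{pcvWsWdG}\eqref{pcvWsWdGa} turns it into a genuine diagram $D\in\bdia$ of $L$, and $D\in\cdia$ holds by Definition~\ref{dkGkGn}. This settles existence.

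For uniqueness, let $D_1,D_2\in\cdia$ be diagrams of $L$, with common constants $r_1$ and $r_2$, respectively. Since $\cdia\subseteq\bdia$, Theorem~\ref{pcvWsWdG}\eqref{pcvWsWdGc} relates $D_1$ and $D_2$ by component-flipping, parallel shifting, and piecewise rescaling. I would then verify that, when both endpoints are constrained to lie in $\cdia$, each of these three operations is already of the type permitted in the corollary. Parallel shifting only alters $\bdelta$ and manifestly preserves uniformity of the constant. Component-flipping reflects some canonical components over a vertical axis; because such a reflection interchanges $\bepsilon$ and $\bepsilon^3$ (indeed $-\overline{\bepsilon}=\bepsilon^3$), its effect in \eqref{bdltrrvdGr} is to swap the left real sequence and the right real sequence of each flipped component, which leaves every entry equal to $r_1$ — so it keeps us in $\cdia$ with the same constant. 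Finally, the piecewise rescaling carrying $D_1$ to $D_2$ must send each entry $r_1$ to $r_2$, since all entries of $D_2$ equal $r_2$; hence this piecewise rescaling is in fact a single global rescaling, i.e.\ exactly \emph{rescaling in} $\cdia$ in the sense of Definition~\ref{dkGkGn}.

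I do not expect a genuine obstacle: the argument is essentially the specialization of Theorem~\ref{pcvWsWdG} to uniform triplets. The only points deserving explicit care are checking that component-flipping does not destroy the uniformity of the constant $r$ (handled by the $\bepsilon\leftrightarrow\bepsilon^3$ swap above) and observing that the piecewise rescaling of Theorem~\ref{pcvWsWdG}\eqref{pcvWsWdGc} collapses to one global rescaling precisely because both $D_1$ and $D_2$ are forced to have all entries equal. With these two verifications in place, the corollary follows.
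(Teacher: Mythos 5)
Your proposal is correct and takes essentially the same route as the paper, which simply asserts that the corollary follows ``clearly'' from Theorem~\ref{pcvWsWdG}; you have merely supplied the bookkeeping (uniform triplets are compatible, and the three operations of Theorem~\ref{pcvWsWdG}\eqref{pcvWsWdGc} specialize to those allowed in the corollary when both diagrams lie in $\cdia$) that the paper leaves implicit. No gap.
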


The diagrams in Figures~\ref{fig-nonreliso}, \ref{fig-nonindecomposable}, and     $\widehat R$ in Figure~\ref{fig-normext-notmin}, and, for example, the diagrams in  
\init{G.\,}Cz\'edli~\cite[Figures 1, 2, 3, 4,  5]{czg-mtx},  \cite[Figures 2, 4, 5]{czgrepres}, and \cite[Figures 1, 8, 9]{czgtrajcolor}  belong to   $\cdia$. Furthermore, the fact that the diagrams in \init{G.\ }Cz\'edli~\cite[Figure 4]{czgquasiplanar} 
belong to $\cdia$ is more than an esthetic issue; it is an integral part of the proof of \cite[Lemma 3.9]{czgquasiplanar}.
Generally, for a planar semimodular lattice, we use a diagram outside $\cdia$ only in the following two cases: a diagram is extended or a subdiagram is taken, or if there are many eyes in the interior of a covering square. (In the first but not the second case,   $\bdia$ is recommended.)

\subsection{Uniqueness without compromise}
The "up" direction in our plane (blackboard, page of an article, etc.)  is usually fixed. Hence, for a diagram $D\in \cdia$, the parameters $\bdelta$ and $r$ in \eqref{dTfchnMr} does not effect the geometric shape and the orientation of $D$. So, we can choose $\pair\bdelta r=\pair 01$.  As we will see soon, this means that we choose  the complex plain $\mathbb C$ so that  $0_D$ is placed at  $0\in \mathbb C$ 
and the leftmost atom of $D$ is placed at $\bepsilon^3$.  
However,   reflecting some of the $D_j$ in the canonical decomposition \eqref{cnNcDpsT} over a vertical axis may effect the geometric shape of $D$, and we  want to get rid of this possibility. To achieve this goal, we need some preparation.

Let $D$ be a planar diagram of a slim semimodular lattice. Recall from 
 \init{G.\,}Cz\'edli and \init{E.\,T.\ }Schmidt~\cite{czgschperm} that the \emph{Jordan--H\"older permutation} $\pi_D$, which was associated with $D$ first by 
\init{H.\ }Abels~\cite{abels} and
\init{R.\,P.\ }Stanley~\cite{stanley}, can be defined as follows. Let 
\begin{equation*}\parbox{6.5cm} 
{$\leftb D=\set{0=e_0\prec e_1\prec\dots\prec e_n=1}$ and $\rightb D=\set{0=f_0\prec f_1\prec\dots\prec f_n=1}$,}
\label{fjHzTnS} 
\end{equation*}
and let $S_n$ denote the symmetric group consisting of all $\set{1,\dots,n}\to\set{1,\dots,n}$ permutations. We define $\pi_D\in S_n$ by the rule
\[\pi_D(i)=j\iff [e_{i-1},e_i]\text{ and }[f_{j-1},f_j]\text{ belong to the same trajectory.}
\]
Obviously, for slim semimodular lattices diagrams $D_1$ and $D_2$, 
\begin{equation}
\text{if }D_1\text{ is similar to }D_2 \text{, then } \pi_{D_1}=\pi_{D_2}\text.
\label{simdiagrsameperm}
\end{equation}
For $\sigma,\tau\in S_n$, $\sigma$ \emph{lexicographically precedes} $\tau$, in notation $\sigma\lexleq \tau$, if   
\begin{equation}
\tuple{\sigma(1),\dots,\sigma(n)} \leq \tuple{\tau(1),\dots,\tau(n)} 
\label{ptlexeqdef}
\end{equation}
in the lexicographic order. Although \eqref{ptlexeqdef} is meaningful for all slim semimodular diagrams,  Section~\ref{proofssection} does not work for chains. 
For example, the diagrams in $\cdia$ of a chain cannot be distinguished by means of join-coordinates. Hence, chain components in the canonical decomposition \eqref{cnNcDpsT} would lead to difficulties. Therefore, we assume glued sum indecomposability here. So  let 
 $D_j'\in \adia$ be the full slimming diagram of $D_j\in \adia$ for $j\in\set{1,2}$ such that $D_1'$ is similar to $D_2'$ and, in addition, let the $D_j$ be glued sum indecomposable. 
Note that if $\height(x)=\height(y)$ and $x\neq y$, then $x\parallel y$ and, by  Lemma~\ref{leftrightlemma}\eqref{leftrightlemmab},
either $x\rellambda y$, or $y\rellambda x$. Hence, we can consider  the unique (repetition free) list 
$\tuple{x_1^{(j)},x_2^{(j)},\dots,x_k^{(j)}}$  of elements of $D_j'$ such that, for all $1\leq s<t\leq k$, either $\height(x_s^{(j)})<\height(x_t^{(j)})$, or $\height(x_s^{(j)})=\height(x_t^{(j)})$ and $x_s^{(j)}\rellambda x_t^{(j)}$.
Denoting the similarity map $D_1'\to D_2'$ by $\phi$, note that
\begin{equation}
\phi\text{ preserves the list, that is, }
\phi(x_s^{(1)})=x_s^{(2)} \text{ for } \forall s\in\set{1,\dots,k}\text.
\label{simmapPreslist}
\end{equation}
We say that      $D_1\antilexleq D_2$ if the $k$-tuple
$\tuple{\inu{D_1}{D_1'}(x_1^{(1)}),\dots \inu{D_1}{D_1'}(x_k^{(1)})}$  equals or lexicographically precedes  
$\tuple{\inu{D_2}{D_2'}(x_1^{(2)}),\dots \inu{D_2}{D_2'}(x_k^{(2)})}$.  Let us emphasize that $D_1\antilexleq D_2$ only makes sense if the full slimming sublattice of $D_1$ is similar to that of $D_2$. 
The upper integer part of a real number $x$ is denoted by $\lceil x\rceil$; for example, $\lceil \sqrt 2 \rceil = 2 =\lceil 2\rceil$.
Now we are in the position to define a class $\ddia\subset \cdia$ of diagrams as follows.

\begin{definition}\label{djNmvYx} 
Let  $D\in\cdia$ be a diagram, and let $L$ denote the  planar semimodular lattice it determines. Let $D'$ and $L'$ denote the full slimming subdiagram of $D$ and the corresponding full slimming sublattice of $L$, respectively. Then $D$ belongs to $\ddia$ if one of the conditions \eqref{djNmvYxA}, \eqref{djNmvYxA}, and \eqref{djNmvYxA} below~holds.
\begin{enumerate}[\quad\upshape(A)]
\item\label{djNmvYxA} $D$ is glued sum indecomposable and the following three conditions hold.
\begin{enumeratei}
\item\label{djNmvYxAa}  The complex coordinate triplet of $D$ is  $\tuple{0,\tuple{1,\dots,1},\tuple{1\dots,1}}$.
\item\label{djNmvYxAb} For every diagram 
$E'\in \adia$ of $L'$, $\pi_{D'}\lexleq \pi_{E'}$.
\item\label{djNmvYxAc} For every diagram $E\in \adia$ of $L$, if the full slimming of  $E$ is similar to $D$, then  $E\antilexleq D$.
\end{enumeratei}
\item\label{djNmvYxB} $D$ is a chain $D=\set{0=d_0\prec \dots\prec d_n=1}$ and, for $j\in\set{0,\dots, n}$,  
\[d_j=\begin{cases}
j\bepsilon^3,&\text{if }j\leq \lceil n/2 \rceil,\cr
\lceil n/2 \rceil\bepsilon^3 + (j- \lceil n/2 \rceil)\bepsilon,&\text{if }j > \lceil n/2\text.
\end{cases}
\]
\item\label{djNmvYxD} The canonical decomposition \eqref{cnNcDpsT} consists of more than one components, that is, $t>1$, and, for every $j\in\set{1,\dots,t}$, an appropriate parallel shift (that is, changing the first component of the complex coordinate triplet) turns $D_j$ into a diagram in $\ddia$.
\end{enumerate}
\end{definition}

For example, the  diagrams in Figures~\ref{fig-nonreliso}, \ref{fig-nonindecomposable}, and \ref{fig-oterr} are in $\ddia$; see also Figure~\ref{fig-centgravleft}. 
 Observe that in \eqref{djNmvYxAb} and \eqref{djNmvYxAc} of Definition~\ref{djNmvYx}, $E'$ and $E$ range in $\adia$ rather than only in $\cdia$. 
Of course, there could be other definitions to make the following proposition valid. Our vague idea  is that ``at low level'', we want more elements on the left than on the right.

\begin{figure}[htb]
\includegraphics[scale=1.0]{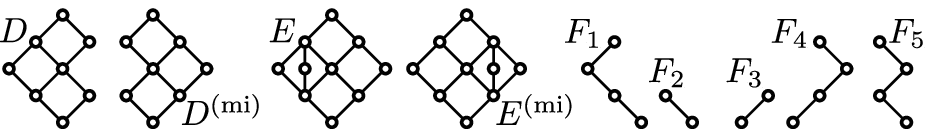}
\caption{ $D,E, F_1,F_2 \in \ddia$ but $\refl D, \refl E, F_3,F_4,F_5\not\in \ddia$ }\label{fig-centgravleft}
\end{figure}

\begin{proposition}\label{dqHRT}
Every planar semimodular lattice $L$ has a \emph{unique} diagram $D$ in $\ddia$. The uniqueness means that if $D^\ast,D^\natural\in\ddia$ are diagrams of $L$, then their vertex sets are exactly the same subsets of $\,\mathbb C$, and their edge sets are also  the same sets of straight line segments in the complex plane. 
\end{proposition}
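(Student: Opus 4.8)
The plan is to argue separately in the three mutually exclusive clauses of Definition~\ref{djNmvYx}. Because the canonical decomposition \eqref{cnNcDpsT} is unique, $L$ is either glued sum indecomposable, or a chain, or a glued sum of $t>1$ components, and which of these holds is determined by $L$; hence any two diagrams $D^\ast,D^\natural\in\ddia$ of $L$ fall under the same clause, and existence and uniqueness may be checked clause by clause. The chain clause is immediate: its displayed formula prescribes every vertex as an explicit point of $\mathbb C$, and the resulting zigzag is visibly a cover-preserving subdiagram of a grid, hence lies in $\cdia$. The decomposable clause reduces to the other two: for existence I would stack the $\ddia$-diagrams of the components $L_1,\dots,L_t$, translating each so that its $0$ meets the $1$ of the previous one, which yields a $\cdia$-diagram whose components are in $\ddia$; for uniqueness, the component-wise uniqueness forces $D^\ast$ and $D^\natural$ to have identical component shapes, and the gluing, together with the standing normalization that places $0_L$ at $0\in\mathbb C$ (forced by \eqref{djNmvYxAa} and by the chain clause), fixes the absolute position.

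For the glued sum indecomposable clause, I would start from a diagram of $L$ in $\cdia$ furnished by Corollary~\ref{pcvhjRdG}; rescaling to $r=1$ and shifting to $\bdelta=0$ achieves condition~\eqref{djNmvYxAa} and places every element $x$ of the full slimming at $\bepsilon^3\lefth_{D'}(x)+\bepsilon\righth_{D'}(x)$ by \eqref{bdltrrvdGr}. By Lemma~\ref{slimcompmml}\eqref{slimcompmmla} the full slimming $L'$ is again glued sum indecomposable and slim, so by Proposition~\ref{slmUq} its diagrams form at most two similarity classes, represented by $D'$ and $\refl{D'}$; choosing the orientation whose Jordan--H\"older permutation is the $\lexleq$-smaller of the two realizes condition~\eqref{djNmvYxAb}. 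With this orientation fixed I would then select, among the finitely many diagrams of $L$ whose full slimming is similar to the chosen $D'$, one whose numerical companion tuple is $\antilexleq$-largest; since $\antilexleq$ is the lexicographic order on tuples in $\mathbb N_0$ matched up by the similarity map via \eqref{simmapPreslist}, this maximum exists, and I realize it in $\cdia$ by drawing the chosen full slimming and placing the eyes by the equal-division rule of Definition~\ref{dhGhTzM}\eqref{dhGhTzMAc}. The resulting diagram satisfies \eqref{djNmvYxAa}--\eqref{djNmvYxAc}.

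For uniqueness in this clause, both $D^\ast,D^\natural$ satisfy \eqref{djNmvYxAa}--\eqref{djNmvYxAc}. Condition~\eqref{djNmvYxAa} fixes the coordinates, so by \eqref{bdltrrvdGr} the placement of the full slimming is determined once its orientation is known, and the eyes are then placed deterministically. A direct check gives $\pi_{\refl{D'}}=\pi_{D'}^{-1}$, so the two admissible permutations are $\pi_{D'}$ and $\pi_{D'}^{-1}$. If they differ, condition~\eqref{djNmvYxAb} forces a single orientation; then by Proposition~\ref{kTrTzmyY} every diagram of $L$ with that full slimming is similar to $D$, and since similarities are $\unu$-preserving they all share one and the same eye distribution, whence $D^\ast$ and $D^\natural$ have identical vertex and edge sets.

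The delicate case, which I expect to be the main obstacle, is when $\pi_{D'}$ is an \emph{involution}: then both orientations pass \eqref{djNmvYxAb}, and a priori $D^\ast$ and $D^\natural$ could use mirror-image full slimmings carrying incomparable eye distributions, which would break uniqueness. To rule this out I would invoke that the Jordan--H\"older permutation determines a slim semimodular diagram up to similarity \cite{czgschperm}; combined with $\pi_{\refl{D'}}=\pi_{D'}^{-1}=\pi_{D'}$ this gives that $D'$ is similar to $\refl{D'}$, so the coordinate set $\set{\pair{\lefth_{D'}(x)}{\righth_{D'}(x)}:x\in D'}$ is invariant under $\pair ij\mapsto\pair ji$ and the normalized full slimming literally equals its own mirror image. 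Thus $D^\ast$ and $D^\natural$ share the same full-slimming vertices, and condition~\eqref{djNmvYxAc} then forces them to share the unique $\antilexleq$-maximal eye distribution, so their vertex and edge sets coincide. A secondary technical point to verify is that the maximal tuple, selected in \eqref{djNmvYxAc} among all diagrams in $\adia$, is actually attained inside $\cdia$; this holds because the tuple depends only on the chosen full slimming and any full slimming can be anti-slimmed within $\cdia$.
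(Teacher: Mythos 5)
Your existence argument follows the paper's route essentially step by step: normalize a $\cdia$-diagram of the full slimming to the triplet $\tuple{0,\tuple{1,\dots,1},\tuple{1,\dots,1}}$, use Proposition~\ref{kTrTzmyY} to cut the admissible permutations down to $\set{\pi_{D'},\pi_{\refl{D'}}}$ and take the $\lexleq$-smaller one, then pick the $\antilexleq$-largest of the finitely many eye distributions, transporting an arbitrary competitor $E\in\adia$ back to an antislimming of $D'$ inside $\cdia$ via a $\unu$-preserving similarity. Where you genuinely differ is on uniqueness, which the paper dismisses as ``evident'': you correctly isolate the only non-obvious case, $\pi_{D'}=\pi_{\refl{D'}}=\pi_{D'}^{-1}$, in which both orientations pass \eqref{djNmvYxAb}, and you close it by importing from \cite{czgschperm} that the Jordan--H\"older permutation determines a slim semimodular diagram up to similarity, so that $\set{\pair{\lefth_{D'}(x)}{\righth_{D'}(x)}:x\in D'}$ is invariant under swapping coordinates and the normalized full slimming coincides with its mirror image as a point set. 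That is a real ingredient the paper leaves implicit, and it is what the ``evident'' actually rests on. Two imprecisions do not affect correctness but deserve a remark: the tuple in \eqref{djNmvYxAc} depends on the eye distribution, not only on the full slimming --- what you need, and what the paper proves, is that every competitor's tuple is realized by an antislimming of $D'$ within $\cdia$; and in clause \eqref{djNmvYxD} the absolute position of a decomposable diagram is pinned down only if one reads the definition as gluing the components starting from the origin, a looseness you inherit from the source, whose proof silently reduces to the indecomposable case.
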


\begin{proof} 
It suffices to deal with the existence statement, because the uniqueness part is evident. 
Let $L'$ be a full slimming sublattice of $L$. We obtain from  Corollary~\ref{pcvhjRdG}  that $L'$ has a diagram $D'\in \cdia$. We can assume that $L$ and, consequently, $L'$ are glued sum indecomposable. 
After rescaling in $\cdia$ and parallel shifting if necessary, we can assume that 
\begin{equation}
\text{the complex coordinate triplet of }D'\text{ is }\tuple{0,\tuple{1,\dots,1},\tuple{1,\dots,1}}\text.
\label{dkzThjT}
\end{equation}
Of course, the same holds for $\refl{D'}$, 
obtained from $D'$ by reflecting it over  the ``imaginary'' axis $\set{ri:r\in \mathbb R}$.
It follows from Proposition~\ref{kTrTzmyY} that every diagram $E'\in \adia$ of $L'$ is similar to $D'$ or $\refl{D'}$. 
Hence, by \eqref{simdiagrsameperm},
all the permutations we have to consider belong to $\set{\pi_{D'},\pi_{\refl{D'}}}$. Thus, $D'$ or $\refl{D'}$ belongs to $\ddia$, depending on $\pi_{D'}\lexleq \pi_{\refl{D'}}$ or $\pi_{\refl{D'}}\lexleq   \pi_{D'}$, because both represent $L'$ and satisfy \eqref{dkzThjT}. Let, say, $D'\in \ddia$. 

Since $D'$ has finitely many 4-cells, and 
the positions of the eyes in a given 4-cell   are determined by Definition~\ref{dhGhTzM}\eqref{dhGhTzMAc}, we conclude that  there are only finitely many antislimmings $D_1,\dots,D_k$ of $D'$ in $\cdia$ that define $L$.  By changing the subscripts is necessary, we can assume that $D_j\antilexleq D_k$ holds for all $j\in\set{1,\dots, k}$. We assert that $D_k\in\ddia$. To prove this,  consider an arbitrary diagram $E\in\adia$ of $L$ such that its full slimming subdiagram $E'$  is similar to $D'$. We have to show that $E\antilexleq D_k$.
Let $\phi\colon D'\to E'$ be similarity map, and define a map $g\colon D'\to\mathbb N_0$ as $g=\inu{E}{E'}\circ \phi$; see \eqref{slimmingMAP}. For each $4$-cell $H$ of $D'$, let us add $g(0_H)$  eyes into the interior of $H$,  keeping Definition~\ref{djNmvYx}\eqref{djNmvYxAc} in mind. In this way, we obtain a diagram $D\in\ddia$, which is an antislimming of $D'$. Since $g=\inu D{D'}$ obviously holds, the similarity map $\phi$ is an $\unu$-preserving isomorphism. Applying \eqref{sdTZnDsghpq} to the lattices our diagrams determine, it follows that $E$ and $D$ define isomorphic lattices. Hence, $D\in \cdia$ defines $L$, and we obtain that $D=D_j$ for some $j\in\set{1,\dots, k}$. 
Since $\phi$ is $\unu$-preserving and it preserves the list of  \eqref{simmapPreslist},
$E\antilexleq D_k\iff D_j\antilexleq D_k$. Therefore, by the choice of $D_k$, $E\antilexleq D_k$, as required.
\end{proof}

\begin{figure}[htb]
\includegraphics[scale=1.0]{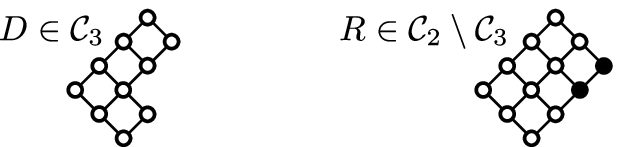}
\caption{In $\ddia$, $D$ has no \rectext{} diagram}\label{fig-notinthescope}
\end{figure}

Consider $D$ and $R$ in Figure~\ref{fig-notinthescope}. By Proposition~\ref{dgmpRopmain}\eqref{dgmpRopmainc}, $R$   is the only \rectext{} of $D$ in  $\cdia$. Hence, we obtain the following remark.

\begin{remark}\label{remnotInScope}  Part \eqref{dgmpRopmainb} of 
Proposition~\ref{dgmpRopmain} fails for $j=3$. 
\end{remark}

\section{A toolkit for diagrams in $\bdia$}\label{toolkitsection}
For $x=x_1+x_2i$ and $y=y_1+y_2i$ in $\mathbb C$, where $x_1,x_2,y_1,y_2\in\mathbb R$,   we say that $x$ is \emph{geometrically below} $y$ if $x_2\leq y_2$. 
In addition to Theorem~\ref{pcvWsWdG}\eqref{pcvWsWdGc}, the following statement also  indicates well the advantage of $\bdia$ over $\adia$; note that this statement  would fail without assuming slimness.

\begin{corollary} \label{nslplQcor}
Let $D\in\bdia$ be a slim semimodular diagram. For distinct $x,y\in D$, we have $x<y$ iff $x$ is geometrically below $y$ and the slope of the line through $x$ and $y$ is in the interval $[\pi/4,3\pi/4]$ $($that is, between $45^\circ$ and $135^\circ)$.
\end{corollary}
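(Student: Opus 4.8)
The plan is to read the vertex positions off \eqref{bdltrrvdGr}, recognise the two geometric conditions together as membership in a fixed convex cone, and match that cone to the lattice order through the join-coordinate criterion of Lemma~\ref{sldiHjNxY}. Slimness gives $L=L'$, so \eqref{bdltrrvdGr} places every vertex. For a vertex $x$ I would write $A(x)=\sum_{j=1}^{\lefth_{D'}(x)}\rbal j$ and $B(x)=\sum_{j=1}^{\righth_{D'}(x)}\rjobb j$; as all $\rbal j,\rjobb j$ are positive, $A$ and $B$ are nonnegative and strictly increasing in $\lefth_{D'}(x)$ and $\righth_{D'}(x)$, respectively. Since $\bepsilon=\tfrac{\sqrt2}2(1+i)$ and $\bepsilon^3=\tfrac{\sqrt2}2(-1+i)$, formula \eqref{bdltrrvdGr} says the vertex of $x$ is $z(x)=\bdelta+\bepsilon^3 A(x)+\bepsilon B(x)$, so $z(y)-z(x)=\bepsilon^3\,\Delta A+\bepsilon\,\Delta B$ with $\Delta A=A(y)-A(x)$ and $\Delta B=B(y)-B(x)$.

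Let $K=\set{u+iv:\ v\geq|u|}$ be the closed cone between the rays $\mathbb R_{\geq0}\bepsilon$ and $\mathbb R_{\geq0}\bepsilon^3$ (of angles $\pi/4$ and $3\pi/4$). For distinct $x,y$, ``$x$ geometrically below $y$'' reads $\text{Im}(z(y)-z(x))\geq0$, and — the line being undirected — ``slope in $[\pi/4,3\pi/4]$'' reads $|\text{Re}(z(y)-z(x))|\leq|\text{Im}(z(y)-z(x))|$; conjoined, the two conditions say exactly $z(y)-z(x)\in K\setminus\set0$. Now $\set{\bepsilon^3,\bepsilon}$ is an $\mathbb R$-basis of $\mathbb C$, and a direct check shows $K$ is precisely the set of nonnegative combinations $\alpha\bepsilon^3+\beta\bepsilon$ with $\alpha,\beta\geq0$; so by uniqueness of coordinates, $z(y)-z(x)\in K$ iff $\Delta A\geq0$ and $\Delta B\geq0$. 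By the strict monotonicity of $A$ and $B$ this means $\lefth_{D'}(x)\leq\lefth_{D'}(y)$ and $\righth_{D'}(x)\leq\righth_{D'}(y)$, which by Lemma~\ref{sldiHjNxY}\eqref{sldiHjNxYb} is $x\leq y$. As $x\neq y$ yields distinct coordinate pairs by \eqref{ddHtBgRw}, hence $z(x)\neq z(y)$, this settles the corollary whenever $x,y$ lie in a single glued sum indecomposable component, where \eqref{bdltrrvdGr} is literally in force.

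For the general $D=D_1\glsum\dots\glsum D_t$ of Definition~\ref{dhGhTzM}\eqref{dhGhTzMC}, I would lean on the facts $K+K\subseteq K$ and $K\cap(-K)=\set0$. If $x<y$, choose a maximal chain from $x$ to $y$; each covering step lies inside one component, where the middle paragraph (or, for a chain component, the zigzag positioning) shows its displacement lies in $K$, whence $z(y)-z(x)\in K\setminus\set0$ by additivity. Conversely, suppose $z(y)-z(x)\in K\setminus\set0$: if $y<x$, the forward direction would place $z(y)-z(x)$ in $-K$ as well, forcing it to be $0$; and $x,y$ cannot be incomparable, since incomparable elements necessarily lie in one glued sum indecomposable component, where one of $\Delta A,\Delta B$ is negative and so $z(y)-z(x)\notin K$. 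Hence $x<y$. The substantive content is the cone computation of the middle paragraph; the points that need care are the undirected reading of ``slope'' (so that the \emph{closed} interval $[\pi/4,3\pi/4]$ matches the \emph{closed}, salient cone $K$ with boundary rays $\bepsilon,\bepsilon^3$) and the stacking bookkeeping in the glued-sum reduction, which I expect to be the main obstacle, though it is organizational rather than deep.
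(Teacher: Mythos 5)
Your proof is correct and follows essentially the same route as the paper: express $y-x$ in the $\mathbb R$-basis $\set{\bepsilon^3,\bepsilon}$ via \eqref{bdltrrvdGr}, observe that the two geometric conditions say exactly that both coefficients are nonnegative, and translate this into the join-coordinate criterion \eqref{sldiHjNxYb}. The only differences are presentational: you spell out the glued-sum reduction (via $K+K\subseteq K$, $K\cap(-K)=\set0$, and the fact that incomparable elements share a component), which the paper compresses into ``easily implies its validity for the general case,'' and you are explicit that the converse direction needs \emph{both} hypotheses (``geometrically below'' as well as the slope condition) to pin down the signs of the coefficients.
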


\begin{proof} First, we deal with the case where $D$ is glued sum indecomposable.  Let $x\neq y\in D$, and denote the line through $x$ and $y$ by $\ell$.  Assume that $x<y$.  Since $\lefth_D$ and $\righth_D$ are monotone, we obtain from \eqref{bdltrrvdGr} that $y-x=s_1\bepsilon^3+s_2\bepsilon\in \mathbb C$ with nonnegative $r_1,r_2\in \mathbb R$. This implies that the slope of $\ell$ is in  $[\pi/4,3\pi/4]$. Conversely, assume that  the slope of $\ell$ is in  $[\pi/4,3\pi/4]$. Again, we can write the complex number $y-x$ in the form  $y-x=t_1\bepsilon^3+t_2\bepsilon\in \mathbb C$ with $t_1,t_2\in \mathbb R$. The  assumption on the slope of $\ell$ implies that $t_1$ and $t_2$ are nonnegative. Thus, we can extract from \eqref{bdltrrvdGr} that $\lefth_D(x)\leq \lefth_D(y)$ and $\righth_D(x)\leq \righth_D(y)$. Hence, $x\leq y$ by \eqref{sldiHjNxYb}. 
So, Corollary~\ref{nslplQcor} holds for the glued sum indecomposable case, which easily implies its validity for the general case. 
\end{proof}

In view of Remark~\ref{kdTZcWZ} and the simplicity of the constructive step described in Definition~\ref{dhGhTzM}\eqref{dhGhTzMAc}, we will mainly focus on \emph{slim rectangular} diagrams. 
Let $D\in \bdia$, and let $[u,v]$ or, in other words, $u\prec v$ be an edge of the diagram $D$. If the angle this edge makes with a horizontal line is $\pi/4$ ($45^\circ$) or  $3\pi/4$ ($135^\circ$), then we say that the edge is of \emph{normal slope}.
If this angle is strictly between $\pi/4$ and  $3\pi/4$, then the edge is \emph{precipitous} or, in other words, it is of \emph{high slope}. The following observation shows that  edges of ``low slopes" do not occur. The 
\emph{boundary}   and the
\emph{interior}   of a diagram $D$ are $\bound D:= \leftb D\cup \rightb D$ and  $D\setminus \bound D$, respectively. 
Remember that $\Mir D$, the set of \emph{meet-irreducible elements}, is $\{x\in D:x$ has exactly one cover$\}$.

\begin{observation}\label{dksRptG}
 Let $D\in\bdia$ be a slim rectangular lattice diagram. If $u\prec v$ in $D$, then exactly one of the following two possibilities holds:
\begin{enumeratei}
\item the edge $[u,v]$ is of normal slope and $u\in \bound D \cup (D\setminus \Mir D)$;
\item the edge $[u,v]$ precipitous,  $u\in\Mir D$,  $u$ is  in $D\setminus \bound D$, the interior of $D$,  and $v$ has at least three lower covers.
\end{enumeratei}
\end{observation}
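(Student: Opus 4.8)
The plan is to read the slope of an edge directly off the two join-coordinates of its endpoints, and then to match each slope type with the promised lattice-theoretic conditions; the geometry does most of the work, and \eqref{hrMlCvr} supplies the rest.

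\emph{Geometric dichotomy.} Since $D$ is slim rectangular, $D=D'$ and every vertex is placed by \eqref{bdltrrvdGr}. For an edge $u\prec v$ I would subtract the two position formulas to obtain
\[
v-u=a\bepsilon^3+b\bepsilon,\qquad
a=\sum_{\lefth_D(u)<j\le\lefth_D(v)}\rbal j,\quad
b=\sum_{\righth_D(u)<j\le\righth_D(v)}\rjobb j .
\]
By the ``$\Rightarrow$'' direction of \eqref{sldiHjNxYb} both index sets are (possibly empty) initial-to-terminal ranges, so $a,b\ge 0$; as the $\rbal j,\rjobb j$ are positive, $a>0\iff\lefth_D(u)<\lefth_D(v)$ and $b>0\iff\righth_D(u)<\righth_D(v)$, while injectivity of $\gamma$ (Lemma~\ref{outlemmA}) forces $(a,b)\neq(0,0)$. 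A one-line computation gives $v-u=\tfrac{\sqrt2}{2}\bigl((b-a)+i(a+b)\bigr)$, whose imaginary part $\tfrac{\sqrt2}{2}(a+b)$ is strictly positive; hence no edge can have ``low slope''. If exactly one of $a,b$ vanishes, the direction is $\bepsilon^3$ or $\bepsilon$, i.e.\ normal slope; if both are positive, the vector lies strictly inside the cone spanned by $\bepsilon^3$ and $\bepsilon$, so its slope is strictly between $\pi/4$ and $3\pi/4$, i.e.\ precipitous. Thus every edge is normal or precipitous but not both, and $[u,v]$ is precipitous iff $\lefth_D(u)<\lefth_D(v)$ and $\righth_D(u)<\righth_D(v)$.

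\emph{The precipitous case.} Assume $[u,v]$ is precipitous, so both join-coordinates strictly increase. A slim rectangular lattice is glued sum indecomposable, hence $D$ is its own \rectext{}, and \eqref{hrMlCvr} applies: there are $p,q\in D$ with $p\prec v$, $q\prec v$, $p\rellambda u$ and $u\rellambda q$. These are three pairwise distinct lower covers $p,u,q$ of $v$, so $v$ has at least three lower covers. Moreover $p\rellambda u$ places an element strictly to the left of $u$, whence $u\notin\leftb D$, and dually $u\rellambda q$ gives $u\notin\rightb D$; thus $u$ lies in the interior $D\setminus\bound D$. For meet-irreducibility I claim $u\in\Mir D$. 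By \init{G.\ }Gr\"atzer and \init{E.\ }Knapp~\cite[Lemma 8]{gratzerknapp1} the element $u$ has at most two covers, and at least one, namely $v$. If it had a second cover $w$, then $u=v\wedge w$ and, by semimodularity, $\{u,v,w,v\vee w\}$ is a covering square; in a slim diagram such a covering square is a $4$-cell with bottom $u$, so $[u,v]$ is one of its two lower sides. But the coordinate computation of the first paragraph, applied to the four covering pairs of any $4$-cell (using \eqref{sldiHjNxYa} for its left and right covers), shows that each of the four sides increments exactly one join-coordinate and is therefore of normal slope. This contradicts the precipitousness of $[u,v]$, so $u$ has a unique cover and $u\in\Mir D$. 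This establishes all the assertions of item~(ii).

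\emph{Assembling the statement.} For a normal edge I would argue by the contrapositive of the previous paragraph: normal and precipitous are mutually exclusive, so a normal $[u,v]$ is not precipitous; by the equivalence at the end of the first paragraph together with the precipitous analysis, we are then not in the situation ``$u\in\Mir D$ and $u$ interior'', that is, $u\in\bound D$ or $u\in D\setminus\Mir D$, which is exactly item~(i). Since the slope of the edge alone decides which of (i),(ii) applies, exactly one of them holds. I expect the only genuinely delicate point to be the meet-irreducibility step: the clean reduction rests on the fact that an element with two covers is the bottom of a $4$-cell, so that its upward edges are $4$-cell sides and hence normal — this is where the planar bookkeeping (covering square $=$ $4$-cell in slim diagrams) has to be invoked carefully, exactly the kind of ``obvious'' statement the paper warns against trusting without proof.
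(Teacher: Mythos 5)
Your opening computation is attractive and genuinely different from the paper: you read the slope of an edge directly off \eqref{bdltrrvdGr} and correctly conclude that every edge is normal or precipitous, with ``precipitous'' equivalent to both join-coordinates strictly increasing; the use of \eqref{hrMlCvr} to produce three lower covers of $v$ and to place $u$ in the interior is also sound. (The paper instead runs a short induction along the multifork construction sequence \eqref{dkmfFtHv} of Lemma~\ref{mfrkLma}: the statement is clear for the grid $D_0$ and is preserved by each multifork extension.) However, the meet-irreducibility step is broken. You assert that the coordinate computation together with \eqref{sldiHjNxYa} shows that each of the four sides of a $4$-cell increments exactly one join-coordinate and is therefore normal. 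For the upper sides this is simply false: the dark-grey $4$-cells in Figure~\ref{fig-samidealinDj} have precipitous upper edges $[a_m,1_{H_{j-1}}]$. For the lower sides the assertion is true, but it does not follow from the inequalities you cite: for a $4$-cell with bottom $b$, left corner $v$ and right corner $w$, every constraint coming from \eqref{sldiHjNxYa} and \eqref{sldiHjNxYb} is satisfied by the coordinate assignment $\gamma(b)=\pair{0}{0}$, $\gamma(v)=\pair{2}{1}$, $\gamma(w)=\pair{1}{2}$, in which both lower sides increment both coordinates. What you actually need is that $\gamma(b)$ is the componentwise minimum $\pair{\lefth_D(w)}{\righth_D(v)}$, i.e., that lower borders of trajectories consist of normal edges --- but that is Corollary~\ref{dkzTbR}, which the paper deduces \emph{from} Observation~\ref{dksRptG}, so it cannot be used here.

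There is a second, purely logical gap in the assembling paragraph. You proved ``precipitous $\Rightarrow$ ($u\in\Mir D$ and $u$ interior and $v$ has at least three lower covers)''; to establish item (i) for a normal edge you need the \emph{converse}, namely ``$u\in\Mir D$ and $u$ interior $\Rightarrow$ the edge above $u$ is precipitous'', not the contrapositive of what you proved. This converse is a substantive claim (interior meet-irreducible elements are exactly the multifork source elements) and is precisely what the paper's induction yields for free: in the grid every interior element is meet-reducible and every edge is normal, and a multifork extension creates exactly the source elements as new interior meet-irreducibles with precipitous up-edges while all other new and old edges stay normal. Both gaps disappear if you graft that induction onto your (correct) slope dichotomy.
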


\begin{proof}[Proof of Observation~\ref{dksRptG}]  Take a multifork construction sequence \eqref{dkmfFtHv} . Reversing the passage from $D_{t-1}$ to $D_t$, that is, omitting the last ``multifork'', we see that $D_{t-1}\in \bdia$. And so on, all the $D_j$ belong to $\bdia$. Since $D_0$ is distributive, it is a \emph{grid}, that is, the direct product of two nontrivial chains. 
Hence,  the statement obviously holds for $D_0$. Finally, it is easy to see that if Observation~\ref{dksRptG} holds for $D_j$, then so it does for $D_{j+1}$.
\end{proof}

The following observation follows by a trivial induction based on Lemma~\ref{mfrkLma}.
The case $y\in\Jir D$, equivalently, $y\in \llb D\cup\lrb D$,  is not considered in it. 

\begin{observation}\label{dklftrght}
If $D\in\bdia$ is a slim rectangular lattice diagram, $x\prec y\in D$, and $y\notin\Jir D$,  then 
the following three conditions are equivalent.
\begin{enumeratei}
\item\label{dklftrghta} The edge $[x,y]$ is of slope $\pi$ $($respectively, $3\pi/4)$.
\item\label{dklftrghtb} $x$ is the leftmost $($respectively, rightmost$)$ lower cover of $y$.
\end{enumeratei}
\end{observation}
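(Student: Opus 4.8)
The plan is to read both conditions directly off the complex coordinates in \eqref{bdltrrvdGr}, rather than to induct along a multifork construction sequence (though the induction the paper hints at would also work). A slim rectangular diagram is its own \rectext, so all the formulas proved for $R$ in Section~\ref{proofssection} apply verbatim to $D$. By left–right symmetry it suffices to treat the pair ``slope $\pi/4$ / leftmost lower cover''; the assertion about slope $3\pi/4$ and the rightmost lower cover then follows by reflecting the diagram over a vertical axis. Throughout I write $k=\lefth_D(y)$.

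First I would characterise the slope. Since $x\prec y$ gives $x<y$, monotonicity of the join-coordinates \eqref{sldiHjNxYb} yields $\lefth_D(x)\le\lefth_D(y)$ and $\righth_D(x)\le\righth_D(y)$, so subtracting the two instances of \eqref{bdltrrvdGr} gives
\[
y-x=a\,\bepsilon^3+b\,\bepsilon,\qquad
a=\sum_{j=\lefth_D(x)+1}^{\lefth_D(y)}\rbal j\ge 0,\quad
b=\sum_{j=\righth_D(x)+1}^{\righth_D(y)}\rjobb j\ge 0,
\]
with $a,b$ not both $0$. As $\bepsilon$ points at angle $\pi/4$ and $\bepsilon^3$ at angle $3\pi/4$ and every $\rbal j,\rjobb j>0$, the vector $y-x$ lies in the closed cone of angles $[\pi/4,3\pi/4]$ (consistent with $y$ being geometrically above $x$, Corollary~\ref{nslplQcor}), and its angle equals $\pi/4$ exactly when $a=0$, i.e.\ exactly when $\lefth_D(x)=k$. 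Hence condition (i) for slope $\pi/4$ is equivalent to $\lefth_D(x)=k$.

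It then remains to identify $\lefth_D(x)=k$ with ``$x$ is the leftmost lower cover of $y$''. Because $y\notin\Jir D$ while every nonzero element of $\llb D$ is join-irreducible by \eqref{dkRtghWr}, we have $c_k\ne y$; together with $c_k=c_{\lefth_D(y)}\le y$ this gives $c_k<y$, so by \eqref{stLRchains} the interval $[c_k,y]$ is a nontrivial chain. Let $x_0$ be its unique coatom, so $x_0\prec y$ and $\lefth_D(x_0)=k$ by \eqref{dkjGHhPqvar}. Conversely, any lower cover $x$ of $y$ with $\lefth_D(x)=k$ satisfies $c_k=c_{\lefth_D(x)}\le\lsp_D(x)\le x$ (the inequality noted just before \eqref{dkjGHhPqvar}), hence $x\in[c_k,y]$ and $x=x_0$; thus $x_0$ is the only lower cover at level $k$. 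Finally, two distinct lower covers of $y$ are incomparable, so by Lemma~\ref{leftrightlemma}\eqref{leftrightlemmab} and \eqref{sldiHjNxYa} they have different left-coordinates, and the one with the larger left-coordinate lies to the left. Since $\lefth_D\le k$ on all lower covers and the value $k$ is attained only at $x_0$, the element $x_0$ is precisely the leftmost lower cover. Combining the three equivalences proves (i)$\iff$(ii) for slope $\pi/4$, and the reflected argument settles $3\pi/4$.

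The main obstacle is the middle claim of the last paragraph—that $\lefth_D(x)=k$ singles out exactly one lower cover and that this one is leftmost—since one must handle the degenerate case $y>c_\emel$ (where $k=\emel$) without the side condition $x\not>c_\emel$ required by \eqref{dkjGHhPq}. This is exactly why I would route the argument through \eqref{stLRchains} and the unconditional inequality $c_{\lefth_D(x)}\le\lsp_D(x)\le x$ instead of \eqref{dkjGHhPq}, so that the reasoning goes through uniformly whether or not $y$ lies above the left corner.
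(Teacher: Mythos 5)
Your proof is correct, but it takes a genuinely different route from the paper's. The paper disposes of this observation in one sentence: it is proved ``by a trivial induction based on Lemma~\ref{mfrkLma}'', i.e.\ one checks that the stated equivalence holds in the grid $D_0$ and is preserved at each multifork extension step $D_j\to D_{j+1}$ of the construction sequence \eqref{dkmfFtHv} (exactly as the paper does for Observation~\ref{dksRptG} and Observation~\ref{dkhGrmnT}). You instead read both conditions directly off the coordinate formula \eqref{bdltrrvdGr}: the slope condition becomes $\lefth_D(x)=\lefth_D(y)$, and the ``leftmost lower cover'' condition is pinned down via $y\notin\Jir D$, \eqref{dkRtghWr}, the chain $[c_{\lefth_D(y)},y]$ from \eqref{stLRchains}, the invariance \eqref{dkjGHhPqvar}, and the left--right comparison \eqref{sldiHjNxYa}; your explicit handling of the case $y>c_{\emel}$ via the unconditional inequality $c_{\lefth_D(x)}\le\lsp_D(x)$ rather than \eqref{dkjGHhPq} is exactly the right precaution. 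What each approach buys: the paper's induction is short to state because the whole Section~\ref{toolkitsection} toolkit is already organized around the multifork sequence, while your argument is self-contained, produces the quantitative statement (the $\bepsilon^3$-component of $y-x$ vanishes iff $x$ is leftmost) without tracking how edges evolve under fork insertions, and in passing re-proves that the leftmost and rightmost lower covers always exist and are the unique ones realizing the extremal join-coordinates. Two small points worth noting for the record: the ``slope $\pi$'' in the statement is a typo for ``slope $\pi/4$'' (as your reading assumes, and as consistency with Observation~\ref{dksRptG} and the orientation of $\bepsilon$ forces), and the statement announces ``three conditions'' while listing two.
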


Let $u$ be a trajectory of a slim semimodular lattice diagram such that its edges, from left to right, are listed as 
$[x_0,y_0]$, $[x_1,y_1]$, \dots, $[x_k,y_k]$. For $a\nparallel b$, let $\mixint ab$ denote $[a,b]$ if $a\leq b$, and let it denote $[b,a]$ if $b\leq a$. That is, $\mixint ab=[a\wedge b,a\vee b]$.
The \emph{lower border} of $u$ is the set $\set{\mixint{x_{j-1}}{x_j}:1\leq j\leq k}$ of edges. Similarly, the \emph{upper border} of $u$ is $\set{\mixint{y_{j-1}}{y_j}:1\leq j\leq k}$.   

\begin{corollary}\label{dkzTbR}
Let $D\in \bdia$ be a diagram of a slim semimodular lattice $L$. If $T$ is trajectory of $D$, then 
every edge of its lower border is of normal slope.
\end{corollary}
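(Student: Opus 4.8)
The plan is to reduce to a single glued sum indecomposable component and then compute the geometric direction of each lower-border edge directly from the join-coordinates via \eqref{bdltrrvdGr}. First I would dispose of the easy reductions. A $4$-cell, consisting of two incomparable elements together with their meet and join, must lie inside a single component of the canonical decomposition \eqref{cnNcDpsT}, since the gluing points are narrows (comparable to everything); hence every trajectory, together with its lower border, lives inside one component. Chain components contain no $4$-cell, so each of their trajectories is a single edge with empty lower border and the assertion is vacuous there. Thus it suffices to treat the case where $L$ is glued sum indecomposable and slim; then $L$ has a \rectext{} by Theorem~\ref{thmmain}, so Lemmas~\ref{sldiHjNxY} and \ref{samecoordin} are available, and a one-edge trajectory again has empty lower border.

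Next I would pin down what a lower-border edge is. Let $[x_{j-1},y_{j-1}]$ and $[x_j,y_j]$ be two consecutive edges of $T$; they are the two parallel sides of a $4$-cell $H$, whose elements are its bottom $0_H$, its two middle elements $\ell_H\rellambda r_H$ (the left and the right one), and its top $1_H=\ell_H\vee r_H$. The two parallel side-pairs of $H$ are $\set{[0_H,\ell_H],[r_H,1_H]}$ and $\set{[0_H,r_H],[\ell_H,1_H]}$, so the bottoms of the two trajectory edges are either $\set{0_H,r_H}$ or $\set{0_H,\ell_H}$. In both cases $\mixint{x_{j-1}}{x_j}=[x_{j-1}\wedge x_j,\,x_{j-1}\vee x_j]$ equals $[0_H,r_H]$ or $[0_H,\ell_H]$, that is, a side of $H$ issuing \emph{upward from the bottom $0_H$}.

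The heart of the argument is then a coordinate computation. Since $\ell_H\rellambda r_H$, Lemma~\ref{sldiHjNxY}, namely \eqref{sldiHjNxYa}, gives $\lefth_L(\ell_H)>\lefth_L(r_H)$ and $\righth_L(\ell_H)<\righth_L(r_H)$. Because the join-irreducibles on $\leftb L$ form a chain and such a join-irreducible lies below $0_H=\ell_H\wedge r_H$ exactly when it lies below both $\ell_H$ and $r_H$, counting them yields $\lefth_L(0_H)=\min(\lefth_L(\ell_H),\lefth_L(r_H))=\lefth_L(r_H)$; symmetrically $\righth_L(0_H)=\righth_L(\ell_H)$, and by Lemma~\ref{samecoordin} these intrinsic values are the ones governing the fixed diagram $D$. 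Consequently the edge $[0_H,r_H]$ has equal left coordinate at its two ends and a strictly larger right coordinate at the top, while $[0_H,\ell_H]$ has equal right coordinate and a strictly larger left coordinate at the top.

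Finally I would read off the slope from \eqref{bdltrrvdGr}: the vertex difference between the endpoints of a covering edge is $\bepsilon^3\sum_j\rbal{j}+\bepsilon\sum_j\rjobb{j}$, the two sums ranging over the left and right coordinate increments with strictly positive summands. For $[0_H,\ell_H]$ the right increment is empty, so the difference is a positive multiple of $\bepsilon^3$, i.e. slope $3\pi/4$; for $[0_H,r_H]$ the left increment is empty, giving a positive multiple of $\bepsilon$, i.e. slope $\pi/4$. Either way the lower-border edge is of normal slope, which is exactly the claim. I expect the only genuinely delicate point to be the bookkeeping that the lower-border edge always issues from the bottom $0_H$ of the $4$-cell, so that both its coordinates are the coordinate-minima of the two middle elements; once that is fixed, the coordinate computation and the reading of the slope are routine. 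As an alternative to the last two paragraphs, one could instead pass to a slim rectangular \rectext{} diagram with the same vertex positions (Lemma~\ref{samecoordin}) and invoke Observation~\ref{dksRptG}: since $0_H$ has two covers it is not meet-irreducible, so by that dichotomy no edge issuing upward from $0_H$ can be precipitous.
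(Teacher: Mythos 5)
Your proposal is correct, but its main line is a genuinely different argument from the one in the paper. Both proofs begin the same way: reduce to a glued sum indecomposable component and observe that a lower-border edge $\mixint{x_{j-1}}{x_j}$ is a lower side $[0_H,\ell_H]$ or $[0_H,r_H]$ of the $4$-cell $H$ joining the two consecutive trajectory edges, so that its bottom $0_H$ is meet-reducible. From there the paper (i) treats the rectangular case by quoting Observation~\ref{dksRptG}, whose normal-slope/precipitous dichotomy immediately rules out a precipitous edge above a meet-reducible element, and (ii) handles the non-rectangular case by realizing $D$ as a region of a \rectext{} diagram $E\in\bdia$ via Lemma~\ref{dkjTghNVc}, Proposition~\ref{dgmpRopmain} and Remark~\ref{kdTZcWZ}; note that Observation~\ref{dksRptG} itself is proved by induction along the multifork construction sequence of Lemma~\ref{mfrkLma}. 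You instead compute directly: $\ell_H\rellambda r_H$ and \eqref{sldiHjNxYa} give $\lefth_L(\ell_H)>\lefth_L(r_H)$ and $\righth_L(\ell_H)<\righth_L(r_H)$; since $\leftb L\cap\Jir L$ and $\rightb L\cap\Jir L$ are chains and $\ideal{0_H}=\ideal{\ell_H}\cap\ideal{r_H}$, the coordinates of $0_H$ are the componentwise minima, so exactly one coordinate increases along each lower side of $H$, and \eqref{bdltrrvdGr} then forces slope $\pi/4$ or $3\pi/4$. This is sound (your invocation of Lemma~\ref{samecoordin} is harmless but not needed, since \eqref{bdltrrvdGr} already refers to the intrinsic coordinates of $D$ itself), and it buys a more elementary, self-contained proof that never leaves $D$ and does not rely on the multifork machinery behind Observation~\ref{dksRptG}; the paper's route is shorter only because that observation is established anyway for later use. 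Your closing ``alternative'' is essentially the paper's proof.
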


\begin{proof} Clearly, we can assume that $D$ is glued sum indecomposable. 
 Let $j$ be in $\set{1,\dots, k}$. First, assume that, in addition, $L$ is rectangular. By \eqref{1rctallrect}, so is $D$.  We  assume that $y_{j-1}<y_j$,  because otherwise we can work in $\refl D$. Thus, $T$ goes upwards at $[x_{j-1},y_{j-1}]$. Hence,  $x_{j-1}<x_j$,  $\mixint{x_{j-1}}{x_j}=[x_{j-1},{x_j}]$, and $x_{j-1}=x_j\wedge y_{j-1}\notin \Mir L$. Therefore, Observation~\ref{dksRptG} yields that $\mixint{x_{j-1}}{x_j}$ is of normal slope. Second, we do not assume that $D$ is rectangular. Then, by Lemma~\ref{dkjTghNVc} and Proposition~\ref{dgmpRopmain}, $D$ is a region of a unique slim rectangular diagram $E\in\bdia$, and $T$ is a section from $\leftb D$ to $\rightb D$ of a trajectory $T'$ of $E$. 
Since $\mixint{x_{j-1}}{x_j}$ is on the lower border of  $T'$, it is of normal slope in $E$. By Remark~\ref{kdTZcWZ}, it is of the same slope in $D$.
\end{proof}

For a  $4$-cell $H$, we say that $H$ is a \emph{$4$-cell with normal slopes} if each of the four sides of $H$ is of normal slope.

\begin{corollary} \label{edgesOfdistCellNSlope}
If $H$ is a distributive $4$-cell of a diagram $D\in \bdia$, then $H$ is of normal slopes and, moreover, every edge in $\ideal{1_H}$ is of normal slope. 
\end{corollary}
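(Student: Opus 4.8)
The plan is to avoid the (genuinely delicate) passage to a rectangular extension used for the neighbouring corollaries and instead read the slopes directly off the join-coordinates through \eqref{bdltrrvdGr}, after two reductions. First I would reduce to the case that $D$ is glued sum indecomposable and slim. A $4$-cell cannot straddle two glued sum components, because its two middle elements $a,b$ (with $0_H\prec a,b\prec 1_H$) are incomparable and hence lie in one component, while a chain component has no incomparable pair and so no $4$-cell; thus $H$ lives in a single glued sum indecomposable component, and the edges of $\ideal{1_H}$ falling in the remaining components (which are distributive, being intervals of the distributive lattice $\ideal{1_H}$) are routine: chain components have normal-slope edges by Definition~\ref{dhGhTzM}, and the core argument below applies to each glued sum indecomposable one. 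For slimness, note that if an eye $e$ lay in $\ideal{1_H}$, then $\set{0_U,a_U,e,b_U,1_U}$ (for the $4$-cell $U$ of the slimming carrying $e$) would be a cover-preserving $M_3$ inside $\ideal{1_H}$, contradicting distributivity via \eqref{slimiffnodiamond}. Hence $\ideal{1_H}$ contains no eye, its four sides and all its edges join non-eye elements whose positions are prescribed by \eqref{bdltrrvdGr} independently of the eyes, and $H$ is a distributive $4$-cell of the full slimming $D'$ (still glued sum indecomposable by Lemma~\ref{slimcompmml}\eqref{slimcompmmla}) drawn in the same position. So it suffices to treat the slim $D'$.

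The heart of the proof is then a short coordinate computation. Let $u\prec v$ be any edge of $\ideal{1_H}$; the four sides of $H$ are such edges. Since $\ideal{1_H}$ is distributive, there is a unique join-irreducible $p$ with $\Jir{\ideal{1_H}}\cap\ideal v=(\Jir{\ideal{1_H}}\cap\ideal u)\cup\set{p}$ and $p\notin\ideal u$; as $\Jir{\ideal{1_H}}\subseteq\Jir{D'}$, \eqref{ddHtBgRw} gives $p\in\leftb{D'}\cup\rightb{D'}$. Because $D'$ is glued sum indecomposable, $\leftb{D'}\cap\rightb{D'}=\set{0,1}$; moreover $p\neq 0$, and $p\neq 1$ since $1\in\Jir{D'}$ would make the unique coatom comparable with every element, i.e.\ a narrows, contradicting glued sum indecomposability. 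Thus $p$ lies in exactly one of $\leftb{D'}$ and $\rightb{D'}$, so passing from $u$ to $v$ raises exactly one of $\lefth_{D'}$ and $\righth_{D'}$ by one and leaves the other fixed. By \eqref{bdltrrvdGr}, $v-u$ is then a positive real multiple of $\bepsilon^3$ or of $\bepsilon$, whose arguments are $3\pi/4$ and $\pi/4$; hence $[u,v]$ is of normal slope (this is the sharp case of Corollary~\ref{nslplQcor}). Applying this to the four sides of $H$ shows that $H$ is of normal slopes, and applying it to every edge of $\ideal{1_H}$ proves the ``moreover'' clause.

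I expect the main obstacle to be the two reductions rather than the computation: one must be sure that distributivity of $\ideal{1_H}$ really excludes eyes (so that the non-slim case collapses to $D'$ with vertices in unchanged positions and the sides of $H$ are honest non-eye elements), and one must dispose of the boundary value $p=1$, which is precisely where glued sum indecomposability is indispensable. The tempting alternative of embedding $D'$ into a slim rectangular $E$ and quoting Observation~\ref{dksRptG} with Lemma~\ref{dkhgTbFmk}\eqref{dkhgTbFmka} founders here, because $\ideal[E]{1_H}$ may strictly enlarge $\ideal{1_H}$ and need not remain distributive; the join-coordinate argument above sidesteps this and, notably, uses no rectangularity at all.
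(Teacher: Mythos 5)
Your proof is correct, but it takes a genuinely different route from the paper's. The paper disposes of the corollary in two lines: by \eqref{ifdistno3covered}, no element of the planar distributive lattice $\ideal{1_H}$ covers more than two elements (and since every lower cover in $D$ of an element of $\ideal{1_H}$ again lies in $\ideal{1_H}$, no such element has three lower covers in $D$), so Observation~\ref{dksRptG} --- an edge is precipitous only if its top has at least three lower covers --- forces every edge in $\ideal{1_H}$ to be of normal slope. That observation is itself proved by induction along the multifork construction sequence \eqref{dkmfFtHv} and is stated for slim rectangular diagrams, so the paper's route silently rides on the rectangular-extension and multifork machinery. You instead read the slopes straight off \eqref{bdltrrvdGr}: after reducing to a slim, glued sum indecomposable component (both reductions are carried out correctly --- excluding eyes from $\ideal{1_H}$ via a cover-preserving $M_3$ and localizing $H$ to one component are precisely the points one must not skip), a covering pair inside a distributive ideal adds exactly one join-irreducible $p$, which by \eqref{ddHtBgRw} and $\leftb{D'}\cap\rightb{D'}=\set{0,1}$ lies on exactly one boundary chain, so exactly one join-coordinate increments and $v-u$ is a positive real multiple of $\bepsilon$ or $\bepsilon^3$. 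What your version buys is independence from Observation~\ref{dksRptG}, from rectangularity, and from the multifork sequence, at the cost of spelling out reductions the paper leaves implicit; the paper's version is shorter only because the observation is already on the shelf. One small quibble: your closing claim that the rectangular-extension route ``founders'' is too strong. One does not need the ideal of $1_H$ computed in the extension $E$ to remain distributive; Definition~\ref{dfrctext}\eqref{dfrctextc} guarantees that an element with a lower cover outside $L$ has at most two lower covers in $E$, so an element of $\ideal{1_H}$ with three lower covers in $E$ would have all of them inside the original $\ideal{1_H}$, again contradicting \eqref{ifdistno3covered}. So the paper's route does go through; yours is simply more self-contained.
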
 

\begin{proof}  Its a folklore result, see 
the Introduction in \init{G.\ }Gr\"atzer and \init{E.\ }Knapp \cite{gratzerknapp1} or see \init{G.\,}Cz\'edli and \init{E.\,T.\ }Schmidt~\cite[Lemmas 2 and 16]{czgschvisual}, that 
\begin{equation}
\parbox{6cm}{no element of  a planar distributive lattice covers more than two elements.}
\label{ifdistno3covered}
\end{equation}
Hence, the corollary follows from Observation~\ref{dksRptG}.
\end{proof}

\begin{figure}[htb]
\includegraphics[scale=1.0]{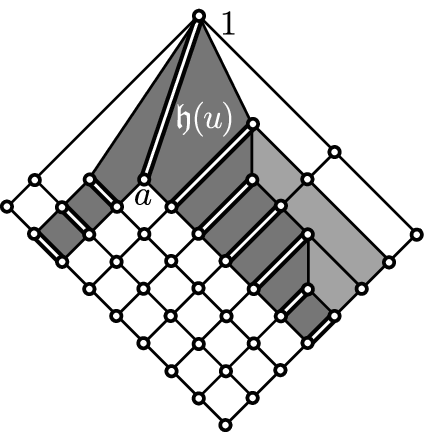}
\caption{Territories: $\terr u$ and $\oterr u$}\label{fig-oterr}
\end{figure}

\begin{remark}  \label{thatswhynotA} The answer to the question in  Remark~\ref{whynotonlyA} is the following. If we applied \eqref{bdltrrvdGr}  to a chain $C$, then 
no edge in the diagram $C'\in \bdia$ of $C$ would be of normal slope. Hence, for $j\in\set{1,2}$, Proposition~\ref{dgmpRopmain} would 
fail by the following argument, and we wanted to avoid this failure.
\end{remark}

\begin{proof}[Proof of Remark~\ref{thatswhynotA}]
Suppose, for a contradiction that $D\in\bdia$ is a \rectext{} diagram of $C'$. Take an edge $a\prec b$ in $C'$, it is not of normal slope. Hence, by Observation~\ref{dksRptG}, $b$ has at least three lower covers but only $a$ of them belongs to $C$. This contradicts Definition~\ref{dfrctext}\eqref{dfrctextc}.
\end{proof}

For a slim $D\in\adia$, the set of trajectories of $D$ is denoted by $\Traj D$.

\begin{definition} \label{dkGhRwQ}
Let $D\in\bdia$ be a slim rectangular diagram, and let $u$ be a trajectory of $D$.  
\begin{enumeratei}
\item \label{dkGhRwQa}
The \emph{top edge} of a trajectory $u$, denoted by $\inh(u)$, belongs to $u$ and is defined by the property that $y\leq 1_{\inh(u)}$ for all $[x,y]\in u$; see \eqref{trtrfR}.
\item \label{dkGhRwQb} For a region or a 4-cell $A$, the territory of $A$ is denoted by $\terr A$. It is a closed polygon in the plane.
\item \label{dkGhRwQc}
Similarly, the \emph{territory}  $u$, denoted by $\terr u$  is the closed polygon of the plane covered by the squares of $u$. An example is given in  Figure~\ref{fig-oterr}, where $u$ is the hat-trajectory through $\inh(u)=[a,1]$ and it consists of the double (thick) edges; the territory of $u$ is the dark grey  area.
\item \label{dkGhRwQd}With reference to the multifork construction sequence \eqref{dkmfFtHv}, for each $x\in D$, there is a smallest  $j$ such that $x\in D_j$.   We denote this smallest $j$ by $\birthno(x)$; the acronym comes from ``year of birth". For an interval  $\ing$, $\birthno(\ing)=\max\set{\birthno(0_\ing),\birthno(1_\ing)}$ is the  smallest $j$ such that $\ing$ is an edge of $D_j$. 
For $x,y\in D$, $x$ is \emph{younger} than $y$ if $\birthno(x)>\birthno(y)$, and similar terminology applies for intervals. 
Note that an interval $\ing$ can contain elements younger than $\ing$ itself.
\item \label{dkGhRwQe}
For $u\in\Traj D$, we define $\birthno(u)$ as  $\birthno(\inh(u)))$. 
The trajectory of $D_{\birthno(u)}$ that contains $\inh(u)$ is denoted by $\birthtr(u)$; now the acronym comes from ``birth trajectory''.     Clearly, $u$ is a straight trajectory iff $\birthno(u)=0$. Also, $u$ is a hat-trajectory iff $\birthno(u)>0$.
\item \label{dkGhRwQf} For $\birthno(u)\leq j\leq t$, the trajectory of $D_j$ through the edge $\inh(u)$ is denoted by $\anc uj$, and it is called an \emph{ancestor} of $u$. (Observation~\ref{cdsrMsdGs}\eqref{cdsrMsdGsd} will show that $\anc uj$ exists.) In particular,  we have that $\anc u{\birthno(u)}=\birthtr(u)$.
\item \label{dkGhRwQg} 
The \emph{original territory} of $u$, denoted by $\oterr u$, is the  territory of $\birthtr(u)$   in $D_{\birthno(u)}$. 
For example, in   Figure~\ref{fig-oterr}, $\oterr u$ is the grey area (dark grey and light grey together). 
The \emph{original upper border} of $u$ is the upper border of $\birthtr(u)$ in  $D_{\birthno(u)}$; it is a broken line consisting of several (possibly, one) straight line segments in the plane. Similarly, the \emph{original lower border} of $u$ is the lower border of $\birthtr(u)$ in  $D_{\birthno(u)}$.
\item \label{dkGhRwQh} 
The \emph{halo square} of $u$ is the $4$-cell  $H_{\birthno(u)-1}$   of  $D_{\birthno(u)-1}$ into which the multifork giving birth to $u$ is inserted. 
\end{enumeratei}
\end{definition}

By a straight line segment \emph{compatible with  a diagram} or, if the diagram is understood, a \emph{compatible straight line segment} we mean a straight line segment composed from consecutive edges $[x_0,x_1]$, $[x_1,x_2]$, \dots, $[x_{k-1},x_k]$ of the same slope.  In particular, every edge is a compatible straight line segment  in the diagram. When we pass from $D_j$ to $D_{j+1}$ in \eqref{dkmfFtHv}, then every edge of $D_j$ either remains an edge of $D_{j+1}$, or it is divided into several new edges by new vertices. 
A 4-cell is formed from two top edges and two bottom edges.  Observe that, by \eqref{ifdistno3covered}, the halo square $H_j$ will not remain distributive in $D_{j+1}$. Hence, the top edges of $H_j$ do not belong to the trajectory through a top edge of $H_k$ for $k>j$. However,  Corollary~\ref{edgesOfdistCellNSlope} applies to $H_j$  when we consider it  in $D_j$. To summarize the present paragraph,  we conclude the following statement; its  part \eqref{cdsrMsdGsd} follows from part \eqref{cdsrMsdGsc}.

\begin{observation}\label{cdsrMsdGs}
If $D\in\bdia$ is a slim rectangular diagram and $u\in\Traj D$, then the following hold.
\begin{enumeratei}
\item\label{cdsrMsdGsa} If $0\leq j<k\leq t$, then every compatible straight line segment of $D_j$ is also a compatible straight line segment of $D_k$ and, in particular, of $D$.
\item\label{cdsrMsdGsb} The sides of the planar polygon $\oterr u$ are  compatible straight line segments of $D$. In particular, the upper border and the lower border of  $\oterr u$ consist of compatible straight line segments of  of $D$.
\item\label{cdsrMsdGsc} With reference to \eqref{dkmfFtHv}, let $j<t$ and $j < k\leq t$. The upper edges of the halo square $H_j$ are of normal slopes, and they are also edges of $D_k$ and, in particular, of $D$. Furthermore, denoting $1_{H_j}$ by $1_j$, 
the edges of the form $[x,1_j]$ are the same in $D_k$ and, in particular, in $D$ as in $D_{j+1}$. That is, $\set{x\in D_{j+1}: x\prec 1_j}=\set{x\in D_k: x\prec 1_j}=\set{x\in D: x\prec 1_j}$.
\item\label{cdsrMsdGsd} For $\birthno(u)\leq j\leq t$, $\anc uj$ exists and  $\inh(\anc uj)=\inh(u)$. In particular, $\inh(\birthtr(u))=\inh(u)$ in $D$.
\end{enumeratei}
\end{observation}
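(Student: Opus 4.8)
The plan is to reduce all four parts to the behaviour of a \emph{single} multifork step $D_j\to D_{j+1}$ in the construction sequence \eqref{dkmfFtHv}, and then to propagate the conclusions along the chain $D_0\subseteq\dots\subseteq D_t=D$ by induction. The basic single-step fact I would establish first is geometric: when we pass from $D_j$ to $D_{j+1}$, the vertices already present in $D_j$ do not move, and every edge of $D_j$ either survives or is subdivided by new vertices that lie \emph{on} it, so that it becomes a chain of collinear edges of the same slope. This is exactly the content prepared in the paragraph preceding the Observation, and I would justify it from \eqref{bdltrrvdGr} together with the computation in the proof of Lemma~\ref{mfrkLma}: the complex coordinate triplet of $D_{j+1}$ refines that of $D_j$ by splitting one entry of $\vrbal$ and one of $\vrjobb$ into summands of the same total, so the partial sums in \eqref{bdltrrvdGr} that govern the old vertices are unchanged and each split edge is cut collinearly.

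Granting this, part \eqref{cdsrMsdGsa} is a short induction on $k-j$: a compatible straight line segment of $D_j$ is a maximal chain of collinear edges, each of which survives or is replaced by collinear sub-edges of the same slope, so their union is again a compatible straight line segment of $D_{j+1}$; iterating up to $k$ (and to $t$) finishes it. Part \eqref{cdsrMsdGsb} then follows at once: by Definition~\ref{dkGhRwQ}\eqref{dkGhRwQg}, $\oterr u$ is the territory of $\birthtr(u)$ inside $D_{\birthno(u)}$, whose sides are the maximal straight portions of the upper and lower borders of $\birthtr(u)$ and hence are compatible straight line segments of $D_{\birthno(u)}$; applying part \eqref{cdsrMsdGsa} with $j=\birthno(u)$ and $k=t$ transports them to $D$.

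For part \eqref{cdsrMsdGsc} I would argue as follows. Since $H_j$ is a distributive $4$-cell of $D_j$ by \eqref{dkmfFtHv}, Corollary~\ref{edgesOfdistCellNSlope} gives that its four edges, in particular the two upper ones $[a,1_j]$ and $[b,1_j]$, are of normal slope. The multifork at $H_j$ inserts the new lower covers of $1_j$ strictly between $a$ and $b$, leaving $[a,1_j]$ and $[b,1_j]$ intact, so in $D_{j+1}$ the set $\set{x:x\prec 1_j}$ is exactly $a$, these new covers, and $b$. For the later steps I would invoke the remark preceding the Observation: every subsequent multifork sits at some $H_{k'}$ with $j<k'$, it adds lower covers only to $1_{H_{k'}}\neq 1_j$, and it subdivides only edges on the two trajectories through $H_{k'}$, none of which are the top edges of $H_j$; hence no edge into $1_j$ is ever cut and no new lower cover of $1_j$ is created after step $j+1$. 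This yields the stated equality of the edges $[x,1_j]$ and of $\set{x:x\prec 1_j}$ across $D_{j+1}$, $D_k$, and $D$.

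Finally, part \eqref{cdsrMsdGsd} will follow from part \eqref{cdsrMsdGsc}. Writing $b=\birthno(u)$ with $b\ge 1$, the top edge $\inh(u)$ is one of the edges into $1_{H_{b-1}}$ created when the multifork is inserted into the halo square $H_{b-1}$, so $1_{\inh(u)}=1_{H_{b-1}}$ is the apex of $u$. Part \eqref{cdsrMsdGsc} with $j=b-1$ shows that this edge persists unchanged in every $D_\ell$ with $b\le\ell\le t$ and in $D$, so $\inh(u)$ is an edge of each $D_j$ and $\anc uj$ exists; since the configuration at $1_{H_{b-1}}$ is frozen, the trajectory through $\inh(u)$ still turns there, whence $\inh(u)$ is its highest edge and $\inh(\anc uj)=\inh(u)$. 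The degenerate case $b=0$ (a straight trajectory already present in the grid $D_0$) is handled the same way, since a multifork subdivides only edges strictly below the apex of its halo square and so never cuts the top edge of a trajectory. I expect the only genuinely delicate point to be the first, geometric single-step claim — that refinement of the coordinate triplet keeps the old vertices fixed and cuts edges collinearly — since everything else is bookkeeping built on top of it.
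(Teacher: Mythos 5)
Your proposal is correct and takes essentially the same route as the paper: the paper does not give a formal proof but presents the Observation as a summary of the two preceding paragraphs, namely that a multifork step leaves old vertices fixed and only subdivides edges collinearly (giving (i) and hence (ii)), that the halo square $H_j$ is distributive in $D_j$ (so Corollary~\ref{edgesOfdistCellNSlope} applies) but loses distributivity in $D_{j+1}$, so by \eqref{ifdistno3covered} no later multifork touches the edges into $1_{H_j}$ (giving (iii)), and that (iv) follows from (iii). Your write-up merely makes the same bookkeeping explicit.
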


As a straightforward consequence of Corollary~\ref{dkzTbR}, we have 

\begin{remark} \label{sZtGwRrgT}
If $u\in \Traj D$ for a slim rectangular $D\in\bdia$, then the lower border $B$ of $u$ and the original lower border of $u$ are the  same (straight or broken) lines in the plane and they consists of compatible straight line segments. Furthermore, for all $j\in\set{\birthno(u),\dots, t}$, the lower border of $\anc uj$  is also $B$. 
\end{remark}

\begin{proof} A trivial induction based on Lemma~\ref{mfrkLma}.
\end{proof}

As an illustration for the following lemma, see Figure~\ref{fig-oterr}.

\begin{lemma}\label{trajorigtershape}
Let $D\in\bdia$ be a slim rectangular diagram, and let $u$ be a trajectory of $D$. If $u$ is a straight trajectory, then its original territory, denoted by $\oterr u$, is a rectangle whose sides are compatible straight line segments with normal slopes.
If $u$ is a hat-trajectory, then the polygon $\oterr u$ is bordered by one or two precipitous edges belonging to its upper border and  containing  $1_{\inh(u)}$ as an endpoint,  and compatible straight  line segments of normal slopes. 
\end{lemma}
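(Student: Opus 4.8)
The plan is to split according to $\birthno(u)$, using a multifork construction sequence \eqref{dkmfFtHv} $D_0\subseteq\dots\subseteq D_t=D$ furnished by Lemma~\ref{mfrkLma}. Recall from Definition~\ref{dkGhRwQ} that $u$ is straight exactly when $\birthno(u)=0$ and a hat-trajectory exactly when $\birthno(u)>0$, and that in either case $\oterr u$ is the territory of $\birthtr(u)=\anc{u}{\birthno(u)}$ computed inside $D_{\birthno(u)}$. Since territories, borders, and slopes are unaffected by the glued-sum decomposition, I may assume $D$ (hence $L$) is glued sum indecomposable and slim rectangular, as in the hypotheses. Throughout I shall read off slopes from Observation~\ref{dksRptG}: a boundary edge is precipitous only if its top has at least three lower covers and its bottom is an interior meet-irreducible element, and it is of normal slope otherwise.

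For the straight case, $\birthtr(u)$ is a trajectory of the grid $D_0$. Every $4$-cell of a grid is distributive, so by Corollary~\ref{edgesOfdistCellNSlope} all edges of $D_0$ are of normal slope, and $\birthtr(u)$ is an up- or a down-trajectory whose consecutive $4$-cells are stacked in a single direction. Hence $\oterr u$ is the band swept out by this strip: its two flanks are the upper and the lower border of $\birthtr(u)$, which by Observation~\ref{cdsrMsdGs}\eqref{cdsrMsdGsb} are compatible straight line segments sharing one normal slope, while the remaining two sides are the first and the last edge of $\birthtr(u)$, of the other normal slope. Because the two normal directions $\bepsilon^3$ and $\bepsilon$ occurring in \eqref{bdltrrvdGr} are orthogonal, this parallelogram is in fact a rectangle, which proves the first assertion.

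For the hat case put $j_0=\birthno(u)>0$ and let $H=H_{j_0-1}$ be the halo square, so that $\birthtr(u)$ is the trajectory of $D_{j_0}$ born by the $k$-fold multifork inserted into $H$, with top edge $\inh(u)$ and peak $1_{\inh(u)}=1_H$. After the multifork, $1_H$ has $k+2\ge 3$ lower covers $m_{\mathrm{l}}=w_0,\ w_1,\dots,w_k,\ w_{k+1}=m_{\mathrm{r}}$ from left to right, where the inner $w_1,\dots,w_k$ are the interior meet-irreducible source elements. The lower border of $\oterr u$ is of normal slope by Remark~\ref{sZtGwRrgT} (equivalently Corollary~\ref{dkzTbR}), and by Observation~\ref{cdsrMsdGs}\eqref{cdsrMsdGsb} the whole boundary of $\oterr u$ consists of compatible straight line segments. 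To locate the precipitous ones, write $\inh(u)=[w_i,1_H]$; then the two upper-border segments meeting at the peak are $[w_{i-1},1_H]$ and $[w_{i+1},1_H]$. By Observation~\ref{dksRptG} and Observation~\ref{dklftrght}, such a segment is precipitous exactly when its lower endpoint is one of the inner sources, and of normal slope when it is $m_{\mathrm{l}}$ or $m_{\mathrm{r}}$; this produces the one or two precipitous edges incident to $1_{\inh(u)}$ on the upper border.

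Finally, I must verify that no \emph{other} boundary edge is precipitous, and I expect this to be the main obstacle. For a boundary edge $[x,y]$ away from the peak, Observation~\ref{dksRptG} makes it precipitous only if $y$ is a forked top carrying at least three lower covers and $x$ is an interior meet-irreducible; I would exclude this by induction along \eqref{dkmfFtHv}, showing that on each of the two monotone flanks the upper border of $\birthtr(u)$ runs along the (normal-slope) upper, respectively lower, border of one of the two pre-existing trajectories through $H$ in $D_{j_0-1}$ — these being normal by the inductive hypothesis together with Corollary~\ref{dkzTbR} — and that passing to $D_{j_0}$ only subdivides such a border into collinear edges of the same slope (Observation~\ref{cdsrMsdGs}\eqref{cdsrMsdGsa}). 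Combining the three parts then yields exactly the description of $\oterr u$ claimed in the statement.
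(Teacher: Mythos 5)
Your treatment of the straight case is fine and matches the paper's (all edges of the grid $D_0$ are of normal slope, so the territory of $\birthtr(u)$ is a rectangle). In the hat case, however, you have correctly identified that the real content of the lemma is the exclusion of precipitous edges away from the peak, and this is precisely the step you do not carry out: you only announce that you ``would exclude this by induction along \eqref{dkmfFtHv}.'' As sketched, that induction does not obviously close. You appeal to Corollary~\ref{dkzTbR}, which controls only \emph{lower} borders, and to an unstated inductive hypothesis about the borders of the two pre-existing trajectories through $H_{j_0-1}$; but the upper border of a hat-trajectory is exactly where precipitous edges can occur, so ``normal by the inductive hypothesis'' is not something the lemma itself (applied to $D_{j_0-1}$) hands you without an additional argument locating the peaks of those older trajectories relative to $\ideal{1_{H_{j_0-1}}}$.

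The missing idea is that no induction is needed: the halo square $H_{\birthno(u)-1}$ is a \emph{distributive} $4$-cell of $D_{\birthno(u)-1}$, so by \eqref{ifdistno3covered} no element of the ideal $\ideal{1_{\inh(u)}}$ has more than two lower covers in $D_{\birthno(u)-1}$. After the multifork producing $D_{\birthno(u)}$, the only element of that ideal acquiring three or more lower covers is $1_{\inh(u)}$ itself (each newly inserted element has exactly two lower covers). Since every edge on the boundary of $\oterr u$ lies in $\ideal{1_{\inh(u)}}$ computed in $D_{\birthno(u)}$, Observation~\ref{dksRptG} immediately forces every such edge whose top is not $1_{\inh(u)}$ to be of normal slope; together with Observation~\ref{cdsrMsdGs}\eqref{cdsrMsdGsa}--\eqref{cdsrMsdGsb} this yields the compatible straight line segments, and your peak analysis supplies the precipitous edges at $1_{\inh(u)}$. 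Replacing your sketched induction by this one-line use of distributivity of the halo square closes the gap and reproduces the paper's argument.
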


\begin{proof}  Clearly, all edges of $D_0$ in  \eqref{dkmfFtHv}, are of normal slope.  Hence, the first part of the lemma follows, because $\birthno(u)=0$, provided $u$ is a straight trajectory. Next, assume that $u$ is a hat-trajectory, that is, $\birthno(u)>0$.
Since the halo square $H_{\birthno(u)-1}$ of $u$ is a distributive $4$-cell of  $D_{\birthno(u)-1}$,  \eqref{ifdistno3covered} implies that no element of the ideal $\ideal 1_{\inh(u)}$ can have more than 2 lower covers in $D_{\birthno(u)-1}$. Hence, the rest of the lemma follows from Observation~\ref{dksRptG}
\end{proof}

As a useful supplement to Observation~\ref{dksRptG}, we formulate the following.

\begin{observation}\label{dkhGrmnT}
If $D\in\bdia$ is a slim rectangular lattice diagram, $x,y\in D$, and $x\prec y$, then 
the following three conditions are equivalent.
\begin{enumeratei}
\item\label{dkhGrmnTa} The edge $[x,y]$ is precipitous.
\item\label{dkhGrmnTb} $y$ has at least three lower covers and $x$ is neither the leftmost, nor the rigthmost of them.
\item\label{dkhGrmnTc} The trajectory $u$ containing  $[x,y]$ is a hat-trajectory and $[x,y]$ is $\inh(u)$.
\end{enumeratei}
\end{observation}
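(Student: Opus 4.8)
The plan is to establish the three equivalences by proving the cycle \eqref{dkhGrmnTb} $\Rightarrow$ \eqref{dkhGrmnTa} $\Rightarrow$ \eqref{dkhGrmnTc} $\Rightarrow$ \eqref{dkhGrmnTb}, which yields all equivalences at once. The step \eqref{dkhGrmnTb} $\Rightarrow$ \eqref{dkhGrmnTa} together with its converse is essentially a repackaging of Observations~\ref{dksRptG} and~\ref{dklftrght}, so the genuinely new content lies in the trajectory description~\eqref{dkhGrmnTc}; this is why the statement is advertised as a supplement to Observation~\ref{dksRptG}.

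For \eqref{dkhGrmnTb} $\Rightarrow$ \eqref{dkhGrmnTa}, which I expect to be routine, I would argue as follows. If $y$ has at least three lower covers, then $y\notin\Jir D$, so Observation~\ref{dklftrght} applies and tells us that an edge into $y$ is of normal slope exactly when its bottom is the leftmost or the rightmost lower cover of $y$. Since $x$ is neither, $[x,y]$ is not of normal slope, and the dichotomy in Observation~\ref{dksRptG} forces it to be precipitous.

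The implication \eqref{dkhGrmnTa} $\Rightarrow$ \eqref{dkhGrmnTc} is where I expect the main obstacle, and I would treat it by a direct analysis of the trajectory $u$ through $[x,y]$. By Observation~\ref{dksRptG}, a precipitous edge satisfies $x\in\Mir D$ and $x\in D\setminus\bound D$. Meet-irreducibility of $x$ forces $[x,y]$ to be a \emph{top} edge of every $4$-cell containing it: were it a bottom edge of some $4$-cell $C$, then $x=0_C$ would have the two atoms of $C$ as distinct covers, contradicting $x\in\Mir D$. Since $x$ lies in the interior, $[x,y]$ is neither the first nor the last edge of $u$ (these lie on $\leftb D$ and $\rightb D$ by~\eqref{trjtrffrls}), so it has both a left neighbour and a right neighbour in $u$; each of these is the bottom edge opposite $[x,y]$ in a shared $4$-cell and is therefore strictly lower than $[x,y]$. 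Hence, among three consecutive edges centred at $[x,y]$, the two outer tops lie below $y$, which by the trajectory trichotomy and~\eqref{trtrfR} means that $u$ is a hat-trajectory with $\inh(u)=[x,y]$.

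Finally, for \eqref{dkhGrmnTc} $\Rightarrow$ \eqref{dkhGrmnTb} I would invoke the multifork construction. Since $u$ is a hat-trajectory, Definition~\ref{dkGhRwQ}\eqref{dkGhRwQe} gives $\birthno(u)>0$, so $u$ is born by a $k$-fold ($k\geq 1$) multifork inserted into its halo square $H$, see Definition~\ref{dkGhRwQ}\eqref{dkGhRwQh}; by construction $\inh(u)=[x,1_H]$ with $x$ a source element placed in the interior of $H$, strictly between the left and right corners of $H$, which are the two original lower covers of $1_H$. Thus $y=1_H$ acquires $k+2\geq 3$ lower covers, and by Observation~\ref{cdsrMsdGs}\eqref{cdsrMsdGsc} the set of lower covers of $1_H$ is frozen from stage $\birthno(u)$ onward, so in $D$ the element $x$ is neither the leftmost nor the rightmost lower cover of $y$. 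This is~\eqref{dkhGrmnTb}, closing the cycle. The one point demanding care throughout is that the notions ``interior'' and ``meet-irreducible'' are applied consistently with Observation~\ref{dksRptG}, and that the left-to-right ordering of the lower covers of $1_H$ survives all later multifork insertions, which is precisely what Observation~\ref{cdsrMsdGs}\eqref{cdsrMsdGsc} guarantees.
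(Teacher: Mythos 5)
Your proof is correct, and it takes a visibly different route from the paper's. The paper disposes of this observation in one line --- ``a trivial induction based on Lemma~\ref{mfrkLma}'' --- that is, it checks that the three conditions hold for exactly the same edges at every stage of the multifork construction sequence (vacuously in the grid $D_0$, and preserved by each multifork insertion). You instead prove the cycle \eqref{dkhGrmnTb}$\Rightarrow$\eqref{dkhGrmnTa}$\Rightarrow$\eqref{dkhGrmnTc}$\Rightarrow$\eqref{dkhGrmnTb}: the first implication is outsourced to Observations~\ref{dksRptG} and~\ref{dklftrght} (which were themselves proved by that same induction), the last one is read off from the multifork step that gives birth to the hat-trajectory together with Observation~\ref{cdsrMsdGs}\eqref{cdsrMsdGsc}, and the middle one --- the genuinely new ingredient --- is a direct, induction-free local argument: by Observation~\ref{dksRptG} the bottom $x$ of a precipitous edge is meet-irreducible and interior, meet-irreducibility forces $[x,y]$ to be an upper edge of both $4$-cells it shares with its trajectory neighbours, those neighbours exist by \eqref{trjtrffrls} because $x\notin\bound D$, so both neighbouring tops lie strictly below $y$ and the trichotomy of trajectories identifies $[x,y]$ as $\inh(u)$ of a hat-trajectory. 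What your version buys is that it makes the logical dependencies on the earlier toolkit explicit and isolates the only step that needs a new idea; what the paper's version buys is brevity, at the cost of hiding the same content inside an unwritten induction. I see no gap in your argument.
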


\begin{proof} A trivial induction based on Lemma~\ref{mfrkLma}.
\end{proof}

\section{A pictorial version of the  Trajectory Coloring Theorem}
\label{congruenceSection}
The set of prime intervals of a finite lattice $M$ is denoted by $\Prin M$. (An interval $[x,y]$ is \emph{prime} if $x\prec y$.) For a quasiordering (reflexive and transitive relation)  $\gamma$, $x\leq_\gamma y$ stands for $\pair xy\in\gamma$. 
\begin{definition}[{\init{G.\ }Cz\'edli~\cite[page 317]{czgrepres}}] 
A \emph{quasi-colored lattice} is a finite lattice $M$  with a  surjective \mapping{} $\gamma$, called \emph{quasi-coloring}, from $\Prin M$ onto a 
quasiordered set $(A;\nu)$ such that $\gamma$ satisfies the following two properties:
\begin{enumerate}[xxxxx]
\item[\colgammacon] if $\gamma(\inp)\geq_\nu \gamma(\inq)$, then  $\con(\inp)\geq\con(\inq)$,
\item[\colcongamma] if $\con(\inp)\geq\con(\inq)$, then $\gamma(\inp)\geq_\nu\gamma(\inq)$.
\end{enumerate}
If, in addition, $(A;\nu)$ is an ordered  set,  then $\gamma$ is called a \emph{coloring}; this concept is due to \init{G.\ }Gr\"atzer and \init{E.\ }Knapp~\cite{gratzerknapp1}. 
\end{definition}

For $u\in\Traj D$, the top edge  $\inh(u)$ was defined in Definition~\ref{dkGhRwQ}\eqref{dkGhRwQa}.

\begin{definition}[{\init{G.\ }Cz\'edli~\cite[Definitions 4.3 and 7.1]{czgtrajcolor}}]
\label{tRajtaudef}
Let $D$ be a slim rectangular diagram. 
\begin{enumeratei} 
\item\label{tRajtaudefa} On the set $\Traj D$ of all trajectories of $D$, we define a relation $\sigma$ as follows. For  $u,v\in \Traj D$, we let 
$\pair uv\in\sigma$ if{f} $u$ is a hat-trajectory, $1_{\inh(u)}\leq 1_{\inh(v)}$, but $0_{\inh(u)}\not\leq 0_{\inh(v)}$. 
\item\label{tRajtaudefb} For $u,v\in\Traj D$, we let $\pair uv\in\Theta$ if{f} $u=v$, or both $u$ and $v$ are hat trajectories such that $1_{\inh(u)}= 1_{\inh(v)}$. The quotient set $\Traj D/\Theta$ of $\Traj D$ by the equivalence $\Theta$ is denoted $\Straj D$. Its elements are denoted by $\tblokk u$, where $u\in\Traj D$.
\item\label{tRajtaudefc} On the set $\Straj D$, we define a relation $\widehat\sigma$ as follows. For  $\tblokk u$ and $\tblokk v$ in $\Straj D$, we let 
$\pair{\tblokk u }{ \tblokk v}\in\widehat\sigma$ if{f} $\tblokk u\neq\tblokk v$ and   there exist $u',v'\in\Traj D$ such that $\pair u{u'},\pair v{v'}\in\Theta$ and  $\pair{u'}{v'}\in\sigma$.  
\item\label{tRajtaudefd} We let $\wtau=\preogen(\widehat\sigma)$, the reflexive transitive closure of $\widehat\sigma$ on   $\Straj D$. 
\item\label{tRajtaudefe} The \emph{trajectory coloring} of $D$ is the coloring $\widehat\xi$ from $\Prin D$ onto the \poset{} 
 $\tuple{\Straj D;\wtau}$, defined by the rule 
that  $\widehat\xi(\inp)$ is the $\Theta$-block of the unique trajectory containing $\inp$.
\end{enumeratei}
\end{definition}

We recall the following result, which 
carries a lot of information on the congruence lattice of a slim rectangular lattice. (By \cite[Remark 8.5]{czgtrajcolor}, the case of slim semimodular lattices reduces to the slim  rectangular case.) 
Note that the original version of the proposition below assumes slightly less, $D\in \adia$.

\begin{proposition}[{\init{G.\ }Cz\'edli~\cite[Theorem 7.3(i)]{czgtrajcolor}}]
\label{tRqCLthm}
If $L$ is a slim rectangular lattice with a diagram  $D\in\bdia$,  then 
 $\tuple{\Straj D; \wtau}$  is an \poset{} and it is isomorphic to $\tuple{\Jir{\Con L};\leq}$. Furthermore, $\widehat\xi$ in Definition~\ref{tRajtaudef}\eqref{tRajtaudefe} is a coloring.
\end{proposition}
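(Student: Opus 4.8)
The plan is to recognize this proposition as an immediate specialization of the cited result rather than as something requiring fresh work. The original Trajectory Coloring Theorem, \cite[Theorem 7.3(i)]{czgtrajcolor}, asserts exactly the same two conclusions --- that $\tuple{\Straj D;\wtau}$ is an \poset{} isomorphic to $\tuple{\Jir{\Con L};\leq}$, and that $\widehat\xi$ is a coloring --- under the weaker hypothesis $D\in\adia$, as the remark preceding the proposition stresses. First I would therefore verify that the present hypothesis is genuinely stronger, that is, that $\bdia\subseteq\adia$. This is immediate from Definition~\ref{dhGhTzM}: every $D\in\bdia$ is, by construction, a planar diagram of a planar semimodular lattice, and $\adia$ is by definition the class of all such planar diagrams.

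Next I would check that the objects named in the statement are literally the same whether $D$ is regarded as a member of $\bdia$ or merely of $\adia$. The set $\Traj D$ of trajectories, the top edge $\inh(u)$, the relations $\sigma$, $\Theta$, $\widehat\sigma$, and $\wtau$, and the map $\widehat\xi$ are all introduced in Definition~\ref{tRajtaudef} using only the covering relation of the diagram and the left-to-right order of lower covers; these data are intrinsic to the slim rectangular lattice together with its fixed planar diagram and make no reference to the complex-coordinate or normal-slope structure that distinguishes $\bdia$ from $\adia$. In particular, the characterization of $\inh(u)$ by the property that $y\leq 1_{\inh(u)}$ for every $[x,y]\in u$ (Definition~\ref{dkGhRwQ}\eqref{dkGhRwQa}) agrees with the one used in \cite{czgtrajcolor}. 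Hence $\Straj D$, $\wtau$, and $\widehat\xi$ computed in the $\bdia$-sense coincide with those computed in the $\adia$-sense.

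With these two observations in hand, the proposition follows by invoking \cite[Theorem 7.3(i)]{czgtrajcolor} for $D$ regarded as a diagram in $\adia$: both the isomorphism $\tuple{\Straj D;\wtau}\cong\tuple{\Jir{\Con L};\leq}$ and the assertion that $\widehat\xi$ is a coloring are conclusions of that theorem, and they transfer verbatim. I expect the ``main obstacle'' to be not a genuine difficulty but merely the bookkeeping of confirming that the two families of definitions coincide; once that inspection is carried out there is nothing further to prove. Indeed, the purpose of restating the theorem for $\bdia$ is not to strengthen it but to make it available in the rigid, pictorial setting of the present paper, where the normal-slope diagrams studied in Section~\ref{toolkitsection} are the working objects.
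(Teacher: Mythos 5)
Your proposal is correct and matches the paper exactly: the paper gives no proof of this proposition but simply recalls it from \cite[Theorem 7.3(i)]{czgtrajcolor}, noting explicitly that the original version assumes only the weaker hypothesis $D\in\adia$, so the statement for $D\in\bdia\subseteq\adia$ is an immediate specialization. Your additional check that the objects $\Straj D$, $\wtau$, and $\widehat\xi$ are defined purely from the diagram's combinatorial data and hence coincide in both settings is the right (and only) bookkeeping needed.
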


The fact that the key relation $\wtau$ is defined as a transitive (and reflexive) closure is probably inevitable. However, the complicated definition of $\widehat\sigma$, whose reflexive transitive closure is taken, makes Proposition~\ref{tRqCLthm} a bit difficult to use. Hence, we introduce the following concept. For  $u,v\in\Traj D$, we say that $u$ is a \emph{descendant} of $v$, in notation $u\lessdesc v$,   if $\birthno(u)>\birthno(v)$ and the halo square of $u$, as a geometric quadrangle,   is within the original territory  $\oterr v$ of $v$.  Note that ``descendant'' is an irreflexive relation. 
Note also that, as opposed to ``in'' for containment, in geometric sense we always use the preposition ``within''. That is, ``$A$ is within $B$'' means that $A$ and $B$ are geometric polygons (closed subsets of the complex plane that contain their inner points) such that $A$ is a subset of $B$. For a point $x$, if $x\in B$, then we also say that $x$ is within $B$ to express that $B$ is a polygon.

We are now in the position to formulate the main achievement of the present section. Since it looks quite technical in itself, let us emphasize that the following theorem is to be used together with Proposition~\ref{tRqCLthm}, where $\wtau$ is the transitive reflexive closure of $\widehat\sigma$, described pictorially in the theorem below.

\begin{theorem}\label{terThM} For a slim rectangular diagram $D\in\bdia$ and let $u,v\in\Traj D$,  $\pair{\tblokk u}{\tblokk v}\in\widehat\sigma$ iff there are $u'\in\tblokk u$ and $v'\in \tblokk v$ such that $u'\lessdesc v'$.
\end{theorem}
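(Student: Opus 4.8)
The plan is to unfold both sides of the equivalence down to the level of representatives and read them off a multifork construction sequence \eqref{dkmfFtHv} of $D$, which exists by Lemma~\ref{mfrkLma}. First I would describe the $\Theta$-blocks. By Observation~\ref{dkhGrmnT} a trajectory is a hat-trajectory exactly when its top edge $\inh(\cdot)$ is precipitous, and then its top vertex has at least three lower covers; by \eqref{ifdistno3covered} such a top vertex is the top $1_{H_j}$ of exactly one halo square in \eqref{dkmfFtHv}, since once it has acquired a third lower cover it can never again be the top of a distributive $4$-cell. Hence the hat-trajectories with a fixed top vertex are precisely those born at one multifork; they form the block $\tblokk u$, and on it the three data $1_{\inh(\cdot)}$, $\birthno(\cdot)$, and the halo square are constant. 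Writing $w=1_{\inh(u)}$, $z=1_{\inh(v)}$, $b_u=\birthno(u)$, $b_v=\birthno(v)$, and $H_u$ for the common halo square of $\tblokk u$, Definition~\ref{tRajtaudef} reduces $\pair{\tblokk u}{\tblokk v}\in\widehat\sigma$ to: $u$ is a hat-trajectory, $\tblokk u\neq\tblokk v$, $w\leq z$, and some representatives $u_0\in\tblokk u$, $v_0\in\tblokk v$ satisfy $0_{\inh(u_0)}\not\leq0_{\inh(v_0)}$; and it reduces ``$u'\lessdesc v'$ for some representatives'' to: $b_u>b_v$ and $H_u$ lies within $\oterr{v'}$ for some $v'\in\tblokk v$.

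For the implication $\widehat\sigma\Rightarrow\lessdesc$ I would first extract the birth inequality by a distributivity argument. If $v$ is a hat-trajectory, its halo square is a distributive $4$-cell of $D_{b_v-1}$ with top $z$, so by \eqref{ifdistno3covered} no element of $\ideal z$ has three lower covers in $D_{b_v-1}$. Were $b_u\le b_v-1$, the element $w\le z$ would already have its $(\geq 3)$ lower covers inside $D_{b_u}\subseteq D_{b_v-1}$, a contradiction; if $v$ is straight then $b_v=0<b_u$ trivially. Thus $b_u\ge b_v$, and $b_u=b_v$ is excluded by $\tblokk u\neq\tblokk v$, giving $b_u>b_v$. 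Next, $w\le z$ is already granted, and it remains to locate $H_u$ inside some $\oterr{v'}$. I would take $v'=v_0$ and use that the precipitous edge $\inh(u_0)=[0_{\inh(u_0)},w]$ has $w\le z=1_{\inh(v_0)}$ while $0_{\inh(u_0)}\not\le 0_{\inh(v_0)}$; by Corollary~\ref{nslplQcor} this places $[0_{\inh(u_0)},w]$ strictly between the two descending arms of $v_0$ and below its apex $z$, hence in the interior of $\oterr{v_0}$, and since by Remark~\ref{sZtGwRrgT} the lower border of $v_0$ is of normal slope, the whole $4$-cell $H_u$ sitting on top of this edge lies within $\oterr{v_0}$.

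For the converse $\lessdesc\Rightarrow\widehat\sigma$, the inputs are $b_u>b_v$ and $H_u\subseteq\oterr{v'}$. Constancy of $\birthno$ on blocks gives $\tblokk u\neq\tblokk v$ and, since $b_u\ge1$, that $u$ is a hat-trajectory. By Lemma~\ref{trajorigtershape} the polygon $\oterr{v'}$ has $z=1_{\inh(v')}$ as its topmost vertex and all of its boundary segments have slope in $[\pi/4,3\pi/4]$; as $w=1_{H_u}\in H_u\subseteq\oterr{v'}$, Corollary~\ref{nslplQcor} yields $w\le z$. Finally I must produce representatives $u_0,v_0$ with $0_{\inh(u_0)}\not\le0_{\inh(v_0)}$; here the free choice of representatives in the definition of $\widehat\sigma$ is essential. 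Using that $w$ is the join of any two of its $(\ge3)$ lower covers, I would argue that if every prong of the $u$-fork were below every bottom $0_{\inh(v_0)}$ of a $\tblokk v$-representative, then $w$ would fall below the whole upper part of the $v$-fan, contradicting that the fork $H_u$—born strictly inside $\oterr{v'}$ after step $b_v$—meets the interior of the region swept by the $v$-block; hence some prong is incomparable to some $0_{\inh(v_0)}$.

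The main obstacle is precisely this last geometric matching in both directions: translating ``$0_{\inh(u_0)}\not\le0_{\inh(v_0)}$ for some representatives, with $w\le z$'' into ``$H_u$ within $\oterr{v'}$ for some $v'$''. Its delicate ingredients are the exact shape of the $v$-block's original territories—that the polygons $\oterr{v'}$, $v'\in\tblokk v$, tile the fan below $z$ between the two arms of the $v$-fork, so that a halo square lies inside one of them iff its apex is caught between those arms—together with the compatibility that lets one read these $D_{b_v}$-territories inside $D$ (Observation~\ref{cdsrMsdGs}, Remark~\ref{sZtGwRrgT}). I expect the cleanest rigorous route to be induction on the number $t$ of multiforks in \eqref{dkmfFtHv}: removing the last extension keeps a diagram in $\bdia$, the induction hypothesis applies to all the relations not involving the final halo square, and the newly born block is handled by the explicit fork geometry described above.
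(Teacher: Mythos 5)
Your direction $\lessdesc\Rightarrow\widehat\sigma$ is essentially workable, although the paper obtains the incomparability $0_{\inh(u')}\nleq 0_{\inh(v')}$ directly for the \emph{given} representatives — $0_{\inh(u')}$ lies in the interior of the halo square, hence strictly above the (original = actual, by Remark~\ref{sZtGwRrgT}) lower border of $v'$, and Corollaries~\ref{nslplQcor} and \ref{dkzTbR} finish — whereas your contradiction argument ("$w$ is the join of the prongs") breaks down for a $1$-fold multifork, where there is only one prong. The genuine gap is in the direction $\widehat\sigma\Rightarrow\lessdesc$, at the step "I would take $v'=v_0$". The hypotheses $1_{\inh(u_0)}\leq z$ and $0_{\inh(u_0)}\nleq 0_{\inh(v_0)}$ do \emph{not} place the halo square of $u_0$ within $\oterr{v_0}$: by Corollary~\ref{nslplQcor} they only say that $0_{\inh(u_0)}$ lies outside the downward cone of the single prong $0_{\inh(v_0)}$ while $1_{\inh(u_0)}$ lies in the downward cone of $z$, and that region is far larger than $\oterr{v_0}$. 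Concretely, if $\tblokk v=\set{w_1,\dots,w_k}$ has top edges $[a_1,z],\dots,[a_k,z]$ and the halo square of $u_0$ sits inside $\oterr{w_1}$, well to the left, then typically $0_{\inh(u_0)}\nleq a_k$ as well, so $\pair{u_0}{w_k}\in\sigma$ is a perfectly legitimate witness of $\pair{\tblokk u}{\tblokk v}\in\widehat\sigma$, yet $u_0\lessdesc w_k$ fails. Choosing the correct representative $v'$ is precisely where the paper spends most of its proof: it builds the region $S$ from the two trajectories through the upper edges of the halo square of $\tblokk v$, removes the ideal of $b=a_1\wedge\dots\wedge a_k$, classifies the remaining $4$-cells into dark-grey and light-grey ones, locates the unique such cell $C$ whose territory contains the halo square of $u_0$, and only then reads off the index $m$ with $C\subseteq\oterr{w_m}$.

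Your fallback sketch ("the polygons $\oterr{v'}$, $v'\in\tblokk v$, tile the fan below $z$, so a halo square lies inside one of them iff its apex is caught between the arms") is also not quite right and would not close the gap as stated: the union of the territories $\oterr{w_m}$ is only $S'=S\setminus\interr I$, and there remain $4$-cells of $S$ inside the territory $I$ of $\ideal b$ that belong to no $\oterr{w_m}$. A later halo square below $z$ could a priori sit in one of those; the paper excludes this using the hypothesis itself, since $0_{\inh(u_0)}\leq b$ would force $0_{\inh(u_0)}\leq a_m$ for every $m$, contradicting $0_{\inh(u_0)}\nleq 0_{\inh(v_0)}$. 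So the missing ingredient is not merely an induction on $t$ but the explicit bookkeeping that (i) assigns each admissible halo square below $z$ to the unique member of $\tblokk v$ whose original territory contains it, and (ii) uses the incomparability hypothesis to show the halo square is not trapped in the uncovered part $\ideal b$.
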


\begin{proof} First of all, note that for any $w\in\Traj D$ and $w'\in\tblokk w$, we have  that $\birthno(w')=\birthno(w)$. This allows us to define $\birthno(\tblokk w)$ as $\birthno (w)$. 

To prove the ``if'' part, assume that $u'\lessdesc v'$. Since $\pair{\tblokk {u}}{\tblokk {v}}=\pair{\tblokk {u'}}{\tblokk {v'}}$, 
what we have to show is that $\pair{\tblokk {u'}}{\tblokk {v'}}\in\widehat\sigma$. Actually, to ease the notation, we that assume  $u\lessdesc v$, and we want to show that
\begin{equation}
\pair{\tblokk {u}}{\tblokk {v}}\in\widehat\sigma\text.
\label{dkMbT}
\end{equation}
We know from $u\lessdesc v$ that the halo square  of $u$ is within $\oterr v$. Clearly, $u$ is a hat-trajectory. Since $0_{\inh(u)}$ is within the interior of this square, it is geometrically (strictly) above the original lower border of $v$. By Remark~\ref{sZtGwRrgT}, $0_{\inh(u)}$ is geometrically above the  lower border of $v$. Hence, Corollaries~\ref{nslplQcor} and \ref{dkzTbR} imply that $0_{\inh(u)}\nleq 0_{\inh(v)}$. On the other hand, the position of the halo square of $u$  yields that $1_{\inh(u)}$ is within the original territory of $v$. Hence, Corollary~\ref{nslplQcor}  and 
Lemma~\ref{trajorigtershape} imply that 
$1_{\inh(u)}\leq 1_{\inh(v)}$. Thus, we 
conclude that $\pair uv\in\sigma$, which 
implies \eqref{dkMbT} and the ``if'' part of Theorem~\ref{terThM}.

To prove the ``only if'' part, assume that $\pair{\tblokk u}{\tblokk v}\in\widehat\sigma$.  Hence, there are $u'\in\tblokk u$ and $v^\ast \in\tblokk v$ such that $\pair{u'}{v^\ast }\in\sigma$. This means that $u'$ is a hat-trajectory, $0_{\inh(u')} \nleq 0_{\inh(v^\ast )}$,  and  $1_{\inh(u')} \leq 1_{\inh(v^\ast )} = 1_{\inh(v)}$.   Our purpose is to find a $v'\in\tblokk v$ such that $u'\lessdesc v'$.   We claim that $u'$ is ``younger'' than $v^\ast $, that is,
\begin{equation}
i:=\birthno(u)= \birthno(u')>\birthno(v^\ast ) =\birthno(v)=:j\text.
\label{dkzGhdvnP}
\end{equation}
The equalities are trivial by the definitions of $i$ and $j$.  
To show the inequality in \eqref{dkzGhdvnP}, there are two cases to consider. First, assume that  $1_{\inh(u')} = 1_{\inh(v^\ast )}$. Since $\pair{u'}{v^\ast }\notin \Theta$ by the  definition of $\widehat\sigma$ and $u'$ is a hat-trajectory, we obtain that $v^\ast $ is a straight trajectory. Thus, we conclude that $i=\birthno(u')>0=\birthno(v^\ast )=j$. Second, assume  that $1_{\inh(u')} < 1_{\inh(v^\ast )}$. 
Clearly, $i=\birthno(u')\neq\birthno(v^\ast )=j$. Suppose, for a contradiction, that $i<j$. Then $v$ is a hat-trajectory and $1_{\inh(u' )} <    1_{\inh(v^\ast )} =   1_{\inh(v )}$.  
Since $u'$ is a hat-trajectory, $1_{\inh(u')}$ has at least three lower covers in $D_i$  , and the same is true in $D_{j-1}$ by  Observation~\ref{cdsrMsdGs}\eqref{cdsrMsdGsc}. But this contradicts 
\eqref{ifdistno3covered}, because $1_{\inh(v )}$ is the top of the halo square $H_{j-1}$, which is distributive in $D_{j-1}$. We have proved \eqref{dkzGhdvnP}. 
%

\begin{figure}[htb]
\includegraphics[scale=1.0]{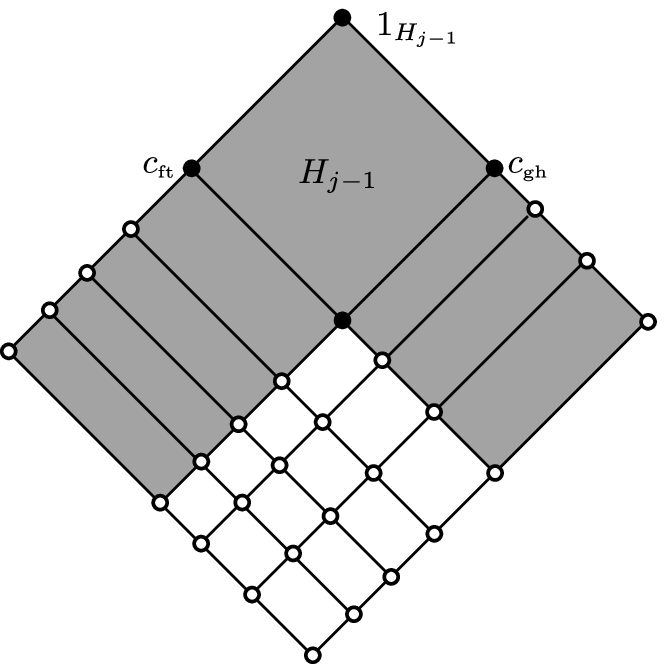}
\caption{$\ideal{1_{H_{j-1}}}$ in $D_{j-1}$ and the territory $S$}\label{fig-idealinDj1}
\end{figure}

\begin{figure}[htb]
\includegraphics[scale=1.0]{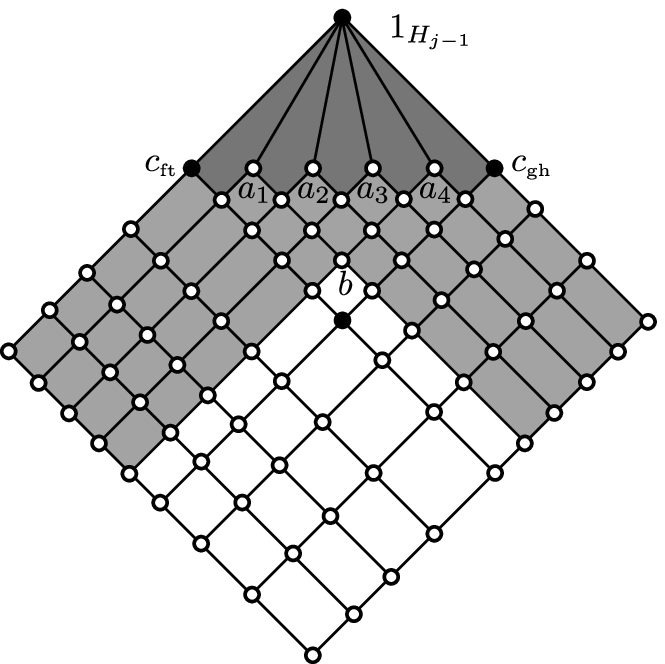}
\caption{$\ideal{1_{H_{j-1}}}$ in $D_{j}$ and the territory $S'$}\label{fig-samidealinDj}
\end{figure}

Next, there are two cases to consider depending on $j>0$ or $j=0$.

First, we assume that $j>0$. Then $v$ and $v^\ast $ are hat-trajectories. 
Note that $1_{\inh(v)}$ belongs to $D_{j-1}$ and equals $1_{H_{j-1}}$. The left and right corners of $H_{j-1}$ are denoted by $\clft$ and $\crght$, respectively. Since the halo square $H_{j-1}$ is distributive in $D_{j-1}$, the ideal $\ideal{1_{H_{j-1}}}$ of $D_{j-1}$ is a grid by Lemma~\ref{dkhgTbFmk}\eqref{dkhgTbFmka}. This ideal is illustrated in Figure~\ref{fig-idealinDj1}. Corollary~\ref{edgesOfdistCellNSlope} yields that 
the edges of this ideal are of normal slopes.
%
We denote by $S$  the planar territory that consists of  the 4-cells (in $D_{j-1}$) of the trajectory through $[\crght,1_{H_{j-1}}]$ that are before (to the left of)  $[\crght,1_{H_{j-1}}]$  and also of the 4-cells  of the trajectory through $[\clft,1_{H_{j-1}}]$  that are after (to the right of)  $[\clft,1_{H_{j-1}}]$. Note that  $S$ is usually concave and that  $H_{j-1}$ is within $S$.
In Figure~\ref{fig-idealinDj1}, $S$ is the grey-colored polygon. 
Since the edges of the grid $\ideal {1_{H_{j-1}}}$ are of normal slopes,  $S$  in $D_{j-1}$ is bordered by compatible straight lines of  normal slopes. 
Hence, by Observation~\ref{cdsrMsdGs}\eqref{cdsrMsdGsa}, $S$ in $D_j$, and also in $D_{i-1}$, is bordered by compatible straight line segments of normal slopes. Listed from left to right, 
let $a_1,\dots, a_k$ be the new lower covers of $1_{\inh(v)}=1_{H_{j-1}}$ in $D_j$; for $k=4$, 
see  Figure~\ref{fig-samidealinDj}. 
By Observation~\ref{cdsrMsdGs}\eqref{cdsrMsdGsc},  
$\clft,a_1,\dots, a_k,\crght$ is the full list, again from left to right, of all lower covers of $1_{\inh(v)}$ in $D$. With the notation $b=a_1\wedge a_k=a_1\wedge\dots\wedge a_k$, the ideal $\ideal b$ of $D_j$ determines a territory $I$. Since every element of $\ideal {1_{H_{j-1}}}$ in $D_{j-1}$ has at most two lower covers by \eqref{ifdistno3covered}, it follows from the multifork construction that every element of $\ideal b$ in $D_j$ has at most two lower covers. 
Therefore, Observation~\ref{dksRptG} or \ref{dkhGrmnT} and Observation~\ref{cdsrMsdGs}\eqref{cdsrMsdGsa} yield that the territory $I$  is bordered by edges of normal slopes in $D_j$ and by compatible straight line segments of normal slopes in $D_{i-1}$.  Consequently,  the territory $S'=S\setminus \interr I$ is again bordered by edges of $D_j$ and by compatible straight line segments of $D_{i-1}$ with normal slopes. In Figure~\ref{fig-samidealinDj}, $S'$ is the grey (dark and light grey together) territory. 
As earlier,  a 4-cell is \emph{$4$-cell with normal slopes} if all of its edges are of normal slopes. In $D_j$, $S'$ is a union of 4-cells.   Namely, it is the union of $4$-cells that belong to the new trajectories that the latest (the $j$-th) fork extension yielded. Among these 4-cells, those containing $1_{\inh(v)}$ are not with normal slopes. They  will be called the \emph{dark-grey cells}, and they are depicted in Figure~\ref{fig-samidealinDj}  accordingly.  
We know from  \init{G.\,}Cz\'edli and \init{E.\,T.\ }Schmidt~\cite[Lemma 13]{czgschvisual}, see also
\init{G.\ }Cz\'edli and \init{G.\ }Gr\"atzer~\cite[Ex.\ 3.41]{czgggltsta}, that two neighboring lower covers of an element in a slim semimodular diagram always generate a cover-preserving square, that is, a 4-cell. 
Hence,  we obtain from Observation~\ref{cdsrMsdGs}~\eqref{cdsrMsdGsc} that
\begin{equation}
\text{the dark-grey 4-cells are also 4-cells in }D\text{ and in }D_{i}\text.
\label{drgrClls}
\end{equation}
It follows from \eqref{tHnDstRbtV}, the multifork construction, and  Corollary~\ref{edgesOfdistCellNSlope} that the rest of the 4-cells of $D_j$ within $S'$ are of normal slopes by  Corollary~\ref{edgesOfdistCellNSlope}; they are called \emph{light-grey 4-cells}, and so they are depicted in  Figure~\ref{fig-samidealinDj}. Although the light-grey 4-cells are not necessarily 4-cells of $D_{i-1}$, we know from Observation~\ref{cdsrMsdGs}\eqref{cdsrMsdGsa} that they are bordered by compatible straight line segments of $D_{i-1}$. Finally, $S$ includes some additional $4$-cells that are not in $S'$; they are uncolored in the figure. Clearly, the \emph{compatible}  straight line segments of $D_{i-1}$ 
\begin{equation}
\text{cannot cut the 4-cell }H_{i-1}\text{ into two halves of positive area.}
\label{cntcthlVs}
\end{equation} 
Furthermore, since $H_{i-1}$ has interior elements in $D_i$, \eqref{drgrClls} gives that $H_{i-1}$ cannot be within a dark-grey 4-cell. Hence, we conclude from \eqref{cntcthlVs}  that  there is a unique light-grey  or uncolored 4-cell $C$ of $D_{j}$ within $S$ such that $H_{i-1}$ is within the territory determined by $C$. (Possibly but not necessarily, $H_{i-1}=C$.) By definitions, $\inh(v^\ast)$ and $\inh(v)$ are in the set $\{[a_1,1_{\inh(v)}]$, \dots, $[a_k,1_{\inh(v)}]\}$ of edges. If $0_{\inh(u')}\leq b=a_1\wedge\dots\wedge a_k$, then $0_{\inh(u')}\leq a_m$ for all $m\in\set{1,\dots,k}$, which contradicts $0_{\inh(u')}\nleq 0_{\inh(v^\ast )} = 0_{\inh(v)}$. Thus, $0_{\inh(u')}\nleq b$.  Combining this with $0_{\inh(u')}\leq 1_C$, it follows that $C$ cannot be an uncolored 4-cell. Hence, $C$ is a light-grey colored $4$-cell in $D_j$. Therefore,  there 
is a (unique) $m\in\set{1,\dots,k}$ such that   
$C$ is a $4$-cell of the  hat-trajectory $w_m$ of $D_j$  with top edge $[a_m, 1_{\inh(v)}]$.
Since $a_m\prec_D 1_{\inh(v)}$ by Observation~\ref{cdsrMsdGs}\eqref{cdsrMsdGsc},  we can also consider the trajectory $v'\in\Traj D$ that contains $[a_m, 1_{\inh(v)}]$; actually,  $[a_m, 1_{\inh(v)}]$ is the top edge of $v'$ by  Observation~\ref{dkhGrmnT}.  Clearly, $w_m=\birthtr(v')$,  $C$ is within $\oterr{v'}=\terr {w_m}$, and $v'\in\tblokk v$.  But $H_{i-1}$ is within $C$, so $H_{i-1}$ is 
also within the original territory $\oterr{v'}$ of $v'$. Consequently,  $u'\lessdesc v'$. 

Second, we assume that $j=0$. Then $v$ is a straight trajectory, $\tblokk v$ is a singleton, and  $u'\lessdesc v'$ follows in a similar but much easier way; the details are omitted. This  completes the proof of Theorem~\ref{terThM}
\end{proof}

\section{\init{G.\ }Gr\"atzer's Swing Lemma}
\label{swingsection}
For a  slim rectangular lattice diagram  $D\in\adia$ and prime intervals $\inp$ and $\inq$ of $D$, we say that $\inp$ \emph{swings} to $\inq$, in notation, $\inp \swings \inq$, if $1_\inp=1_\inq$, $1_\inp$ has at least three lower covers, and $0_\inq$ is neither the leftmost, nor the rightmost lower cover of $1_\inp$.  
If $D$ is in  $\bdia$, not only in $\adia$, then 
Observation~\ref{dkhGrmnT} implies that 
\begin{equation}
\inp\swings\inq\quad\text{ iff }\quad 1_\inp=1_\inq\text{ and }\inq\text{ is a precipitous edge.}
\label{dhgprCpt}
\end{equation}
As usual, $\inp$ is \emph{up-perspective} to $\inq$, in notation, $\inp \perspup \inq$,  
if $1_\inp\vee 0_\inq=1_\inq$ and  $1_\inp\wedge 0_\inq=0_\inp$. Down-perspectivity is just the converse relation defined by $\inp\perspdn\inq \iff 
 \inq\perspup\inp$.  Although here we only  formulate the Swing Lemma for slim \emph{rectangular} lattices, the original version in \init{G.\ }Gr\"atzer~\cite{swinglemma} is the same statement for slim \emph{semimodular} lattices.  Speaking of diagrams rather than lattices is not an essential change.

\begin{lemma}[{Swing Lemma in \init{G.\ }Gr\"atzer~\cite{swinglemma}}]
\label{swnglmm}
Let $D\in\adia$ be a slim rectangular diagram, and let $\inp$ and $\inq$ be edges $($that is, prime intervals$)$ of $D$.
Then the following two conditions are equivalent.
\begin{enumeratei}
\item\label{swnglmma}
$\con(\inp) \geq \con(\inq)$ in the lattice of all congruences of $D$.
\item\label{swnglmmb} There exist an $n\in\mathbb N_0$ and edges $\inr=\inr_0,\inr_1,\dots,\inr_n=\inq$ in $D$ such that
$\inp\perspup\inr$, 
$ \inr_{i-1}\perspdn\inr_{i}$ for $i\in\set{1,\dots,n}$, $i$ odd, and 
$ \inr_{i-1}\swings\inr_{i}$ for $i\in\set{1,\dots,n}$, $i$ even.
\end{enumeratei}
\end{lemma}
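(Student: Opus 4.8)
The plan is to route the entire statement through the Trajectory Coloring Theorem. Write $u,v\in\Traj D$ for the trajectories containing $\inp$ and $\inq$, so that, by Definition~\ref{tRajtaudef}, $\widehat\xi(\inp)=\tblokk u$ and $\widehat\xi(\inq)=\tblokk v$. Since $\widehat\xi$ is a coloring onto $\tuple{\Straj D;\wtau}$ by Proposition~\ref{tRqCLthm}, the two defining properties \colgammacon\ and \colcongamma\ of a coloring combine into the clean equivalence $\con(\inp)\geq\con(\inq)\iff \pair{\tblokk v}{\tblokk u}\in\wtau$. Thus the whole task reduces to showing that $\pair{\tblokk v}{\tblokk u}\in\wtau$ holds exactly when there is a walk from $\inp$ to $\inq$ of the alternating shape demanded in \eqref{swnglmmb}.

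For the forward implication \eqref{swnglmma}$\then$\eqref{swnglmmb} I would unfold $\wtau=\preogen(\widehat\sigma)$ into a chain $\tblokk v=\tblokk{w_0},\dots,\tblokk{w_m}=\tblokk u$ with $\pair{\tblokk{w_{s-1}}}{\tblokk{w_s}}\in\widehat\sigma$ for each $s$ (the reflexive base case $\tblokk v=\tblokk u$ being either $u=v$ or two hats with a common top). Theorem~\ref{terThM} converts every step into a descendant relation $w_{s-1}'\lessdesc w_s'$ for suitable representatives, i.e. the halo square of $w_{s-1}'$ lies within $\oterr{w_s'}$. Re-reading the geometry exactly as in the proof of Theorem~\ref{terThM}, I would realize each such step concretely: start at the top edge $\inh(w_s')$, \emph{swing} into the fork whose insertion created $w_{s-1}'$, and then run a chain of down-perspectivities down the territory $\oterr{w_s'}$ until the top edge $\inh(w_{s-1}')$ is reached. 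A passage between two trajectories inside one $\Theta$-block (when I switch representative of some $\tblokk{w_s}$) is a single swing, because two hat-trajectories sharing the top vertex have precipitous top edges with that common top and hence swing to each other by \eqref{dhgprCpt} and Observation~\ref{dkhGrmnT}. Finally I prepend the passage from $\inp$ to $\inh(u)$ inside the single trajectory $u$, consisting of perspectivities within $u$, which supplies the initial $\inp\perspup\inr$.

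The step I expect to be the main obstacle is normalizing this concatenated walk into the \emph{exact} pattern of \eqref{swnglmmb}: one up-perspectivity followed by strictly alternating down-perspectivities (odd $i$) and swings (even $i$). This needs reduction rules establishing that consecutive perspectivities of equal orientation merge, that the single up-perspectivity can be pushed to the very front because after the first swing the walk only descends, and that two adjacent swings (same top) collapse into one. The slim rectangular geometry is precisely what makes these reductions succeed: lower borders of trajectories carry only normal slopes (Corollary~\ref{dkzTbR}), and precipitous edges are exactly the hat tops (Observation~\ref{dkhGrmnT}), so each descending leg is certified to be a genuine down-perspectivity and each swing is certified to land on a precipitous edge. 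Keeping this bookkeeping coherent along the whole chain is the delicate part of the argument.

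The converse \eqref{swnglmmb}$\then$\eqref{swnglmma} is the routine direction, argued again through the bridge of the first paragraph. Up- and down-perspective prime intervals generate equal congruences, so every $\perspup$ and $\perspdn$ step leaves the $\wtau$-class of the color unchanged; and for a swing $\inr_{i-1}\swings\inr_{i}$ the edge $\inr_i$ is precipitous by \eqref{dhgprCpt}, hence a hat top whose trajectory descends from that of $\inr_{i-1}$, so Theorem~\ref{terThM} yields $\pair{\widehat\xi(\inr_i)}{\widehat\xi(\inr_{i-1})}\in\widehat\sigma\subseteq\wtau$. Composing these relations along $\inr_0,\dots,\inr_n$ together with the initial $\inp\perspup\inr_0$ gives $\pair{\tblokk v}{\tblokk u}\in\wtau$, whence $\con(\inp)\geq\con(\inq)$ by \colgammacon, completing the proposed proof.
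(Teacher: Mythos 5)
Your reduction of the statement to the equivalence $\con(\inp)\geq\con(\inq)\iff\pair{\tblokk v}{\tblokk u}\in\wtau$ via Proposition~\ref{tRqCLthm}, followed by unfolding $\wtau$ into $\widehat\sigma$-steps and converting each step into a descendant relation by Theorem~\ref{terThM}, is exactly the skeleton of the paper's argument for \eqref{swnglmma}$\then$\eqref{swnglmmb}. (For the converse the paper argues more directly, showing from the multifork construction that $\inr'\swings\inr''$ forces $\con(\inr')\supseteq\con(\inr'')$; your route through \colgammacon{} can also be made to work.) The genuine gap is in how you realize a single descendant step $w'_{s-1}\lessdesc w''_s$ as a walk of swings and down-perspectivities. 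You propose to ``swing into the fork whose insertion created $w'_{s-1}$, and then run a chain of down-perspectivities down the territory $\oterr{w''_s}$.'' This is in the wrong order and, more importantly, too weak. The edge from which one can swing onto $\inh(w'_{s-1})$ is an \emph{upper edge of the halo square} of $w'_{s-1}$; that edge is in general not on the trajectory $w''_s$ at all --- it is merely quasi-parallel to $\inh(w''_s)$ and within $\oterr{w''_s}$ --- so it cannot be reached from $\inh(w''_s)$ by down-perspectivities inside $w''_s$. Worse, $\oterr{w''_s}$ is typically subdivided by later multifork insertions, each of which interposes a precipitous top edge on any descending path, and getting past each such obstruction costs a further swing. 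This is precisely the content of the paper's key claim \eqref{dwTyBrTh}: every edge $\ing$ with $\ing\qparallel\inh(w)$ is \accessible{} from $\inh(w)$. Its proof is a nontrivial induction on $\birthno(\ing)$ with two geometric cases (according to whether $\ing$ lies on the hat-trajectory responsible for its birth), and it occupies the bulk of the paper's proof; your outline assumes this lemma rather than supplying it.

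A secondary misjudgment: you single out the normalization of the concatenated walk into the strictly alternating pattern of \eqref{swnglmmb} as the main obstacle. In fact this is the easy part --- the paper dispatches it in a few lines by merging consecutive steps of the same kind (transitivity of $\perspdn$, and two swings with a common top compose to a swing or to the identity) and prefixing a reflexive $\perspdn$ step if necessary. The substantive difficulty is the accessibility claim above.
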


\begin{proof}   Our argument relies, among other ingredients, on the multifork construction sequence of $D$, and the notation in \eqref{dkmfFtHv} will be in effect. 

Let $L$ be the lattice determined by $D$. 
By Theorem~\ref{pcvWsWdG}\eqref{pcvWsWdGb}, $L$ also has a diagram $D'$ in $\bdia$. Obviously, $L$ is glued sum indecomposable. Thus, Proposition~\ref{kTrTzmyY} implies that every planar diagram of $L$ is similar to $D$ or $\refl D$. Hence, $D'$ is similar to $D$ or to $\refl D$. Since the statement of the lemma is obviously invariant under left-right similarity, we can assume that $D=D'\in\bdia$.

Assume  \eqref{swnglmmb}. We claim that for prime intervals $\inr'$ and $\inr''$ of $D$, 
\begin{equation}
\text{if }\inr'\swings \inr''\text{, then }\con(\inr') \supseteq \con(\inr'')\text.
\label{sWnGthpZ}
\end{equation} 
Assume that $\inr'\swings \inr''$.  It follows from the definition of $\swings$ and Observation~\ref{dkhGrmnT}\eqref{dkhGrmnTc} that $0_{\inr''}$ is a source element in $D_{\birthno({\inr''})} \setminus D_{\birthno({\inr''})-1}$, and either the same holds for 
$0_{\inr'}$, or $0_{\inr'}$ is a corner of the halo square $H_{\birthno({\inr''})-1}$. In both cases, since $1_{\inr'}=1_{\inr''}$,  $\con(\inr') \supseteq \con(\inr'')$ follows in a straightforward way. This proves \eqref{sWnGthpZ}. On the other hand,  if $ \inr'\perspdn\inr''$, then $\con(\inr')=\con(\inr'')$. Combining this with \eqref{sWnGthpZ}, we obtain \eqref{swnglmma}. 
 Thus, \eqref{swnglmmb} implies \eqref{swnglmma}.

Before proving the converse implication, some preparations are necessary.
For edges $\intv e$ and $\intv e'$ of $D$, we say that $\intv e'$ is \emph{\accessible{}} from $\intv e$ if there exists a finite sequence $\inr_0=\intv e, \inr_1,\dots,\inr_n=\intv e'$ of edges such that, for every $i\in\set{1,\dots,n}$, either
$ \inr_{i-1}\swings\inr_{i}$, or $ \inr_{i-1}\perspdn\inr_{i}$.  
For $j\in\set{0,\dots,t}$, $D_j$ determines a sublattice in $D$. We claim that for common edges  $\intv e$ and $\intv e'$ of $D_j$ and $D$, 
\begin{equation}
\parbox{6cm}
{if $\intv e'$  is \accessible{} from $\intv e$ in $D_j$, then  it is also \accessible{} from $\intv e$ in $D$.}
\label{ddkszhGhRvBn}
\end{equation}
To prove this, we have to show that $\inr_n=\intv e'$, $\inr_{n-1}$, \dots are also edges in $D$. If $\inr_i$ is an edge, that is, a prime interval of $D$, $i>0$, and  $ \inr_{i-1}\perspdn\inr_{i}$, then $\inr_{i-1}$ is also a prime interval in $D$ by semimodularity.  If $\inr_i$ is  a prime interval of $D$, $i>0$, and 
$ \inr_{i-1}\swings\inr_{i}$, then $\inr_{i-1}$ is also a prime interval in $D$ by  Observation~\ref{cdsrMsdGs}\eqref{cdsrMsdGsc}. This completes the induction proving \eqref{ddkszhGhRvBn}.

For a trajectory $w\in\Traj D$, the original territory  $\oterr w$ of $w$ can be divided into two parts; note that one of these parts is empty iff $w$ is a straight trajectory. The union of the $4$-cells (as quadrangles in the plane)  of $\birthtr(w)$ before $\inh(\birthtr(w))$ (if we walk from left to right along $\birthtr(w)$) is the ``\emph{before the top edge}'' part, and this polygon is denoted by $B(w)$. Similarly, the union of the $4$-cells of $\birthtr(w)$ after $\inh(\birthtr(w))$  the ``\emph{after the top edge}'' part, and it is denoted by $A(w)$. Note that  
\begin{equation}
\oterr w=B(w)\cup A(w)\text.
\label{sKsbwbw}
\end{equation}
If $w$ is a hat-trajectory, then both $B(w)$ and $A(w)$ are polygons of positive area and 
\begin{equation}
\parbox{7.8 cm}{
each of $B(w)$ and $A(w)$ has one or two precipitous sides, which contain (that is, end at) $1_{\inh(w)}$, and the rest of the sides are of normal slopes;
}
\label{sidesofBwAw}
\end{equation}
this follows from  Observation~\ref{dkhGrmnT}, the construction of the multifork construction sequence \eqref{dkmfFtHv}, and Corollary~\ref{edgesOfdistCellNSlope}. If $w$ is a straight trajectory, then Corollary~\ref{edgesOfdistCellNSlope} yields that   one of $B(w)$ and $A(w)$ is a rectangle whose sides are of normal slope while the other one is the edge $\inh(\birthtr(w))$, that is,  a degenerate rectangle. 
No matter if $w$ is a hat-trajectory or a straight one, an edge $\ing $ of $D$ is said to be  \emph{quasi-parallel} to $\inh(w)$, in notation, $\ing  \qparallel  \inh(w)$, if  
$\ing $  is within (that is, both  $0_{\ing }$ and $1_{\ing }$ are within) the original territory  $\oterr w$ of $w$, and either $\ing $ is in $B(w)$ and it is of slope $3\pi/4$ (that is, $135^\circ$), or $\ing $ is in $A(w)$ and it is of slope $\pi/4$.  
If $\ing  \qparallel  \inh(w)$, then $\ing $ is or normal slope by definition. Observe that $\qparallel$ is not a symmetric relation.
Let us emphasize that, by definition, $\ing  \qparallel  \inh(w)$ implies that $\ing $ is within $\oterr w$. Note  that if  $\ing$ is within $\oterr w$, then it is within $B(w)$ or within $A(w)$ by Observation~\ref{cdsrMsdGs}\eqref{cdsrMsdGsa}--\eqref{cdsrMsdGsb}, but it is not necessarily quasi-parallel to $\inh(w)$.
We say that an edge $\intv f$ is, say, on the lower border of $\oterr w$ if both $0_{\intv f}$ and $1_{\intv f}$ are on this lower border. We conclude from Lemma~\ref{trajorigtershape} that
\begin{equation}
\parbox{7.5cm}{if $\ing\qparallel\inh(w)$, then $\ing$ is neither on the lower border, nor on the upper border of $\oterr w$.}
\label{notOnBoRdr}
\end{equation}
We claim that, for every edge $\ing $ of $D$ and every   $w\in\Traj D$,
\begin{equation}
\text{if }\ing  \qparallel  \inh(w)\text{, then }\ing \text{ is \accessible{} from }\inh(w)\text.
\label{dwTyBrTh}
\end{equation}
We prove this by induction on $\birthno(\ing )$. Assume that $\ing  \qparallel  \inh(w)$. 
We can also assume that  $\ing\neq\inh(w)$, since otherwise \eqref{dwTyBrTh}  trivially holds. 
By definitions, $\ing\in\oterr w=B(w)\cup A(w)$. By left-right symmetry, we assume that $\ing\in B(w)$. Since $\ing$ is within $B(w)$ and $\ing  \neq  \inh(w)$,  $B(w)$ is of positive area.

It follows from \eqref{notOnBoRdr}, the description of the multifork extension, and that of the multifork construction sequence \eqref{dkmfFtHv}   that $\birthno(w) \leq \birthno(\ing )$. Remember that $t$ denotes the length of the sequence \eqref{dkmfFtHv}.
Combining Observation~\ref{cdsrMsdGs}\eqref{cdsrMsdGsd} and \eqref{ddkszhGhRvBn}, it follows that we can assume that $\birthno(\ing )=t$. (Less formally speaking with more details, if $\ing$ came to existence earlier but not before $w$, then first we could show \eqref{dwTyBrTh} in $D_{\birthno(\ing)}$ for $\ing$ and the ancestor $\anc w{\birthno(\ing)}$ of $w$ the same way we are going to show \eqref{dwTyBrTh} in $D$, and then we could apply \eqref{ddkszhGhRvBn}.) 

That is, $\ing$ came to existence only in the last step  of the multifork construction sequence, and the induction hypothesis is that   for every edge $\ing'$ of $D$, if $\ing'  \qparallel  \inh(w)$ and $\ing'$ is an edge of $D_{t-1}$, then $\ing'$ is \accessible{} from $\inh(w)$ in $D$.
We can also assume that $\birthno(w)<t$, because otherwise $\ing\qparallel \inh(w)$ gives $\ing\in w$ and  \eqref{dwTyBrTh} follows from $\inh(w)\perspdn \ing$. 
It follows from the description of multifork extensions and \eqref{dkmfFtHv}  that  there is a hat-trajectory $z$ of $D$ that is ``responsible'' for the fact that $\ing$ came to existence. Since there are two  essentially different ways of the above-mentioned responsibility,  we have to distinguish two cases.

\begin{Caseone} 
We assume that $\ing \in z$. Let 
\[U=\set{\ing'\in   z: \ing'\qparallel\inh(w)\text{ and }\ing'\text{ is on the right of }\ing}\text.
\]
Being ``on the right'' above means that when we walk along $z$, then   $\ing'$ comes later than $\ing$ or $\ing'=\ing$. 
Note that  $\ing\in U$. For an illustration, see Figure~\ref{fig-gqparall}, where  $B(w)$ is the (light and dark) grey area and   $U=\set{\ing_0,\dots,\ing_4}$. 
If $w$ is a hat-trajectory, then, in accordance with \eqref{sidesofBwAw}, we denote the vertices of the polygon $B(w)$ by $a$, 
$b=0_{\inh(w)}$,  $c=1_{\inh(w)}$, $d$, and $e$; anticlock-wise, starting from the bottom $a$. Except possibly for the edge $[d,c]$,  which could be of slope $\pi/4$ (and then $d$ is not a vertex of the polygon), the slopes of the sides of $B(w)$  are faithfully depicted in Figure~\ref{fig-gqparall}. In particular, $\inh(w)=[b,c]$ is precipitous, if $w$ is a hat-trajectory. On the other hand, if $w$ is a straight trajectory, then 
$\inh(w)$ is on the upper right boundary of $D$ and $D_0$ (because otherwise $B(w)$ would not be of positive area), 
the edges $[b,c]$  and $[d,c]$ are of slopes $3\pi/4$ and $\pi/4$, respectively, while the slopes of the other sides of the polygon $B(w)$ are faithfully depicted. (Note that $d$ is not a vertex of the polygon in this case.)
We claim that, for every edge $\ing'$ of $D$,
\begin{equation}
\text{if }\ing'\in U,\quad\text{then }\ing'
\text{ is not on }\rightb D\text.
\label{inUthenNotRBC}
\end{equation}
To prove this, assume  that $\ing'\in U$. Since $\birthno(\ing')=t >\birthno(w)$, $\ing'\neq \inh(w)$. We know from $\ing'\qparallel \inh(w)$ that $\ing'$ is of slope $3\pi/4$. Observe that 
that $1_{\ing'}\neq 1_{\inh(w)}$, because otherwise either $\inh(w)$ is precipitous and $0_{\ing'}$ is not within $B(w)$, or the edge  $\inh(w)$ is of slope $3\pi/4$ and $\ing'=\inh(w)$. Being within $B(w)$, $1_{\ing'}$ cannot be strictly greater than $1_{\inh(w)}$. 
Hence, using that $1_{\inh(w)}$ is the only cover of $0_{\inh(w)}$ in $D$, we obtain that $1_{\ing'}\ngeq 0_{\inh(w)}$. We also obtain that $1_{\ing'}\nleq 0_{\inh(w)}$, because otherwise Corollary~\ref{nslplQcor} yields that $1_{\ing'}$, which is within $B(w)$, is on the lower right border (from $a$ to $b$) of $B(w)$, but then $0_{\ing'}$ cannot be within $B(w)$ since $\ing'$ is of slope $3\pi/4$. So, $1_{\ing'}\parallel 0_{\inh(w)}$. 
Since the lower right border of $B(w)$, from $a$ to $b$, is a compatible straight line by Observation~\ref{cdsrMsdGs}\eqref{cdsrMsdGsa}--\eqref{cdsrMsdGsb}, it is also a chain in $D$. Extend this chain and $c=1_{\inh(w)}$ to a maximal chain $C$ of $D$. Being within $B(w)$, $1_{\ing'}$ is on the left of $C$. Since $1_{\ing'}\parallel  0_{\inh(w)}=b\in C$, $1_{\ing'}$  is strictly on the left of $C$. This proves \eqref{inUthenNotRBC}.

Trajectories go from left to right.  We claim that, for every  every $\ing'\in U$,
\begin{equation}
z\text{ does not terminate at } \ing' \text{ and goes upwards at }\ing'\text.
\label{kpsgoingUp}
\end{equation}
The first part follows from  \eqref{inUthenNotRBC}. Suppose, for a contradiction, that $z$ goes downwards at $\ing'\in U$ or $z$ makes a turn to the lower right at $\ing'\in U$. This means that  $\ing'$ is the upper left edge of a 4-cell. The slope of the upper right edge of this 4-cell is greater than that of $\ing'$, which is $3\pi/4$ since $\ing'\qparallel \inh(w)$ and $\ing'$ is within $B(w)$. Hence, $D$ has an edge with slope greater than $3\pi/4$. This is a contradiction, because every edge is either precipitous or is of normal slope by Observation~\ref{dksRptG} . Thus, we conclude \eqref{kpsgoingUp}.

\begin{figure}[htb]
\includegraphics[scale=1.0]{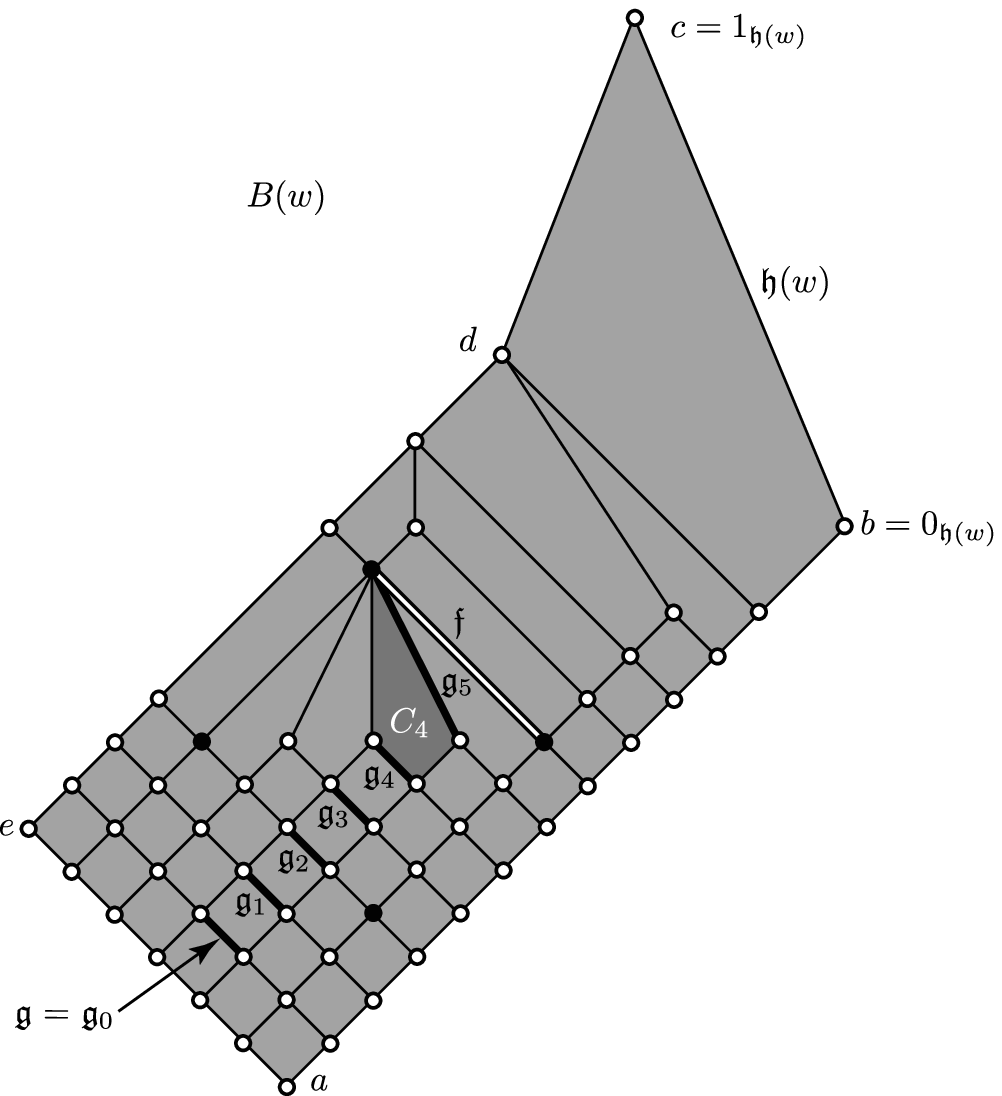}
\caption{$B(w)$  and $\ing\in z$}\label{fig-gqparall}
\end{figure}

Listing from left to right, let $\ing=\ing_0, \dots, \ing_k$ be the edges of $U$, and let $\ing_{k+1}$ be the next  edge of $z$. 
In Figure~\ref{fig-gqparall}, $k=4$ and $\ing_0,\dots,\ing_{k+1}$ are the thick edges. 
\eqref{kpsgoingUp} yields that  $\ing_{k+1}$ exists. Since $\ing_k$ belongs to $U$, it is  of slope $3\pi/4$. Hence,  except possibly for the side from $a$ to $e$,  $\ing_k$ does not lie  on the sides of the polygon  $B(w)$. Therefore, since $\ing_k$ is within $B(w)$, 
$B(w)\cap\terr{C_k}$ is of positive area. However, the sides of $B(w)$, which are compatible straight line segments, cannot divide $\terr{C_k}$, formed by edges of $D$, into two parts of positive area. 
Hence,  $\terr{C_k}$ is within $ B(w)$, that is,   $\terr{C_k} \subseteq B(w)$.  In particular, $\ing_{k+1}$ is within $B(w)$. So, the definition of $U$ implies that $\ing_{k+1} \qnparallel \inh(w)$. This and  Observation~\ref{dksRptG} implies that either $\ing_{k+1}$ is precipitous, or it is of slope $\pi/4$. Applying Corollary~\ref{dkzTbR} to $z$, we obtain that the slope of the edge $[0_{\ing_{k}}, 0_{\ing_{k+1}}]$, 
which is distinct from that of $\ing_k$, is  $\pi/4$. It follows that $\ing_{k+1}$ cannot be of slope $\pi/4$, because otherwise the slope of the edge $[1_{\ing_k}, 1_{\ing_{k+1}}]$ is less than $\pi/4$,  contradicting Observation~\ref{dksRptG}. Therefore, $\ing_{k+1}$ is precipitous, and Observation~\ref{dkhGrmnT} implies that $\ing_{k+1}=\inh(z)$. 

Consider the halo square $H_{t-1}$ in $D_{t-1}$. Its four elements in $D=D_t$ are the black-filled elements in the figure. Since the upper edges of $H_{t-1}$  are of normal slopes and $1_{C_k}=1_{\ing_{k+1}}=1_{\inh(z)}= 1_{H_{t-1}}$, Observation~\ref{dksRptG} implies that $\terr{H_{t-1}}\cap\terr{C_k}$ is of positive area. But $\terr{C_k}\subseteq  B(w)$, so a part of  $\terr{H_{t-1}}$ with positive are  is also in $B(w)$. This is also true in $D_{t-1}$. In $D_{t-1}$, where $H_{t-1}$ is a 4-cell, the sides of $B(w)$, which are compatible straight line segments, cannot divide $\terr{H_{t-1}}$ into two parts of positive area. Hence,  
$\terr{H_{t-1}}\subseteq B(w)$. In particular, both upper edges of $H_{t-1}$ are within $B(w)$. 
The halo square $H_{t-1}$ is distributive in $D_{t-1}$. Hence, Corollary~\ref{edgesOfdistCellNSlope} gives that   its upper edges are of normal slopes. Hence, exactly one of these upper edges, which we  denote by $\intv f$,  is quasi-parallel to $\inh(w)$. In the figure, $\intv f$ is  drawn with double lines. Since  $\birthno(\intv f)\leq t-1$, $\intv f$ is \accessible{} from $\inh(w)$by the induction hypothesis. On the other hand, $\intv f\swings \ing_{k+1}\perspdn \ing$. Thus, transitivity yields that $\ing$ is \accessible{} from $\inh(w)$, as required.
\end{Caseone}

\begin{figure}[htb]
\includegraphics[scale=1.0]{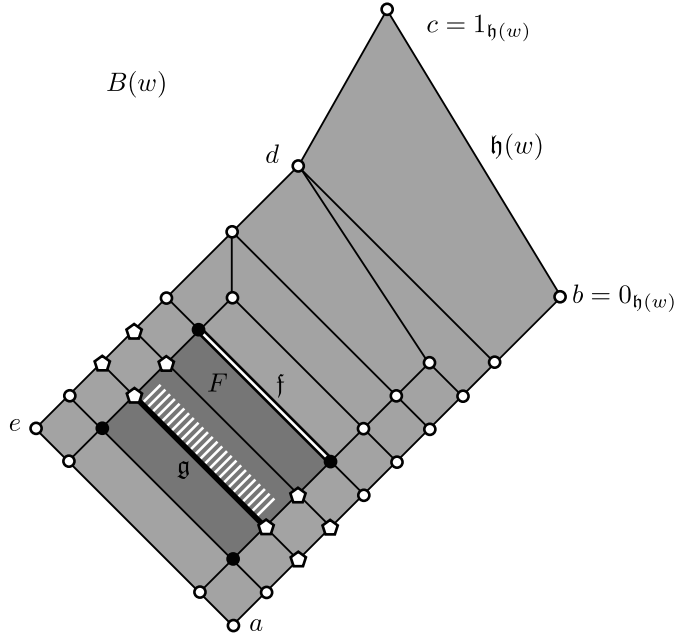}
\caption{$B(w)$ and $\ing\not\in z$}\label{fig-bwGnotinz}
\end{figure}

\begin{Casetwo} We assume that $\ing\notin z$. It follows from the description of a multifork extension that there is a 4-cell $F$ of $D_{t-1}$ that is divided into new cells in $D=D_t$, and $\ing$ is one of the new edges that divide $F$ into parts; see Figure~\ref{fig-bwGnotinz}, where $B(w)$ is the grey area as before, and $\terr F$ in $D$ is dark grey. The slopes of the sides of $B(w)$  in Figure~\ref{fig-bwGnotinz} are depicted with the same accuracy as in case of Figure~\ref{fig-gqparall}.  Corollary~\ref{edgesOfdistCellNSlope}, applied to $D_{t-1}$ and the halo square $H_{t-1}$ whose top is $1_{\inh (z)}$, implies that $F$ is of normal slope. Since $\ing\qparallel \inh(w)$, $\ing$ is of slope $3\pi/4$. Using that $\ing$ is within $B(w)$, both $\ing$ and  $\inh(w)$ are edges of $D_{t-1}$, and $\ing\neq\inh(w)$, we conclude that there is a narrow rectangular zone $S\subseteq B(w)$ of positive area and of normal slopes such that $S$ is on the right of and adjacent to  $\ing$. In the figure, $S$ is indicated by the striped area. Also, choosing it narrow enough, $S$ is within $\terr F$. Since $S\subseteq \terr F\cap B(w)$ holds not only in $D$ but also in $D_{t-1}$, where $F$ is a 4-cell, we conclude that $\terr S\subseteq B(w)$ as in Case 1. Hence, one of the upper edges of $F$, which we denote by $\intv f$, 
 is quasi-parallel to $\inh(w)$. Using $\birthno(f)< t-1$ and the induction hypothesis, we obtain that $\intv f$ is \accessible{} from $\inh(w)$. So is $\ing$, since $\intv f\perspdn \ing$.  This completes the induction, and the proof of \eqref{dwTyBrTh}
\end{Casetwo}

%
%
%

Now, we are in the position  to prove the converse implication of Lemma~\ref{swnglmm}.  Assume that \eqref{swnglmma} holds, that is, $\con(\inp)\geq\con(\inq)$. 
Denote by $u$ and $v$ the trajectories of $D$ that contain $\inp$ and $\inq$, respectively. 
We claim that 
\begin{equation}
\inh(v)\text{ is \accessible{} from }\inh(u)\text.
\label{dkjgmBtZyQ}
\end{equation}
Since  $\widehat\xi$ from 
Definition~\ref{tRajtaudef}\eqref{tRajtaudefe} is a coloring by Proposition~\ref{tRqCLthm}, \colcongamma{} yields that 
\[
\pair{\tblokk v}{\tblokk u}= 
\pair{\widehat\xi(\inq)}{\widehat\xi(\inp)}\in\wtau =\preogen(\widehat\sigma)\text.
\]
Thus, there exist an $n\in\mathbb N_0=\set{0,1,2,\dots}$ and a sequence $w_0=v$, $w_1$, \dots, $w_n=u$ of trajectories of $D$ such that $\pair{\tblokk {w_{j-1}}}{\tblokk {w_j}} \in \widehat\sigma$ for $j\in\set{1,\dots,n}$. By Theorem~\ref{terThM}, there are $w_0',w_1', w_1'', w_2', w_2'',   \dots, w_{n-1}', w_{n-1}'',    w_n''\in\Traj D$ such that $w_j', w''_j\in\tblokk{w_j}$ for $j\in\set{1,\dots,n-1}$,  $w_0'\in \tblokk{w_0}$,    $w_n''\in \tblokk{w_n}$,  and  $w_{j-1}' \lessdesc  w_j''$ for $j\in\set{1,\dots,n}$. Let $F_{j-1}$ denote the halo square of $w'_{j-1}$ when this trajectory is born. 
By the definition of $\,\lessdesc$, $F_{j-1}$   is within $\oterr{w''_j}$.  So are its upper edges, which are edges of $D$ with normal slopes by Observation~\ref{cdsrMsdGs}\eqref{cdsrMsdGsc}. Hence one of these two upper edges, which we denote by  $\intv f_{j-1}$, is quasi-parallel to   $\inh(w''_j)$.
Applying \eqref{dwTyBrTh}, we obtain that $\intv f_{j-1}$ is \accessible{} from $\inh(w''_j)$.  Since 
$\intv f_{j-1}\swings \inh(w'_{j-1})$, transitivity yields that 
\begin{equation}
\text{for }j\in\set{1,\dots,n},\text{ }\inh(w'_{j-1})\text{ is \accessible{} from }\inh(w''_j)\text.
\label{sldqkZhgtw}
\end{equation}
The top edges of any two trajectories in the same $\Theta$-block are \accessible{} from each other; either because they are equal, or by using a  $\swings$ step. Thus, 
\begin{equation}
\parbox{10.5cm}
{$\inh(w''_n)$ and $\inh(w_0)$ are \accessible{} from $\inh(w_n)$ and $\inh(w'_0)$, respectively, and, for $j\in\set{1,\dots, n-1}$, $\inh(w''_{j})$ is \accessible{} from $\inh(w'_j)$.}
\label{slhkZhgtw}
\end{equation}
Using transitivity, \eqref{sldqkZhgtw}, and 
\eqref{slhkZhgtw}, we conclude \eqref{dkjgmBtZyQ}. 
Finally, let $\inr=\inh(u)$. Since $\inp\in u$, we have that  $\inp\perspup \inr$. Similarly, $\inh(v)\perspdn\inq$. Combining these facts with \eqref{dkjgmBtZyQ}, we obtain that $\inq$ is \accessible{} from $\inr$. 
Hence, there exists a finite sequence
$\inr=\inr_0,\inr_1,\dots,\inr_k=\inq$ of edges such that, for each $j\in\set{1,\dots,k}$,  $\inr_{j-1}\perspdn \inr_j$ or  $\inr_{j-1}\swings \inr_j$. However, we still have to show that the relations $\perspdn$ and $\swings$ alternate and that $\perspdn$ is applied first, that is, $\inr_0\perspdn\inr_1$.  

To do so, first we prefix $\inr_0\perspdn\inr_0$ to the sequence, if necessary. Next,  we get rid of the unnecessary  repetitions. Namely, whenever we see the pattern $\intv b\swings \intv b'\swings \intv b''$ in the sequence, we correct it either to $\intv b\swings \intv b''$ or to $\intv b$, depending on $\intv b\neq  \intv b''$ or $\intv b =  \intv b''$. Knowing that  $\perspdn$ is transitive, we correct
every pattern $\intv b\perspdn \intv b'\perspdn \intv b''$  to $\intv b\perspdn \intv b''$ . Finally, there is no pattern to correct, and 
part \eqref{swnglmmb} of Lemma~\ref{swnglmm} holds.  This completes the proof of Lemma~\ref{swnglmm}.
\end{proof}


\begin{thebibliography}{99}


\bibitem{abels}
 Abels, H.:
 The geometry of the chamber system of a semimodular lattice.
 Order \tbf{8}, 143--158 (1991)

\bibitem{czg-mtx}
   Cz\'edli, G.:
   The matrix of a slim semimodular lattice.
   Order \tbf{29}, 85--103 (2012)

\bibitem{czgrepres}
    Cz\'edli, G.: 
    Representing homomorphisms of distributive lattices as restrictions of congruences of rectangular lattices.
    Algebra Universalis \tbf{67}, 313--345 (2012)

\bibitem{czgcoord}
    Cz\'edli, G.: 
    Coordinatization of join-distributive lattices.
    Algebra Universalis \tbf{71}, 385--404 (2014)

\bibitem{czgtrajcolor}
    Cz\'edli, G.: Patch extensions and trajectory colorings of slim rectangular lattices.
    Algebra Universalis \tbf{72} ,  125--154  (2014)

\bibitem{czgcircles}
    Cz\'edli, G.: 
    Finite convex geometries of circles.
 Discrete Mathematics \tbf{330},  61--75 (2014)

\bibitem{czgquasiplanar}
    Cz\'edli, G.: 
    Quasiplanar diagrams and slim semimodular lattices. 
    Order (submitted)

\bibitem{czgdekanyatall}
    Cz\'edli, G., D\'ek\'any, T., Ozsv\'art, L., Szak\'acs, N., Udvari, B.:
    On the number of slim, semimodular lattices.
    Mathematica Slovaca, to appear; 
{\url{http://arxiv.org/abs/1208.6173}}

\bibitem{czgggresect}
   Cz\'edli, G.,  Gr\"atzer, G.:  
    Notes on planar semimodular lattices. VII. Resections of planar semimodular lattices.
    Order \tbf{30}, 847--858 (2013)

\bibitem{czgggltsta}
   Cz\'edli, G.,  Gr\"atzer, G.:
   Planar semimodular lattices and their diagrams. Chapter 3 in: Gr\"atzer, G.,
Wehrung, F. (eds.) Lattice Theory: Special Topics and Applications. Birkh\"auser Verlag, Basel (2014)

\bibitem{czgolub}
   Cz\'edli, G., Ozsv\'art, L., Udvari, B.:
   How many ways can two composition series intersect?.
   Discrete Mathematics \tbf{312}, 3523--3536 (2012) 

\bibitem{czgschthowtoderive}
   Cz\'edli, G., Schmidt, E.T.:
   How to derive finite semimodular lattices from distributive lattices?.  Acta Math. Hungar. \tbf{121}, 277--282 (2008)


\bibitem{czgschtJH}
   Cz\'edli, G., Schmidt, E.T.:
   The Jordan-H\"older theorem with uniqueness for groups and semimodular lattices. 
   Algebra Universalis    \tbf{66}, 69--79 (2011)


\bibitem{czgschvisual}
   Cz\'edli, G., Schmidt, E.T.:
   Slim semimodular lattices. I. A visual approach.
   Order \tbf{29}, 481--497 (2012)

\bibitem{czgschslim2}
   Cz\'edli, G., Schmidt, E.T.:
   Slim semimodular lattices. II. A description by patchwork systems.
   Order \tbf{30}, 689--721  (2013)

\bibitem{czgschperm}
  Cz\'edli, G., Schmidt, E.T.:
  Composition series in groups and the structure of slim semimodular lattices.  
Acta Sci Math. (Szeged) \tbf{79}, 369--390 (2013)

\bibitem{r:Gr-LTFound}
   Gr\"atzer, G.: 
   Lattice Theory: Foundation.
   Birkh\"auser, Basel (2011)


\bibitem{swinglemma}
    Gr\"atzer, G.:
    Congruences and prime perspectivities in finite lattices. Algebra Universalis, to appear; 
    {\url{http://arxiv.org/abs/1312.2537}}


\bibitem{gratzerknapp1} 
   Gr\"atzer, G., Knapp, E.:
   Notes on planar semimodular lattices. I.  Construction. 
   Acta Sci.\ Math.\ (Szeged) \tbf{73}, 445--462 (2007)


\bibitem{gratzerknapp3}
   Gr\"atzer, G., Knapp, E.:
   Notes on planar semimodular lattices. III. Congruences of rectangular lattices. 
   Acta Sci. Math. (Szeged), \tbf{75}, 29--48 (2009)

\bibitem{r:GratzerLakserSchmidt}
   Gr\"atzer, G., Lakser, H., Schmidt, E.T.:
   Congruence lattices of finite semimodular lattices.
   Canad.\ Math.\ Bull.
   \tbf{41}, 290--297 (1998)

\bibitem{gr-nation}
   Gr\"atzer, G.,  Nation, J. B.: 
    A new look at the Jordan-H\"older theorem for semimodular lattices.
   Algebra Universalis  \tbf{64}, 309--311 (2010)

\bibitem{ggscht-periodica2014}
  Gr\"atzer, G., Schmidt, E.T.:
  Periodica Math. Hungarica, 2014), 


\bibitem{kellyrival}
  Kelly, D., Rival, I.: 
  Planar lattices. 
  Canad. J. Math. \tbf{27}, 636--665 (1975)

\bibitem{stanley}
  Stanley, R.P.:
  Supersolvable lattices.
  Algebra Universalis \tbf{2}, 197--217 (1972)

\bibitem{stern}
  Stern, M.: 
  Semimodular lattices. Theory and applications. In: Encyclopedia of Mathematics and its Applications, vol. 73. Cambridge University Press (1999)

\end{thebibliography}
\end{document}